\newenvironment{customthm}[1]
  {\innercustomthm}
  {\endinnercustomthm}
\newcommand{\defn}[1]{\textcolor{blue}{\emph{#1}}}
\def\qed{\hfill {\hfill $\Box$} \medskip}
\DeclareMathOperator{\good}{Good}
\DeclareMathOperator{\bad}{Bad}
\DeclareMathOperator{\sing}{Sing}
\DeclareMathOperator{\Fano}{Fano}
\newcommand{\RR}{\mathbb R}
\newcommand{\EE}{\mathbb E}
\newcommand{\R}{\mathbb R}
\newcommand{\bna}{\begin{eqnarray}}
\newcommand{\ena}{\end{eqnarray}}
\newcommand{\ba}{\begin{eqnarray*}}
\newcommand{\ea}{\end{eqnarray*}}
\newcommand{\bs}[1]{}
\newcommand{\ud}{\,\mathrm{d}}
\newcommand{\edgecard}{N}
\newcommand{\incidencegraph}{\Delta}
\newcommand{\graphautomorphism}{\rho}
\newcommand{\reflectiongroup}{W}
\DeclareMathOperator{\Aut}{Aut}
\newtheorem{theorem}{Theorem}[section]
\newtheorem{lemma}[theorem]{Lemma}
\newtheorem{proposition}[theorem]{Proposition}
\newtheorem{remark}[theorem]{Remark}
\newtheorem{definition}[theorem]{Definition}
\newtheorem{question}[theorem]{Question}
\DeclareMathOperator{\real}{Real}
\DeclareMathOperator{\Dim}{Dim}
\DeclareMathOperator{\Gr}{Gr}
\DeclareMathOperator{\rank}{rank}
\newcommand{\ra}{\rangle}
\newcommand{\la}{\langle}
\newcommand{\CC}{{\mathbb C}}
\newcommand{\CS}{{\mathcal S}}
\newcommand{\QQ}{{\mathbb Q}}
\newcommand{\ZZ}{{\mathbb Z}}
\newcommand{\bl}{{\bf l}}
\newcommand{\LL}{{L_{2,4}}}
\newcommand{\Ln}{{L_{2,n}}}
\newcommand{\LN}{{L_{d,n}}}
\def\p{{\bf p}}
\def\b{{\bf b}}
\def\pn{\p =(\p_1, \dots, \p_{n}) }
\def\q{{\bf q}}
\def\f{{\bf f}}
\def\g{{\bf g}}
\def\h{{\bf h}}
\def\G{{\bf G}}
\def\D{{\bf D}}
\def\bm{{\bf m}}
\def\v{{\bf v}}
\def\w{{\bf w}}
\def\e{{\bf e}}
\def\r{{\bf r}}
\def\E{{\bf E}}
\def\Q{{\bf Q}}
\def\y{{\bf y}}
\def\z{{\bf z}}
\def\M{{\bf M}}
\def\I{{\bf I}}
\def\PP{{\mathbb{P} }}
\def\N{{\bf N}}
\def\A{{\bf A}}
\def\B{{\bf B}}
\def\S{{\bf S}}
\def\T{{\bf T}}
\def\J{{\bf J}}
\def\X{{\bf X}}
\def\Y{{\bf Y}}
\def\Z{{\bf Z}}
\def\P{{\bf P}}
\newcommand{\genericpoint}{\bl}
\newcommand{\trans}[1]{{#1}^\top}
\DeclareMathOperator{\lin}{lin}
\title{Determining Generic Point Configurations\\ From Unlabeled 
Path or Loop Lengths}
\author{
Ioannis Gkioulekas, 
Steven J. Gortler,
Louis Theran,
and 
Todd Zickler}
\date{}
\begin{document}
\maketitle 

\begin{abstract}
Let $\p$ be a configuration of $n$ points in $\RR^d$ for some 
$n$ and some
$d \ge 2$. Each pair of points defines an edge, which has a Euclidean
length in the configuration. A path is an ordered sequence of the
points, and a loop is a path that has the same endpoints. A path or loop, as a sequence of edges, also has a Euclidean length.

In this paper, 
we study the question of when $\p$ will be uniquely determined (up to an
unknowable Euclidean transform) from a given set of path or loop lengths.
In particular, we consider the setting where the lengths are
given simply as a set of real numbers, and are 
not labeled with the combinatorial data describing the paths 
or loops that gave rise to the lengths.

Our main result is a condition on the set of paths or loops
that is sufficient to guarantee such a unique determination. We also provide 
an algorithm, under a real computational model, for performing 
a reconstruction of $\p$ from such unlabeled lengths.

To obtain our results, we introduce a new family of algebraic 
varieties which we call the
unsquared measurement varieties. The family is parameterized
by the number of points $n$ and the dimension $d$, and our results follow from
a complete characterization of the linear
automorphisms of these varieties 
for all $n$ and $d$.  The linear automorphisms for the special case of $n=4$ and
$d=2$ correspond to the so-called Regge symmetries of the
tetrahedron.
\end{abstract}

\section{Introduction} 
\label{sec:intro}

We are motivated by the following signal processing
scenario.
Suppose there is a ``configuration'' $\pn$ of $n$ points 
in, say, $\RR^2$ or $\RR^3$. Let a ``path'' be a finite sequence of these points, 
and a ``loop'' be a path that begins and ends at the same point.
Each such path or loop in $\p$ has a Euclidean length. 

\begin{figure}[b]
	\centering
	\def\svgwidth{4in}
	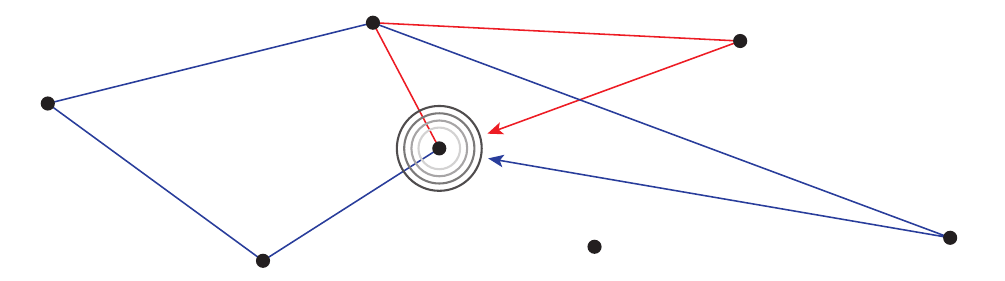
	\caption{An emitter-receiver at point $\p_1$ emits an omnidirectional pulse that bounces among points $\p_i$. The same emitter-receiver records the arrival times of pulse fronts that eventually return. These arrival times measure the lengths of loops that begin and end at $\p_1$. 
}\label{fig:intro}
\end{figure}

Let $\p_1$ be a distinguished point. In our scenario, 
it may represent the location of an omni-directional emitter and receiver of sound or radiation. Let the other points in $\p$ represent the positions of small objects that behave as omnidirectional scatterers.

An omnidirectional pulse is emitted from $\p_1$ and travels outward at, say, unit speed. Whenever the pulse front encounters an object $\p_i$, an additional omnidirectional pulse is created there through scattering. Pulses continue to bounce around in this manner, and the receiver at $\p_1$ records the arrival times of the pulse fronts that return. We allow for the possibility that some pulse fronts might vanish or not be measurable back at $\p_1$.

By recording the times of flight between emission and reception, we effectively measure the lengths of loops traveled. In the case of light, these are travel times of photons that leave $\p_1$ and return after one or more bounces. In the case of sound, these are delays of direct or indirect echoes.

Importantly, each recorded length measurement is a single real number $v$. We do not obtain any \emph{labeling} information about which points were visited or how many bounces occurred during the loop. Nor do we obtain any information about the direction from which energy arrives. 

We wish to understand when we can recover the point configuration (up to Euclidean congruence) from a sufficiently rich sequence of \emph{unlabeled} loop measurements. Once
the measurements are labeled, this becomes a well-studied problem,
which can be reasonably solved when there is a sufficiently
rich set of paths. The
difficulty here arises from the lack of labeling.

In this paper we will prove that if $\p$ is a ``generic'' 
point configuration in $\R^d$, for $d\ge 2$,
and we measure a
sufficiently rich set of loops, namely
one that ``allows for trilateration'' (formally defined later),
then the configuration is uniquely
determined from these measurements up to congruence.  Moreover this
leads to an algorithm, under a real computation model~\cite{bss},
to calculate $\p$ from such data.  
The assumption of genericity (defined later) 
roughly means that there are some special $\p$ where
these conclusions do not hold, but these special cases
are very rare. In deriving our results, we will not concern ourselves with 
noise or numerical issues. We
plan to address some of these issues in future work.

To put this work in the context of previous mathematical results,
Boutin and Kemper~\cite{BK1} have shown that if $\p$ is a generic
point configuration in $\R^d$ with $n \geq d+2$, and we are given the complete
set of all $\binom{n}{2}$
edge lengths as an unlabeled sequence, then
$\p$ is uniquely determined up to Euclidean congruence and point
relabeling. This result can also be generalized to the case
where not all of the  edge lengths have been measured,
but only a subset that is rich enough to allow for trilateration~\cite{dux1}.

In this setting, one can ask: Suppose we have an unlabeled measurement
set, that includes a collection of edge \emph{and} path or loop lengths. Then, is $\p$ still uniquely determined, and 
can it be reconstructed? In this paper, we will answer this in the affirmative, 
under the condition that the measurement ensemble allows for trilateration. We will 
additionally show that the same holds for an unlabeled measurement ensemble that 
includes only loop measurements.

\subsection{Unlabeled Trilateration}
\label{sec:introa}

\begin{figure}[ht]
	\centering
	\def\svgwidth{.8\textwidth}
	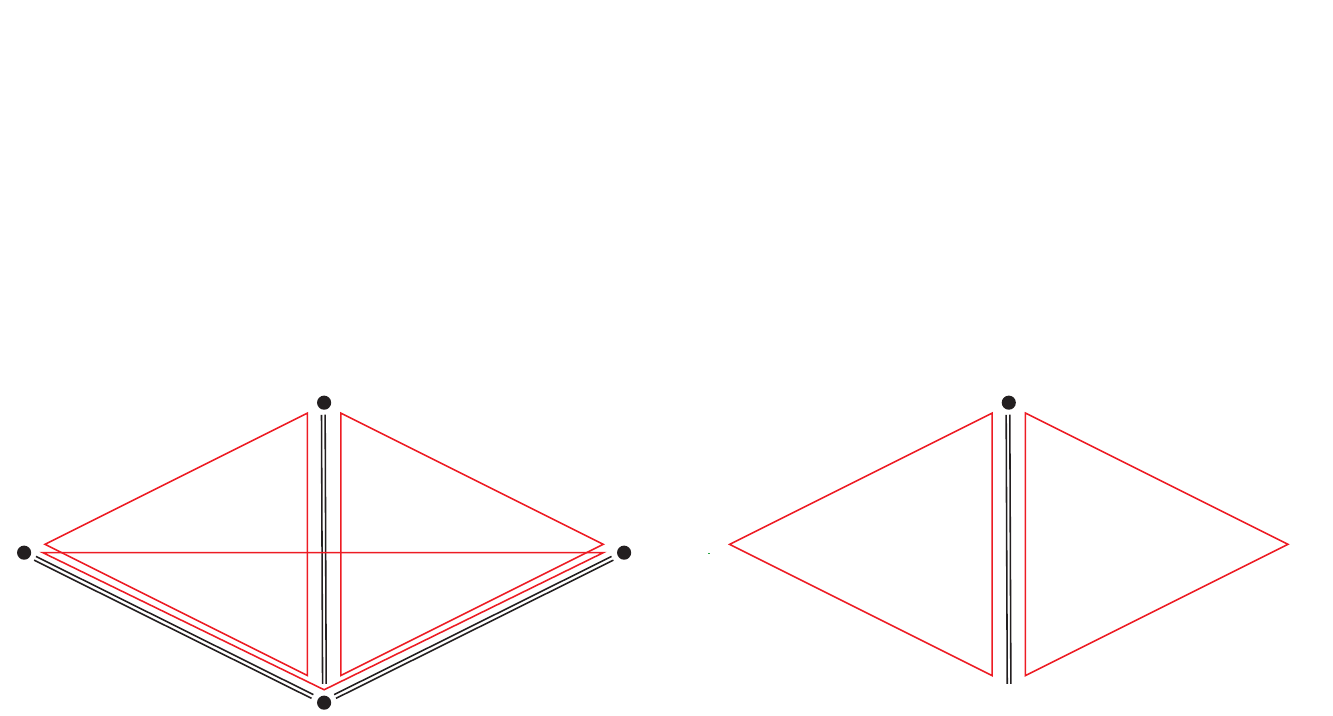
	\caption{Top row: A $K_{4}$ contained within a path measurement ensemble 
consists of six edges (blue lines) (left). During trilateration using path measurement
 data, three points $\p_1$, $\p_2$, $\p_3$ are known, and a fourth point $\p_4$ is
reconstructed from three edge length measurements (right).
Bottom row: A $K_{4}$ contained within a loop measurement ensemble consists of three pings (double black lines) and three triangles (red lines) (left). During trilateration using loop measurement data, three points $\p_1$, $\p_2$, $\p_3$ are known, and a fourth point $\p_4$ is
reconstructed from one ping and two triangle length measurements (right).}\label{fig:pt}
\end{figure}

Our sufficient condition for uniqueness, and a core part of our reconstruction
algorithm, is based on the notion of ``trilateration''. We outline this notion here, using two dimensions for simplicity. Trilateration has a base step and an inductive step. First we describe these steps in the labeled setting, where each length measurement is identified with the sequence of points that created it. Then we move to the unlabeled setting.

{\bf Base step:}
In the labeled path setting, suppose we are given the $6$ labeled edge lengths (edges are very simple paths)
of a tetrahedron in $\RR^2$, as in~\ref{fig:pt} (upper left).
Then we can easily reconstruct the configuration of its four points
(up to an unknowable Euclidean congruence)~\cite{YH38}.
Likewise in the loop setting, 
suppose we are given the $6$ labeled loop lengths comprising
the three ``pings'' (loops that traverse only one edge twice) and three ``triangles'' (loops that traverse only three distinct edges once) as shown in Figure~\ref{fig:pt} 
(lower left). Again, from this labeled data it is straightforward
to reconstruct the configuration of its four points (up to congruence).

{\bf Inductive step:}
Continuing on, suppose we already know the positions of some $3$ points, and we are given
either the labeled edge lengths to some fourth point as in Figure~\ref{fig:pt} (upper right), or the labeled loop lengths of the one ping and two triangles shown in Figure~\ref{fig:pt} (lower right). In either case, we can easily reconstruct the position of the fourth point.


We say that a labeled data set (in either the path or loop setting)
``allows for trilateration'' if it has 
enough labeled measurements so that we can apply a base step
and then iteratively apply the inductive step until we reconstruct all $n$ of the points.

{\bf Unlabeled setting:}
Now suppose $\p$ consists of $n$ points, and we are given a large collection of path or loop lengths that are unlabeled. We can take any ordered subset of $6$ lengths from this data set and \emph{hypothesize} that they arise, in the path or loop setting respectively, from the six ``tetrahedral'' edges of Figure~\ref{fig:pt} (upper left), corresponding to the edges of a ``flat tetrahedron'', or the three pings and three triangles of Figure~\ref{fig:pt} (lower left). In either case, we can attempt to test the hypothesis by checking whether the $6$ path or loop lengths can ``fit together'' to form a tetrahedron in $\RR^2$ (this is essentially a determinant calculation). If they do not fit together appropriately, then we can be sure that our hypothesis was wrong.



Suppose, though, that when testing such a hypothesis,
we find that the $6$ lengths in fact \emph{are} consistent with a tetrahedron. Can we conclude that our hypothesis is correct? Or could the lengths arise from some other set of paths or lengths
among the $n$ points, thereby giving a ``false positive''? In this paper, our main mathematical task will be to show, with some qualifications, that there are no false positives. 

Once this mathematical principle is settled, it suggests the following procedure: Given 
a large collection of unlabeled path or loop lengths, we can search over all ordered six-tuples in the data set until we find a six-tuple that is consistent with the measurements from one tetrahedron. If such a tetrahedral six-tuple exists in the data set, we are guaranteed to find it, and we can reconstruct the position of $4$ points of $\p$ (up to congruence), successfully completing a trilateration base step, without labels.

Continuing on, the same type of hypothesize-and-test approach can be used for the inductive step of trilateration. We take a triplet of previously localized points, along with an ordered triplet of lengths, and we hypothesize that they jointly arise from the geometry of Figure~\ref{fig:pt} (upper right) for paths, or (lower right) for loops. When testing the hypothesis, from the same principle as above, we can argue that there will be no false positives. Thus, positive test results provide the reconstruction of additional points, and if the underlying data set allows for trilateration, we can reconstruct all of $\p$, even without labels.

As we will see, the unlabeled trilateration approach also applies in higher dimensions, $d>2$. Tests on six-tuples of measurements are simply replaced by tests on $\binom{d+2}{2}$-tuples of measurements; and the test criteria based on tetrahedral configurations are replaced by analogous criteria (still essentially determinant calculations) derived from configurations of $(d+1)$ points.

\subsection{General Approach}
\label{sec:introb}

A path or loop length arises from some linear functional over the pairwise distances between points of $\p$. In order to prove the absence of false positives in the hypothesize-and-test approach described in the previous section, we will need to argue the following: Given a generic $\p$ and a set of linear functionals that correspond to the set of paths or loops that we hypothesize (the four sets shown in Figure~\ref{fig:pt} for $d=2$, or their higher-dimensional counterparts), an adversary cannot construct a different configuration $\q$ (say, with $\q$ generic) and a different set of linear functionals that would yield the same measurements.

It is difficult to reason about any specific $\p$ and potential $\q$. Instead, we study the set of \emph{all possible} pairwise distance measurements as we vary the configuration of $n$ points in dimension $d$; this forms an algebraic variety we call the ``unsquared measurement variety'' $L_{d,n}$.  

From this perspective, we can interpret our adversary to be trying to come up with a linear map on $L_{d,n}$ that sends the measurements from $\p$ to those of $\q$.  The key principles we rely on are that, if $\p$ is generic, then much more is true (definitions and proofs about complex algebraic varieties and other algebraic geometry preliminaries are in Appendix~\ref{sec:geometry}):




\begin{theorem}
\label{thm:prin1}
Let $V$ be an irreducible algebraic variety
and
$\genericpoint$ a generic point in the variety.
Let $\E$ be a linear map that maps $\genericpoint$
to some variety $W$. Let all the above be defined over
$\QQ$. Then $\E(V) \subset W$.
\end{theorem}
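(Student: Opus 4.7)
The plan is to unpack the meaning of ``generic point'' and then observe that it reduces the statement to a one-line pullback argument.

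First I would spell out the working definition of genericity that the excerpt is pointing to in the appendix: for $V$ an irreducible $\QQ$-variety sitting in some affine space, a generic point $\genericpoint$ of $V$ is one whose coordinates generate the function field of $V$ over $\QQ$, or equivalently one for which a polynomial $f \in \QQ[x_1,\dots,x_N]$ satisfies $f(\genericpoint) = 0$ if and only if $f$ lies in the defining ideal $I(V)$, i.e., $f$ vanishes on all of $V$. Existence of such a $\genericpoint$ for an irreducible $\QQ$-variety is standard; this ``one direction of vanishing implies the other'' is the only property I will need.

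Next I would use $\E$ to pull $I(W)$ back. Since $W$ is defined over $\QQ$, we may choose generators of $I(W)$ consisting of polynomials $g_1,\dots,g_m \in \QQ[y_1,\dots,y_M]$. Because $\E$ is a $\QQ$-linear map, each composition $f_i := g_i \circ \E$ is again a polynomial with rational coefficients in the coordinates of $V$'s ambient space. The hypothesis $\E(\genericpoint) \in W$ says exactly that $g_i(\E(\genericpoint)) = f_i(\genericpoint) = 0$ for each $i$.

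Now the genericity of $\genericpoint$ kicks in: since $f_i \in \QQ[x_1,\dots,x_N]$ and $f_i(\genericpoint) = 0$, we conclude $f_i \in I(V)$, so $f_i$ vanishes identically on $V$. That is, for every $\v \in V$, $g_i(\E(\v)) = 0$ for all $i$, which is exactly the statement that $\E(\v) \in W$. Hence $\E(V) \subset W$.

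The main ``obstacle'' is really a bookkeeping one rather than a mathematical one: I would want to state the definition of genericity I am using (presumably the one given in Appendix~\ref{sec:geometry}) and to justify that $I(W)$ admits generators over $\QQ$ when $W$ is $\QQ$-defined. Once these two facts are in hand, the argument is just ``pull $I(W)$ back through the $\QQ$-linear map $\E$ and apply the generic-point criterion''; no resolution of singularities, dimension count, or irreducibility of the image is required, which is why the statement can be used as a black-box tool in the rest of the paper.
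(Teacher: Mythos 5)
Your argument is correct and is essentially the paper's own proof: the paper states it contrapositively via the preimage variety $\E^{-1}(W)$, which is defined over $\QQ$, but this is the same pullback-of-defining-equations-plus-genericity reasoning you give. One minor simplification: since ``defined over $\QQ$'' in this paper just means $W$ is cut out by some polynomials with rational coefficients, you can pull those back directly and need not argue that the full ideal $I(W)$ admits $\QQ$-generators.
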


\begin{theorem}
\label{thm:prin2}
Let $V \in \CC^N$ be an irreducible algebraic variety
and
$\genericpoint$ a generic point in the variety.
Let $\A$ be a bijective linear map on $\CC^N$
that maps $\genericpoint$
to  $V$. Let all the above be defined over
$\QQ$. Then $\A(V) = V$, that is, $\A$ is a linear automorphism of $V$.
\end{theorem}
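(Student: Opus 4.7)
The plan is to establish the two inclusions $\A(V)\subset V$ and $V\subset\A(V)$ separately. The first is essentially an immediate application of Theorem~\ref{thm:prin1}, while the second follows from the bijectivity of $\A$ combined with an irreducibility–dimension argument.

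For the first inclusion, I apply Theorem~\ref{thm:prin1} with $\E=\A$ and target $W=V$. The hypothesis $\A(\genericpoint)\in V$ is exactly the condition required (with $V$ itself serving as the irreducible $\QQ$-variety containing the image), so we obtain $\A(V)\subset V$.

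For the reverse inclusion, I use that $\A$ is a linear bijection of $\CC^N$ with $\QQ$-rational inverse (by Cramer's rule), hence a biregular automorphism of the affine variety $\CC^N$. Consequently $\A(V)$ is Zariski-closed in $\CC^N$, irreducible, and of the same dimension as $V$. An irreducible closed subvariety of the irreducible variety $V$ having dimension $\dim V$ must equal $V$, so $\A(V)=V$.

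The only subtle step, and the one I expect to articulate most carefully, is upgrading the inclusion $\A(V)\subset V$ to an equality. An alternative route that uses only Theorem~\ref{thm:prin1} would be to apply it a second time to $\A^{-1}$ at the point $\A(\genericpoint)$, after verifying that $\A(\genericpoint)$ is itself generic in $V$ over $\QQ$ (any proper $\QQ$-subvariety $Z\subsetneq V$ cannot contain $\A(V)$ because $\A(V)$ is irreducible of dimension $\dim V$ while $\dim Z<\dim V$, so $\A^{-1}(Z)\cap V$ is a proper $\QQ$-subvariety avoided by $\genericpoint$). That verification essentially reproduces the same dimension remark, so the route above seems most economical.
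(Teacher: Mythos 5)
Your proposal is correct and matches the paper's own argument: the paper applies Theorem~\ref{thm:prin1} to get $\A(V)\subset V$, then invokes Lemma~\ref{lem:bij} (the image of a variety under a bijective linear map is a variety of the same dimension, irreducible if $V$ is) and concludes equality from the fact that a strict subvariety of an irreducible variety has strictly smaller dimension. Your main route is exactly this reasoning, just spelled out; the alternative via $\A^{-1}$ is an aside the paper does not need.
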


Thus a bijective linear map on $L_{d,n}$ that maps 
a complete edge-set of 
measurements of a generic $\p$ to 
the measurements of a different (non-congruent)
$\q$  also \textit{sends all of $L_{d,n}$ to itself}.  This gives
the key reduction that unique reconstructability is implied by an absence of 
``adversarial'' linear automorphisms of $L_{d,n}$.

The heart of this paper, which occupies most of it, 
is a complete characterization of the linear 
automorphisms of all of the $L_{d,n}$ varieties. 
We find that the \emph{only} ``unexpected''
automorphisms of $L_{2,4}$ are ones that arise due to the so-called
Regge symmetries~\cite{regge} of the tetrahedron. 
For all other $d$ and $n$, we show that there are
no unexpected automorphisms.
When all is said and done, the paucity of these automorphisms
will imply that each measurement label is uniquely determined.

Returning to the context of Section~\ref{sec:introa}, we will apply Theorem~\ref{thm:prin1} with the matrix $\E$ representing an
adversary's choice of $6$ paths/loops.
Here, 
$V$ is set to $L_{2,n}$ and $W$ set to $L_{2,4}$.
Using this theorem, we will  
ultimately be able to conclude that 
if $6$ path or loop measurements 
among $n$ points ``look like'' they come from a tetrahedron, as in 
Figure~\ref{fig:pt}, then they must indeed 
come from paths or
loops that are  supported over some $4$-point subset of $\p$.
Next, we will use Theorem~\ref{thm:prin2} 
with $V$ set to $L_{2,4}$. This will 
allow us to conclude that an adversary's paths or loops must be \emph{exactly}
those of Figure~\ref{fig:pt}; any other
$6$ paths or loops among $4$ points would correspond to 
an automorphism of $L_{2,4}$, which we have ruled out. Together, these will allow us to rule out
false positive test results.


\subsection*{Acknowledgements}
We would like to thank Dylan Thurston
for numerous helpful
conversations and suggestions throughout this project. His input on Regge symmetries, and on the use of 
covering space maps was essential.
We also thank Brian Osserman for fielding numerous algebraic geometry
queries.

Steven Gortler was partially supported by NSF grant DMS-1564473. Ioannis Gkioulekas and Todd Zickler received support from the DARPA REVEAL program under contract no.~HR0011-16-C-0028.




\section{Definitions and Main Results}\label{sec:defs}

We start by establishing our basic terminology.
\begin{definition}\label{def:constants}
Fix positive integers  $d$  and $n$.
Throughout the paper, we will set $N:= \binom{n}{2}$, 
$C:=\binom{d+1}{2}$, and $D:= \binom{d+2}{2}$.

These constants appear often because they are, respectively, the 
number of pairwise distances between $n$ points, the dimension of the 
group of congruences in $\RR^d$, 
and the number of edges in a complete $K_{d+2}$ graph (the importance of such graphs will be explained later).
\end{definition}

\begin{definition}
\label{def:graph}
A \defn{configuration}, $\pn$ is a  sequence of $n$ points
in $\RR^d$. (If we want to talk about points in $\CC^d$, we will
explicitly call this a \defn{complex configuration}.) The affine span
of a configuration need not be all of $\RR^d$.

We think of the integers in $[1,\dots,n]$ as a \defn{vertex}
of an abstract complete graph $K_n$. An \defn{edge}, $\{i,j\}$, is an
unordered distinct pair of vertices. The complete edge set of $K_n$ has cardinality $N$.

A \defn{path} $\alpha:=[i_1, i_2,
  \dots, i_z]$ is a finite sequence of vertices, 
  with no vertex immediately repeated.  A 
\defn{loop} is a path where $i_1=i_z$. We think of a path or loop as
comprising a sequence of $z-1$ edges. 
(The simplest kind of path, $[i,j]$, is a single edge. The
simplest kind of loop $[i,j,i]$ is called a \defn{ping}. Another
important kind of loop $[i,j,k,i]$ is a \defn{triangle}.)
Because we will only be interested in the geometric lengths
of paths through a configuration, 
two paths are considered equal if they comprise the same
edges, with the same multiplicity, in any order.

A \defn{path measurement ensemble} 
$\pmb{\alpha}:=\{\alpha_1,\dots,\alpha_k\}$ is a
finite sequence of paths. Likewise for a 
\defn{loop measurement ensemble}.

We say that a path or loop $\alpha$ is 
\defn{$b$-bounded}, for some positive integer $b$, 
if no edge appears more than $b$ times in $\alpha$.
We say that a measurement ensemble $\pmb{\alpha}$ is $b$-bounded if it comprises only $b$-bounded
loops or paths.

Fixing a configuration $\p$ in $\R^d$, we define the 
\defn{length} of an edge
$\{i,j\}$ to be the Euclidean distance between the points
$\p_i$ and $\p_j$, a real number. 

We define the length $v$ of a path or loop $\alpha$ to be
the sum of the lengths of its comprising edges. We denote this
as $v = \la \alpha, \p \ra$. (The intuition behind this notation will become evident in Section~\ref{sec:varieties}.) 

A configuration $\p$ and a measurement
ensemble $\pmb{\alpha}$ give rise to a \defn{data set} $\v$ that is the 
finite sequence 
of real numbers made up of the lengths of its paths or loops. We denote this as 
$\v = \la \pmb{\alpha}, \p \ra$. We say that this data set \defn{arises} from this measurement
ensemble. Notably, a data set $\v$ 
itself does not include any labeling information about the measurement ensemble it arose from. 
\end{definition}

\begin{remark}
\label{rem:diam}
In a practical setting, we may not know the actual bound $b$ of a $b$-bounded ensemble,
but instead know that it must exist for other reasons. 
In particular, suppose we have some 
bound on the maximal distance between any pair of points
in $\p$. Then we can safely assume that any sufficiently huge
data value arises from a sufficiently complicated path or loop
and discard it.
Suppose then that we also have some bound on the minimal distance
between any pair of points in $\p$. Then we know that any non-discarded
value must arise from a $b$-bounded loop or path with some 
appropriate $b$.
\end{remark}

\begin{definition}
We use $\p_I$ to refer to a \defn{subconfiguration} of a configuration
$\p$ indexed by an index sequence $I$, that is a 
(possibly reordered)
subsequence of
$\{1,\dots,n\}$. In particular, we use $\p_T$ to refer to a
${d+2}$ point subconfiguration in $\p$, indexed by
a sequence $T=\{i_1,\dots,i_{d+2}\}$ of $\{1,\dots,n\}$. Similarly, we use
$\p_R$ to refer to a ${d+1}$ point subconfiguration  of  $\p$.

We use $\v_J$ to refer to a \defn{sub data set}, 
a (possibly reordered) 
subsequence of the data set $\v$ indexed by an
index sequence $J$, and similarly for a \defn{subensemble} $\pmb{\alpha}_J$.
\end{definition}

\begin{definition}
For $s$ a real number, the \defn{$s$-scaled} configuration $s\cdot\p$
is the configuration obtained by scaling each of the coordinates of
each point in $\p$ by $s$.  Likewise for an $s$-scaled
subconfiguration $s\cdot\p_I$. For $s$ a positive integer, the
\defn{$s$-scaled} path $s\cdot\alpha$ is a path that comprises the
same edges as $\alpha$, but with $s$ times their multiplicity in
each measurement of 
$\alpha$. For $s$ a positive integer, the \defn{$s$-scaled} path
measurement ensemble $s\cdot\pmb{\alpha}$ is a path measurement
ensemble that contains the same sequence of paths as $\pmb{\alpha}$,
except each scaled by $s$. Likewise for an $s$-scaled loop and
$s$-scaled loop measurement ensemble.
\end{definition}

We will be interested in measurement ensembles that are sufficient to 
uniquely determine the configuration in a greedy manner.

\begin{definition}
\label{def:contained}
In the path setting, we say that  
a $K_{d+2}$
 subgraph  of $K_n$ is
\defn{contained} within a path measurement ensemble $\pmb{\alpha}$ if
the ensemble includes a subensemble of size $D$ comprising the edges of this subgraph.
For the two-dimensional case, see Figure~\ref{fig:pt} (upper left).

In the loop setting, we say that  
a $K_{d+2}$
 subgraph  of $K_n$ 
with vertices $\{i_1,\dots, i_{d+2}\}$ 
is
\defn{contained} within a loop measurement ensemble $\pmb{\alpha}$ if
the ensemble includes a subensemble of size $D$ comprising
the $d+1$ pings,
$[i_1, j , i_1]$ for $j$ spanning  $[2,\dots,d+2]$; and also the
triangles, 
$[i_1, j_1, j_2, i_1]$
for $j_1 < j_2$ spanning  $[2,\dots,d+2]$.
That is, the ensemble includes
all pings and triangles in this $K_{d+2}$
 with endpoints at vertex
$i_1$.  For the two-dimensional case, see Figure~\ref{fig:pt} (bottom left).

\end{definition}

\begin{definition}\label{def:ensemble}
We say that a path measurement ensemble \defn{allows for trilateration} if, after reordering the vertices: 
i) it contains an initial \defn{base} $K_{d+2}$ 
over  $\{1,\dots,d+2\}$;
ii) for all subsequent $(d+2) <j \leq n$, it includes as a 
subsequence a \defn{trilateration sequence} comprising the edges
$[i_1,j],\dots,[i_{d+1},j]$
where all $i_k < j$. For the two-dimensional case, see Figure~\ref{fig:pt} (top right).

We say that a loop measurement ensemble \defn{allows for 
trilateration} if, after reordering the vertices:
i) it contains an initial \defn{base} $K_{d+2}$ over $\{1,\dots, d+2\}$;
ii) for all subsequent $(d+2) <j \leq n$, it includes as a subsequence a \defn{trilateration sequence} comprising the triangles  
$[i_1,i_2,j,i_1],\dots,[i_1,i_{d+1},j,i_1]$, and also the ping
$[i_1,j,i_1]$,
where all $i_k < j$.
That is, it includes one ping from $j$ back to one previous $i_1$,
and $d$ triangles back to the previous vertices and including $i_1$.
(See Figure~\ref{fig:pt} (bottom right) for the two-dimensional case.)
\end{definition}

Note that a path (resp. loop) measurement ensemble that allows for trilateration may
include any other additional paths (resp. loops) beyond those specified in Definition~\ref{def:ensemble}. 




Next, we define a strong notion of a generic configuration.

\begin{definition}
\label{def:genR}
We say that a real point in $\RR^{dn}$
is \defn{generic}
if its coordinates do not satisfy any non-trivial 
polynomial equation with coefficients in $\QQ$.
The set of generic real points have full measure and are 
(standard topology) dense in $\RR^{dn}$.

We say that a configuration $\p$ of $n$ points in $\RR^d$ is generic if
it is generic when thought of as a single point in $\R^{dn}$.
\end{definition}

Ultimately, we will be most interested in properties that hold 
not merely at all generic  configurations, but over an open and dense
subset of the configuration space.
 Such a property will
be what we ``generally'' observe when looking at configurations, and
will be stable under any perturbations. There can be exceptional configurations
but they are very confined and isolated.

When a property holds at all generic configurations
and the exceptions are due to only a \emph{finite} number of
algebraic conditions, then we will be able to conclude that the 
property actually holds over a Zariski open subset.
\begin{definition}
\label{def:zopenR}
A non-empty real subset $S$ of  $\RR^{dn}$ is \defn{Zariski open}
if it can be obtained from $\RR^{dn}$ by 
cutting out the set of points that simultaneously 
solve a finite number
of non-trivial polynomial equations. A non-empty real Zariski open subset is open and (standard topology) dense 
in $\RR^{dn}$, and has full measure.
\end{definition}

\subsection{Results}\label{sec:results}

The central conclusion of this paper will be the following
``global rigidity'' 
statement:
\begin{mdframed}
\begin{theorem}
\label{thm:punchline}
Let the dimension be $d\ge 2$. 
Let $\p$ be a generic configuration of $n\ge d+2$ points. Let
$\v= \la \pmb{\alpha}, \p \ra$ where 
$\pmb{\alpha}$
is a path (resp. loop) measurement ensemble
that allows for 
trilateration.

Suppose there is a 
configuration $\q$, 
also of $n$ points,
along with 
a
measurement ensemble $\pmb{\beta}$ 
such that 
$\v=\la \pmb{\beta},\q \ra$.

Then 
there is a vertex relabeling  of $\q$ such that,
up to congruence,
$\q=1/s\cdot\p$,
with $s$ a whole number $\ge 1$.
Moreover, under this vertex relabeling,
$\pmb{\beta} = s\cdot \pmb{\alpha}$.

If we also assume that 
$\pmb{\beta}$ allows for trilateration, then 
there is a vertex relabeling of $\q$ such that,
up to congruence,
$\q=\p$.
Moreover, under this vertex relabeling,
$\pmb{\beta} = \pmb{\alpha}$.

If the measurement ensembles $\pmb{\alpha}$
and $\pmb{\beta}$
are assumed to be   
$b$-bounded,
for any fixed $b$, 
then the above determination holds over some 
Zariski open subset of configurations $\p$. This subset depends only on $b$.
\end{theorem}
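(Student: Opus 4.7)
The central idea is to realize the data set $\v$ as the image of $\p$ under a linear map (determined by $\pmb{\alpha}$) into the unsquared measurement variety $\Ln$, and then interpret the adversary's choice of $(\q,\pmb{\beta})$ as asserting the existence of a second such linear presentation of the same data. The two principles Theorem~\ref{thm:prin1} and Theorem~\ref{thm:prin2}, combined with the promised characterization of the linear automorphisms of $\LN$, will then force the second presentation to coincide with the first up to the allowed ambiguities (congruence, vertex relabeling, and the integer scaling $s$).

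The plan is to run an unlabeled trilateration argument on $\v$, matching the informal discussion of Section~\ref{sec:introa}. First I would execute the \emph{base step}: I would search over ordered $D$-tuples of entries in $\v$, and for each candidate I would write down the linear map $\E$ on $\Ln$ corresponding to the hypothesis that these $D$ entries came from a $K_{d+2}$ of the type specified in Definition~\ref{def:contained} for the path or loop setting. If the candidate $D$-tuple is consistent with a $K_{d+2}$ measurement in $\RR^d$, then $\E(\p)$ lies in $L_{d,d+2}$; Theorem~\ref{thm:prin1} applied with $V=\Ln$ and $W=L_{d,d+2}$ then forces $\E(\Ln) \subset L_{d,d+2}$. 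The characterization of linear automorphisms of $L_{d,d+2}$ (invoked via Theorem~\ref{thm:prin2} after composing with an honest base-$K_{d+2}$ map) then tells us that $\E$ must \emph{actually} correspond, up to the known symmetries (Regge only in the $d=2$, $n=4$ case) and an integer scaling factor $s$, to the restriction of $\pmb{\alpha}$ to some genuine $K_{d+2}$ subgraph of the vertex set. Since the trilateration hypothesis on $\pmb{\alpha}$ guarantees that such a base $K_{d+2}$ exists in $\pmb{\alpha}$, this test will succeed and lets us recover $d+2$ of the points of $\q$, up to congruence, as $(1/s)$ times $d+2$ of the points of $\p$ after relabeling.

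Next I would run the \emph{inductive step} in the same hypothesize-and-test fashion: having localized $j-1 \ge d+2$ points, I search over $(d+1)$-tuples (for paths) or $(d+1)$-tuples of one ping and $d$ triangles (for loops) from $\v$ and, using an analogous $\E$ together with Theorems~\ref{thm:prin1} and~\ref{thm:prin2} applied to a smaller unsquared measurement variety on $d+2$ points, show that no false positive test can occur. The trilateration assumption on $\pmb{\alpha}$ supplies such a $(d+1)$-tuple for each new vertex, and the automorphism characterization again forces the same integer scale $s$ (it cannot change from step to step, since the already-reconstructed points fix it). Iterating through all $n$ vertices yields $\q = (1/s)\cdot\p$ up to congruence and relabeling, with $\pmb{\beta} = s\cdot\pmb{\alpha}$. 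If in addition $\pmb{\beta}$ allows for trilateration, then running the same argument starting from $\q$ produces a reciprocal integer $s'\ge 1$ with $\p=(1/s')\cdot\q$, forcing $s=s'=1$.

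For the final $b$-bounded claim, the difference is that I must promote a statement that holds at every generic $\p$ to one that holds on a Zariski open subset depending only on $b$. The key observation is that, once $b$ is fixed, only finitely many paths or loops can appear in either $\pmb{\alpha}$ or $\pmb{\beta}$ (up to length), and only finitely many $D$-tuple hypotheses can be tested. Each hypothesize-and-test step fails outside a proper algebraic subset of configuration space cut out by the vanishing of the determinantal/Cayley--Menger-type polynomials that express ``these $D$ lengths fit a $K_{d+2}$'' together with the polynomial relations that express the automorphism characterization; there are finitely many such subsets, so their union is still a proper algebraic subset of $\RR^{dn}$, and its complement is the desired Zariski open set (Definition~\ref{def:zopenR}). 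The main obstacle throughout is the ``no false positives'' reduction in the base and inductive steps: this is where the full strength of the automorphism characterization of the $\LN$ (including the Regge anomaly at $d=2$, $n=4$) has to be deployed to rule out any exotic linear map sending $\p$'s measurements to $\q$'s; once that reduction is in hand, the rest is a bookkeeping argument about iterated trilateration and integer scaling.
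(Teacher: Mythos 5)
Your overall architecture is the same as the paper's: a trilateration induction whose base and inductive steps are powered by Theorems~\ref{thm:prin1} and~\ref{thm:prin2} plus the automorphism classification (with non-negativity eliminating the Regge maps), a role-reversal to get $s=1$ when $\pmb{\beta}$ also trilaterates, and a finitely-many-conditions argument for the $b$-bounded Zariski-open claim. But there is one genuine gap: you never establish that $l(\q)$ is generic in $\LN$, and everything you invoke needs it. The consistency machinery (Theorems~\ref{thm:consist} and~\ref{thm:verLoop}, built on Theorems~\ref{thm:prin1} and~\ref{thm:prin2}) must be applied to the side whose functionals are unknown, namely $\pmb{\gamma}\subset\pmb{\beta}$ acting on $\q$; the generic-point hypothesis in those statements is therefore about $l(\q)$, not about the generic $\p$, and the theorem does not assume $\q$ generic. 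This cannot be waved away: as the paper notes immediately after the statement, a non-generic $\q$ (with no restriction on the number of points) together with a tree ensemble reproduces any $\v$ whatsoever. The paper closes this hole with Lemma~\ref{lem:punchline2} (modeled on Lemma~\ref{lem:triBK2}): a trilaterating ensemble is infinitesimally rigid, so the image of $\LN$ under the $\pmb{\alpha}$-ensemble matrix has dimension $dn-C$; since $\q$ also has $n$ points, a dimension count forces the $\pmb{\alpha}$- and $\pmb{\beta}$-image varieties to coincide, whence $\v$ is generic there and $l(\q)$ is generic in $\LN$ by pulling back. This is exactly where the hypothesis ``$\q$ also of $n$ points'' is used, and your plan has no substitute for it.

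A secondary omission: Theorem~\ref{thm:consist} also requires the tested $D$-tuple $\w$ to have rational rank $D$, and Remark~\ref{rem:rat-rank} shows this hypothesis is not removable (there are rank-one $D$-tuples consistent with a $K_4$ for every $\p$). In the rigidity proof this comes for free because the $D$-tuple is the one singled out by $\pmb{\alpha}$'s trilateration structure, whose functionals are linearly independent, and Lemma~\ref{lem:Cdep} transfers that independence to the values; in your ``search over all $D$-tuples'' formulation it must be tested explicitly. Relatedly, what pins the map $\E$ to a single $K_{d+2}$ is not the automorphism classification by itself but the separate classification of rank-$D$ linear maps from $\LN$ with degenerate image (Theorem~\ref{thm:linImage}), and your inductive-step assertion that ``the already-reconstructed points fix the scale'' is the content of Lemmas~\ref{lem:noSim} and~\ref{lem:extend} about generic configurations having no repeated similar subconfigurations. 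These last items are repairable details of the same plan; the missing genericity transfer for $\q$ is the substantive gap.
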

\end{mdframed}

When 
$\la \pmb{\alpha},\p \ra$ agrees with
$\la \pmb{\beta},\q \ra$ after some permutation, then
the theorem can be applied after appropriately permuting
$\pmb{\beta}$.

Note that if one lets $\q$ be non-generic \emph{and} puts no restrictions on 
the number of points, 
then one can obtain any target $\v$ by 
letting $\pmb{\beta}$ be a tree of edges and then placing
$\q$ appropriately. 

\begin{figure}[ht]
	\centering
	\def\svgwidth{.9\textwidth}
	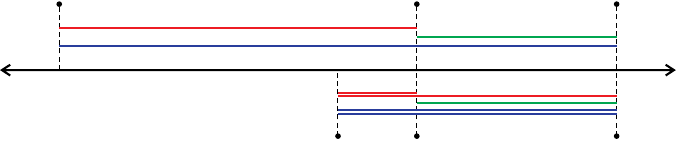
	\caption{Counterexample in one dimension.
  The configuration $\p$ with the shown (upper) three edge measurements
  gives rise to the same length values as the configuration
  $\q$ with the shown (lower) three path measurements. 
  This behavior is stable; as $\p$ is perturbed, $\q$ can
  be appropriately perturbed to maintain this ambiguity.    
    }\label{fig:1dambiguity}
\end{figure}

Theorem~\ref{thm:punchline} fails for $d=1$. A simple counterexample
to the first part of the theorem for the path case is shown in Figure~\ref{fig:1dambiguity}:
Let $\p_1 < \p_2 < \p_3$ be three generic points on the line.
Let $\alpha_1$ measure the edge $[1,2]$, 
$\alpha_2$ measure the edge $[2,3]$ and 
$\alpha_3$ measure the edge $[1,3]$.
This ensemble clearly allows for trilateration.
In this case we will have 
$\v = \la \pmb{\alpha},\p \ra =
 [ \p_2-\p_1, \p_3-\p_2, \p_3-\p_1]$. 
Now let 
$\q_1$ be arbitrary, set 
$\q_2 := \q_1 + (\p_2-\p_1) - 1/2 (\p_3-\p_1)$ and 
$\q_3 := \q_1 + 1/2(\p_3-\p_1)$. 
This will give us
$\q_3-\q_2=  (\p_3-\p_1) - (\p_2-\p_1)=
\p_3-\p_2$. Let us also assume that $\p_3-\p_2<\p_2-\p_1$, then this will give us the ordering: $\q_1 < \q_2 < \q_3$. 
Now, let $\beta_1$ measure the path $[2,1,3]$, 
$\beta_2$ measure the edge $[2,3]$, and
$\beta_3$ measure the path $[1,3,1]$. 
Then in this case, we will also get 
$\v=\la \pmb{\beta},\q \ra$. But our two measurement ensembles
are not related by a scale. 
Since we cannot uniquely
reconstruct a triangle on the line, this will kill off 
any attempts at using trilateration for reconstruction.

In the language we develop later, 
the failure of this example essentially happens due to the 
fact that the variety $L_{1,3}$ is reducible, and
thus Theorem~\ref{thm:prin2} does not apply. The relationship between
these
$\pmb{\alpha}$ and 
$\pmb{\beta}$ 
is not described by an automorphism of $L_{1,3}$ but instead,
only by an automorphism of one of its (planar) components. 

\begin{figure}[ht]
	\centering
	\def\svgwidth{.9\textwidth}
	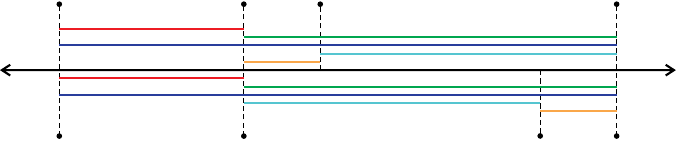
	\caption{2-flips ambiguity.    
    }\label{fig:2flips}
\end{figure}

For the second part of the theorem, where
$\pmb{\beta}$ must also allow for trilateration, we can still
find counterexamples due the fact that unlabeled trilateration
from edge lengths fails in one dimension. In particular,
Lemma~\ref{lem:extend} below does not hold. 
This is shown in Figure~\ref{fig:2flips}: Let $\p$ consist of $4$ points on a line and
$\pmb{\alpha}$ 
consist of $5$ of the $6$ possible edges. In this case, there
is one vertex, say $\p_4$, with only two measured edges, say
$[2,4]$ and $[3,4]$. 
If $\pmb{\beta}$ is obtained from 
$\pmb{\alpha}$ by simply swapping the order of these two edges,
we can maintain $\v$ by appropriately re-locating 
the fourth point. The edge mapping here between 
$\pmb{\alpha}$ and
$\pmb{\beta}$ is
an example of a $2$-flip~\cite{miso}.

Our approach for proving Theorem~\ref{thm:punchline}
will be constructive and provide the basis for a 
computational attack on this reconstruction process. In particular, we will establish the following.
\begin{mdframed}
Let $\p$ be generic in $d \ge 2$ dimensions,
let
$\pmb{\alpha}$ be a $b$-bounded
path (resp. loop) measurement ensemble
that allows for 
trilateration. 
Suppose $\v = \la \pmb{\alpha}, \p \ra$. Then, given $\v$, there
is a 
trilateration-based algorithm, over a real computation model,
that reconstructs $\p$ up to congruence and vertex labeling.
The algorithm will succeed for any 
$\p$ in some Zariski open subset of configurations
that depends only on $b$.
\end{mdframed}

For fixed $d$,  this algorithm (over a real computation model)
will have worst case time 
complexity that  is polynomial 
in $(|\v|, b)$, though with a moderately large exponent.
Unfortunately, 
for  $d = 3$, the complexity includes 
a factor of $b^{90}$,
unless we add some strong extra 
assumptions on 
$\pmb{\alpha}$.
For $d=2$, this factor scales with the more
reasonable $b^6$. 

\subsection{Organization}
The rest of the paper is structured as follows.  Section 
\ref{sec:varieties} develops, in some detail, the properties of 
two algebraic varieties relating to measurements of pairwise
distances between points.  These ``squared'' and ``unsquared''
measurement varieties are the natural setting for our reconstruction
problem.  Some of the structural results are interesting in their 
own right.

In Section~\ref{sec:warm}, we revisit the Boutin-Kemper
\cite{BK1} problem of reconstruction from unlabeled edge
length measurements.  We provide a new proof of a small 
generalization, and, along the way, introduce the techniques
we use to solve the more general path and loop problem. Setting
up the edge measurement ensemble in our language also makes clear
what is new about our setting.

Sections~\ref{sec:maps} and~\ref{sec:Ldn-automorphisms} contain
our key technical results, which classify the linear maps between 
measurement varieties and their linear automorphisms.  It turns 
out that dimension $d=2$ is the most interesting and difficult case,
due to the presence of Regge symmetries~\cite{regge}. The corresponding 
part of the proof makes use of computer algebra to verify that these
extra symmetries do not cause problems in our reconstruction 
application. Our computer algebra script is available
as a supplemental document.

The main results are then proved in Sections~\ref{sec:consist} and 
\ref{sec:ugr}. We conclude with a discussion of reconstruction 
procedures in Section~\ref{sec:recon}; we will provide complete 
detail about the algorithms in a companion document.

To keep the paper self-contained, Appendix~\ref{sec:geometry}
presents the essential algebraic-geometric background. Appendix~\ref{sec:det} contains the proof of a lemma about determinants and sign flips, which we will need for our classification of linear maps. Appendix~\ref{sec:rational:funcs} establishes several results about rational functionals of generic point configurations, which we use throughout the paper. Finally, Appendix~\ref{sec:fano} discusses properties of the Fano varieties of the $\LL$ variety, which will be important for our reconstruction algorithm.




\section{Measurement Varieties}\label{sec:varieties}

In this section, we will study the basic properties
of two related families of varieties, the squared and unsquared
measurement varieties. The structure of these varieties
will be critical to understanding 
the problem of 
reconstruction from
unlabeled measurements.

The squared variety is very well studied
in the literature, but the unsquared variety is much less so.
Since we are interested in integer sums of unsquared 
edge lengths, we will need to understand the structure of this
unsquared variety.

Although we are ultimately interested in measuring real lengths
in Euclidean space, we will pass to the complex setting where we can
utilize some tools from algebraic geometry.

\begin{definition}
\label{def:sms}
Let us index the coordinates of $\CC^N$ as $ij$, with
$i < j$ and both between $1$ and $n$.  We also fix an ordering 
on the $ij$ pairs to index the coordinates of $\CC^N$ as $i$ with 
$i$ between $1$ and $N$.\footnote{This ordering choice does not matter as long 
as we are consistent.  It is there to lets us switch between coordinates 
indexed by edges of $K_n$ and indexed using flat vector notation.  For $n=4$, $N=6$ we will 
use the order: $12,13,23,14,24,34$.}
\end{definition}

Let us begin with a 
\defn{complex configuration} $\p$ of $n$ points in $\CC^d$
with $d \geq 1$. We will always assume  $n \geq d+2$.
There are $\edgecard$ vertex pairs (edges), along which we can measure
the complex \emph{squared} length as 
\ba
m_{ij}(\p) := \sum_{k=1}^{d}(\p^k_i-\p^k_j)^2
\ea
where $k$ indexes over the $d$ dimension-coordinates. Here, we measure
complex squared length using the complex square operation with no
conjugation. We consider the vector $[m_{ij}(\p)]$ over all of the vertex 
pairs, with $i<j$,
as a single point in $\CC^{\edgecard}$, which we denote as $m(\p)$.

\begin{definition}
Let $M_{d,n}\subset \CC^{\edgecard}$ be the
the image of $m(\cdot)$ over all $n$-point complex configurations in $\CC^d$. 
We call this the \defn{squared measurement variety} of $n$ points
in $d$ dimensions.
\end{definition}
When $n \le (d+1)$, then $M_{d,n}= \CC^{\edgecard}$.

\begin{definition}
If we restrict the domain to be real configurations, then 
we call the image under $m(\cdot)$ the \defn{Euclidean squared measurement set} denoted as 
$M^{\EE}_{d,n} \subset \RR^{\edgecard}$.
This set has real dimension 
$dn-C$. 
\end{definition}

The following theorem reviews some basic facts.
Most of the  ideas are discussed 
in~\cite{ciprian}, but we include a detailed proof here for completeness and ease of reference.

\begin{theorem}
\label{thm:Mvariety}
Let $n \ge d+2$.
The set $M_{d,n}$
is linearly isomorphic to 
$\CS^{n-1}_d$,
the variety of complex, symmetric 
$(n-1)\times(n-1)$
matrices of rank $d$ or less.
Thus, $M_{d,n}$ is a variety, and also defined over $\QQ$.
It is irreducible.
Its dimension is $dn-C$.
Its singular set $\sing(M_{d,n})$ consists of squared measurements of configurations
with affine spans of dimension strictly less than $d$.
If $\p$ is a generic complex configuration in $\CC^d$ or a 
generic configuration
in $\RR^d$, 
then $m(\p)$ is generic in 
$M_{d,n}$.
\end{theorem}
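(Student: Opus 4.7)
The plan is to exhibit an explicit linear isomorphism $\Phi$ of $\CC^N$ with the space $\mathrm{Sym}_{n-1}(\CC)$ of symmetric $(n-1)\times(n-1)$ complex matrices, defined over $\QQ$, that restricts to a bijection between $M_{d,n}$ and $\CS^{n-1}_d$. Every subsequent claim will then follow by transport of structure from classical facts about symmetric determinantal varieties.

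\emph{Construction.} Singling out the index $n$ as an ``anchor'', I set, for $(\mu_{ij})_{i<j} \in \CC^{N}$,
\[ G_{ii} := \mu_{in}, \qquad G_{ij} := \tfrac{1}{2}(\mu_{in}+\mu_{jn}-\mu_{ij}) \quad (i\ne j,\ i,j<n). \]
This is linear in the $\mu$'s with $\QQ$ coefficients, and is invertible on the full ambient space (read $\mu_{in}$ off the diagonal and then solve for the off-diagonals), so $\Phi$ is a linear isomorphism $\CC^{N}\cong\mathrm{Sym}_{n-1}(\CC)$. After translating $\p_n$ to the origin, a direct calculation shows $\Phi(m(\p))=P^{\top}P$, where $P$ is the $d\times(n-1)$ matrix with columns $\p_1,\dots,\p_{n-1}$, so $\Phi(m(\p))$ has rank at most $d$. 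Conversely, over $\CC$ every symmetric matrix of rank at most $d$ admits a factorization of the form $P^{\top}P$ with $P$ of size $d\times(n-1)$ (take an $LDL^{\top}$ decomposition and extract complex square roots), so the configuration with columns of $P$ as $\p_1,\dots,\p_{n-1}$ and $\p_n=0$ realizes any element of $\CS^{n-1}_d$.

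\emph{Consequences.} $\CS^{n-1}_d$ is the vanishing locus of the $(d+1)\times(d+1)$ minors, which are $\QQ$-polynomials, so $M_{d,n}$ is a $\QQ$-variety. It is irreducible because it is the image of the irreducible affine space of $d\times(n-1)$ matrices under the $\QQ$-polynomial map $P\mapsto P^{\top}P$, composed with $\Phi^{-1}$. For the dimension, the fiber of this parametrization over a generic (rank-$d$) matrix is a coset of the action of the complex orthogonal group $O(d,\CC)$ by $P\mapsto QP$, which has dimension $\binom{d}{2}$; hence $\dim M_{d,n}=d(n-1)-\binom{d}{2}=dn-C$. The classical result that $\sing(\CS^{m}_r)=\CS^{m}_{r-1}$ identifies $\sing(M_{d,n})$ with the image of configurations whose associated Gram matrix has rank $<d$; since the columns of $P$ together with the origin $\p_n$ span the affine hull of $\p$, this is exactly the set of configurations whose affine span has dimension $<d$.

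\emph{Genericity.} Suppose $\p$ is generic in $\RR^{dn}$ (or $\CC^{dn}$) and $f$ is a polynomial over $\QQ$ vanishing at $m(\p)$. Then $f\circ m$ is a polynomial over $\QQ$ vanishing at $\p$, so by genericity of $\p$ it is identically zero. Hence $f$ vanishes on the image $m(\CC^{dn})$, which is Zariski-dense in the irreducible $M_{d,n}$, so $f$ vanishes on all of $M_{d,n}$; that is precisely the genericity of $m(\p)$ inside $M_{d,n}$.

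I expect the main technical point to be the dimension and irreducibility argument via $P\mapsto P^{\top}P$: one must use the complex setting to guarantee both the existence of the symmetric factorization and the connectedness/dimension count of the $O(d,\CC)$ fiber (a purely real argument would fail because of signature issues). The identification of the singular locus rests on a standard but non-trivial theorem about symmetric determinantal varieties, which I would cite rather than reprove.
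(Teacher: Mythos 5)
Your proposal is correct and takes essentially the same route as the paper: your $\Phi$ is exactly the inverse of the paper's linear map $\varphi$ (there written $\varphi(\G)_{ij}=G_{ii}+G_{jj}-2G_{ij}$), and the rest — transport of irreducibility, the singular locus, and dimension from the symmetric determinantal variety $\CS^{n-1}_d$, the fiber count $d(n-1)-\binom{d}{2}$ (your $O(d,\CC)$-orbit argument in place of the paper's appeal to the spectral decomposition), and the genericity pullback, which is the paper's Lemma~\ref{lem:genPush} — matches the paper's proof. The one small caveat is your justification of surjectivity onto $\CS^{n-1}_d$: a plain $LDL^{\top}$ factorization need not exist for a complex symmetric matrix (e.g.\ one with all diagonal entries zero), so this step should instead invoke Takagi factorization (as the paper does) or congruence-diagonalization of the symmetric bilinear form over $\CC$; the fact you need is standard and true, so this is a matter of citation rather than a gap.
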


\begin{proof}
Such an isomorphism is developed in~\cite{YH38}
and further, for example, in~\cite{gower}, see also
\cite[Section 7]{cgr}. The basic idea is as follows. We can, wlog,
translate the entire complex configuration $\p$ 
in $\CC^d$ such that the last
point $\p_n$ is at the origin. We can then think of this as a
configuration of $n-1$ vectors in $\CC^d$.  Any such complex
configuration gives rise to
a symmetric 
$(n-1)\times(n-1)$
complex Gram matrix (where no conjugation is used),
$G(\p)$, of rank at most
$d$.  Conversely, any symmetric complex matrix $\G$ of rank $d$ or
less can be (Tagaki) factorized, giving rise to a complex configuration of
$n-1$ vectors in $\CC^d$, which, along with the origin, gives us an $n$-point
complex configuration $\p$ so that $\G=G(\p)$.

With this in place, 
let $\varphi$ be the 
invertible linear map 
from the space of 
$(n-1)\times(n-1)$ 
symmetric complex
matrices $\G$, to $\CC^{\edgecard}$
(indexed by vertex pairs $ij$, with $i<j$)
defined as
$\varphi(\G)_{ij}:= G_{ii} + G_{jj} -2G_{ij}$
(where $G_{in}$ and $G_{nj}$ is interpreted as
$0$).
(For invertibility see~\cite[Lemma 7]{cgr}.) 

When $\G=G(\p)$ is the gram matrix of a 
complex configuration $\p$ in $\CC^d$, then $\varphi(\G)$ 
computes the squared edge lengths of $\p$.
Since every rank-$d$ constrained matrix $\G$ arises 
as the Gram matrix, $G(\p)$ from some complex
configuration $\p$ in $\CC^d$, we see that the image of $\varphi$ 
acting on 
$\CS^{n-1}_d$,
is contained in 
$M_{d,n}$.
Conversely, 
since every point in $M_{d,n}$ arises from 
a complex configuration $\p$, and $\p$ gives rise to a Gram matrix 
$G(\p)$, we see that the image of $\varphi$ acting on rank 
constrained matrices is onto $M_{d,n}$.
This gives us our isomorphism of varieties (Lemma~\ref{lem:bij}.)

Irreducibility of $M_{d,n}$ follows from the fact that 
it is the image of an affine space (complex configuration space)
under a polynomial (the squared-length map).
To get the dimension, the above isomorphism, along with
the existence and uniqueness of the spectral 
decomposition, gives that the dimension is 
$d(n-1) - \binom{d}{2}$, which is what we want.

For the description of the singular set of rank-constrained matrices,
see for example~\cite[Page 184]{harris} 
(which can also be applied to
the symmetric case). Meanwhile, we know that
$\G=G(\p)$ has rank $<d$
iff $\p$ has a deficient affine span 
in $\CC^d$ (see for example~\cite[Lemma 26]{cgr}). 

The statement on genericity follows from Lemma~\ref{lem:genPush}.
\end{proof}

\begin{remark}
We note, but will not need, the following:
For $d\ge 1$,
the smallest complex variety containing   
$M^{\EE}_{d,n}$ is $M_{d,n}$.
\end{remark}
We note the following minimal instances where $n=d+2$.
In these cases, the variety has codimension $1$.

The variety $M_{1,3} \subset \CC^3$ is defined by the vanishing of the \defn{simplicial volume determinant}, that is, the determinant of the following matrix
\ba
\begin{pmatrix}
2m_{13}& (m_{13} + m_{23} - m_{12})\\
(m_{13} + m_{23} - m_{12})& 2m_{23}\\
\end{pmatrix}
\ea
where we use $(m_{12}, m_{13}, m_{23})$ to represent the coordinates of $\CC^3$. This is the Gram matrix, $\varphi^{-1}(m(\p))$, described in the proof of Theorem~\ref{thm:Mvariety}. 

The variety $M_{2,4} \subset \CC^6$ is defined by the vanishing of the 
determinant of the matrix
\ba
\begin{pmatrix}
2m_{14}& (m_{14} + m_{24} - m_{12})& (m_{14} + 
m_{34} - m_{13})\\
(m_{14} + m_{24} - m_{12})& 2m_{24}& (m_{24} + m_{34} - m_{23})\\
(m_{14} + m_{34} - m_{13})& (m_{24} + m_{34} - m_{23})& 
2m_{34}
\end{pmatrix}.
\ea

The variety $M_{3,5} \subset \CC^{10}$ is defined by the vanishing of the determinant of the matrix
\ba
\begin{pmatrix}
2m_{15}& (m_{15} + m_{25} - m_{12})& (m_{15} + 
m_{35} - m_{13})&  (m_{15} + m_{45} - m_{14})\\
(m_{15} + m_{25} - m_{12})& 2m_{25}& (m_{25} + m_{35} - m_{23})&  
(m_{25} + m_{45} - m_{24})\\
(m_{15} + m_{35} - m_{13})& (m_{25} + m_{35} - m_{23})& 
2m_{35}&  (m_{35} + m_{45} - m_{34})\\
(m_{15} + m_{45} - m_{14})& (m_{25} + m_{45} - m_{24})&
 (m_{35} + m_{45} - m_{34})& 2m_{45}
\end{pmatrix}.
\ea
These same polynomial calculations can be done by 
constructing the Cayley-Menger determinants.

When $n>d+2$, then $M_{d,n}$ has higher codimension, and 
requires the simultaneous vanishing of more
than one minor, characterizing the rank $d$.

Next we move on to unsquared lengths.


\begin{definition}
Let the \defn{squaring map} $s(\cdot)$ be the map from $\CC^{\edgecard}$ 
onto $\CC^{\edgecard}$ that
acts by squaring each of the $\edgecard$ coordinates of a point.
Let $L_{d,n}$ be the preimage of $M_{d,n}$ under the squaring map.
(Each point in $M_{d,n}$ has $2^{\edgecard}$ preimages in $L_{d,n}$, arising
through coordinate negations).
We call this the \defn{unsquared measurement variety} of $n$ points
in $d$ dimensions.
\end{definition}

\begin{definition}
We can define the \defn{Euclidean length map}
of a real configuration $\p$ as
\ba
l_{ij}(\p) := \sqrt{\sum_{k=1}^{d}(\p^k_i-\p^k_j)^2}
\ea
where we use the positive square root.
We call the image of $\p$ under
$l$ the \defn{Euclidean unsquared measurement set} denoted as 
$L^{\EE}_{d,n} \subset \RR^{\edgecard}$.
Under the squaring map, we get
$M^{\EE}_{d,n}$. 
We denote by $l(\p)$,
the vector $[l_{ij}(\p)]$ over all vertex 
pairs. We may consider $l(\p)$ either as a point in 
the real valued $L^{\EE}_{d,n}$
or as a point in 
the complex variety $L_{d,n}$.
\end{definition}

Indeed, $L^{\EE}_{d,n}$
is the set we are truly interested in,
but it will be easier to work with the whole variety $L_{d,n}$. For example,  Theorem~\ref{thm:prin2}
requires
us to work with varieties, and not, say,
with real ``semi-algebraic sets''.
Also, the proof of Proposition~\ref{prop:irr} will require us
to work in the complex domain.

\begin{remark}
The locus of $\LL$ where the edge lengths of a triangle,
$(l_{12}, l_{13}, l_{23})$, are held fixed is studied in 
beautiful detail in~\cite{marco}, where this is shown to be a Kummer surface.
\end{remark}

The following theorem is the main result of this section.
\begin{theorem}
\label{thm:Lvariety}
Let $n \ge d+2$.
$\LN$ is a variety. It has pure dimension
$dn-C$.
Assuming that $d \geq 2$, we also have the following:
$\LN$ is irreducible.
If $\bm$ is generic in $M_{d,n}$, then each point in 
$s^{-1}(\bm)$ is generic in $\LN$. If $\p$ is a generic configuration in 
$\RR^d$, then $l(\p)$ is generic in $\LN$.
\end{theorem}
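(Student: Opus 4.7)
The plan is to handle the four assertions (variety, pure dimension, irreducibility, and genericity) in sequence, exploiting that $L_{d,n} = s^{-1}(M_{d,n})$ and that $s : \CC^N \to \CC^N$ is the finite flat morphism $y_i = x_i^2$, with everything defined over $\QQ$. First, $L_{d,n}$ is a $\QQ$-variety as the preimage of the $\QQ$-variety $M_{d,n}$ under a polynomial map with $\QQ$ coefficients. For the pure dimension claim, $s$ is finite and flat (the ring extension $\CC[y_1,\ldots,y_N] \hookrightarrow \CC[x_1,\ldots,x_N]$ is free of rank $2^N$, with basis the squarefree monomials in the $x_i$), so by the standard fact that finite flat maps pull back equidimensional varieties to equidimensional ones of the same dimension, $L_{d,n}$ is equidimensional of dimension $\dim M_{d,n} = dn-C$.

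For irreducibility, assume $d \ge 2$ and let $U \subset M_{d,n}$ denote the Zariski open subset consisting of smooth points at which all coordinates $m_{ij}$ are nonzero. Over $U$ the squaring map is étale, so $s : s^{-1}(U) \to U$ is a topological cover of degree $2^N$. Since $L_{d,n}$ has pure dimension matching $\dim U$, $s^{-1}(U)$ is open and dense in $L_{d,n}$, reducing irreducibility of $L_{d,n}$ to connectedness of this cover. The cover is abelian with group $(\ZZ/2)^N$, so connectedness is equivalent to surjectivity of the monodromy homomorphism $\pi_1(U) \to (\ZZ/2)^N$. The goal is therefore to exhibit, for each edge $ij$, a loop in $U$ whose monodromy is the $ij$-th sign flip.

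The loop construction is where the hypothesis $d \ge 2$ is essential and is the main obstacle. For $d \ge 2$ there exists a nonzero isotropic vector $\v \in \CC^d$, e.g.\ $\v = (1,i,0,\ldots,0)$; no such vector exists for $d=1$, which matches the scope of the theorem. Starting from a generic base configuration $\p^*$, I move the $j$-th point along $\p_j(t) := \p_j^* + t\v$. Because $\v \cdot \v = 0$, each $m_{jl}(t)$ is an affine linear function of $t$ with a unique zero $t_l$, and for generic $\p^*$ all these $t_l$ are distinct. A sufficiently small loop around $t_i$ (the zero of $m_{ij}$) in the $t$-plane then lifts to a loop in $U$: it stays in the smooth locus by genericity of $\p^*$, keeps every $m_{kl}$ with $kl \ne ij$ bounded away from $0$, and causes $m_{ij}$ to wind exactly once around $0$. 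Its monodromy therefore flips precisely the sign of $l_{ij}$. Running this over all edges proves the required surjectivity, and hence irreducibility.

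Finally, for the genericity statements, let $\bm \in M_{d,n}$ be generic, let $l \in s^{-1}(\bm)$, and let $Z$ be the smallest $\QQ$-subvariety of $L_{d,n}$ containing $l$. Because $s$ is finite and defined over $\QQ$, the image $s(Z)$ is a $\QQ$-closed subvariety of $M_{d,n}$ containing the generic point $\bm$, so $s(Z) = M_{d,n}$; finiteness then forces $\dim Z = \dim M_{d,n} = dn - C$, and since $L_{d,n}$ is now known to be irreducible of this dimension we conclude $Z = L_{d,n}$, i.e.\ $l$ is generic in $L_{d,n}$. Combining this with Theorem~\ref{thm:Mvariety} (which ensures $m(\p)$ is generic in $M_{d,n}$ whenever $\p$ is generic in $\RR^d$) then yields that $l(\p)$, a preimage of $m(\p)$ under $s$, is generic in $L_{d,n}$, completing the proof.
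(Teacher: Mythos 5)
Your proposal is correct, and while it follows the same overall skeleton as the paper --- pass to the locus of smooth points of $M_{d,n}$ with nonvanishing coordinates, over which the squaring map is a $(\ZZ/2)^N$-cover, and obtain irreducibility by showing that the signs can be flipped one coordinate at a time --- several of your key steps are genuinely different. For pure dimension, the paper shows that good points of $L_{d,n}$ are analytically smooth of dimension $dn-C$ and that the good locus is Zariski dense (Lemmas~\ref{lem:Asmooth} and~\ref{lem:closeG}), whereas you invoke finiteness and flatness of the squaring map (the extension $\CC[y_1,\dots,y_N]\hookrightarrow\CC[x_1,\dots,x_N]$ being free of rank $2^N$), which yields equidimensionality in one stroke for all $d\ge 1$, at the cost of appealing to heavier standard machinery. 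For the sign flips, the paper constructs an explicit ``gadget'' configuration and rotates its first coordinate by $e^{-t\sqrt{-1}}$ (Lemma~\ref{lem:Lneg}), then conjugates by a connecting path in $\good(M_{d,n})$; you instead translate the single point $\p_j$ along an isotropic direction, so that every squared length incident to $j$ is affine linear in the parameter $t$ and a small circle around the zero of $m_{ij}$ winds once about $0$ in that coordinate and not at all in the others --- this is arguably cleaner, and it makes transparent exactly where $d\ge 2$ (existence of an isotropic vector) and $n\ge d+2$ (the remaining points already span $\CC^d$, keeping the whole loop in the smooth locus) enter. Your monodromy-surjectivity phrasing packages the paper's lifting lemma (Lemma~\ref{lem:z2-cover}) and the connectedness-plus-smoothness argument of Proposition~\ref{prop:irr} into standard covering-space language. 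Two small points you should make explicit: the equivalence ``cover connected iff monodromy surjective'' requires the base $U$ to be connected, which the paper proves separately (Lemma~\ref{lem:Mcon}) and which also follows from $U$ being a dense Zariski-open subset of the irreducible $M_{d,n}$; and your small circles are based near $t_i$ rather than at a fixed basepoint, so one should note that they can be conjugated back through the punctured $t$-line, which is harmless since the deck group is abelian. Finally, your genericity argument --- pushing the minimal $\QQ$-subvariety through the finite map $s$ and using irreducibility plus equal dimension --- is in effect a direct re-derivation of the paper's Lemma~\ref{lem:genPull}, and your treatment of $l(\p)$ coincides with the paper's use of Theorem~\ref{thm:Mvariety}.
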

The proof is in the next subsection.
The non-trivial  part will be showing irreducibility, which 
we will do in Proposition~\ref{prop:irr} below.
Indeed,
in one dimension, the variety $L_{1,3}$ is reducible
and thus also has no generic points. We elaborate on this below.

\begin{remark}
We note, but will not need the following:
For $d\geq 2$,
the smallest complex variety containing   
$L^{\EE}_{d,n}$ is $L_{d,n}$.
\end{remark}

Returning to our minimal examples:
The variety $L_{1,3} \subset \CC^3$ is defined by the vanishing of the determinant of the
following matrix
\ba
\begin{pmatrix}
2l^2_{13}& (l^2_{13} + l^2_{23} - l^2_{12})
\\
(l^2_{13} + l^2_{23} - l^2_{12})& 2l^2_{23}
\end{pmatrix}
\ea
where we use $(l_{12}, l_{13}, l_{23})$ to represent the coordinates of $\CC^3$.

The variety $L_{2,4} \subset \CC^6$ is defined by the vanishing of the determinant of the matrix
\ba
\begin{pmatrix}
2l^2_{14}& (l^2_{14} + l^2_{24} - l^2_{12})& (l^2_{14} + 
l^2_{34} - l^2_{13})\\
(l^2_{14} + l^2_{24} - l^2_{12})& 2l^2_{24}& (l^2_{24} + l^2_{34} - l^2_{23})
\\
(l^2_{14} + l^2_{34} - l^2_{13})& (l^2_{24} + l^2_{34} - l^2_{23})& 
2l^2_{34}
\end{pmatrix}.
\ea

The variety $L_{3,5}\subset \CC^{10}$ is defined by the vanishing of the determinant of the matrix
\ba
\begin{pmatrix}
2l^2_{15}& (l^2_{15} + l^2_{25} - l^2_{12})& (l^2_{15} + 
l^2_{35} - l^2_{13})&  (l^2_{15} + l^2_{45} - l^2_{14})\\
(l^2_{15} + l^2_{25} - l^2_{12})& 2l^2_{25}& (l^2_{25} + l^2_{35} - l^2_{23})&  
(l^2_{25} + l^2_{45} - l^2_{24})\\
(l^2_{15} + l^2_{35} - l^2_{13})& (l^2_{25} + l^2_{35} - l^2_{23})& 
2l^2_{35}&  (l^2_{35} + l^2_{45} - l^2_{34})\\
(l^2_{15} + l^2_{45} - l^2_{14})& (l^2_{25} + l^2_{45} - l^2_{24})&
 (l^2_{35} + l^2_{45} - l^2_{34})& 2l^2_{45}
\end{pmatrix}.
\ea

\begin{figure}[ht]
	\begin{center}
		\includegraphics[width=2.5in]{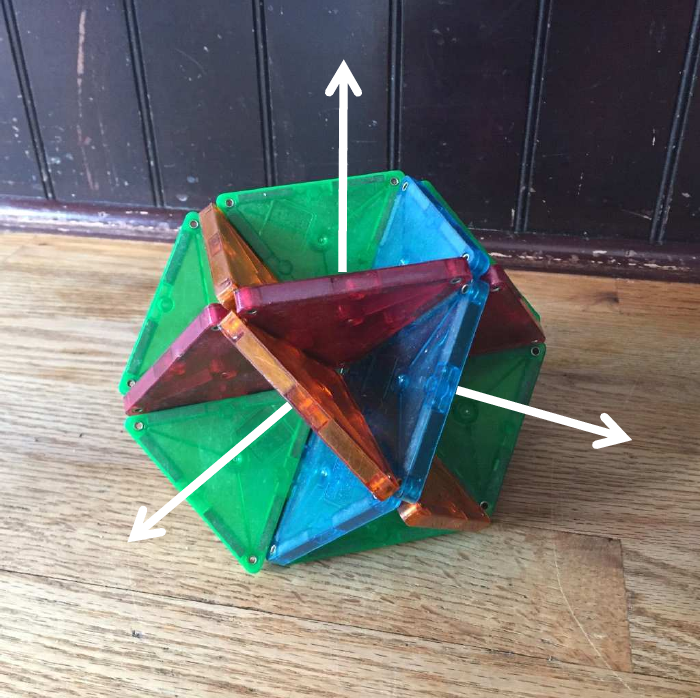}
	\end{center}
	\caption{A model of the real locus of $L_{1,3}$, a subset of $\RR^3$. It comprises $4$ planes. Coordinate axes are in white.}\label{fig:l13}
\end{figure}

\begin{remark}
It turns out that $L_{1,3}$ is reducible and consists of the four hyperspaces defined,
respectively, by the vanishing of one of the following equations:
\ba
l_{12} + l_{23} - l_{13} \\
l_{12} - l_{23} + l_{13} \\
-l_{12} +l_{23} + l_{13} \\
l_{12} + l_{23} + l_{13} 
\ea
This reducibility can make the one-dimensional case quite different from dimensions 2 and 3, as already discussed in Section~\ref{sec:results}. See also Figure~\ref{fig:l13}.

Notice that the first octant of the real locus of $3$ of these hyperspaces 
arises as the Euclidean lengths of a triangle in $\RR^1$ (that is,
these make up $L^{\EE}_{1,3}$). 
The specific hyperplane
is determined by the order of the $3$ points on the line.
\end{remark}

At this point, we would like to generalize our notion of 
measurement ensembles from Definition~\ref{def:graph}.

\begin{definition}\label{def:functionals}
A \defn{length functional} $\alpha$ is a linear mapping from $\LN$ to $\CC$.
We write its application to $\bl \in \LN$ as $\la \alpha,\bl \ra$.
In coordinates, it has the form $\sum_{ij} \alpha^{ij} l_{ij}$, with $\alpha^{ij} \in \CC$.
When $\p$ is a real configuration, and thus $l(\p)$ is well defined,
then we can also define
$\la \alpha, \p \ra:= \la \alpha, l(\p)\ra$.

We say that a length functional is \defn{rational} if all of its coordinates are in $\QQ$.
We say it is \defn{non-negative} if all of its coordinates are non-negative.
We say it is \defn{integer} if all of its coordinates are integer numbers.
We say it is \defn{whole} if all of its coordinates are whole numbers.
We say that an integer or whole length functional 
is \defn{$b$-bounded} if all of its coordinates have magnitudes  
no greater than $b$.

Given a sequence of $k$ length functionals $\alpha_i$, we define its
\defn{ensemble  matrix} $\E$ as the $k\times \edgecard$ matrix
whose $i$-th row is equal to the coordinates of the $i$-th length
functional. The ensemble matrix gives rise to a linear map from $\LN$ to
$\CC^k$. The definitions of non-negative, integer, whole, and $b$-bounded length functionals
can be extended to such an $\E$ by enforcing their respective coordinate conditions on all rows of $\E$.

A path or loop (as in Definition~\ref{def:graph}) gives rise
to a unique whole length functional. Analogously, a path or loop measurement ensemble gives
rise to a unique whole length ensemble matrix.

\end{definition}

\subsection{Proof}
\label{sec:lproof}

We will now develop the proof of Theorem~\ref{thm:Lvariety}. The main issue will be
proving the irreducibility of $\LN$. 
The special case of $n=d+2$ follows from~\cite{cmirr}, but
we are interested in the general case, $n \ge d+2$.
The basic idea we will use is that a variety whose smooth locus
is connected must be irreducible.
More specifically, 
our strategy is to define a ``good'' locus of points in 
$\LN$, and show that this locus is connected,
made up of smooth points, and is Zariski dense in $\LN$.
This, along with Theorem~\ref{thm:conIrr}, will prove irreducibility.

We will show connectivity using a specific path construction. This
will rely centrally on the complex setting that we have placed ourselves in.  
Showing (algebraic) smoothness will mostly be a technical matter.

\begin{definition}
Let the \defn{zero} locus $Z$ of $\CC^N$ be the points where at least one
coordinate vanishes.

Let the \defn{bad} locus $\bad(M_{d,n})$ of $M_{d,n}$
be the union of its singular locus
$\sing(M_{d,n})$ together with the points in $M_{d,n}$ that are in $Z$.
We will call the remaining locus $\good(M_{d,n})$ \defn{good}.

Let the \defn{bad} locus $\bad(L_{d,n})$ of $L_{d,n}$ be the 
preimage of the bad locus of $M_{d,n}$ under the squaring map $s$.
We will call the remaining locus $\good(L_{d,n})$ \defn{good}. 

We refer to points on the good locus
as \defn{good} points, and analogously for \defn{bad} points.
\end{definition}

\begin{lemma}
\label{lem:Mcon}
$\good(M_{d,n})$ is path-connected.
\end{lemma}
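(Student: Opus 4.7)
The plan is to realize $\good(M_{d,n})$ as the continuous image of a path-connected subset $U$ of the complex configuration space $\CC^{dn}$, and conclude path-connectedness from continuity of the squared-length map.

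First I would take $U \subset \CC^{dn}$ to be the locus of configurations $\p$ satisfying (i) $m_{ij}(\p) \neq 0$ for every pair $i<j$, and (ii) the affine span of $\p$ has dimension exactly $d$. Both are Zariski open conditions on $\CC^{dn}$: (i) is the simultaneous non-vanishing of the $N$ squared-distance polynomials, and (ii) is the non-vanishing of at least one $d \times d$ minor of the matrix $[\p_i - \p_n]_{i<n}$. Since generic configurations satisfy both, $U$ is a nonempty Zariski open subset of the irreducible affine space $\CC^{dn}$. Its complement is a proper algebraic subvariety, of real codimension at least $2$, so $U$ is path-connected in the Euclidean topology.

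Next I would verify $m(U) = \good(M_{d,n})$. The inclusion $m(U) \subseteq \good(M_{d,n})$ is immediate: (i) keeps $m(\p)$ out of the zero locus $Z$, and (ii), via the characterization of $\sing(M_{d,n})$ in Theorem~\ref{thm:Mvariety}, keeps $m(\p)$ out of the singular locus. For the reverse inclusion, fix $\bm \in \good(M_{d,n})$ and any preimage $\p$ with $m(\p)=\bm$. Since $\bm \notin Z$, every $m_{ij}(\p)$ is nonzero, so (i) holds. Moreover, the rank of the Gram matrix $\varphi^{-1}(\bm)$ depends only on $\bm$, and $\bm \notin \sing(M_{d,n})$ forces this rank to equal $d$; so by Theorem~\ref{thm:Mvariety} the configuration $\p$ has full affine span, and (ii) holds. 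Hence $\p \in U$.

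Finally, the squared-length map $m: \CC^{dn} \to \CC^N$ is polynomial, hence continuous in the Euclidean topology, so $\good(M_{d,n}) = m(U)$ is the continuous image of a path-connected set and therefore path-connected. The only mildly delicate point I expect in carrying this out is the reverse inclusion above: one has to check that non-singularity of $\bm$ forces \emph{every} preimage configuration, not merely some preimage, to have full affine span. This is exactly what the linear isomorphism $\varphi$ from the proof of Theorem~\ref{thm:Mvariety} supplies, since it turns the rank of the Gram matrix into a function of $\bm$ alone.
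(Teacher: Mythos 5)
Your proof is correct and is essentially the paper's argument: both connect preimage configurations inside the locus of configurations with full affine span and no vanishing squared edge length (path-connected because its complement is a proper algebraic subvariety of $\CC^{dn}$, hence of real codimension at least $2$), and then push the path forward under the continuous map $m(\cdot)$. Your only addition is the explicit check that $m(U)=\good(M_{d,n})$, i.e.\ that non-singularity of $\bm$ forces every preimage to have full span via the rank of $\varphi^{-1}(\bm)$, a point the paper leaves implicit.
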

\begin{proof}
Let $\bm_1$ and $\bm_2$ be any two 
good points in $M_{d,n}$. These correspond to 
two configurations $\p$ and $\q$. A path in configuration space, connecting $\p$ to $\q$,
will remain, under $m(\cdot)$, on $\good(M_{d,n})$
when the affine span of the configuration does not drop in dimension,
and no edge between any two points has zero squared length.
This can always be done, as we have $n \geq d+2$ points.
(This is even true for one-dimensional configurations
in the complex setting, as a zero squared length is a condition
that has complex-codimension of at least $1$, and thus the bad locus is non-separating.)
\end{proof}

We next record a lemma that follows from basic results
of covering space theory.  
See 
\cite[Sections 53, 54]{munk} for more details.
\begin{definition}
A \defn{path} 
$\tau$ on a space $X$ is a continuous  map from the unit interval to $X$.
A \defn{loop} is a path with $\tau(0)=\tau(1)$.
Let $p$ be a map from a space $\tilde{X}$ to $X$.
A \defn{lift} $\tilde{\tau}$ of $\tau$ (under $p$) is a map
such that $p(\tilde{\tau})=\tau$. It is a path on $\tilde{X}$.
\end{definition}
Intuitively, a lift is just tracing out 
the path $\tau$ in the preimage through $p$. In what follows, 
$\CC^\times$ is the punctured complex plane.
\begin{lemma}\label{lem:z2-cover}
Let $p$ be the map $\CC^\times\to \CC^\times$ given by $z\mapsto z^2$.
Let $x:=p(z)$.
A loop $\tau$ starting
at $x$ uniquely lifts to a loop $\tilde{\tau}$ starting at $z$
if $\tau$ winds 
around the origin an even number of times, and otherwise 
it lifts to a path that ends at $-z$.
\end{lemma}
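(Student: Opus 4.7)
The plan is to verify that $p$ is a covering map, then invoke the standard unique path lifting theorem, and finally use the fundamental group computation to track where the lift ends.

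First I would check that $p : \CC^\times \to \CC^\times$, $z \mapsto z^2$, is indeed a covering map of degree $2$. The derivative $p'(z) = 2z$ is nonzero on $\CC^\times$, so $p$ is a local biholomorphism, and for any $x \in \CC^\times$ the preimage is exactly the two points $\pm z$ where $z$ is a fixed square root of $x$. A small disk neighborhood $U$ of $x$ avoiding $0$ admits exactly two holomorphic branches of the square root, whose images form two disjoint open sets each mapped homeomorphically onto $U$ by $p$. This is precisely the defining property of a covering map, so the standard Path Lifting Lemma (see \cite[Lemma 54.1]{munk}) applies: any path $\tau : [0,1] \to \CC^\times$ starting at $x$ has a unique continuous lift $\tilde\tau : [0,1] \to \CC^\times$ with $\tilde\tau(0) = z$ and $p \circ \tilde\tau = \tau$.

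Next I would identify the endpoint. Since $p(\tilde\tau(1)) = \tau(1) = x$, the endpoint $\tilde\tau(1)$ must lie in $p^{-1}(x) = \{z, -z\}$. So the only question is which of the two it is, and by continuity this depends only on the homotopy class of $\tau$ in $\pi_1(\CC^\times, x) \cong \ZZ$, generated by a simple loop around the origin.

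The main step is then to compute the effect of $p$ on the fundamental group. Since $p(re^{i\theta}) = r^2 e^{2i\theta}$, the induced map $p_* : \pi_1(\CC^\times, z) \to \pi_1(\CC^\times, x)$ is multiplication by $2$ on $\ZZ$: a simple loop around $0$ in the base pulls back to a half-loop covering only half the way around, so it takes two full revolutions upstairs to close up. By the standard correspondence between subgroups of $\pi_1$ and connected covers, a loop $\tau$ lifts to a loop starting at $z$ if and only if its class lies in the image $2\ZZ$ of $p_*$, that is, if and only if its winding number about the origin is even. Otherwise the lift cannot close at $z$, and by the dichotomy above it must end at the other preimage $-z$. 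This establishes both clauses of the lemma.

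Since every step (local triviality, unique path lifting, winding number computation) is classical, I do not anticipate a serious technical obstacle; the only thing to be careful about is matching conventions so that the parity of the winding number really does correspond to closing up the lift, which is straightforward from the explicit form $p(re^{i\theta}) = r^2 e^{2i\theta}$.
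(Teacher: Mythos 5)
Your proposal is correct and follows essentially the same route as the paper: both verify (or cite) that $z\mapsto z^2$ is a covering map, invoke unique path lifting, and then use the lifting correspondence with $p_*(\pi_1)\cong 2\ZZ$ inside $\pi_1(\CC^\times)\cong\ZZ$ to conclude that the lift closes up exactly when the winding number is even and otherwise ends at $-z$. The only difference is cosmetic: you verify the covering property and compute $p_*$ explicitly via $re^{i\theta}\mapsto r^2e^{2i\theta}$, where the paper simply cites the standard facts from \cite[Sections 53, 54]{munk}.
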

\begin{proof}[Proof sketch]
See 
\cite[Chapters 53, 54]{munk} for definitions.
The map $\CC^\times\to \CC^\times$ given by $z\mapsto z^2$
is a  covering map. 
Call the base $B$ and the cover $F$ and the 
covering map $p$.  
Each loop $\tau$ in $B$, starting at $x$, lifts uniquely to a path $\tilde{\tau}$
in $F$, starting at $z$.
The path $\tilde{\tau}$
ends at a uniquely defined point $z' \in p^{-1}(x)$
under the \defn{lifting correspondence}. 
In our case the fiber is $\{z,-z\}$. Moreover every $z'$
in the fiber can be reached under the lifting of some loop $\tau$  (see~\cite[Theorem 54.4]{munk}).

The fundamental group of the base is
$\pi_1(B) = 
\pi_1(\CC^\times)\cong \ZZ$.
The covering map determines an
induced map
$p_* : \pi_1(F)\to \pi_1(B)$. The image of the induced map
consists of 
loops that wind around the origin an even number of 
times in $F$ so it is isomorphic to $2\ZZ$.
The lifting correspondence induces a bijective  map from the 
group $\pi_1(B)/p_*(\pi_1(F))\cong \ZZ_2$ to the fiber above $x$, and (only) loops in 
$p_*(\pi_1(F))$ lift to loops in $F$.
(see~\cite[Theorem 54.6]{munk}). 

Thus, this lift, starting from $z$,
is a path from $z$ to $-z$ if and only if 
$\tau$ 
winds around the origin an odd number of 
times.
\end{proof}

\begin{figure}[ht]
	\centering
	\def\svgwidth{2.5in}
	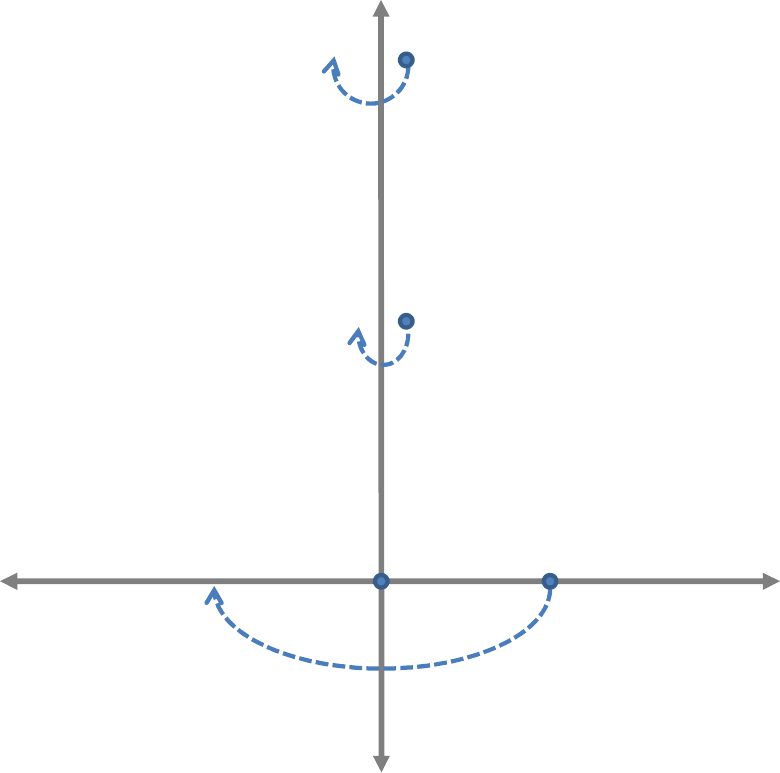
	\caption{Our gadget. The imaginary $x$-direction is coming out of the page. Our path ends with 
the reflection of the configuration $\q$ along the $x$-axis.}\label{fig:gadget}
\end{figure}

\begin{figure}[ht]
	\centering
	\def\svgwidth{0.8\textwidth}
	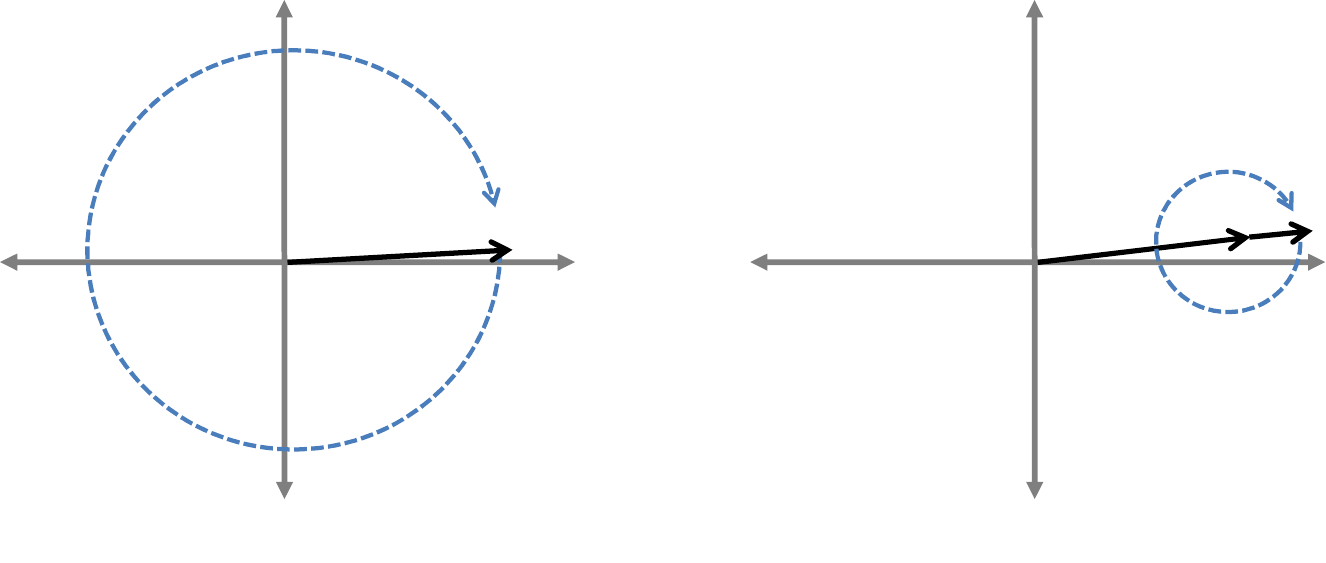
	\caption{Since the squared length along edge $\{1,2\}$ arises from its $x$ component, our path along this edge measurement winds once about the origin in $\CC$. For any other edge, the $x$ component of the squared distance is dominated by the other coordinates and the resulting path stays far from the origin in $\CC$.}\label{fig:wind}
\end{figure}

Looking at the product space $(\CC^{\times})^\edgecard$, we can also view the squaring map
$s$ as a covering map mapping this product space to itself, 
and we can apply Lemma~\ref{lem:z2-cover} coordinate-wise.

\begin{lemma}
\label{lem:Lneg}
Assume $d \ge 2$.
Suppose $\bl$ and $\bl'$ are two points in $\LN$ that 
differ only by a negation along one
coordinate. Then, there is a path that connects 
$\bl$ to $\bl'$ and stays in $\good(L_{d,n})$.
\end{lemma}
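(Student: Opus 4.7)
The plan is to reduce the existence of a good path in $L_{d,n}$ connecting $\bl$ to $\bl'$ to the existence of a suitable loop in $\good(M_{d,n})$. Assume without loss of generality that $\bl$ and $\bl'$ differ in sign along the $\{1,2\}$ coordinate, and write $\bm := s(\bl) = s(\bl') \in \good(M_{d,n})$. I want to exhibit a loop $\tau$ in $\good(M_{d,n})$ based at $\bm$ whose lift under $s$ starting at $\bl$ terminates at $\bl'$. Since $\good(L_{d,n}) = s^{-1}(\good(M_{d,n}))$ lies inside $(\CC^\times)^{\edgecard}$, the coordinate-wise squaring map restricts to a covering on the good loci, and Lemma~\ref{lem:z2-cover} applies in each coordinate independently. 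Hence the requirement on $\tau$ is: the winding number about $0$ must be odd in the $\{1,2\}$ coordinate and even (ideally zero) in every other coordinate.

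By Lemma~\ref{lem:Mcon}, $\good(M_{d,n})$ is path-connected, so I may move the base point freely: I will build the loop at a convenient $\bm_0$ and conjugate it by a path from $\bm$ to $\bm_0$, which contributes no net winding in any coordinate. For $\bm_0$ I would use the ``gadget'' of Figure~\ref{fig:gadget}: put $\q_1$ at the origin, $\q_2$ just off the real $x$-axis, and $\q_3,\dots,\q_n$ on the real $x$-axis (augmented by suitable displacements along additional real coordinate axes so as to achieve a full $d$-dimensional affine span), all at large modulus. The loop in configuration space fixes every point other than $\q_2$ and drags $\q_2$ through $\CC^d$, ending at its reflection across the real $x$-axis. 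Since reflection in a real line preserves squared distances to points lying on that line, the measurement vector returns to $\bm_0$, so this really is a loop $\tau$ in $M_{d,n}$.

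To verify the winding profile edge by edge, I would work in the linear change of coordinates $u := \q_2^1 + i\q_2^2$, $w := \q_2^1 - i\q_2^2$, in which the squared length of edge $\{1,2\}$ is simply $uw$. Choosing $u(t), w(t) \in \CC^\times$ so that each ends at its reflected value and so that $uw$ winds exactly once around $0$ (for instance $u$ sweeping one semicircular arc and $w$ the complementary one) yields odd winding in the $\{1,2\}$ coordinate. Every edge not touching $\q_2$ is constant. For any edge touching $\q_2$ but not $\q_1$, the other endpoint has large modulus, so the squared length traces a short arc around a large value and has winding $0$. No coordinate of $\tau$ ever hits $0$, and the widely spread points keep the affine span at dimension $d$ throughout, so $\tau$ lies in $\good(M_{d,n})$ as required, and its lift starting at $\bl$ is the desired good path to $\bl'$.

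The step I expect to require the most care is the gadget construction itself: one must honestly exhibit a path in complex configuration space that simultaneously (a) produces the prescribed winding coordinate by coordinate and (b) avoids both the singular locus and the zero locus of $M_{d,n}$. The hypothesis $d \ge 2$ enters here in an essential way, since the $u, w$ factorization uses two real coordinate directions in which $\q_2$ can move; in one dimension the squared length of an edge is just the square of a single complex-valued difference, whose winding is automatically even, and no such loop exists, consistent with the reducibility of $L_{1,3}$ noted earlier.
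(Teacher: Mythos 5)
Your proposal is correct and follows essentially the same route as the paper: reduce to exhibiting a loop in $\good(M_{d,n})$ whose winding about the origin is odd in the $\{1,2\}$ coordinate and zero in all others, conjugate it by a connecting path supplied by Lemma~\ref{lem:Mcon}, and lift through the squaring covering map via Lemma~\ref{lem:z2-cover}; your gadget (moving only $\q_2$ and factoring the $\{1,2\}$ squared length as $uw$) differs only in mechanical detail from the paper's (which rotates the first coordinate of every point, with the remaining points spread along the second axis). One small caution: the full affine span along your loop is not guaranteed by the fixed ``widely spread'' points alone---with zero second coordinate they span only a hyperplane---but rather by the fact that $\q_2$'s second coordinate never vanishes on the chosen arcs, which is easy to verify for the semicircular parametrization but should be stated explicitly.
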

\begin{proof}
W.l.o.g., we will negate the coordinate corresponding to the edge
lengths between vertices $1$ and $2$.
But first, we need to develop a little gadget.

Let $\q$ be a special configuration with the following properties:
$\q_1$ is at the origin, $\q_2$ is placed one unit along the first axis of 
$\CC^d$; and the remaining points are arranged so that they all lie
within $\epsilon$ of the second axis in $\CC^d$, but such that
they are greater than one unit apart along the second axis
from each other and also from $\q_1$. 
(Note that this step requires that $d\geq 2$.)
Moreover we choose the remaining
points so that $\q$ has a full $d$-dimensional affine span.
This configuration has the following property:
the squared distances of all of the edges are dominated by the
contribution from the second coordinate, except for the squared distance
along the edge $\{1,2\}$, which is dominated by the contribution
from its first coordinate. See Figure~\ref{fig:gadget}.

Let $a(t)$ be the path in configuration space, parameterized by
$t \in [0,\pi]$ where, for each $i$, we multiply the first
coordinate of $\q_i$ by $e^{-t\sqrt{-1}}$. 
This path ends at 
$a(\pi)$, 
a configuration
which is a reflection of $\q$. 

Under $m$, this gives us
a loop $\tau:=m(a)$ in $M_{d,n}$ that starts and ends at the point
$\y:= m(\q)$. 
By construction, the loop
$\tau$ avoids any
singularities or vanishing coordinates.
Fixing one point $\z$ in $s^{-1}(\y)$, the loop $\tau$ lifts
to a path $\tilde{\tau}$ in $L_{d,n}$
that ends at some 
point $\z'$  in 
the fiber $s^{-1}(\y)$. Moreover, this path remains in $\good(L_{d,n})$. 

If we project $\tau$
onto the coordinate of $\CC^\edgecard$ corresponding
to the edge $\{1,2\}$, we see that the image
maps to a loop that
winds around the origin of $\CC$ exactly once. 
If we project this loop onto any
of the other coordinates, we obtain a loop  that cannot wind
about the origin of $\CC$ at all. See Figure~\ref{fig:wind}.
By Lemma~\ref{lem:z2-cover},
the lifted loop $\tilde{\tau}$ 
in $\LN$ must end at the point $\z'$ that arises from
$\z$ by negating the first coordinate.

Going now back to our problem, let $\p$ be any configuration such that 
$m(\p) = s(\bl)$. Let $w$ be a configuration path 
from $\p$ to our special $\q$. 
Let $\omega := m(w)$. From  Lemma~\ref{lem:Mcon} this path can 
be chosen to avoid any singular points or points where a coordinate
vanishes.
Let the concatenated path
$\sigma$ be $\omega^{-1} \circ \tau \circ \omega$.
This is a loop in $M_{d,n}$ that starts and ends at $m(\p)$.
The projection of $\sigma$
onto the coordinate of $\CC^\edgecard$ corresponding
to the edge $\{1,2\}$, defined by forgetting all other 
coordinates, 
winds around the origin
exactly once (any loops due to $\omega$ cancel out),
while the other coordinate projections are simply connected in 
$\CC^{\times}$ (any loops due to $\omega$ cancel out). Thus, fixing the point $\bl$ in $\LN$,
from Lemma~\ref{lem:z2-cover},
$\sigma$ must lift to a path $\tilde{\sigma}$ that ends at 
$\bl'$. Moreover, this path stays in the good locus.
\end{proof}

\begin{lemma}
\label{lem:Lcon}
For $d \ge 2$, $\good(L_{d,n})$ is path-connected.
\end{lemma}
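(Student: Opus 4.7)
The plan is to reduce connectedness of $\good(L_{d,n})$ to the already-proved connectedness of $\good(M_{d,n})$ (Lemma~\ref{lem:Mcon}) by lifting paths through the squaring map $s$, and then to use Lemma~\ref{lem:Lneg} to patch together the resulting endpoint ambiguity in the fiber.

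Concretely, fix two good points $\bl, \bl' \in \good(L_{d,n})$, and set $\bm := s(\bl)$, $\bm' := s(\bl')$. Since neither $\bl$ nor $\bl'$ lies in $Z$ and neither is singular, the images $\bm, \bm'$ lie in $\good(M_{d,n})$. By Lemma~\ref{lem:Mcon}, there is a path $\tau$ from $\bm$ to $\bm'$ lying entirely in $\good(M_{d,n})$. In particular, $\tau$ avoids the zero locus, so it is a path in $(\CC^\times)^N$. Since $s$ restricts to a coordinate-wise covering map $(\CC^\times)^N \to (\CC^\times)^N$, the path $\tau$ lifts uniquely (coordinate-by-coordinate, using Lemma~\ref{lem:z2-cover} in each factor) to a path $\tilde{\tau}$ in $\LN$ starting at $\bl$. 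This lift automatically avoids $Z$, and since $s(\tilde{\tau})$ avoids $\sing(M_{d,n})$, the lift $\tilde{\tau}$ avoids $\sing(L_{d,n})$ as well; hence $\tilde{\tau}$ stays in $\good(L_{d,n})$.

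The endpoint $\bl'' := \tilde{\tau}(1)$ satisfies $s(\bl'') = \bm' = s(\bl')$, so $\bl''$ and $\bl'$ lie in the same fiber of $s$. This fiber consists of the $2^N$ points obtained from $\bl'$ by sign choices on its coordinates (all of which are nonzero, since $\bl'$ is good). Therefore $\bl''$ and $\bl'$ differ by negations on some subset $S \subset \{1, \dots, N\}$ of coordinates. Applying Lemma~\ref{lem:Lneg} once for each coordinate in $S$, we obtain a chain of paths in $\good(L_{d,n})$ from $\bl''$ to $\bl'$; here it is important that Lemma~\ref{lem:Lneg} was proved for every coordinate, and that the intermediate endpoints are themselves good (all coordinates remain nonzero, and they lie in the fiber of a good point of $M_{d,n}$). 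Concatenating $\tilde{\tau}$ with this chain yields a path from $\bl$ to $\bl'$ in $\good(L_{d,n})$.

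The only real subtlety in this argument is verifying that the lifted path does not accidentally wander into $\bad(L_{d,n})$, but this is automatic: badness in $L_{d,n}$ is defined as the $s$-preimage of badness in $M_{d,n}$, and we chose $\tau$ to avoid the bad locus in $M_{d,n}$. The dimension assumption $d \ge 2$ is used implicitly through Lemma~\ref{lem:Lneg}, whose proof required a $d \ge 2$ ``gadget'' configuration.
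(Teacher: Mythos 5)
Your proposal is correct and follows essentially the same route as the paper: lift a good path in $M_{d,n}$ (from Lemma~\ref{lem:Mcon}) through the squaring map to stay in $\good(L_{d,n})$, then fix the residual coordinate-sign discrepancy in the fiber by repeated use of Lemma~\ref{lem:Lneg}. No gaps; your extra care about the intermediate endpoints remaining good is a nice touch but matches the paper's (implicit) reasoning.
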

\begin{proof}
Let $\bl_1$ and $\bl_2$ be two good points in $\good(L_{d,n})$.
Define $\bm_i := s(\bl_i)$. 
Let $\tau$ be a path 
in $M_{d,n}$
from 
$\bm_1$ to $\bm_2$ that avoids the singular set of $M_{d,n}$,
and such that no coordinate ever vanishes
(as guaranteed by~\ref{lem:Mcon}).
Fixing $\bl_1$, the path $\tau$
lifts to a path $\tilde{\tau}$ in $\LN$
that remains in the good locus 
and that connects $\bl_1$ to some point  $\bl_2'$ 
in the fiber 
$s^{-1}(s(\bl_2))$. 
The only remaining issue is that
$\bl_2'$ may have some of its coordinates negated from 
our desired target point $\bl_2$.
This can be solved by repeatedly applying the good negating paths
guaranteed by Lemma~\ref{lem:Lneg}.
\end{proof}

We now move on to the technical matters of smoothness.

\begin{lemma}
\label{lem:Asmooth}
Every point $\bl \in \good(\LN)$ is 
smooth
and with 
$\Dim_\bl(\LN)=dn-C$.
Every point in $\bad(\LN)-Z$ is singular.
\end{lemma}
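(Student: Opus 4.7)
The strategy is to observe that the squaring map $s:\CC^N\to\CC^N$ restricts to an étale map (equivalently, a local analytic isomorphism) on $\CC^N\setminus Z$, and then transfer all local information between $\LN$ and $M_{d,n}$ through this map.

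First I would compute the Jacobian of $s$ at a point $\bl=(l_1,\dots,l_N)$: it is the diagonal matrix $\mathrm{diag}(2l_1,\dots,2l_N)$, which is invertible if and only if $\bl\notin Z$. By the complex implicit function theorem, $s$ is therefore a local biholomorphism from a neighborhood of $\bl$ in $\CC^N$ onto a neighborhood of $\bm:=s(\bl)$ in $\CC^N$ for every $\bl\notin Z$. Since $\LN=s^{-1}(M_{d,n})$, if $F_1,\dots,F_k$ are holomorphic functions locally cutting out $M_{d,n}$ around $\bm$, then $F_1\circ s,\dots,F_k\circ s$ locally cut out $\LN$ around $\bl$, and the chain rule gives
\[
J_{F\circ s}(\bl) \;=\; J_F(\bm)\cdot J_s(\bl).
\]
Because $J_s(\bl)$ is invertible, this matrix has the same rank as $J_F(\bm)$, and more generally the map $s$ induces an isomorphism of germs of analytic varieties $(\LN,\bl)\cong (M_{d,n},\bm)$.

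From this germ isomorphism, both local dimension and (non)smoothness transfer: for $\bl\notin Z$, the point $\bl\in\LN$ is smooth precisely when $\bm\in M_{d,n}$ is smooth, and $\Dim_\bl(\LN)=\Dim_\bm(M_{d,n})$. Now I would apply the two cases of the lemma. If $\bl\in\good(\LN)$, then by definition $\bl\notin Z$ and $\bm\notin\sing(M_{d,n})$; since $M_{d,n}$ is irreducible of dimension $dn-C$ by Theorem~\ref{thm:Mvariety}, we get $\Dim_\bl(\LN)=dn-C$ and $\bl$ is smooth. If instead $\bl\in\bad(\LN)-Z$, then $\bl\notin Z$ but $\bm\in\sing(M_{d,n})$, so the germ isomorphism forces $\bl$ to be singular in $\LN$.

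The only mild subtlety is matching the algebraic and analytic notions of smoothness and dimension; but these coincide for subvarieties of $\CC^N$ at any point, so there is no real obstacle. The main conceptual step is just the étale/local-isomorphism observation about $s$ off of $Z$, after which everything reduces to the already-established structure of $M_{d,n}$ from Theorem~\ref{thm:Mvariety}.
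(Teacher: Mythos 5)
Your proposal is correct and follows essentially the same route as the paper: the Jacobian of the squaring map is invertible off $Z$, so by the inverse function theorem smoothness, non-smoothness, and local dimension transfer from $M_{d,n}$ (via Theorem~\ref{thm:Mvariety}), with Theorem~\ref{thm:smpt} used to pass between algebraic and analytic smoothness. Your germ-isomorphism phrasing is just a mild repackaging of this (and matches the \'etale-morphism viewpoint the paper mentions in the remark following the lemma).
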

\begin{proof}
Every good point  in $M_{d,n}$ is
(algebraically) smooth, and thus, from Theorem~\ref{thm:smpt}, is 
analytically smooth 
of dimension $dn-C$. 
Also, from Theorem~\ref{thm:smpt}, 
every singular point in $M_{d,n}$ is not analytically smooth.

The differential $\ud s$ of the squaring map $s$ on $\CC^\edgecard$ is
represented by an $\edgecard\times \edgecard$ Jacobian matrix $\J$ at
each point in $\CC^\edgecard$. At points in $\CC^\edgecard$ where none
of the coordinates vanish, $\J$ is invertible.
Thus, from the inverse function theorem,  every good point in $\LN$ is
analytically smooth 
of dimension $dn-C$. 
Also every bad point in $\LN-Z$ is not analytically smooth.

Again using Theorem~\ref{thm:smpt}, we have each good point 
(algebraically) smooth
and with 
$\Dim_\bl(\LN)=dn-C$.
Similarly, we also have that 
every bad point in $\LN-Z$ is singular.
\end{proof}
Note that there may be some bad points of $\LN$ in $Z$ that are still smooth.

\begin{remark}
The above lemma can be proven directly using more machinery from algebraic geometry.
In particular, away from $Z$, the squaring map from $\CC^N$ to itself is an 
``\'etale morphism''~\cite[page 18]{milne-etale}. 
This property transfers
to the map $s(\cdot)$ acting on $\LN-Z$,
as this property transfers under a ``base change''.
 The results then follows immediately.
\end{remark}

\begin{lemma}
\label{lem:closeG}
The Zariski closure of $\good(L_{d,n})$ is $\LN$.
\end{lemma}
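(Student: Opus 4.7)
The plan is a dimension count. I will show that $\bad(L_{d,n})$ sits inside a Zariski closed subset of $L_{d,n}$ of strictly smaller dimension than $\dim L_{d,n} = dn-C$; combined with the pure dimensionality of $L_{d,n}$, this forces $\good(L_{d,n})$ to meet every irreducible component of $L_{d,n}$ in a nonempty Zariski open subset, and hence to be Zariski dense.

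First I would bound $\dim \bad(M_{d,n})$ strictly below $dn-C$. Since $M_{d,n}$ is irreducible of dimension $dn-C$ by Theorem~\ref{thm:Mvariety}, the singular locus $\sing(M_{d,n})$ is automatically a proper closed subvariety, hence of dimension at most $dn-C-1$. For the zero-locus piece, note that for any generic real configuration $\p$ the point $m(\p)$ lies in $M_{d,n}$ with every coordinate $m_{ij}(\p)$ nonzero; in particular $M_{d,n}$ is not contained in any of the $N$ coordinate hyperplanes $\{m_{ij}=0\}$ comprising $Z$. Irreducibility then forces each of the finitely many intersections $M_{d,n} \cap \{m_{ij}=0\}$ to be proper closed in $M_{d,n}$, again of dimension at most $dn-C-1$. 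Taking the union with $\sing(M_{d,n})$ shows $\bad(M_{d,n})$ has dimension strictly less than $dn-C$.

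Next I would transfer this bound to $L_{d,n}$ via the squaring map $s \colon \CC^N \to \CC^N$, which is a finite flat cover of degree $2^N$. Finite morphisms preserve the dimension of closed subvarieties, so $\bad(L_{d,n}) = s^{-1}(\bad(M_{d,n}))$ has dimension equal to $\dim \bad(M_{d,n}) < dn - C$. Because $L_{d,n}$ is the pullback of $M_{d,n}$ along a finite flat map, it inherits pure dimension $dn - C$; hence no irreducible component of $L_{d,n}$ can be contained in the strictly lower-dimensional $\bad(L_{d,n})$. Consequently $\good(L_{d,n}) = L_{d,n} \setminus \bad(L_{d,n})$ is a nonempty Zariski open subset of each component of $L_{d,n}$, so its Zariski closure equals $L_{d,n}$. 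The only mildly subtle point is the invocation of pure dimensionality of $L_{d,n}$, which ensures that a strictly lower-dimensional subvariety cannot swallow a component; this is a standard consequence of $s$ being finite flat, matching the étale remark following Lemma~\ref{lem:Asmooth}.
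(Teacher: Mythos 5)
Your argument is correct, but it is a genuinely different route from the paper's. The paper proves this lemma analytically: given any $\bl\in\LN$, it approximates $s(\bl)$ by points of $\good(M_{d,n})$ (which are dense in the standard topology), chooses coordinate-wise square roots via locally defined continuous branches to produce nearby points of $\good(\LN)$, and then converts standard-topology density into Zariski density via Theorem~\ref{thm:Zdense}. You instead do a dimension count: $\bad(M_{d,n})$ is a proper closed subvariety of the irreducible $M_{d,n}$ (both the singular locus and each slice $M_{d,n}\cap\{m_{ij}=0\}$ are proper), hence of dimension $<dn-C$; its preimage under the finite map $s$ has dimension $<dn-C$; and purity of dimension of $\LN$ then prevents any component from lying inside $\bad(\LN)$. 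The one point you must be careful about is exactly the purity claim: in the paper, pure dimensionality of $\LN$ (Lemma~\ref{lem:dim}) is \emph{deduced from} this lemma together with Lemma~\ref{lem:Asmooth}, so quoting it would be circular. You avoid circularity by asserting that $\LN$, as the pullback of $M_{d,n}$ along the finite flat squaring map, is automatically pure of dimension $dn-C$; this is true (miracle flatness for $s\colon\CC^N\to\CC^N$ plus going-down shows every component of $s^{-1}(M_{d,n})$ dominates $M_{d,n}$), but it imports flatness/going-down machinery that the paper's Appendix~\ref{sec:geometry} does not develop, so you should cite it explicitly rather than call it only ``a standard consequence.'' What each approach buys: the paper's proof is elementary given its toolkit (local square roots plus Theorem~\ref{thm:Zdense}) and makes no equidimensionality claim in advance; your proof is purely algebraic, avoids the analytic approximation entirely, and in passing re-proves Lemma~\ref{lem:dim}, at the cost of relying on the equidimensionality of finite flat covers.
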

\begin{proof}
Recall the following principle:
Given any point $z$ in $\CC^\times$, we can always find a 
neighborhood $B$
of $z^2$,
so that there is a 
well defined, single valued, continuous 
square root function
from $B$ to $\CC$, with $\sqrt{z^2}=z$.

Returning to our setting,
let $\bl$ be any point in $\LN$, and
let $\bm:=s(\bl)$ be its image in 
$M_{d,n}$ under the coordinate squaring map.
The good points of $M_{d,n}$ are  dense
in $M_{d,n}$. (Letting
$\bm=m(\p)$ for some $\p$,
there is always a nearby 
configuration $\p'$ with a full span and no edge with vanishing
squared length. Moreover, the 
map $m(\cdot)$ is continuous.)
Thus we can always find an arbitrarily close
point $\bm'$ that is in $\good(M_{d,n})$.

Next we argue that we can find a point $\bl'$
such that $s(\bl')=\bm'$ (putting it in $\good(\LN)$)
with $\bl'$ is arbitrarily close to $\bl$. Given $\bm'$,
in order to determine $\bl'$ we need to select a ``sign'' for the 
square-root on each coordinate $ij$.
When $l_{ij} \neq 0$ then using the above principle, 
we can pick a sign so that $l'_{ij}$ is near to 
$l_{ij}$.
When $l_{ij}=0$ then we can use any sign to 
obtain an 
$l'_{ij}$ that is sufficiently close to $0$.

Since this can be done for each
$\bl$, then $\LN$ is in 
the standard-topology closure of $\good(L_{d,n})$.

Thus, from Theorem~\ref{thm:Zdense}, 
$\LN$ is in 
the Zariski
closure of $\good(L_{d,n})$.
Since $\LN$ itself is closed
and contains $\good(L_{d,n})$,
we are done.



\end{proof}

\begin{lemma}
\label{lem:dim} 
Every component of 
$\LN$ is of  dimension equal to 
$dn-C$.
\end{lemma}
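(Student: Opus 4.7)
The plan is to combine Lemmas~\ref{lem:Asmooth} and~\ref{lem:closeG} with the standard fact that at a smooth point of a variety, exactly one irreducible component passes through, and the local dimension there equals that component's dimension.

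First I would decompose $\LN = V_1 \cup \dots \cup V_k$ into irreducible components, each a closed subvariety. The goal is to show each $V_i$ has dimension $dn-C$. The key intermediate claim is that every component $V_i$ contains at least one good point. Suppose for contradiction some $V_i$ contained no good points, so that $V_i \subseteq \bad(\LN)$. Then
\[
\good(\LN) \;\subseteq\; \bigcup_{j\ne i} V_j,
\]
and since the right-hand side is closed, the Zariski closure of $\good(\LN)$ would be contained in $\bigcup_{j\ne i} V_j$. But by Lemma~\ref{lem:closeG}, this closure is all of $\LN$, which would force $V_i \subseteq \bigcup_{j\ne i} V_j$, contradicting the irredundancy of the component decomposition.

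Now fix a good point $\bl \in V_i$. By Lemma~\ref{lem:Asmooth}, $\bl$ is a smooth point of $\LN$ with $\Dim_\bl(\LN) = dn - C$. Since smoothness at $\bl$ implies that $\bl$ lies on a unique irreducible component of $\LN$ (the local ring at $\bl$ is an integral domain, so only one minimal prime passes through), that component must be $V_i$, and its dimension coincides with the local dimension $\Dim_\bl(\LN) = dn - C$. Thus $\dim V_i = dn - C$, and since $i$ was arbitrary, every component has pure dimension $dn-C$.

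I do not expect any real obstacle here: all the substantive work has been done in the preceding lemmas (path-connectivity, Zariski density of the good locus, smoothness of good points). The only thing to be careful about is invoking the correct form of ``smooth $\Rightarrow$ lies on a unique component,'' which is immediate from Theorem~\ref{thm:smpt} and standard commutative algebra, and distinguishing the good/bad locus behavior so that the closure argument in the first paragraph cleanly rules out ``stray'' components lying entirely in $\bad(\LN)$.
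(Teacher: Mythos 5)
Your proof is correct and follows essentially the same route as the paper: good points are smooth of local dimension $dn-C$ (Lemma~\ref{lem:Asmooth}), and the Zariski density of the good locus (Lemma~\ref{lem:closeG}) rules out components avoiding it, so every component has dimension $dn-C$. The only small adjustment is that ``a smooth point lies on a unique component'' is exactly the paper's Theorem~\ref{thm:conIrr}, which you should cite rather than appealing to Theorem~\ref{thm:smpt} and outside commutative algebra.
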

\begin{proof}
From Lemma~\ref{lem:Asmooth}
each good point has a local dimension of  $dn-C$.
Thus, the good locus is covered by a set of components of $\LN$, all of dimension 
$dn-C$.
The Zariski closure of $\good(L_{d,n})$ is $\LN$ (Lemma~\ref{lem:closeG}).
Thus, no new components need to be added during the Zariski closure.
\end{proof}

We can now prove irreducibility.

\begin{proposition}
\label{prop:irr}
For $d\geq2$,
$\LN$ is irreducible.
\end{proposition}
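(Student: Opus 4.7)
The plan is to assemble the four preceding lemmas to conclude irreducibility via the standard principle that a variety whose smooth locus is (topologically) connected is irreducible (this is the content of the referenced Theorem~\ref{thm:conIrr}). Concretely, I would argue as follows.

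First I would observe that by Lemma~\ref{lem:Asmooth} every point of $\good(L_{d,n})$ is a smooth point of $L_{d,n}$, and by Lemma~\ref{lem:dim} every irreducible component of $L_{d,n}$ has the same dimension $dn-C$. A smooth point of a variety lies on exactly one irreducible component, so the good locus is partitioned by the components it meets: each component $V_i$ of $L_{d,n}$ contributes the (possibly empty) relatively open piece $\good(L_{d,n}) \cap V_i$, and distinct such pieces are disjoint (two components cannot share a smooth point). These pieces are also open in $\good(L_{d,n})$ in the standard topology.

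Next I would invoke Lemma~\ref{lem:Lcon}, which tells us that for $d \ge 2$ the locus $\good(L_{d,n})$ is path-connected, hence connected. A connected topological space cannot be decomposed into more than one nonempty disjoint open subset, so exactly one of the pieces $\good(L_{d,n}) \cap V_i$ is nonempty. Call the corresponding component $V$; then $\good(L_{d,n}) \subseteq V$.

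Finally, I would close the argument using Lemma~\ref{lem:closeG}: the Zariski closure of $\good(L_{d,n})$ is all of $L_{d,n}$. Since $V$ is Zariski closed and contains $\good(L_{d,n})$, it contains this closure, so $V = L_{d,n}$. Therefore $L_{d,n}$ has a single irreducible component, i.e.\ it is irreducible. No step here requires more than combining the already-proved lemmas; the genuine work was done earlier in establishing path-connectedness of $\good(L_{d,n})$ (the covering-space/winding argument), which is where the hypothesis $d \ge 2$ was essential.
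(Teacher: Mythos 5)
Your proposal is correct and takes essentially the same route as the paper: smoothness of $\good(L_{d,n})$ (Lemma~\ref{lem:Asmooth}), path-connectedness for $d\ge 2$ (Lemma~\ref{lem:Lcon}), the fact that a smooth point cannot lie on two components (Theorem~\ref{thm:conIrr}), and Zariski density of the good locus (Lemma~\ref{lem:closeG}), assembled by a connectedness argument. The only cosmetic difference is that you phrase the connectedness step via the pieces $\good(L_{d,n})\cap V_i$ being open in $\good(L_{d,n})$, whereas the paper uses that they are closed (the $V_i$ being Zariski, hence standard-topology, closed); both follow at once from the finite disjoint partition, so this is the same argument.
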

\begin{proof}
From  Lemma~\ref{lem:Asmooth}, all of the points in $\good(L_{d,n})$ are smooth. 
From Lemma~\ref{lem:Lcon}, $\good(L_{d,n})$ is path-connected,
and thus connected in the subspace topology from $\CC^n$.

Next, we will use this smoothness and connectedness, together with 
Theorem~\ref{thm:conIrr}, to argue that 
all of $\good(L_{d,n})$ lies in one component $V_1$
of $\LN$. Suppose  we have the irreducible 
decomposition $\LN = \cup_i V_i$.
Let 
$G_i := \good(L_{d,n}) \cap V_i$. 
As varieties, the $V_i$  are closed subsets of $\CC^n$
(Theorem~\ref{thm:Zdense}), and thus the $G_i$
are closed subsets of $\good(L_{d,n})$ in the subspace topology.
Suppose that at least two
such $G_i$, say $G_1$ and $G_2$ are non-empty.
This would imply that there is a  pair of 
distinct
non-empty closed sets $G_1$ and $G_{>1}$ with union equal to $\good(L_{d,n})$.
Since $\good(L_{d,n})$ is connected, this implies that $G_1 \cap G_{>1}$
is non-empty.
But a smooth point in $\LN$ that is shared
between two components
would contradict Theorem~\ref{thm:conIrr}. Thus our claim is established.

The Zariski closure of $\good(L_{d,n})$ is $\LN$ (Lemma~\ref{lem:closeG}).
But since $\good(L_{d,n})$ is contained in the variety $V_1$, this closure must
be contained in $V_1$. Thus $\LN=V_1$, and $\LN$ must be irreducible.
\end{proof}

And now we can complete the proof of our theorem:

\begin{proof}[Proof of Theorem~\ref{thm:Lvariety}]
$\LN$ can be seen to be a variety by pulling back the 
defining equations of the variety $M_{d,n}$ through $s$.
Dimension is  Lemma~\ref{lem:dim}.
Irreducibility is Proposition~\ref{prop:irr}.
The statements on genericity follow from Lemma~\ref{lem:genPull}
and Theorem~\ref{thm:Mvariety}. 
\end{proof}

\section{Warm-up: The case of edge measurement ensembles}
\label{sec:warm}

As a warm-up for our main techniques, we will briefly look at
the case when our measurement ensemble consists only of
edge measurements. This case has been studied carefully in~\cite{BK1}.
Here we will look at these issues from the point of view of 
linear maps acting on the variety $M_{d,n}$.

We will start with the case where the measurement ensemble
consists of the complete edge set of cardinality $\edgecard$. 
This has been cleverly applied in~\cite{echo} to determine
the shape of a room from acoustic echo data.
Then we will consider the case of a trilateration 
ensemble of edges. This has been used
in~\cite{dux1} in the context of molecular scanning.

\begin{definition}
An \defn{edge measurement ensemble} 
$G:=\{G_1,\dots,G_k\}$ is a
finite sequence of distinct edges of $K_n$.
It is the same thing as a graph on $n$ vertices
with some ordering on its edges.
For an edge measurement ensemble 
$G$, we will write
$\la G, \p \ra^2$ to denote the sequence
of squared edge lengths.
\end{definition}

\subsection{Edge measurements of $K_n$ revisited}

We start with  a central result of
Boutin and Kemper~\cite{BK1}, stated 
in our terminology.

\begin{theorem}
\label{thm:bkMain}
Let $n \geq d+2$. 
Let $\p$ be a generic configuration of $n$ points
in $d$ dimensions. Let
$\v= \la G, \p \ra^2$, where 
$G$
is an edge measurement ensemble made up of exactly the $\edgecard$ edges of
$K_n$ in some order.

Suppose there is a 
configuration $\q$, 
also of $n$ points,
along with 
an edge
measurement ensemble $H$, 
where 
$H$
is an edge measurement ensemble made up of exactly the $\edgecard$ edges of
$K_n$, in some other order such that
$\v=\la H,\q \ra^2$.

Then 
there is a 
vertex relabeling  of $\q$ such that,
up to 
congruence,
$\q=\p$.
Moreover, under this vertex relabeling,
$G=H$.
\end{theorem}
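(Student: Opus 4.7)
The plan is to reduce the statement to a question about linear automorphisms of $M_{d,n}$, and then to classify the relevant coordinate permutations. Writing $\v$ as a point of $\CC^{N}$ under the coordinate ordering from Definition~\ref{def:sms}, and using that $G$ and $H$ both enumerate all $N$ edges of $K_n$, there is a unique permutation $\pi \in S_N$ whose associated permutation matrix $P_\pi$ is a bijective $\QQ$-linear map on $\CC^N$ with $P_\pi \cdot m(\p) = m(\q)$.

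First I would apply Theorem~\ref{thm:Mvariety}: because $\p$ is generic in $\RR^{dn}$, the image $m(\p)$ is a generic point of the irreducible variety $M_{d,n}$, and $m(\q) \in M_{d,n}$ as well. Theorem~\ref{thm:prin2} then forces $P_\pi$ to be a linear automorphism of $M_{d,n}$.

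The main obstacle is showing that every coordinate permutation that is a linear automorphism of $M_{d,n}$ is induced by a vertex relabeling $\sigma \in S_n$ (acting on $\CC^N$ through its action on unordered pairs of vertices). The strategy is to recover the combinatorial structure of $K_n$ intrinsically from $M_{d,n}$. One concrete route uses the characterization of $\sing(M_{d,n})$ in Theorem~\ref{thm:Mvariety}: a subconfiguration on $d+1$ vertices has deficient affine span, so the coordinate subspace indexed by the edges of any $(d+1)$-clique of $K_n$ carries a distinguished subvariety of $\sing(M_{d,n})$ of a specific form. Since $P_\pi$ must carry $\sing(M_{d,n})$ to itself and permute its irreducible components, $\pi$ is forced to send the $N$-tuple of edges belonging to each $(d+1)$-clique of $K_n$ to the edges of another such clique. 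A standard combinatorial argument then promotes this to a vertex permutation $\sigma \in S_n$ inducing $\pi$, with the boundary case $n=d+2$ treated directly from the explicit determinantal equation written out in Section~\ref{sec:varieties}. In the paper this classification is subsumed by the more general results proved in Sections~\ref{sec:maps} and~\ref{sec:Ldn-automorphisms}.

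Once $\pi$ is known to arise from some $\sigma \in S_n$, the equation $P_\pi \cdot m(\p) = m(\q)$ reads $m(\sigma \cdot \p) = m(\q)$: the \emph{labeled} squared edge lengths of $\q$ match those of the relabeled configuration $\sigma \cdot \p$. Since $n \geq d+2$ and $\p$ is generic, the complete labeled squared distances on $K_n$ determine a configuration uniquely up to Euclidean congruence (via the Gram-matrix reconstruction recalled in the proof of Theorem~\ref{thm:Mvariety}). Hence, after the relabeling given by $\sigma$, the configuration $\q$ is congruent to $\p$ and $G = H$ as ordered edge sequences.
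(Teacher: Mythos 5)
Your outer skeleton matches the paper exactly: pass to the permutation $P_\pi$ with $P_\pi\, m(\p)=m(\q)$, use Theorem~\ref{thm:Mvariety} plus Theorem~\ref{thm:prin2} to conclude $P_\pi$ is a linear automorphism of $M_{d,n}$, and finish with Lemma~\ref{lem:mds} once the permutation is known to come from a vertex relabeling. The problem is the middle step, which is the actual mathematical content of the theorem, and your proposed mechanism for it does not work. You want to detect the $(d+1)$-cliques of $K_n$ from the singular locus, arguing that $P_\pi$ preserves $\sing(M_{d,n})$ and must "permute its irreducible components," which would force cliques to go to cliques. But by Theorem~\ref{thm:Mvariety}, $\sing(M_{d,n})$ is exactly $M_{d-1,n}$, the squared measurements of configurations whose \emph{entire} affine span is deficient; this is a single irreducible variety (for $d-1\ge 1$ it is itself a measurement variety, for $d=1$ it is the origin), so it has no decomposition into components indexed by cliques and permuting its components conveys no combinatorial information. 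Relatedly, the locus where some fixed $(d+1)$-subset of the points is affinely degenerate is \emph{not} contained in $\sing(M_{d,n})$ at all (the rest of the configuration can still have full span), so there is no "distinguished subvariety of the singular locus" attached to a clique that the automorphism is obliged to respect. As stated, the forced conclusion "$\pi$ sends the edge set of each $(d+1)$-clique to that of another clique" does not follow.

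The paper uses the singular locus only to descend the chain $M_{d,n}\to M_{d-1,n}\to\cdots\to M_{1,n}$ (Lemma~\ref{lem:1d-Mdn}); the combinatorics is then recovered by a different tool, namely the rigidity-matroid structure of coordinate projections: a generalized-permutation automorphism maps (in)dependent coordinate subspaces to (in)dependent ones (Lemmas~\ref{lem:proj-and-aut} and~\ref{lem:matroid}, via Proposition~\ref{prop:infind}), and the only dependent edge sets of size at most $\binom{d+2}{2}$ are complete graphs $K_{d+2}$ (Proposition~\ref{prop:simplex}) --- note these are $K_{d+2}$'s, not $(d+1)$-cliques, and in the $1$-dimensional reduction they are triangles. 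A Whitney-type argument (Lemma~\ref{lem:vweak-whitney}) then upgrades "triangles to triangles" to a vertex relabeling, and Lemma~\ref{lem:voldet-scale} handles scale (not needed in your setting since $P_\pi$ is an honest permutation). So to repair your proof you would replace the singular-locus-component argument with this dependence/matroid argument, or supply a genuinely new way of seeing clique structure inside $M_{d,n}$. One small additional correction: the classification you need is not "subsumed by Sections~\ref{sec:maps} and~\ref{sec:Ldn-automorphisms}" --- those concern the unsquared variety $L_{d,n}$; the relevant result is Theorem~\ref{thm:bk-linear} in the warm-up section itself.
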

By way of comparison, the labeled setting
is classical.  Since it is useful, we record it as a lemma.
\begin{lemma}[{\cite{YH38}}]\label{lem:mds}
Suppose that $\p$ and $\q$ are configurations
of $n$ points and that for all $N$ edges $ij$ of $K_n$,
we have $|\p_i - \p_j| = |\q_i - \q_j|$.  Then 
$\p$ and $\q$ are congruent.
\end{lemma}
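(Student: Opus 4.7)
The plan is to reduce the lemma to the classical fact that two vector configurations with equal Gram matrices differ by an orthogonal transformation. First, using that a rigid motion can be applied freely to both $\p$ and $\q$, I would translate so that $\p_n = \q_n = \mathbf{0}$. From there, each remaining point $\p_i$ ($i < n$) can be viewed as a vector $\v_i := \p_i - \p_n$ in $\RR^d$, and analogously $\w_i := \q_i - \q_n$.

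Next, I would recover the inner products $\langle \v_i, \v_j\rangle$ purely from the pairwise distances via the polarization identity $\langle \v_i, \v_j\rangle = \tfrac{1}{2}(|\v_i|^2 + |\v_j|^2 - |\v_i - \v_j|^2) = \tfrac{1}{2}(|\p_i - \p_n|^2 + |\p_j - \p_n|^2 - |\p_i - \p_j|^2)$. Because all $N$ edge lengths of $K_n$ agree between $\p$ and $\q$, the identical formula applied to $\q$ yields $\langle \v_i, \v_j\rangle = \langle \w_i, \w_j\rangle$ for every $1 \le i, j \le n-1$. Equivalently, forming the $d \times (n-1)$ matrices $P$ and $Q$ whose columns are the $\v_i$'s and $\w_i$'s respectively, we have $P^\top P = Q^\top Q$.

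The remaining task is to produce an orthogonal matrix $R$ with $Q = RP$. Let $U_P \subset \RR^d$ be the linear span of the columns of $P$, and $U_Q \subset \RR^d$ the linear span of the columns of $Q$. The assignment $\v_i \mapsto \w_i$ preserves all inner products on a spanning set, so it extends (uniquely) to a well-defined linear isometry $T \colon U_P \to U_Q$: indeed, any linear relation $\sum c_i \v_i = 0$ forces $\big|\sum c_i \w_i\big|^2 = \sum c_i c_j \langle \w_i,\w_j\rangle = \sum c_i c_j \langle \v_i,\v_j\rangle = 0$, so the same relation holds among the $\w_i$'s. Since $T$ is an isometry between subspaces of equal dimension inside $\RR^d$, it extends to an orthogonal map $R \in O(d)$ on all of $\RR^d$ (pick orthonormal bases of $U_P^\perp$ and $U_Q^\perp$ and match them).

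Composing $R$ with the earlier translations, we obtain a Euclidean congruence carrying $\p$ to $\q$. The only step with any real content is the extension of an inner-product-preserving linear map to a global orthogonal transformation, and this is entirely standard linear algebra; no algebraic-geometric machinery from the rest of the paper is needed.
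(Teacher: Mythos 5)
Your proof is correct, and it is essentially the classical Young--Householder argument that the paper invokes by citing \cite{YH38} rather than proving the lemma itself: recover the Gram matrix from the edge lengths via polarization after pinning $\p_n=\q_n=\mathbf{0}$, then extend the inner-product-preserving map to an element of $O(d)$. This is the same Gram-matrix machinery the paper sketches in its proof of Theorem~\ref{thm:Mvariety}, so there is nothing to add.
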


In this section we will develop a new proof of 
Theorem~\ref{thm:bkMain}.
This will also allow us to develop machinery and general 
approaches that we will reuse later in the paper. 

\begin{definition}\label{def:linear-automorphism}
A \defn{linear automorphism} of a variety 
$V$ in $\CC^N$ is 
a non-singular linear transform on $\CC^N$
(that is, a non-singular $N\times N$ complex matrix $\A$) that bijectively
maps $V$ to itself.\footnote{In our setting, $V$ will always be a cone, so 
linear isomorphisms (as opposed to affine ones) are natural.}
\end{definition}

As promised in the introduction,
our main focus will be on understanding linear automorphisms
of $M_{d,n}$. We will first use our understanding of the singular
locus of $M_{d,n}$ to reduce to the case of $M_{1,d}$.
Next we will show that any edge permutation
that acts as an automorphism on $M_{1,d}$ must map 
triangles to triangles. From this, we will be able to  conclude that
the edge permutation must arise from a vertex relabeling.

The following will allow us to reduce the $d$-dimensional case
to the $1$-dimensional setting.

\begin{lemma}\label{lem:1d-Mdn}
Any linear  automorphism $\A$ of $M_{d,n}$ is 
a linear  automorphism of $M_{1,n}$.
\end{lemma}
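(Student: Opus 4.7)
The plan is to use the structural description of the singular locus of $M_{d,n}$ from Theorem~\ref{thm:Mvariety} and iterate it downward in the dimension parameter. Recall that that theorem identifies $\sing(M_{d,n})$ with the image under $m(\cdot)$ of the configurations whose affine span has dimension strictly less than $d$; equivalently, via the Gram-matrix isomorphism with rank-constrained symmetric matrices, $\sing(M_{d,n}) = M_{d-1,n}$ as subvarieties of $\CC^N$.

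First I would observe that a linear automorphism $\A$ of a variety $V \subset \CC^N$ (in the sense of Definition~\ref{def:linear-automorphism}) is in particular a biregular self-map of $V$, and therefore preserves the singular locus of $V$ set-theoretically: $\A(\sing(V)) = \sing(V)$. Applied to $V = M_{d,n}$, this together with the identification above gives $\A(M_{d-1,n}) = M_{d-1,n}$. Since $\A$ is a non-singular linear transform of $\CC^N$ sending $M_{d-1,n}$ bijectively to itself, it is, by Definition~\ref{def:linear-automorphism}, a linear automorphism of $M_{d-1,n}$ as well.

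Next I would iterate: the same reasoning applied to $M_{d-1,n}$ gives that $\A$ restricts to a linear automorphism of $M_{d-2,n}$, and so on. Because the standing hypothesis is $n \ge d+2$, one has $n \ge (d-k)+2$ for every $k = 0, 1, \dots, d-1$, so each $M_{d-k,n}$ is a proper subvariety of $\CC^N$ with the expected singular stratum (Theorem~\ref{thm:Mvariety} applies at every step). After $d-1$ iterations, we conclude that $\A$ is a linear automorphism of $M_{1,n}$, as desired.

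The only point needing care is the claim that a linear automorphism of $V$ preserves $\sing(V)$. I would justify this by noting that smoothness (in the algebraic-geometric sense) is an intrinsic property of a point of a variety and is preserved by any isomorphism of varieties; since $\A|_V$ is such an isomorphism, it must send smooth points to smooth points and singular points to singular points, hence $\A(\sing(V)) = \sing(V)$. This is the only nontrivial ingredient; everything else is bookkeeping of the induction and a direct appeal to Theorem~\ref{thm:Mvariety}.
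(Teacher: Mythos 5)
Your proposal is correct and follows essentially the same route as the paper: identify $\sing(M_{d,n})$ with $M_{d-1,n}$ via Theorem~\ref{thm:Mvariety}, use the fact that a bijective linear map between varieties preserves singular loci (the paper's Theorem~\ref{thm:Tmap}, which is exactly your ``smoothness is intrinsic'' point), and induct down to $M_{1,n}$. No gaps; your extra remark that $n \ge d+2$ guarantees $n \ge (d-k)+2$ at every stage is a harmless bit of bookkeeping the paper leaves implicit.
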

This argument is inspired by the main technique 
used in  \cite{cip2}.
\begin{proof}
The singular set of $M_{d,n}$ is $M_{d,n-1}$ by Theorem 
\ref{thm:Mvariety}.  Thus, 
from Theorem~\ref{thm:Tmap}, $\A$ must be a linear 
automorphism of $M_{d-1,n}$.  We then see, by induction, 
that $\A$ is also a linear automorphism of $M_{1,n}$.
\end{proof}

Next we will look at projections of varieties onto lower 
dimensional coordinate spaces.

\begin{definition}\label{def:dependent-axes}
Let $V\subset \CC^N$ be an irreducible affine variety, where
$\{e_1, \ldots, e_N\}$ denotes  the coordinate basis for $\CC^N$.
Let $S\subset \CC^N$ be a linear subspace.  Let
$\pi_{S}$ denote the linear quotient map taking 
$\CC^N$ to $\CC^N/S$.

Let $I$ be a subset of $[N]$, the \defn{coordinate subspace}
$S_I$ is the linear span  $S_I = \lin\{e_i : i\in I\}$.
The map $\pi_{S_{\bar{I}}}(\cdot)$, where $\bar{I}$ is the complement of $I$ in $[N]$, 
is the quotient map that ignores the coordinates not in $I$.

A coordinate subspace $S_I$ is 
\defn{independent} in $V$ if the dimension of $\pi_{S_{\overline{I}}}(V)$
is $|I|$. Otherwise $S_I$ is \defn{dependent} in $V$.
\end{definition}

\begin{lemma}\label{lem:proj-and-aut}
Let $V\subset \CC^N$ be a variety
and $\A$ a linear automorphism of $V$ and
$S\subset \CC^N$ be a linear subspace.

Then the dimension of $\pi_{S}(V)$ is equal to that of 
$\pi_{\A(S)}(V)$.
\end{lemma}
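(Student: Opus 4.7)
The plan is to exploit the fact that $\A$, being a non-singular linear transform of $\CC^N$ (by Definition~\ref{def:linear-automorphism}), induces a commuting diagram of quotients. Since $\A$ is invertible on $\CC^N$, it sends the subspace $S$ isomorphically onto $\A(S)$, and in particular $\dim \A(S) = \dim S$, so the quotients $\CC^N/S$ and $\CC^N/\A(S)$ have the same dimension. Moreover, $\A$ descends to a well-defined linear isomorphism
\[
\bar{\A}\colon \CC^N/S \;\longrightarrow\; \CC^N/\A(S),
\]
characterized by $\bar{\A}\circ \pi_S = \pi_{\A(S)} \circ \A$. This is just the standard universal property: $\pi_{\A(S)}\circ \A$ kills exactly $S$, so it factors uniquely through $\pi_S$, and invertibility of $\A$ together with $\A(S) = \A(S)$ forces the factored map to be a bijection.

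Next I would apply this to $V$. Because $\A$ is a linear automorphism of $V$ we have $\A(V) = V$, so
\[
\pi_{\A(S)}(V) \;=\; \pi_{\A(S)}\bigl(\A(V)\bigr) \;=\; \bar{\A}\bigl(\pi_S(V)\bigr).
\]
Thus $\pi_{\A(S)}(V)$ is the image of $\pi_S(V)$ under a linear isomorphism of ambient spaces.

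Finally, since $\bar{\A}$ is a linear isomorphism, it is a morphism of affine varieties with a morphism inverse, hence it preserves Zariski closures as well as the dimension of any constructible set. In particular $\dim \pi_{\A(S)}(V) = \dim \pi_S(V)$, as required. There is no real obstacle here; the lemma is essentially a bookkeeping consequence of the fact that automorphisms of $V$ come from ambient linear isomorphisms, so anything defined by a projection can be transported through $\A$ without change of dimension.
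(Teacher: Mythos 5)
Your proof is correct and follows essentially the same route as the paper: the paper's one-line argument (that $\pi_{\A(S)}(V)$ is linearly isomorphic to $\pi_S(\A^{-1}(V)) = \pi_S(V)$) uses exactly the induced quotient isomorphism $\bar{\A}$ that you construct explicitly. Your version just spells out the universal-property bookkeeping and the dimension-preservation of a linear isomorphism on constructible sets, which the paper leaves implicit.
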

\begin{proof}
Notice that $\pi_{\A(S)}(V)$ is linearly isomorphic to 
$\pi_{S}(\A^{-1}(V))$, which is  equal to $\pi_{S}(V)$ because
$\A$ is an automorphism of $V$.
\end{proof}

\begin{definition}\label{def:gen-perm}
An $N\times N$ matrix $\P$ is a
\defn{permutation} if each row and 
column has a single non-zero entry, and this entry
is $1$.
A matrix $\P'=\D\P$, 
where $\D$ is diagonal and invertible, is a 
\defn{generalized permutation} if each row and 
column has one non-zero entry.  
A generalized permutation has \defn{uniform scale} 
if it is a scalar multiple of a permutation matrix.  
\end{definition}

\begin{definition}
A generalized
permutation acting on an edge ensemble
is 
\defn{induced by a vertex relabeling}
when it has the same
non-zero pattern as an edge permutation that arises from a vertex relabeling.
\end{definition}

\begin{lemma}\label{lem:matroid}
Let $V\subset \CC^N$ be an irreducible variety,
and $\A$ a linear automorphism of $V$ that is 
a generalized permutation.  
Then $\A$ maps (in)dependent coordinate subspaces to 
(in)dependent ones.
\end{lemma}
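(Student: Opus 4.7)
The plan is to reduce the statement to Lemma~\ref{lem:proj-and-aut} by observing that a generalized permutation carries coordinate subspaces to coordinate subspaces. Specifically, I would first unpack the action of $\A$ using Definition~\ref{def:gen-perm}: since each row and column of $\A$ has exactly one nonzero entry, there is a permutation $\sigma$ of $[N]$ and nonzero scalars $c_i$ with $\A(e_i) = c_i e_{\sigma(i)}$. Therefore for any $I \subseteq [N]$ we have $\A(S_I) = S_{\sigma(I)}$, and because $\sigma$ is a bijection $\A(S_{\bar I}) = S_{\overline{\sigma(I)}}$.

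Next I would apply Lemma~\ref{lem:proj-and-aut} with $S := S_{\bar I}$, giving
\[
\dim \pi_{S_{\bar I}}(V) \;=\; \dim \pi_{\A(S_{\bar I})}(V) \;=\; \dim \pi_{S_{\overline{\sigma(I)}}}(V).
\]
By Definition~\ref{def:dependent-axes}, $S_I$ is independent in $V$ iff the left-hand side equals $|I|$, and $\A(S_I) = S_{\sigma(I)}$ is independent in $V$ iff the right-hand side equals $|\sigma(I)|$. Since $\sigma$ is a bijection, $|\sigma(I)| = |I|$, so the two independence conditions are equivalent, and the same equivalence applies to dependence (the complementary condition). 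This gives the lemma.

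I do not expect any genuine obstacle here: the proof is essentially a bookkeeping corollary of Lemma~\ref{lem:proj-and-aut}. The only point requiring a moment of care is the observation that a generalized permutation permutes the coordinate subspaces among themselves (as opposed to, say, mapping them into more general linear subspaces), which is immediate from the structural definition rather than requiring the automorphism hypothesis. Irreducibility of $V$ is not actually needed for this step, but it is inherited from the ambient setting in which the lemma will be applied.
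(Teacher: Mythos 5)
Your proof is correct and follows essentially the same route as the paper: observe that a generalized permutation sends each coordinate subspace $S_I$ to another coordinate subspace (here $S_{\sigma(I)}$, with $|\sigma(I)|=|I|$), and then apply Lemma~\ref{lem:proj-and-aut} with $S=S_{\bar I}$ to transfer the dimension condition of Definition~\ref{def:dependent-axes}. Your version just spells out the bookkeeping that the paper leaves implicit.
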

\begin{proof}
A generalized permutation maps a 
coordinate subspace $S_I$ (as in Definition~\ref{def:dependent-axes})
to some other coordinate subspace $S_{I'}$, though not necessarily
with uniform scale. Likewise for 
$S_{\bar{I}}$ and $S_{\bar{I'}}$.
The conclusion is now an application of 
Lemma~\ref{lem:proj-and-aut} (using $S_{\bar{I}}$ as $S$).
\end{proof}

Now we will define the 
combinatorial notion of infinitesimally dependent and independent
sets of edges in $d$ dimensions, which will agree with the notion
of dependent and independent coordinates of $M_{d,n}$.

\begin{definition}
\label{def:maps}
Let $d$ be some fixed dimension and $n$ a number of vertices.
Let $E:= \{E_1,\ldots, E_k\}$ be  an edge measurement 
ensemble.
The ordering on the edges of $E$
fixes  an association between each edge in $E$ 
and a coordinate axis of $\CC^{k}$. 
Let $m_E(\p) :=
\la E, \p \ra^2$
be the map from $d$-dimensional 
configuration space to $\CC^{k}$
measuring the squared lengths of the edges
of $E$. 

We denote by $\pi_{\bar{E}}$
the linear map  from $\CC^N$
to $\CC^{k}$ 
that forgets the edges not in $E$, and is consistent with the 
ordering of $E$.
Specifically, we have an association between each edge of
$K_n$ and an index in $\{1,\dots,N\}$, and thus we can think of each 
$E_i$ as simply its index in $\{1,\dots,N\}$. Then,
$\pi_{\bar{E}}$ is defined by the conditions:
$\pi_{\bar{E}}(e_j) = 0$ when $j\in \bar{E}$
and
$\pi_{\bar{E}}(e_j) = e'_i$ when $E_i=j$,
where 
$\{e_1, \ldots, e_N\}$ denotes  the coordinate basis for $\CC^N$
and
$\{e'_1, \ldots, e'_k\}$ denotes  the coordinate basis for $\CC^k$.
We call $\pi_{\bar{E}}$ an \defn{edge forgetting map}.

The map $m_E(\cdot)$ 
is simply the composition of the complex measurement map $m(\cdot)$ 
and $\pi_{\bar{E}}$.

Finally, we denote by $M_{d,E}$ the Zariski closure of the image
of $m_E(\cdot)$ over all $d$-dimensional configurations. 
\end{definition}

\begin{definition}
\label{def:ind}
We say the 
an edge measurement 
ensemble $E$  is \defn{infinitesimally independent} in $d$
dimensions if, starting from a generic complex configuration 
$\p$ in $\CC^d$, we can
\emph{differentially}
vary each of the $|E|$ 
squared lengths independently by
appropriately differentially varying our  configuration $\p$. 
This exactly
coincides with the notion of infinitesimal independence from graph 
rigidity theory~\cite{L70}.

Infinitesimal independence means 
that the image of the differential of $m_E(\cdot)$ at $\p$ is $|E|$-dimensional.
Note that this rank can only drop on some (non-generic)
subvariety of configuration space.

An edge measurement ensemble that is 
not infinitesimally independent in $d$ dimensions is 
called \defn{infinitesimally dependent} in $d$ dimensions.
Note that in this case the rank can never rise to 
$|E|$.
\end{definition}

The following is implicit in the rigidity theory literature. 
\begin{proposition}
\label{prop:infind}
An edge measurement ensemble
$E$ is infinitesimally  independent in $d$ dimensions
iff the image of $m_E(\cdot)$ over all complex 
configurations of $n$ points has dimension $|E|$.  
\end{proposition}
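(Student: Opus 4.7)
The plan is to identify both sides of the claimed equivalence with the same numerical invariant, namely the generic rank of the differential of $m_E$, so that they become tautologically equivalent. I would carry this out in three steps.

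First, observe that $m_E\colon \CC^{dn}\to \CC^{|E|}$ is a polynomial map, so its differential at a point $\p$ is encoded by an $|E|\times dn$ Jacobian matrix whose entries are polynomials with coefficients in $\QQ$ in the coordinates of $\p$. The function $r(\p) := \rank(\ud m_E|_\p)$ is therefore lower semicontinuous in the Zariski topology: for each $k$, the locus $\{\p : r(\p)\ge k\}$ is the complement of the common vanishing of all $k\times k$ minors of the Jacobian, hence Zariski open. Thus there is a maximum value $r^\ast$ attained on a nonempty Zariski open set $U\subset \CC^{dn}$, and because $U$ is cut out by non-vanishing of polynomials with rational coefficients it contains every generic $\p$ in the sense of Definition~\ref{def:genR}. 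This recovers the parenthetical remark in Definition~\ref{def:ind} that $r(\p)$ can only drop on a subvariety.

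Second, I would invoke the standard identity $\dim M_{d,E} = r^\ast$ for a polynomial map from an irreducible variety. The image $m_E(\CC^{dn})$ is constructible and its Zariski closure $M_{d,E}$ is irreducible (being the closure of the image of the irreducible variety $\CC^{dn}$ under a polynomial). Viewing $m_E$ as a dominant map $\CC^{dn}\to M_{d,E}$ and applying the theorem on dimensions of fibers, a generic fiber has dimension $dn - \dim M_{d,E}$. At a smooth preimage of a smooth point in $M_{d,E}$, the kernel of $\ud m_E$ is tangent to this fiber, so $r = \dim M_{d,E}$ there; since this value is at most $r^\ast$ (by definition of $r^\ast$) and at least $r^\ast$ (since the map would otherwise factor through a lower dimensional variety), we conclude $r^\ast = \dim M_{d,E}$.

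Third, combining: infinitesimal independence of $E$ in $d$ dimensions, namely $r(\p) = |E|$ at generic $\p$, is by the first step equivalent to $r^\ast = |E|$, which by the second step is equivalent to $\dim M_{d,E} = |E|$. Both sides are automatically bounded above by $|E|$, so equality of either invariant forces equality of the other. The main subtlety is purely bookkeeping: making sure the notion of ``generic'' appearing in Definition~\ref{def:ind} is compatible with the $\QQ$-generic notion of Definition~\ref{def:genR}, which is exactly what the rationality of the Jacobian's entries guarantees in the first step. No deeper algebraic geometry beyond generic fiber dimension (presumably available in Appendix~\ref{sec:geometry}) is required.
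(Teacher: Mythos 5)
Your first step (rank semicontinuity over $\QQ$, so generic configurations attain the maximal rank $r^\ast$) is fine, and so is the inequality $r^\ast \le \dim M_{d,E}$: your ``otherwise the map factors through a lower-dimensional variety'' remark is made precise by noting that the image of $\ud m_E|_\p$ is killed by the differential of every polynomial vanishing on $M_{d,E}$, hence lies in the Zariski tangent space of $M_{d,E}$ at $m_E(\p)$, which has dimension $\dim M_{d,E}$ once $\p$ is chosen generic enough that $m_E(\p)$ avoids the ($\QQ$-defined) singular locus. That is a legitimate substitute for the paper's appeal to the constant rank theorem. The genuine gap is in the reverse inequality $r^\ast \ge \dim M_{d,E}$, i.e.\ in the sentence ``at a smooth preimage of a smooth point in $M_{d,E}$, the kernel of $\ud m_E$ is tangent to this fiber, so $r=\dim M_{d,E}$ there.'' Tangency only gives the containment $T_\p(\mathrm{fiber})\subseteq \ker \ud m_E|_\p$; combined with the fiber-dimension theorem this yields $\dim\ker \ge dn-\dim M_{d,E}$, i.e.\ $r\le \dim M_{d,E}$ --- the inequality you already had, not the one you need. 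The claimed equality is false under your stated hypotheses: for $f(x,y)=x^2$ from $\CC^2$ to $\CC$, the value $0$ is a smooth point of the closure of the image, every point of the fiber $\{x=0\}$ is a smooth point of that fiber and of $\CC^2$, yet $\ker \ud f$ there is $2$-dimensional and the rank is $0<1$.

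What is missing is exactly the characteristic-zero ingredient the paper cites as algebraic Sard / generic smoothness: for a dominant morphism of irreducible complex varieties there is a dense open subset of the \emph{target} over which the differential is surjective onto the tangent space of the target (equivalently, the kernel equals the fiber tangent space there). Applying this over a generic value of $m_E$ produces points where the rank equals $\dim M_{d,E}$, giving $r^\ast \ge \dim M_{d,E}$ and closing the argument. So your proof becomes correct once that citation replaces the tangency claim; as written, the step fails because the genericity required is genericity of the value, not smoothness of the fiber point and of the image point. With that repair, your route is essentially the paper's (constant rank plus Sard), with the constant-rank half replaced by the slightly more algebraic tangent-space containment argument.
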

\begin{proof}[Proof sketch]
From 
the constant rank theorem (as used in ~\cite[Proposition 2]{asimow}),
the dimension of the image of $m_E(\cdot)$ 
least as big as the rank of the  differential at a generic $\p$.
Sard's Theorem~\cite[Theorem 14.4]{harris} tells us that inverse image of 
almost every point 
in the image consists entirely
of configurations $\p$, where the differential
has rank at least as big as the
the dimension of the image of $m_E(\cdot)$.
\end{proof}

\begin{remark}\label{rem:complex-rigid}
These notions are usually studied in the real setting, but the
tools used in the proof sketch above (constant rank and algebraic Sard)
work the same way in the complexified setting.  Complexification 
is used to study rigidity problems in, e.g., \cite{ST10,cgr,OP07,ciprian}.
\end{remark}

The following is a standard result from rigidity theory
(see, e.g., \cite[Corollary 2.6.2]{GSS}).  We give a 
proof for completeness.
\begin{proposition}
\label{prop:simplex}
Let $E$ be an edge measurement ensemble.
Suppose $|E| \le \binom{d+2}{2}$ and $E$ is 
infinitesimally dependent
in $d$ dimensions.
Then $|E| = \binom{d+2}{2}$ and
$E$ consists of the edges of a  $K_{d+2}$ subgraph (in some order).
\end{proposition}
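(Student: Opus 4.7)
The plan is to extract a rigidity circuit $C\subseteq E$ (a minimal infinitesimally dependent subset) and show that any such circuit with at most $\binom{d+2}{2}$ edges must be exactly the edge set of a $K_{d+2}$. Since $C\subseteq E$ and the hypothesis $|E|\le\binom{d+2}{2}$ will force $|E|=|C|$, this gives $E=C=K_{d+2}$, which is what we want.

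The key technical step, and the main obstacle, is a degree lemma: every vertex $v$ of a rigidity circuit $C$ must have $\deg_C(v)\ge d+1$. Suppose toward a contradiction that some $v$ has $\deg_C(v)=j\le d$, and split $C=F\sqcup R$ with $F$ the edges of $C$ incident to $v$ and $R$ the rest. At a generic complex configuration $\p$, the Jacobian of $m_C$ has a block triangular form in which the columns indexed by $v$'s $d$ coordinates meet only the $F$-rows nontrivially. The $F$-block is built from the $j$ vectors $\p_v-\p_{u_i}$, which are linearly independent in $\CC^d$ for generic $\p$ whenever $j\le d$, so it has full row rank $j$. Since $R$ is a proper subset of the circuit $C$ it is independent, so its Jacobian has rank $|R|$. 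The block structure forces the ranks to add: any linear relation among the rows, when restricted to $v$'s columns, kills the $F$-rows first and then the $R$-rows. This gives Jacobian rank $\ge j+|R|=|C|$, contradicting $\rank=|C|-1$ which is required for $C$ to be a circuit (via Proposition~\ref{prop:infind} applied to $C$ and to each $C\setminus\{e\}$).

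Once minimum degree $d+1$ is established, handshaking yields $2|C|\ge(d+1)k$ where $k=|V(C)|$. Combined with $|C|\le\binom{d+2}{2}=(d+1)(d+2)/2$, this forces $k\le d+2$. For the reverse bound, the remark right after Theorem~\ref{thm:Mvariety} records $M_{d,k}=\CC^{\binom{k}{2}}$ whenever $k\le d+1$; any ensemble on at most $d+1$ vertices therefore has image of full dimension $|E|$, and Proposition~\ref{prop:infind} certifies it as independent, so it cannot contain a circuit. Hence $k=d+2$, and plugging back gives $|C|\ge\binom{d+2}{2}\ge|C|$, so $|C|=\binom{d+2}{2}$ and $C$ is the complete edge set of a $K_{d+2}$ on $V(C)$.

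Finally, $C\subseteq E$ combined with $|E|\le\binom{d+2}{2}=|C|$ gives $E=C$, which is the claimed $K_{d+2}$ subgraph of $K_n$. The only subtle step is the degree lemma in the second paragraph, which needs both the generic linear independence of the $j\le d$ vectors $\p_v-\p_{u_i}$ and the clean block decoupling across $v$'s coordinate columns; the remaining counting and reductions are purely combinatorial.
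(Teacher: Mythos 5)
Your proof is correct and follows essentially the same route as the paper's: pass to a minimal infinitesimally dependent subset, rule out a vertex of degree at most $d$ using the generic independence of the $\le d$ edge-direction vectors at that vertex together with the fact that moving it leaves all other edge lengths unchanged, and then finish with the degree/edge count forcing $K_{d+2}$. The only difference is presentational: the paper runs the key step dually (a nonzero equilibrium stress must vanish on the coordinates of the edges at the low-degree vertex, producing a dependence on the remaining edges and contradicting minimality), whereas you argue primally via rank additivity of the block-triangular Jacobian; the two formulations are equivalent.
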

\begin{proof}
Assume, w.l.o.g., that $E$ is infinitesimally
dependent and inclusion-wise minimal
with this property.  Let $\p$ be generic.
If $E$ does not consist of the edges of a $K_{d+2}$ subgraph, then it has a vertex $v$ 
of degree at most $d$.  Since $\p_v$ has $d$ infinitesimal degrees of 
freedom, the $\le d$ squared lengths of each edge in 
edge set $E'$ incident on $v$ can be 
differentially varied independently.  

In particular, the coordinate directions in $\CC^{|E|}$ associated with
$E'$ are in the image $X$ of $dm_E(\cdot)$ at $\p$.  
Because $X$ is not all of $\CC^{|E|}$, linear duality provides 
a non-zero $\omega\in \left(\CC^{|E|}\right)^*$ vanishing on $X$.
(called an equilibrium stress  in the rigidity literature).
By the above observation $\omega(e_j) = 0$ for each $j\in E'$.

This means that $\omega$, also acts on $\left(\CC^{|E|}/S_{E'}\right)^*$, 
and vanishes on 
$X/S_{E'}$.
By linear duality once more, this means that $E\setminus E'$ 
is infinitesimally dependent, contradicting the minimality of $E$.
\end{proof}

We will use this result many times in the paper, but in the
present section
all we will use is the
$1$-dimensional case where the infinitesimally dependent triples must correspond to 
triangles.

Next we look at edge permutations that preserve 
cycles, and specifically triangles:

\begin{definition}\label{def:cycle-isomorphism}
Let $\Gamma$ and $\Delta$ be simple graphs and $\P$ a bijection 
between the edge sets.  The map $\P$ is a \defn{cycle 
isomorphism} if the image of any cycle (and only cycles)
in $G$ is a
cycle in $H$.
A \defn{triangle isomorphism} is 
like a cycle isomorphism, but relaxed to only 
require that exactly triangles are mapped to triangles.
Any triangle isomorphism of $K_n$ is necessarily a cycle
isomorphism by considering the dimension 
of the cycle space (see, e.g., \cite{O11} for definitions).
\end{definition}
\begin{lemma}\label{lem:vweak-whitney}
For any $n\ge 3$, any cycle automorphism of $K_n$ 
is induced by a vertex relabeling.
\end{lemma}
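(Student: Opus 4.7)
Proof plan for Lemma~\ref{lem:vweak-whitney}.

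The case $n = 3$ is immediate: $K_3$ consists of three edges forming a single triangle, and each of the six permutations of these edges is realized by the corresponding permutation of the three vertices. So assume $n \ge 4$. By definition, a cycle isomorphism $\P$ sends any $k$-cycle to a $k$-cycle (the image of a cycle is a cycle, and bijections on edges preserve cardinality). In particular, $\P$ maps the set of triangles of $K_n$ bijectively to itself.

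The key combinatorial observation is that two distinct edges $e = \{a,b\}$ and $f$ of $K_n$ share a vertex if and only if they are contained in a common triangle: if $f = \{a, c\}$ with $c \ne b$, then $\{e, f, \{b,c\}\}$ is a triangle, and conversely, disjoint edges clearly share no triangle. Since $\P$ permutes triangles, it preserves the ``shares a vertex'' relation on edges, i.e., it induces an automorphism of the line graph $L(K_n)$. For each vertex $v$, let $S_v$ denote the \emph{star} at $v$, the set of $n-1$ edges incident to $v$; each $S_v$ is a clique in $L(K_n)$. The plan is to show that $\P$ permutes the stars $\{S_1, \dots, S_n\}$, from which we read off a vertex relabeling $\sigma$: since $\{u,v\} = S_u \cap S_v$ is the unique edge shared by two distinct stars, $\P$ must send $\{u, v\}$ to $\{\sigma(u), \sigma(v)\}$.

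It remains to prove that $\P$ permutes stars. The maximal cliques of $L(K_n)$ are exactly the stars (of size $n-1$) and the triangle-cliques coming from triangles of $K_n$ (of size $3$). For $n \ge 5$, stars are strictly larger than triangle-cliques, so they are characterized as the maximum cliques and are preserved under any line graph automorphism. For $n = 4$, both kinds of maximal clique have size $3$, so size alone is insufficient; however, we still know that $\P$ maps the four triangle-cliques to themselves (because $\P$ is a cycle isomorphism, not merely a line-graph automorphism), and since $L(K_4)$ has exactly eight triangles partitioned into the four triangle-cliques and the four stars, $\P$ must permute the four stars among themselves.

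The only real subtlety is the $n = 4$ case, where the additional symmetries of $L(K_4) = K_{2,2,2}$ that do not come from $S_4$ must be excluded; this is precisely where we use that $\P$ preserves cycles and not just the line graph. Once stars are permuted, the induced vertex relabeling works uniformly for all $n \ge 4$, completing the proof.
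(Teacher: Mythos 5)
Your proof is correct, but it reaches the key intermediate fact --- that a cycle automorphism of $K_n$ permutes the vertex stars --- by a genuinely different route than the paper. You observe that two distinct edges share a vertex exactly when they lie in a common triangle, so a cycle automorphism (which sends triangles to triangles) induces an automorphism of the line graph $L(K_n)$; you then identify the stars among the maximal cliques of $L(K_n)$, by size when $n\ge 5$, and for $n=4$ by noting that the eight $3$-cliques of $L(K_4)=K_{2,2,2}$ split into four triangle-cliques and four stars, and the triangle-cliques are already known to be permuted, so the stars must be as well. The paper instead argues via wheel subgraphs: in a wheel $W_{n-1}$ the rim cycle is forced to itself, so the hub is fixed and the spokes are permuted, and since every vertex star of $K_n$ contains a $W_{n-1}$, stars are permuted; the $K_4$ case is dispatched by direct computation. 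Your line-graph/maximal-clique argument is the classical Whitney-style one; its main advantage is that it replaces both the somewhat terse wheel claim and the unexplained $K_4$ computation with explicit structural reasoning, the $n=4$ counting argument being exactly where the exceptional symmetries of the octahedron (those not coming from $S_4$) are excluded by cycle preservation. The paper's wheel argument is shorter and avoids introducing line graphs and their clique structure, at the cost of leaving the base case and the rim-fixing step to the reader. Both proofs then conclude identically: once stars are permuted, the relabeling $\sigma$ is read off from $\{u,v\}=S_u\cap S_v$, and the automorphism is induced by $\sigma$.
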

This is a consequence of Whitney's famous ``$2$-isomorphism theorem''
\cite{W33}, but the special case here is easy to establish.
\begin{proof}
The statement is clear for the triangle $K_3$ and $K_4$ is direct
computation.  
For $n\ge 4$ consider the wheel graph $W_{n-1}$.  Any cycle 
automorphism $\P$ must fix the ``rim'' cycle, so the hub vertex
must be a fixed point. Thus, the action $\P$ permutes the spoke 
edges, and, consequently, the rim vertices.  The general case
follows, since every vertex star in $K_n$ has a $W_{n-1}$
subgraph,
so any cycle automorphism of $K_n$ permutes vertex stars and 
thus the vertices.
\end{proof}

Since we wish to deal with generalized permutations, we will
also need to reason about scaling of individual edges.

\begin{lemma}\label{lem:voldet-scale}
Let $m_{12}$, $m_{13}$ and $m_{23}$ be the squared
edge lengths of a $1$-dimensional triangle, and
suppose that $s_{12}$, $s_{13}$ and $s_{23}$
are scalars such that the simplicial 
volume determinant 
\[
\begin{split}
\det
\begin{pmatrix}
2m_{13}& (m_{13} + m_{23} - m_{12})\\
(m_{13} + m_{23} - m_{12})& 2m_{23}\\
\end{pmatrix} = & \\  & 2 (m_{12}m_{13} + m_{12}m_{23} + m_{13}m_{23})
- (m^2_{12} + m^2_{13} + m^2_{23})
\end{split}
\]
(see Section~\ref{sec:varieties}) is mapped to a 
multiple of itself under the scaling $m_{ij}\mapsto s_{ij}m_{ij}$.  Then the $s_{ij}$ are all equal.
\end{lemma}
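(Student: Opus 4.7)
The plan is to reduce the lemma to a direct polynomial coefficient comparison. Writing $D(m_{12},m_{13},m_{23})$ for the simplicial volume determinant, the substitution $m_{ij}\mapsto s_{ij}m_{ij}$ sends each diagonal monomial $m_{ij}^2$ to $s_{ij}^2\, m_{ij}^2$ and each mixed monomial $m_{ij}m_{kl}$ to $s_{ij}s_{kl}\, m_{ij}m_{kl}$. Since the six monomials $m_{12}^2,m_{13}^2,m_{23}^2,m_{12}m_{13},m_{12}m_{23},m_{13}m_{23}$ are linearly independent in $\CC[m_{12},m_{13},m_{23}]$, the equality $D(s_{12}m_{12},s_{13}m_{13},s_{23}m_{23}) = \lambda\, D(m_{12},m_{13},m_{23})$ can be read off monomial by monomial.

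Comparing the coefficients of $m_{ij}^2$ and $m_{ij}m_{kl}$ on both sides yields the system
\[
s_{12}^2 \;=\; s_{13}^2 \;=\; s_{23}^2 \;=\; \lambda,
\qquad s_{12}s_{13} \;=\; s_{12}s_{23} \;=\; s_{13}s_{23} \;=\; \lambda.
\]
I would then split into two cases according to the value of $\lambda$. If $\lambda = 0$, then every $s_{ij}^2=0$, so $s_{12}=s_{13}=s_{23}=0$ and the conclusion holds trivially. If $\lambda\neq 0$, then each $s_{ij}$ is nonzero and $s_{ij}^2 = s_{ij}s_{kl}$ forces $s_{kl} = s_{ij}$ for every pair, giving $s_{12}=s_{13}=s_{23}$.

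The only subtle point to guard against is the implicit assumption that the scaling really does produce a polynomial identity rather than a relation holding only on the triangle variety. However, since the statement is phrased as a condition on the determinant itself (as a polynomial expression in the $m_{ij}$), the coefficient comparison is legitimate; alternatively, one can observe that $D$ is an irreducible polynomial cutting out the hypersurface $M_{1,3}\subset\CC^3$, so a generalized-permutation automorphism of $M_{1,3}$ must pull $D$ back to a scalar multiple, which is exactly the hypothesis in the form needed. Thus the computation above is the whole story, and the lemma follows with essentially no obstacle beyond bookkeeping.
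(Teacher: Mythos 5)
Your proof is correct and follows essentially the same route as the paper: both treat the hypothesis as a polynomial identity and compare coefficients of the quadratic monomials to force $s_{ij}^2 = s_{ij}s_{kl}$ for all pairs. The paper merely shortcuts the bookkeeping by specializing $m_{23}=0$, while you compare all six coefficients and split on $\lambda$; your closing remark about why the hypothesis may be read as a polynomial identity (irreducibility of the determinant cutting out $M_{1,3}$) is a welcome extra precision but not a different argument.
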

\begin{proof}
The hypothesis means that the desired statement holds for any specialization 
of the $m_{ij}$.  Consider the case where $m_{23} = 0$.
The presence of the monomials $m_{12}^2$ and 
$m_{12}m_{13}$ then imply that $s_{12}^2 = s_{12}s_{13}$, that is, $s_{12} = s_{13}$.  Continuing the same way, 
we see that $s_{12} = s_{23}$. 
\end{proof}

We can now present a proof of
the following slight generalization  of
\cite[Lemmas 2.3 and 2.4]{BK1} (that deals 
with generalized permutations instead of 
permutations).  Minor modifications of 
the arguments in \cite{BK1}, which are different
from ours, can yield the same result. 
We will use this generalization in 
Section~\ref{sec:bign} below. This result forms the core of 
Boutin and Kemper's result.
\begin{theorem}[{\cite[Lemmas 2.3 and 2.4]{BK1}}]\label{thm:bk-linear}
Suppose that $\A$ is a generalized permutation
that is a linear automorphism of $M_{d,n}$.  Then 
$\A$ is induced by a vertex relabeling and has 
uniform scale.
\end{theorem}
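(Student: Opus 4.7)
The plan is to chain the preceding lemmas together, first specializing to one dimension and then separately pinning down the underlying edge permutation and the diagonal scales. Write $\A = \D\P$ with $\P$ a permutation matrix acting on the edge-coordinates and $\D$ a non-singular diagonal matrix with entries $s_e$ indexed by the edges $e$ of $K_n$.

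First, I would apply Lemma~\ref{lem:1d-Mdn} to reduce to the case $d=1$: a generalized-permutation automorphism of $M_{d,n}$ is also an automorphism of $M_{1,n}$. All subsequent arguments take place there, where every infinitesimally dependent triple of edges is easy to identify.

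Second, I would show that $\P$ is induced by a vertex relabeling. By Lemma~\ref{lem:matroid}, a generalized-permutation automorphism preserves the (in)dependence of coordinate subspaces, so $\P$ must send dependent triples of edges to dependent triples and independent to independent. Specializing Proposition~\ref{prop:simplex} to $d=1$ says that a triple of edges is infinitesimally dependent exactly when it is the edge set of a triangle in $K_n$. Thus $\P$ is a triangle automorphism of $K_n$, which by Definition~\ref{def:cycle-isomorphism} is automatically a cycle automorphism of $K_n$, and Lemma~\ref{lem:vweak-whitney} then gives that $\P$ is induced by a relabeling of the vertices.

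Third, I would force $\D$ to have uniform scale. Fix any triangle $T$ of $K_n$ with edges $T_1, T_2, T_3$ and let $T' := \P^{-1}(T)$, also a triangle by the previous step, with edges $T'_i := \P^{-1}(T_i)$. The $T'$-coordinate projection of $M_{1,n}$ is exactly $M_{1,3}$, cut out by the simplicial volume determinant $f$ from Section~\ref{sec:varieties}. A direct computation of the action of $\A$ on coordinates shows that the $T$-coordinates of $\A(\bm)$ are the $T'$-coordinates of $\bm$ scaled by $(s_{T'_1}, s_{T'_2}, s_{T'_3})$. Demanding that $\A(\bm)\in M_{1,n}$ for every $\bm\in M_{1,n}$ then forces $f(s_{T'_1} u_1, s_{T'_2} u_2, s_{T'_3} u_3)$ to vanish on $M_{1,3}$, hence to be a scalar multiple of $f(u_1, u_2, u_3)$. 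Lemma~\ref{lem:voldet-scale} then yields $s_{T'_1} = s_{T'_2} = s_{T'_3}$. Since $n\ge 3$, any two edges of $K_n$ can be linked by a chain of triangles (two edges sharing a vertex already lie in a common triangle, and a disjoint pair can be bridged through an edge incident to both), so all the $s_e$ coincide and $\A$ has uniform scale.

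The main obstacle is the bookkeeping in the third step: one must carefully track how $\A$ acts on the coordinates of each triangle and verify that preservation of $M_{1,n}$ produces precisely the polynomial identity that triggers Lemma~\ref{lem:voldet-scale}. The first two steps are otherwise direct applications of the already-assembled toolbox.
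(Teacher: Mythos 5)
Your proposal is correct and follows essentially the same route as the paper: reduce to $M_{1,n}$ via Lemma~\ref{lem:1d-Mdn}, use Lemma~\ref{lem:matroid} (together with Proposition~\ref{prop:infind}, which you use implicitly) and Proposition~\ref{prop:simplex} plus Lemma~\ref{lem:vweak-whitney} to get the vertex relabeling, and then invoke Lemma~\ref{lem:voldet-scale} triangle by triangle, chaining overlapping triangles for uniform scale. The only cosmetic difference is that the paper factors $\A=\D\P$ and commutes $\D$ past the edge-forgetting map, whereas you track the coordinates of $T$ versus $\P^{-1}(T)$ directly, which amounts to the same computation.
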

\begin{proof}
First we check that $\A$ is induced by a vertex relabeling.
We can  apply Lemma~\ref{lem:1d-Mdn} 
to reduce to the $1$-dimensional case.  Specialized
to $M_{1,n}$, Lemma~\ref{lem:matroid} and Proposition~\ref{prop:infind}
imply that infinitesimally
dependent edge sets in the rigidity theoretic sense 
of Definition~\ref{def:ind} are mapped to each other.  
Proposition~\ref{prop:simplex} then implies that
$\A$ is a triangle isomorphism composed with an edge scaling.
Lemma~\ref{lem:vweak-whitney} then tells us that 
$\A$ is induced by a vertex relabeling.

Next we need to prove uniform scale.  
Let $\pi_{\bar{K}}$ be an edge forgetting map 
that ignores all of the edges in the complement
of an ensemble $K$, consisting of the edges of a fixed
triangle. Under any ordering of the edges of $K$, we have 
$\pi_{\bar{K}} (M_{1,n}) = M_{1,3}$.
(which is cut out from $\CC^3$ by the simplicial 
volume determinant as in Lemma~\ref{lem:voldet-scale}).

We know that we can factor $\A$ into 
$\D\P$, where $\D$ is diagonal and 
$\P$ is a 
permutation induced by a vertex relabeling.
Since a vertex relabeling is a linear automorphism of $M_{1,n}$, then so too is $\D$.

Since $\D$ is diagonal, and $\pi_{\bar{K}}$ is an
edge forgetting map, then $\pi_{\bar{K}} \D = \D' \pi_{\bar{K}}$ 
for an appropriate $3\times 3$ diagonal scaling matrix $\D'$, 
 making 
$\D'$ an automorphism of $M_{1,3}$. So it has to 
send the simplicial volume determinant to a multiple of itself.
This is the situation of Lemma~\ref{lem:voldet-scale},
and we conclude that the scaling on each triangle is uniform.

That $\A$ has a uniform scale then follows from applying 
the above argument repeatedly to overlapping triangles
until we have determined the scale on every edge.
\end{proof}

The proof of this section's main theorem now follows directly.

\begin{proof}[Proof of Theorem~\ref{thm:bkMain}]
By assumption there exists a
edge permutation matrix
$\P$ such that  $\v = \P(m(\p))$ and 
some
edge permutation matrix $\Q$ such that
$\v = \Q(m(\q))$, and thus
$\Q^{-1}\v$ is in $M_{d,n}$.
Thus letting $\A:= \Q^{-1}\P$, 
also a edge permutation, we have 
$\A(m(\p)) \in M_{d,n}$.

i) 
From Theorem~\ref{thm:prin2},
we know that $\A(M_{d,n}) = M_{d,n}$, i.e, $\A$ is a linear automorphism
of $M_{d,n}$.

ii)
From 
Theorem~\ref{thm:bk-linear},
we can conclude that $\A$ arises from a 
vertex relabeling under which 
$G = H$.

iii)
After relabeling, we are now in the situation of Lemma~\ref{lem:mds},
and so we conclude that $\q$ is congruent to $\p$.
\end{proof}

\begin{remark}\label{rem:matcomp}
A related problem to the the unlabeled rigidity problem for $K_n$ is to 
recover a generic rank $d$ symmetric matrix from its unlabeled set of entries.  This 
seems easier to reason about.  Let $\CS^{n}_d$ be the variety of symmetric, $n\times n$ matrices
of rank at most $d$.  
The 
linear automorphisms of $\CS^{n}_d$ are of the ``factored'' form $\trans{\B}\G\B$, where $\G\in \CS^{n}_d$ 
and $\B$ is any $n\times n$ non-singular matrix (see, e.g., \cite{cip2}).
This factored form implies that if a linear automorphism 
of  $\CS^{n}_d$
is an edge permutation, it is also a vertex relabeling.  Thus the fixed rank ``matrix completion''
(see, e.g., \cite{CS,KTT}) version of 
Theorem~\ref{thm:bkMain} (and an appropriate generalization
to the non-symmetric case) follows immediately 
through this.

Unfortunately, this line of thinking does not seem to lead directly 
to Theorem~\ref{thm:bkMain}.  The issue is that the linear 
isomorphism $\varphi$ between $M_{d,n}$ and $\CS^{n-1}_d$, described in Theorem~\ref{thm:Mvariety},
does not  imply 
that linear automorphisms of $\CS^{n-1}_d$ 
have a factored form when acting on $M_{d,n}$, where
 points in $\CC^N$ are expressed (in so-called 
distance matrix form)
as symmetric $n\times n$
matrices with zero diagonal entries.
In fact,
there are linear automorphisms of $M_{1,3}$
(necessarily not generalized permutations)
which do not have such a factored form.
\end{remark}

\subsection{Trilateration}

Now we wish to extend this result to the case where our edge 
measurement ensemble 
is not complete, but does allow for trilateration in the sense of
Definition~\ref{def:ensemble}. For this case 
we will need to restrict
this discussion to $d\ge 2$, as we have already 
encountered one-dimensional counterexamples  in 
Section~\ref{sec:defs}.
The key idea is to use Theorem~\ref{thm:bkMain}, but only, 
and iteratively, applied to $K_{d+2}$
subsets of $K_n$. This will lead to the next
theorem. \begin{theorem}
\label{thm:triBK2}
Let $d \ge 2$. 
Let $\p$ be a generic configuration of $n\ge d+2$ points. Let
$\v= \la G, \p \ra^2$ where 
$G$
is an edge measurement ensemble
that allows for 
trilateration. 

Suppose there is a 
configuration $\q$, 
also of $n$ points,
along with 
an edge
measurement ensemble $H$,
such that 
$\v=\la H,\q \ra^2$.

Then 
there is a vertex relabeling  of $\q$ such that,
up to congruence,
$\q=\p$.
Moreover, under this vertex relabeling,
$G=H$.
\end{theorem}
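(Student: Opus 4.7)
I will proceed by induction on the number of reconstructed vertices, following the trilateration order of $G$, and at each step apply Theorem~\ref{thm:bkMain} to a $(d{+}2)$-vertex subconfiguration. The global input is Theorem~\ref{thm:prin1} (push-forward of a generic point), which upgrades the data $\v = \pi_G(m(\p))^2 = \pi_H(m(\q))^2$ into a variety-level identity from which we can extract combinatorial structure on $H$. Here, $\pi_G, \pi_H\colon \CC^N\to\CC^k$ are the coordinate projections that extract the edges of $G$ and $H$ respectively.

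\textbf{Step 1 (globalization).} Since $\p$ is generic, $m(\p)$ is generic in $V := M_{d,n}$ by Theorem~\ref{thm:Mvariety}. Applying Theorem~\ref{thm:prin1} with $\E := \pi_G$ and $W := \pi_H(M_{d,n})$ gives $\pi_G(M_{d,n}) \subseteq \pi_H(M_{d,n})$. Because $G$ allows trilateration, $\pi_G$ is generically finite-to-one on $M_{d,n}$, so $\dim \pi_G(M_{d,n}) = dn - C$; combined with $\dim \pi_H(M_{d,n}) \le dn - C$ and irreducibility of both, the two images coincide as varieties. So every polynomial that vanishes on $\pi_G(M_{d,n})$ also vanishes on $\pi_H(M_{d,n})$.

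\textbf{Step 2 (base step).} The base $K_{d+2}$ of $G$ on $\{1,\ldots,d+2\}$ occupies some $D$ positions $I_0 \subseteq \{1,\ldots,k\}$; its Cayley-Menger polynomial (see Theorem~\ref{thm:Mvariety}) vanishes on $\pi_G(M_{d,n}) = \pi_H(M_{d,n})$. Pulling back through $\pi_H$, this same polynomial, read on the edges of $H$ at positions $I_0$, vanishes identically over $M_{d,n}$; by Proposition~\ref{prop:infind} and Proposition~\ref{prop:simplex} those $D$ edges must form a $K_{d+2}$ subgraph on some $T' \subseteq \{1,\ldots,n\}$ with $|T'| = d+2$. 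Applying Theorem~\ref{thm:bkMain} to the generic $(d{+}2)$-subconfiguration $\p_{\{1,\ldots,d+2\}}$ and $\q_{T'}$ yields a vertex bijection $\sigma_0\colon \{1,\ldots,d+2\} \to T'$ that realizes congruence and matches the $D$ edges of $G$ at $I_0$ with those of $H$ at $I_0$.

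\textbf{Step 3 (inductive step, and main obstacle).} Suppose $\sigma\colon \{1,\ldots,j\} \to T_j$ has been constructed for some $j \ge d+2$, with matching edges on these vertices. The trilateration sequence adds $d+1$ edges $\{i_l, j+1\}$ at positions $p_1,\ldots,p_{d+1}$ of $\v$. The crux, and the main obstacle, is to show that the $d+1$ edges $H_{p_1},\ldots,H_{p_{d+1}}$ of $\q$ all attach to a single new vertex $j' \in \{1,\ldots,n\}\setminus T_j$. I do this by an \emph{augmentation trick}: let $G_+$ be $G$ together with the $C$ edges among $\{i_1,\ldots,i_{d+1}\}$ (whose $\p$-squared-lengths are already determined from the reconstruction), and let $H_+$ be $H$ together with the $C$ edges of $\q$ among $\{\sigma(i_1),\ldots,\sigma(i_{d+1})\}$; by the inductive hypothesis, the augmented coordinates of $\v_+$ agree on both sides. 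Now $G_+$ still allows trilateration, so Steps 1 and 2 may be repeated for $G_+$ and $H_+$ using the full $K_{d+2}$ on $\{i_1,\ldots,i_{d+1},j+1\}$ contained in $G_+$; the conclusion is that the $D$ positions in $H_+$ corresponding to this $K_{d+2}$ form a $K_{d+2}$ in $\q$. Since $C$ of those positions are, by construction, the already-identified edges among $\{\sigma(i_l)\}$, the remaining $d+1$ positions $p_1,\ldots,p_{d+1}$ must form the missing star: a set of edges from one common new vertex $j'$ to the $\sigma(i_l)$. Theorem~\ref{thm:bkMain} applied to this $(d{+}2)$-subconfiguration then extends $\sigma$ by $\sigma(j+1) := j'$, preserving the inductive hypothesis. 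After $n-d-2$ iterations, $\sigma$ becomes a bijection $\{1,\ldots,n\} \to \{1,\ldots,n\}$ under which $\q$ is congruent to $\p$ and $H = G$.
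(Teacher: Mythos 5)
Your proposal is correct in substance and shares the paper's overall architecture (a trilateration induction in which Theorem~\ref{thm:bkMain} is applied to $(d{+}2)$-point subconfigurations, with the known $C$ edges among the attaching $K_{d+1}$ adjoined to both ensembles at each step), but the mechanism you use to extract combinatorial information about $H$ is genuinely different. The paper first proves that $m(\q)$ is itself generic in $M_{d,n}$ (Lemma~\ref{lem:triBK2}, using infinitesimal rigidity of $G$), precisely so that Proposition~\ref{prop:ind} and Lemma~\ref{lem:noSim} can be applied on the $\q$ side, and Lemma~\ref{lem:extend} then upgrades congruence to equality. You keep all genericity on the $\p$ side: Theorem~\ref{thm:prin1} applied to the generic $m(\p)$ gives $\pi_{G_+}(M_{d,n})$ contained in the Zariski closure of $\pi_{H_+}(M_{d,n})$ (note \ref{thm:prin1} needs a variety, so work with closures), the dimension count forces the closures to coincide, and pulling the simplicial-volume (Cayley--Menger) determinant back through $\pi_{H_+}$ forces, via Propositions~\ref{prop:infind} and~\ref{prop:simplex}, the relevant $D$ edges of $H_+$ to form a $K_{d+2}$; your augmentation then pins the shared $K_{d+1}$ by construction, so Lemma~\ref{lem:noSim} is never needed to locate it. This buys a cleaner treatment of the non-generic $\q$ (no analogue of Lemma~\ref{lem:triBK2} is required); what the paper's route buys is a packaged consistency statement (Proposition~\ref{prop:ind}) whose unsquared analogue is exactly what is reused for the much harder path/loop case later.

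Three details should be filled in. First, in Step 3, before invoking Proposition~\ref{prop:simplex} you must rule out that some $H_{p_a}$ coincides with one of the $C$ adjoined edges among $\{\sigma(i_1),\dots,\sigma(i_{d+1})\}$ (otherwise the $D$ positions carry fewer than $D$ distinct edges and the proposition does not apply as stated); this follows because the corresponding two entries of $\v_+$ are squared lengths of \emph{distinct} edges of the generic $\p$ and hence unequal, while coinciding edges of $\q$ would give equal values. Second, that $j'$ is genuinely unvisited, and that the extension of $\sigma$ is consistent with the already-fixed congruence, both need a word: a congruence agreeing with the old one on the $d+1$ affinely spanning points $\p_{i_1},\dots,\p_{i_{d+1}}$ is that congruence (Lemma~\ref{lem:extend}), and if $j'$ were already visited two distinct points of the generic $\p$ would coincide. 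Third, the closing claim $H=G$ needs the observation that distinct edges of a generic configuration have distinct squared lengths, so each position's $H$-edge is forced to be the $\sigma$-image of its $G$-edge; this is how the paper concludes Lemma~\ref{lem:triBK} as well. All three are immediate from genericity of $\p$, so they are omissions of detail rather than flaws in the approach.
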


\begin{remark}
As is often the case in rigidity theory (see, e.g., \cite{ght}),
for both Theorems~\ref{thm:bkMain} 
and~\ref{thm:triBK2}, we can
weaken the restriction that $\p$ be generic. All of the undesired 
exceptions arise due to a finite number of algebraic conditions, and thus, as per Remark~\ref{rem:gtoz},
there exists some Zariski open subset $O$ of the configuration space such that
these results hold whenever $\p$ is in $O$.
\end{remark}

\paragraph{Theorem~\ref{thm:triBK2}, sketch}
Informally, Theorem~\ref{thm:triBK2} says that, generically, 
we do not need to know the edge labels for the 
trilateration reconstruction process described 
in Section~\ref{sec:introa} to succeed.

The intuitive reasoning is as follows.  The trilateration 
process starts from a known $K_{d+1}$ and then locates
each additional point by ``gluing'' a new $K_{d+2}$ (with one
new point) onto a $K_{d+1}$ inside the already visited $K_{v}$
over the $v$ previously reconstructed vertices.
The idea, then, is to find the labels as we locate points
by using Theorem~\ref{thm:bkMain} iteratively: initially 
to find a ``base'' $K_{d+2}$ 
(the bigger base is necessary 
because a $K_{d+1}$ is too small to find using 
Theorem~\ref{thm:bkMain})
to start the trilateration 
process, and then, after measuring all the edges between the 
visited points, to find subsequent $K_{d+2}$ 
that add one more
point.  When $d\ge 2$, there is only one way 
to do the gluing, because generic $(d+1)$-simplices
do not have any ``self-congruences''.

Even though the steps above are conceptually very simple,
the details are a bit involved.  This is partially 
 due to notational 
overhead of measurement ensembles, and partially because
Theorem~\ref{thm:bkMain} is about whole configurations, and 
we will need to work with subconfigurations at every step.
Because the edge measurement ensemble result has a proof that
is very similar in structure to that of our main Theorem~\ref{thm:punchline}, 
we go through this warm-up result 
carefully below.

\paragraph{Theorem~\ref{thm:triBK2}, details}
We now fill in the sketch above.  A key 
technical result we need is that, generically,  $D$ edge 
measurements look like they come from a 
$K_{d+2}$ subensemble exactly when they really do.

\begin{proposition}
\label{prop:ind}
Let $d \ge 1$.
Let  $\w:=(w_1, \dots, w_D)$
describe a point in $\CC^D$ that
happens to be a point of $M_{d,d+2}$,
with no two of the $w_i$ identical.

Suppose that there is a generic configuration $\p$ 
in $\RR^d$ (or simply such that $m(\p)$ is generic in $M_{d,n}$),
and an edge measurement ensemble $E$, of size $D$,
such that $w_i=\la E_i,\p \ra^2$.

Then
there must be a  subconfiguration  
$\p_T$ of $\p$ with $d+2$ points 
such that
$m_E(\p)=\w = m(\p_T)$.
\end{proposition}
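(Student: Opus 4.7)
The plan is to exhibit the edge-forgetting map of Definition~\ref{def:maps} as the ``linear map taking a generic point into another variety'' needed to invoke Theorem~\ref{thm:prin1}, and then to deduce from a dimension count, via the standard rigidity-theoretic lemmas of Propositions~\ref{prop:infind} and~\ref{prop:simplex}, that $E$ must be a $K_{d+2}$.

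Concretely, first I would consider the edge-forgetting map $\pi_{\bar{E}}\colon \CC^N \to \CC^D$, which is linear and defined over $\QQ$. By the hypothesis $w_i = \la E_i,\p\ra^2$, one has $\pi_{\bar{E}}(m(\p)) = \w$. Since $\p$ is generic, Theorem~\ref{thm:Mvariety} gives that $m(\p)$ is a generic point of the irreducible variety $M_{d,n}$; and $M_{d,d+2}\subset \CC^D$ is a variety defined over $\QQ$. Under a suitable identification of the coordinates of $\CC^D$ indexed by $E$ with an ordering of the edges of $K_{d+2}$, the hypothesis $\w\in M_{d,d+2}$ places the image of our generic point inside this target variety, so Theorem~\ref{thm:prin1} yields $\pi_{\bar{E}}(M_{d,n}) \subseteq M_{d,d+2}$, and therefore $M_{d,E} \subseteq M_{d,d+2}$ after taking the Zariski closure.

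Now I would compare dimensions. By Theorem~\ref{thm:Mvariety}, $\dim M_{d,d+2} = d(d+2) - C = D-1$, hence $\dim M_{d,E} \le D - 1 < |E|$. By Proposition~\ref{prop:infind} this forces $E$ to be infinitesimally dependent in dimension $d$. Since $|E| = D = \binom{d+2}{2}$, Proposition~\ref{prop:simplex} then forces $E$ to be precisely the edge set of some $K_{d+2}$ subgraph on a vertex subset $T\subset [n]$ with $|T| = d+2$. Unwinding Definition~\ref{def:maps} gives $m_E(\p) = m(\p_T)$, and combined with the hypothesis $m_E(\p) = \w$ we obtain $\w = m(\p_T)$, as required.

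The main conceptual move is the translation of ``six unlabeled measurements fit together as a tetrahedron'' into ``a linear projection sends a generic point of $M_{d,n}$ into the hypersurface $M_{d,d+2}$''; once that is done, Theorem~\ref{thm:prin1} plus the sharp combinatorial bound of Proposition~\ref{prop:simplex} do all the real work, and I do not expect any serious obstacle. The only bookkeeping point is matching the edge ordering of $E$ with an ordering on the edges of $K_{d+2}$ when embedding $\w$ into $M_{d,d+2}$, which is harmless since $M_{d,d+2}$ is intrinsic modulo vertex relabeling. The hypothesis that the $w_i$ are pairwise distinct does not appear to enter this core dimension argument, but it is presumably convenient downstream to guarantee that the recovered tetrahedral data is visibly nondegenerate.
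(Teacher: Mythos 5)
Your first two steps match the paper's proof exactly: apply Theorem~\ref{thm:prin1} to the edge-forgetting map $\pi_{\bar{E}}$ to get $\pi_{\bar{E}}(M_{d,n})\subset M_{d,d+2}$, then use the dimension count with Propositions~\ref{prop:infind} and~\ref{prop:simplex} to force $E$ to be the edge set of a $K_{d+2}$. The gap is in your last step, which you dismiss as harmless bookkeeping. Knowing that the edges of $E$ form a $K_{d+2}$ on some vertex set only gives $m_E(\p) = \w = \P\bigl(m(\p_T)\bigr)$, where $\P$ is the permutation relating the \emph{ordering} of $E$ to the fixed coordinate ordering of the edges of $K_{d+2}$ (Definition~\ref{def:sms}). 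This $\P$ is an arbitrary element of $S_D$, and only $(d+2)!$ of the $D!$ edge permutations are induced by a vertex relabeling; $M_{d,d+2}$ is \emph{not} invariant under general edge permutations, so ``intrinsic modulo vertex relabeling'' does not absorb $\P$. The conclusion $\w = m(\p_T)$ (for a suitably reordered index sequence $T$) therefore requires showing that $\P$ is induced by a vertex relabeling, and that is genuinely the hardest part of the statement, not a formality: it is where the genericity of $\p$ and the classification of permutation automorphisms of $M_{d,d+2}$ enter.

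The paper closes this by observing that since $\w\in M_{d,d+2}$ there is a configuration $\q$ of $d+2$ points with $\w = m(\q)$, so $\q$ and the generic subconfiguration $\p_T$ have the same unlabeled squared edge lengths related by $\P$; Theorem~\ref{thm:bkMain} (equivalently, Theorem~\ref{thm:prin2} applied to $M_{d,d+2}$ followed by Theorem~\ref{thm:bk-linear}) then forces $\P$ to be induced by a vertex relabeling, after which reordering $T$ gives $\w = m(\p_T)$ and $\p_T$ congruent to $\q$. Your proposal never invokes any of these results, so as written it does not establish the stated equality. A minor additional remark: the pairwise distinctness of the $w_i$ is what the paper uses to certify that the $D$ entries of $E$ are distinct edges, which your dimension count implicitly needs (repeated edges would collapse the image into a diagonal and break the ``$D$-dimensional unless $K_{d+2}$'' dichotomy), so it is not purely a downstream convenience.
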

\begin{proof}
Since each of the $w_i$ is unique, the $D$ edges of
$E_i$ must be distinct.
Our measurement sequence $\w$ arises from the squared
lengths of $D$ edges in $m(\p)$
thus 
$\w = 
\pi_{\bar{E}}(m(\p))$

i) From Theorem~\ref{thm:prin1}, we see that 
$\pi_{\bar{E}}(M_{d,n}) \subset M_{d,{d+2}}$. 

ii) 
From Proposition~\ref{prop:simplex},
an edge measurement ensemble with $D$ edges is infinitesimally \emph{independent}
in $d$ dimensions unless they consist of the edges from
of a $K_{d+2}$ simplex. 
Thus if $E$ does not consist of the edges of a $K_{d+2}$, then 
$\pi_{\bar{E}}(M_{d,n})$ would be $D$-dimensional (Proposition~\ref{prop:infind})
and could not lie
in  $M_{d,d+2}$. 

Thus
there must be a generic  subconfiguration
$\p_T$ of
$\p$ with $d+2$ points
such that
$m_E(\p)=
\w = \P(m(\p_T))$
where $\P$ is some edge permutation
on the $D$ edges of a $K_{d+2}$.
Meanwhile we have $\w \in M_{d,d+2}$, thus there must be a configuration
of $d+2$ points $\q$ such that $\w=m(\q)$.

ii) Then from Theorem~\ref{thm:bkMain}, 
after a vertex relabeling of $\p_T$, we must have
$\p_T$ congruent to $\q$ and
$\w = m(\p_T)$.
\end{proof}

The following lemmas  will tell us that 
if we have built up a part of $\p$ using 
trilateration from unlabeled edge measurements, we 
can extend what we know to one more point.

\begin{lemma}
\label{lem:noSim}
If $\q$ is a generic configuration in any fixed dimension, then 
no two subconfigurations of
at least three points in $\q$ can be similar to each other,
unless the two subconfigurations consist of the same points,
in the same order.
\end{lemma}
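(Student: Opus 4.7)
The plan is to translate the similarity of the two subconfigurations into a system of polynomial equations in the coordinates of $\q$ and then show that, whenever $I \ne J$ as ordered sequences, the system is non-trivial, i.e., not identically satisfied on configuration space. By Definition~\ref{def:genR}, non-triviality forces a generic $\q$ out of the common zero locus, yielding the lemma (after applying this to each of the finitely many pairs $(I,J)$ of equal-length index sequences drawn from $\{1,\dots,n\}$).

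Fix $I = (i_1, \dots, i_k)$ and $J = (j_1, \dots, j_k)$ with $k \ge 3$ and $I \ne J$. Similarity of the ordered tuples $\q_I$ and $\q_J$ via the matching $i_\ell \leftrightarrow j_\ell$ is equivalent to equality of all ratios of pairwise distances; squaring away the radicals, this becomes the polynomial system
$$
|\q_{i_\ell} - \q_{i_m}|^2 \, |\q_{j_{\ell'}} - \q_{j_{m'}}|^2 \; = \; |\q_{j_\ell} - \q_{j_m}|^2 \, |\q_{i_{\ell'}} - \q_{i_{m'}}|^2, \qquad \ell, m, \ell', m' \in \{1, \dots, k\}.
$$
These are integer-coefficient polynomials in the $dn$ coordinates of $\q$, cutting out an algebraic subvariety $V_{I,J} \subset \RR^{dn}$ defined over $\QQ$. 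By Definition~\ref{def:genR}, a generic $\q$ lies in $V_{I,J}$ only if $V_{I,J} = \RR^{dn}$, so the task reduces to exhibiting a single witness configuration $\tilde{\q} \notin V_{I,J}$.

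I would split into two sub-cases. If the underlying index sets of $I$ and $J$ differ, then the multisets of edges $\{\{i_\ell, i_m\}\}$ and $\{\{j_\ell, j_m\}\}$ also differ, so matching pairwise distances up to a single scale forces a specific coincidence among the distances of $\tilde{\q}$ that is easy to defeat, e.g., by placing one of the points of $\tilde{\q}$ far from the others with the rest in general position. If the underlying sets coincide but the orderings differ, then comparing total multisets of squared distances on both sides forces the similarity's scale factor to be $1$, and the purported similarity is an isometry realizing a non-trivial permutation of the subconfiguration $\tilde{\q}_I$; a generic triangle in $\RR^d$ has trivial isometry group, and extending such a triangle to an $n$-point witness by placing the remaining points in general position defeats this condition. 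In either sub-case, $V_{I,J}$ misses $\tilde{\q}$ and is therefore proper.

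The main obstacle is the second sub-case, which packages the auxiliary fact that a generic point set admits no non-trivial self-isometry on any subset of at least three points. This auxiliary fact is itself algebraic in nature, and is handled by the same style of witness argument as above, so no genuinely new technique is needed.
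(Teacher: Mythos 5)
Your proposal is correct and takes essentially the same route as the paper: the paper's proof encodes similarity of two subconfigurations as agreement of their pairwise distance-ratio spectra, a polynomial condition over $\QQ$ that a generic configuration cannot satisfy, and your cleared-denominator cross-product system is exactly that condition. The only difference is that you spell out, via the two witness constructions (a far-away point when the index sets differ, and an all-distances-distinct subconfiguration forcing scale $1$ and ruling out a nontrivial distance-preserving permutation when only the orderings differ), the non-triviality that the paper asserts in one sentence.
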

\begin{proof}
Consider the ``ratio spectrum'' of a configuration $\r$, which is the 
sequence of pairwise edge-length ratios  
(ordered, in some 
way based on the ordering of the points in $\r$). 
Agreement of ratio spectra of a pair of configurations, each
of three or more points
can be expressed as a non-trivial
polynomial condition on the point coordinates, defined over $\QQ$.  
Meanwhile, similar configurations have the same ratio spectra, and so
must
satisfy this polynomial condition,
certifying non-genericity.
\end{proof}

\begin{lemma}
\label{lem:extend}
Let $d \ge 2$.
Let $\p$ and $\q$ be generic configurations 
of $d+2$ points in $\RR^d$
that are related by  
a similarity $\sigma$.
Moreover, suppose that both 
$\p$ and $\q$ share an unordered subset 
$S$ of 
$d+1$ points.
 Then
$\p=\q$.
\end{lemma}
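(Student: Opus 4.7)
My plan is to analyze how the similarity $\sigma$ interacts with the shared subset $S$ and use Lemma~\ref{lem:noSim} to pin down its structure. Write $\p = S \cup \{p_*\}$ and $\q = S \cup \{q_*\}$, where $p_*, q_* \notin S$ generically. Since $\sigma$ is injective with $\sigma(\p) = \q$ as $(d+2)$-point sets, one of two cases holds: \emph{Case A}, where $\sigma(S) = S$ (so $\sigma$ permutes $S$ and $\sigma(p_*) = q_*$); or \emph{Case B}, where $\sigma(S) = (S \setminus \{s_0\}) \cup \{q_*\}$ for some $s_0 \in S$, which forces $\sigma(s_0) = q_*$, $\sigma(p_*) = s_0$, and $\sigma$ restricting to a bijection on $S \setminus \{s_0\}$.

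In Case A, I will view $S$ as an ordered $(d+1)$-point subsequence of $\p$; its $\sigma$-image is another ordered $(d+1)$-point subsequence of $\p$ (since $\sigma(S) = S \subset \p$). Because $d+1 \ge 3$, Lemma~\ref{lem:noSim} will force these two subsequences to be identical, i.e., $\sigma$ fixes each point of $S$. Then $\sigma$ is a similarity fixing $d+1$ generically affinely independent points of $\RR^d$, so $\sigma = \mathrm{id}$ and $\p = \q$.

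In Case B, the $\sigma$-orbit structure $p_* \mapsto s_0 \mapsto q_*$ together with $\sigma$ stabilizing $S \setminus \{s_0\}$ setwise reduces the problem to showing $\sigma^2 = \mathrm{id}$: this immediately yields $q_* = \sigma(s_0) = \sigma^2(p_*) = p_*$ and hence $\p = \q$. For $d \ge 3$, I will apply Lemma~\ref{lem:noSim} to the ordered $d$-point subsequence $S \setminus \{s_0\}$ of $\p$ (which is permuted by $\sigma$) to conclude $\sigma$ fixes $S \setminus \{s_0\}$ pointwise. Then $\sigma$ preserves the distances between these $d$ affinely independent points, so its scale is $1$ and it is an isometry fixing their hyperplane. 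The identity is ruled out by $\sigma(s_0) = q_* \ne s_0$, leaving $\sigma$ as a reflection of order $2$.

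The main obstacle is the subcase $d = 2$ of Case B, where $S \setminus \{s_0\}$ has only two points and Lemma~\ref{lem:noSim} no longer applies. I plan to handle this by hand: the restriction of $\sigma$ to the two-point set either fixes both points or swaps them, and in both sub-sub-cases $\sigma$ preserves the distance between these two points, so has scale $1$ and is an isometry of the plane. The fix sub-case forces $\sigma$ to be a reflection across the line through them (the identity being ruled out as before); the swap sub-case forces $\sigma$ to be either a half-turn about their midpoint or a reflection across their perpendicular bisector. All three possibilities have order $2$, so $\sigma^2 = \mathrm{id}$ and again $p_* = q_*$, completing the proof.
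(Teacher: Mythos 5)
Your Case A is fine and is essentially the paper's argument, but Case B contains a genuine gap: the combinatorial structure you assert there is not forced. From $\sigma(S)=(S\setminus\{s_0\})\cup\{q_*\}$ the only forced conclusion is $\sigma(p_*)=s_0$ (since $s_0$ is the unique point of $\q$ missed by $\sigma(S)$). It does \emph{not} follow that $\sigma(s_0)=q_*$, nor that $\sigma$ restricts to a bijection of $S\setminus\{s_0\}$: the point of $S$ sent to $q_*$ can be any element of $S$. For example (combinatorially), with $d=2$ and $S=\{s_0,s_1,s_2\}$ one could have $\sigma\colon p_*\mapsto s_0,\ s_0\mapsto s_2,\ s_2\mapsto s_1,\ s_1\mapsto q_*$, which satisfies $\sigma(S)=(S\setminus\{s_0\})\cup\{q_*\}$ but has neither your orbit $p_*\mapsto s_0\mapsto q_*$ nor $\sigma$-invariance of $S\setminus\{s_0\}$. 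Consequently the application of Lemma~\ref{lem:noSim} to "$S\setminus\{s_0\}$, which is permuted by $\sigma$" (for $d\ge 3$), and the $d=2$ hand analysis (which assumes $\sigma$ maps the two-point set to itself, either fixing or swapping the points), both rest on an unproved hypothesis, and the reduction "$\sigma^2=\mathrm{id}$ hence $q_*=p_*$" collapses with it. The set that \emph{is} guaranteed to be mapped into $S$ is $S':=S\cap\sigma^{-1}(S)$, which has $d$ elements but need not be $\sigma$-invariant either, so the repair is not just notational.

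The gap disappears if you apply Lemma~\ref{lem:noSim} inside $\q$ rather than inside $\p$, which is the paper's (much shorter) route: since $S\subseteq\q$ and $\sigma(S)\subseteq\sigma(\p)=\q$, the two $(d+1)$-point subconfigurations $S$ and $\sigma(S)$ of the generic configuration $\q$ are similar via $\sigma$, so by Lemma~\ref{lem:noSim} they consist of the same points in the same order. Hence $\sigma$ fixes $S$ pointwise — in particular your Case B never occurs — and then, exactly as in your Case A, a similarity fixing $d+1$ affinely independent points is the identity, giving $\p=\q$. So no case split on $\sigma(S)$ is needed at all.
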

Only the case of congruence is needed now, but similarities will be needed later in Section~\ref{sec:ugr}. 
\begin{proof}
Using the shared point set $S$,
we can find a subconfiguration $\r$ of $\p$ that is similar (in fact identical) to one subconfiguration of $\q$. From 
Lemma~\ref{lem:noSim}
we conclude that there is no
other subconfiguration of $\q$ similar to $\r$.

Thus the assumed  
similarity $\sigma$ between $\p$ and $\q$,  must leave the points of
$\r$ fixed. Since $\r$ has $d+1$ points, $\sigma$ is the identity,
and we are done.
\end{proof}
\begin{remark}
The statement of Lemma~\ref{lem:extend} is not true for $d=1$, even if we restrict 
$\sigma$ to be a congruence, because
a subconfiguration $\{\p_i, \p_j\}$ is congruent to
the subconfiguration $\{\p_j, \p_i\}$.
\end{remark}

The next lemma describes our main inductive step.

\begin{lemma}\label{lem:partial-trilat}
Let $d\ge 2$ and let $\p$ and $\q$ be configurations
so that $\p$ is generic and $m(\q)$ is generic in 
$M_{d,n'}$ (for some $n'$).  
Let 
$G$ and $H$ be two edge measurement ensembles such that
$\la G, \p\ra^2 = \la H, \q\ra^2$.

Suppose
that we have two 
``already visited'' subconfigurations
$\p_V$ and $\q_{V'}$ with 
$\p_V =\q_{V'}$.

Suppose  we can 
find $F$, a set of  
$d+1$ distinct edges in 
$G$ 
connecting some unvisited
vertex $\p_i\in \p_{\bar{V}}$ to 
some visited 
subconfiguration $\p_R$ of $\p_V$ with
$d+1$ vertices.

Then we can find an unvisited
$\q_{i'}\in \q_{\bar{V'}}$ such that 
the two subconfigurations $\p_{V\cup\{i\}}$
and $\q_{V'\cup\{i'\}}$ are equal.
\end{lemma}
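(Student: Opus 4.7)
The plan is to complete the $d+1$ new edges of $F$ with the $\binom{d+1}{2}$ intra-$R$ edges into a full $D$-tuple of squared lengths realizing a $K_{d+2}$ on $\p_{R\cup\{i\}}$, transport this data to the $\q$-side using $\la G,\p\ra^2=\la H,\q\ra^2$ together with $\p_V=\q_{V'}$, invoke Proposition~\ref{prop:ind} on $\q$ to identify $i'$, and then finish with Lemma~\ref{lem:extend}.

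First, fix the bijection $V\to V'$ witnessing $\p_V=\q_{V'}$, and let $R'\subset V'$ be the image of $R$, so that $\p_R=\q_{R'}$. Form the edge ensemble $E$ on $K_n$ consisting of the edges of $F$ followed by the $\binom{d+1}{2}$ edges spanning $R$; this is the complete edge set of a $K_{d+2}$ on $R\cup\{i\}$, and hence $\la E,\p\ra^2=m(\p_{R\cup\{i\}})\in M_{d,d+2}$, with $D$ pairwise distinct coordinates by genericity of $\p$. Each edge of $F$ sits at some position in $G$, so the equality $\la G,\p\ra^2=\la H,\q\ra^2$ yields $d+1$ corresponding edges $F'$ in $H$ with matching squared lengths. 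Let $E'$ be $F'$ followed by the $\binom{d+1}{2}$ edges spanning $R'$ in $K_{n'}$; since $\p_R=\q_{R'}$, we get $\la E',\q\ra^2=\la E,\p\ra^2\in M_{d,d+2}$.

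Because $m(\q)$ is generic in $M_{d,n'}$ and the coordinates of $\la E',\q\ra^2$ are pairwise distinct, Proposition~\ref{prop:ind} applies to $(\q,E')$: $E'$ must be the edge set of a $K_{d+2}$ on some subconfiguration $\q_{T'}$ with $m(\q_{T'})=m(\p_{R\cup\{i\}})$. The $\binom{d+1}{2}$ intra-$R'$ edges contained in $E'$ force $R'\subset T'$, so $T'=R'\cup\{i'\}$ for a single extra vertex $i'$. To rule out $i'\in V'$: if it were, $\q_{T'}$ would be a subconfiguration of $\q_{V'}=\p_V$, corresponding under the bijection $V\to V'$ to a subconfiguration $\p_T\subset \p_V$ of $d+2\ge 4$ points with $m(\p_T)=m(\p_{R\cup\{i\}})$; since $i\notin V$, the unordered sets $T$ and $R\cup\{i\}$ differ, giving two distinct similar subconfigurations of the generic $\p$ and contradicting Lemma~\ref{lem:noSim}.

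Finally, $\p_{R\cup\{i\}}$ and $\q_{R'\cup\{i'\}}$ are $(d+2)$-point configurations with equal $m$-values, hence related by a congruence $\sigma$, and they share the $d+1$ points $\p_R=\q_{R'}$. Ratio spectra are congruence invariants, so the no-self-similarity property of $\p_{R\cup\{i\}}$ passes through $\sigma$ to $\q_{R'\cup\{i'\}}$, and the argument behind Lemma~\ref{lem:extend} then forces $\sigma$ to fix $\p_R$ pointwise and so to be the identity, giving $\p_{R\cup\{i\}}=\q_{R'\cup\{i'\}}$ and in particular $\p_i=\q_{i'}$. Combined with $\p_V=\q_{V'}$, this yields $\p_{V\cup\{i\}}=\q_{V'\cup\{i'\}}$. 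The step I expect to be the main obstacle is ruling out $i'\in V'$: this is the only place where the asymmetry between the hypotheses ``$\p$ generic'' and ``only $m(\q)$ generic'' matters, and the needed rigidity must be drawn from genericity of $\p$ alone and propagated across the matching $m$-values.
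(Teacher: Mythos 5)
Your proof is correct and follows essentially the same route as the paper's: form $\w=m(\p_{R\cup\{i\}})$, transport it to the $\q$ side via an ensemble made of the $H$-edges matching $F$ together with the intra-$R$ edges, apply Proposition~\ref{prop:ind}, rule out a visited $i'$ using Lemma~\ref{lem:noSim} on the generic $\p$, and finish with Lemma~\ref{lem:extend}. The only (harmless) variations are that you obtain $R'\subset T'$ directly from the $K_{d+2}$ support of $E'$ rather than via the paper's congruence-uniqueness argument on the $\q$ side, and you are a bit more careful than the paper in noting that the $\q$-side $(d+2)$-point configuration is only congruent to a generic one, which suffices because the property used in Lemma~\ref{lem:extend} is congruence-invariant.
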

\begin{proof}
Let $\p_T$ be a subconfiguration consisting of, in some order, all the
points of $\p_R$ along with $\p_i$.
Let $\w:=m(\p_T)$, each $w_i$ distinct due to genericity.

Using the existence of $F$, the fact that
$\la G, \p\ra^2 = \la H, \q\ra^2$ 
together with
$\p_V =\q_{V'}$, we can 
find an edge measurement ensemble $E$ under which 
we can apply Proposition~\ref{prop:ind} to 
$\q$ using this same 
$\w$.

This guarantees a 
$d+2$ point subconfiguration 
$\q_{T'}$ of $\q$ such that
$\w=m(\q_{T'})$.
From  Lemma~\ref{lem:mds},
we  conclude that
$\p_T$ and $\q_{T'}$ 
are related by a 
congruence.  

By construction $\p_T$ contains the subconfiguration
$\p_R$, which is also a subconfiguration of $\q_{V'}$.
From genericity of $\q$  and Lemma~\ref{lem:noSim}, 
$\p_R$ is congruent to no other
subconfiguration of $\q$. Thus $\p_R$ must be a subconfiguration
of $\q_{T'}$.
Similarly, from genericity of $\p$ and Lemma~\ref{lem:noSim}, 
the remaining vertex $\q_i'$ of $\q_{T'}$ not included in $\p_R$
must be unvisited, 
ie. in $\q_{\bar{V'}}$.

Then from Lemma~\ref{lem:extend}, we must have 
$\p_T=\q_{T'}$ and thus 
$\p_{V\cup\{i\}}=\q_{V'\cup\{i'\}}$.
\end{proof}

Applying the above iteratively yields the following:

\begin{lemma}
\label{lem:triBK}
Let the dimension $d \ge 2$. 
Let $\p$ be a generic configuration of $n\ge d+2$ points. Let
$\v= \la G, \p \ra^2$, where 
$G$
is an edge measurement ensemble
that allows for 
trilateration. 

Suppose that there is a
configuration $\q$, 
of $n'$ points, 
that is 
generic (or simply such that $m(\q)$ is generic in $M_{d,n'}$),
along with 
an edge
measurement ensemble $H$ 
such that 
$\v=\la H,\q \ra^2$.

Let $\q_{V'}$ be the subconfiguration of $\q$ indexed by the vertices within the support of  $\pmb{\beta}$.
Then
there is a vertex relabeling  of $\q_{V'}$ such that,
up to congruence,
$\q_{V'}=\p$.
Moreover, under this vertex relabeling,
$G=H$.
\end{lemma}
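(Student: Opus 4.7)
The plan is to proceed by iterative trilateration. Because $G$ allows for trilateration, it contains (after vertex reordering) a base $K_{d+2}$ on vertices $\{1,\dots,d+2\}$ followed by trilateration sequences that introduce vertices $d+3, d+4, \ldots, n$ one at a time. I will match $G$ with a subensemble of $H$, and $\p$ with a subconfiguration of $\q$, vertex by vertex in that same order, using Proposition~\ref{prop:ind} and Lemma~\ref{lem:partial-trilat} as the main tools.

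For the base step, the $D$ measurements of $\v$ coming from the base $K_{d+2}$ in $G$ equal $m(\p_{T_0})$ for $T_0 = \{1,\dots,d+2\}$, and $\p_{T_0}$ is generic because $\p$ is. These same $D$ values of $\v$ also arise as squared lengths of some $D$ edges in $\q$ under $H$; since $m(\q)$ is generic in $M_{d,n'}$, Proposition~\ref{prop:ind} applies to $\q$ and produces a $(d+2)$-point subconfiguration $\q_{T'_0}$ with $m(\q_{T'_0})$ equal to these values. Now both $\p_{T_0}$ and $\q_{T'_0}$ are generic $(d+2)$-point configurations with equal full squared edge sets (up to permutation), so Theorem~\ref{thm:bkMain} gives a vertex relabeling of $\q_{T'_0}$ making it congruent to $\p_{T_0}$ with the corresponding edges of $G$ and $H$ in agreement. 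After applying this congruence globally to $\q$ (which does not change any of the length measurements) and the relabeling, I may assume $\p_{T_0}=\q_{T'_0}$ as labeled configurations, giving the starting hypothesis $\p_V=\q_{V'}$ of Lemma~\ref{lem:partial-trilat} with $V=V'=T_0$.

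For the inductive step, suppose by induction that we have matched $\p_V$ with $\q_{V'}$ for some $V\supseteq\{1,\dots,j-1\}$ and identified the corresponding edges of $G$ and $H$ on these vertices. The next trilateration sequence in $G$ supplies $d+1$ edges from $j$ to a $(d+1)$-vertex subset $R\subset V$. These edges satisfy exactly the hypothesis of Lemma~\ref{lem:partial-trilat}, so there is an unvisited vertex $i'\in\overline{V'}$ of $\q$ such that $\p_{V\cup\{j\}}=\q_{V'\cup\{i'\}}$ after relabeling $i'$ to $j$; the corresponding edges of $G$ and $H$ are then identified through the matching of $(d+2)$-subconfigurations used inside that lemma (again Proposition~\ref{prop:ind} combined with Theorem~\ref{thm:bkMain}). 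Iterating this step until $V=\{1,\dots,n\}$ yields a vertex relabeling of $\q_{V'}$ under which $\q_{V'}=\p$ and $G=H$, which is the desired conclusion.

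The main subtlety I anticipate is the bookkeeping between the global congruence/relabeling of $\q$ that is applied once in the base step and the step-by-step relabeling performed afterwards, together with checking that the genericity hypothesis needed to invoke Proposition~\ref{prop:ind} and Lemma~\ref{lem:partial-trilat} continues to hold for each new $(d+2)$-subconfiguration of $\q$ produced along the way; this follows because $m(\q)$ generic in $M_{d,n'}$ implies $m(\q_S)$ is generic in $M_{d,|S|}$ for each coordinate projection $S$, via Lemma~\ref{lem:genPush}-type reasoning applied to the natural edge-forgetting maps between these measurement varieties.
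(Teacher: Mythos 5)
Your overall route---base case via Proposition~\ref{prop:ind}, then iterating Lemma~\ref{lem:partial-trilat} with the congruence and relabeling factored out---is exactly the paper's proof, and the geometric part of the conclusion ($\q_{V'}=\p$ up to congruence after relabeling) goes through as you describe. Two small remarks: Proposition~\ref{prop:ind} already returns an \emph{ordered} equality $m(\q_{T'_0})=\w=m(\p_{T_0})$, so Lemma~\ref{lem:mds} suffices in the base step and the extra appeal to Theorem~\ref{thm:bkMain} is redundant (though harmless); and the genericity needed to invoke Proposition~\ref{prop:ind} at every step is genericity of $m(\q)$ in $M_{d,n'}$, which is a standing hypothesis of the lemma, so no propagation argument to subconfigurations of $\q$ is actually required.

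The genuine gap is the final claim $G=H$. Your argument only identifies those measurements of $G$ that belong to the base $K_{d+2}$ or to one of the trilateration sequences with corresponding measurements of $H$ (and even for these, Lemma~\ref{lem:partial-trilat} as stated asserts only equality of subconfigurations, not an edge correspondence, so you are implicitly re-opening its proof). But an ensemble that allows for trilateration may contain arbitrary additional edges, and $H$ is an arbitrary ensemble producing the same data, so in general there are measurements $G_i$, $H_i$ that never enter the inductive matching and are left unaccounted for. The paper closes this with a separate observation: since $\p$ is generic, its $\binom{n}{2}$ squared edge lengths are pairwise distinct, and after the relabeling $\q_{V'}=\p$ the same holds for $\q_{V'}$; hence each value $v_i$ is the squared length of a unique edge of $\p$ and of a unique edge of $\q_{V'}$, and these must coincide, giving $G_i=H_i$ for every index $i$. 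You need this (or an equivalent) argument to conclude $G=H$ for the full ensembles.
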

\begin{proof}
For the base case, 
the trilateration assumed in $G$
guarantees a $K_{d+2}$ contained in $G$, over
a ${d+2}$ point subconfiguration  $\p_T$ of  $\p$.
Define $\w:=m(\p_T)$, with each $w_i$ distinct due to genericity.
We have 
$\w \in M_{d,d+2}$.

Using the
fact that
$\la G, \p\ra^2 = \la H, \q\ra^2$ 
we can apply Proposition~\ref{prop:ind} to this $\w$,  $\q$
and appropriate subensemble $E$ of $H$.
From this, we conclude 
that
there is a ${d+2}$ point
subconfiguration  $\q_{T'}$ of $\q$ such that
$m_{E'}(\q)=\w=m(\q_{T'})$.
From Lemma~\ref{lem:mds},
up to congruence, we have
$\p_T = \q_{T'}$.

Then, going forward inductively, assume that we have a 
two ``visited'' subconfigurations such that 
$\p_{V}$ and
$\q_{V'}$, 
are related by a 
global congruence. 

Continuing with 
the trilateration process allowed by  
$G$, we can iteratively apply (with the global
congruence factored out)
Lemma~\ref{lem:partial-trilat} until we have visited all of  
$\p$. 
At this point we will have, that  
up to congruence, 
$\q_{V'}=\p$.

Since $\p$ is generic, then no two distinct edges  can have the same squared length. 
The same is true for $\q$.
This gives us, after vertex relabeling,  equality between all of 
$G$, 
and $H$. 
\end{proof}

With some other added assumptions, we can remove the genericity
assumption from $\q$. To see this, we first use the following definition from~\cite{asimow}.

\begin{definition}
Let $d$ be a fixed dimension.
Let $E$ be an edge measurement ensemble
 with $n\ge d+1$.
We say $E$ is 
\defn{infinitesimally rigid} in $d$ dimensions,
if, starting at some  generic 
(real or complex) configuration $\p$,
there are no differential  motions of $\p$ in $d$ dimensions that preserve 
all of the squared lengths among the edges of $E$,
except for differential congruences.

When an edge measurement ensemble is infinitesimally  rigid, then the lack
of differential motions holds over a Zarksi-open subset of
configurations, that includes all generic configurations.

Letting $m_E(\p)$ be the map from
configuration space to $\CC^{|E|}$
measuring the squared lengths of the edges
of $E$, 
infinitesimal rigidity means 
that the image of the differential of $m_E(\cdot)$ at $\p$ is $(dn-C)$-dimensional.
Note that this rank can only drop on some (non-generic)
subvariety of configuration space.

\end{definition}
The following proposition follows exactly as Proposition~\ref{prop:infind}.
\begin{proposition}
\label{prop:infrig}
If $E$ is  infinitesimally rigid, then the image of $m_E(\cdot)$ acting on all configurations
is $dn-C$-dimensional. Otherwise, the dimension of the image is smaller.
\end{proposition}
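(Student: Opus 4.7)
The proposition is almost a verbatim analogue of Proposition~\ref{prop:infind}, with the value $|E|$ replaced throughout by $dn-C$, so the plan is to reuse the proof sketch given there, adjusting only for where the constant $C$ enters. First, I would unpack the definition already stated in the excerpt: the condition that $E$ is infinitesimally rigid is precisely that the image of $dm_E(\p)$ at a generic $\p$ has dimension $dn-C$, i.e., the kernel of $dm_E(\p)$ has dimension exactly $C$. This kernel always contains the $C$-dimensional space of infinitesimal Euclidean motions (which is $C$-dimensional as soon as $\p$ has full affine span, in particular for generic $\p$ with $n\ge d+1$), and infinitesimal rigidity is the statement that it contains nothing more. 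Failure of infinitesimal rigidity means the generic rank of $dm_E$ is strictly smaller than $dn-C$.

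Second, I would apply the same two bounds used in the sketch of Proposition~\ref{prop:infind}. The constant rank theorem, invoked exactly as in~\cite[Proposition 2]{asimow}, tells us that the dimension of the image of $m_E(\cdot)$ on configuration space is at least the rank of $dm_E(\p)$ at a generic $\p$. Sard's theorem~\cite[Theorem 14.4]{harris} supplies the matching upper bound: for almost every $\v$ in the image, every configuration $\p$ with $m_E(\p)=\v$ has $\mathrm{rank}(dm_E(\p))$ at least equal to $\dim(\mathrm{image}(m_E))$. These two inequalities pin $\dim(\mathrm{image}(m_E))$ to the generic rank of $dm_E$.

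Combining the two steps gives the conclusion: $\dim(\mathrm{image}(m_E)) = dn-C$ exactly when $E$ is infinitesimally rigid, and is strictly smaller otherwise. I do not anticipate any real obstacle here; the entire argument is dimension-counting plus two off-the-shelf theorems. The one point worth flagging, rather than an obstacle, is that we are applying these tools in the complex setting on $\CC^{|E|}$ and $\CC^{dn}$, but Remark~\ref{rem:complex-rigid} has already observed that both the constant rank theorem and (algebraic) Sard work identically over $\CC$, so no new work is required on that front.
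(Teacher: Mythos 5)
Your proposal matches the paper's own treatment: the paper proves Proposition~\ref{prop:infrig} by noting it "follows exactly as Proposition~\ref{prop:infind}," i.e., the same constant-rank lower bound and Sard upper bound pinning the image dimension to the generic rank of $dm_E$, which is $dn-C$ precisely when $E$ is infinitesimally rigid. Your argument is correct and essentially identical, including the observation that the complexified setting is already covered by Remark~\ref{rem:complex-rigid}.
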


\begin{lemma}
\label{lem:triBK2}
In dimension $d \ge 1$, let  $\p$ and $\q$ be two 
configurations with the same number of points $n \ge d+1$.
Suppose that 
$G$ and
$H$
are two edge measurement ensembles, each with 
the same number $k$, of edges,
and with $G$
infinitesimally  rigid in $d$ dimensions.
And suppose that  
$\v:=\la G,\p \ra^2 =
\la H,\q \ra^2$.

If $\p$ is a generic configuration, then 
$m(\q)$ is generic in $M_{d,n}$.
\end{lemma}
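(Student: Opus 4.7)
The plan is to establish that $\v$ is a generic point of the measurement variety $M_{d,G}$, that $M_{d,G}=M_{d,H}$ as $\QQ$-varieties, and then to transfer genericity of $\v$ under the coordinate-forgetting projection $\pi_{\bar H}$ back to $m(\q) \in M_{d,n}$.

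First I would observe that $m_G = \pi_{\bar G}\circ m$ is a polynomial map defined over $\QQ$, and that by construction $M_{d,G}=\overline{\pi_{\bar G}(M_{d,n})}$ is irreducible (being the Zariski closure of the image of the irreducible $\QQ$-variety $M_{d,n}$ under a $\QQ$-morphism) and defined over $\QQ$. Since $\p$ is generic in $\RR^{dn}$, Lemma~\ref{lem:genPush} gives that $\v = m_G(\p)$ is generic in $M_{d,G}$. The infinitesimal rigidity of $G$, combined with Proposition~\ref{prop:infrig}, gives $\dim M_{d,G}=dn-C=\dim M_{d,n}$.

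Next, the same construction applied to $H$ produces an irreducible $\QQ$-variety $M_{d,H}=\overline{\pi_{\bar H}(M_{d,n})}$, and the hypothesis $\v = \la H,\q\ra^2 = m_H(\q)$ gives $\v \in M_{d,H}$. I would then apply Theorem~\ref{thm:prin1} with $V=M_{d,G}$, $W=M_{d,H}$, and $\E$ the identity map: genericity of $\v$ in $M_{d,G}$ forces $M_{d,G}\subset M_{d,H}$. The dimension bound $\dim M_{d,H}\leq\dim M_{d,n}=\dim M_{d,G}$, together with irreducibility of $M_{d,H}$, then yields $M_{d,G}=M_{d,H}$. In particular, $\v$ is also generic in $M_{d,H}$, and $\pi_{\bar H}\colon M_{d,n}\to M_{d,H}$ is a dominant $\QQ$-morphism between irreducible $\QQ$-varieties of equal dimension.

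Finally, I would argue genericity of $m(\q)$ in $M_{d,n}$ by the standard pullback contrapositive: if $m(\q)$ lay in a proper $\QQ$-subvariety $V'\subsetneq M_{d,n}$, then $\dim V' < \dim M_{d,n}$ by irreducibility of $M_{d,n}$, and $\v=\pi_{\bar H}(m(\q))$ would lie in the $\QQ$-variety $\overline{\pi_{\bar H}(V')}\subset M_{d,H}$, whose dimension is at most $\dim V' < \dim M_{d,H}$. This would contradict the genericity of $\v$ in $M_{d,H}$. The only subtlety is keeping the $\QQ$-structure visible throughout; the essential ingredient driving the whole argument is the infinitesimal rigidity of $G$, which upgrades the image variety $M_{d,G}$ to have full dimension $dn-C$ and makes the final dimension count work.
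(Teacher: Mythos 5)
Your argument is correct and is essentially the paper's own proof: establish that $\v$ is generic in the full-dimensional irreducible $\QQ$-variety $M_{d,G}$ (via Lemma~\ref{lem:genPush} and Proposition~\ref{prop:infrig}), conclude $M_{d,G}=M_{d,H}$ by a genericity-plus-dimension/irreducibility argument, and then pull genericity of $\v$ back through the equal-dimension projection $\pi_{\bar H}$ to $m(\q)$. The only differences are cosmetic: you invoke Theorem~\ref{thm:prin1} with the identity map where the paper argues via the intersection $M_{d,G}\cap M_{d,H}$, and you inline the pullback-of-genericity step that the paper delegates to Lemma~\ref{lem:genPull}.
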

\begin{proof}
Recall the notation introduced in Definition~\ref{def:maps}.
The varieties  $M_{d,G}$ and  $M_{d,H}$, both subsets of 
$\CC^k$,
are 
defined over $\QQ$. 
They are irreducible
since they arise as images of 
$M_{d,n}$,
which is irreducible, under a  
polynomial (in fact linear) map.
$M_{d,G}$
is of 
dimension $dn-C$ from Proposition~\ref{prop:infrig}.
Likewise, $M_{d,H}$ is of dimension
$dn-C$ if 
$H$ is infinitesimally rigid, otherwise it is 
of smaller dimension.

Our assumptions give us $\v \in M_{d,G}$
and
$\v \in M_{d,H}$.

We claim $M_{d,G} = M_{d,H}$.
Suppose not, then 
$M_{d,G} \cap M_{d,H}$ is an algebraic
variety, defined over $\QQ$, of dimension strictly less 
than $dn-C$ (due to irreducibility),  
and thus could contain no generic points of $M_{d,G}$.
But we have assumed that $\v$ is in both, and thus also in
this intersection set. But since $\p$ is generic,
then $\v$ is  generic in $M_{d,G}$ (Lemma~\ref{lem:genPush}).
This contradiction thus establishes our claim.

Since
$M_{d,G} = M_{d,H}$, then $\v$ is also a generic point of 
of $M_{d,H}$.

Finally, since $M_{d,H}$ is the image of $M_{d,n}$ under 
the linear map $\pi_{\bar{H}}(\cdot)$,
and since they 
have the same dimension, 
then from Lemma~\ref{lem:genPull} 
the preimage of $\v$
under $\pi_{\bar{H}}(\cdot)$, which is $m(\q)$,
must be a generic point in 
$M_{d,n}$. 
\end{proof}

And we can now finish the proof of our theorem:
\begin{proof}[Proof of Theorem~\ref{thm:triBK2}]
An edge measurement ensemble that allows for 
trilateration is always infinitesimally
rigid.  
The result then  follows directly from 
Lemmas~\ref{lem:triBK2}
and~\ref{lem:triBK}.
\end{proof}

\subsection{Digression: Unlabeled generic global rigidity}

The previous section leads to a very natural 
open question that we briefly discuss here.
Are there edge measurement ensembles
that do not allow for trilateration, but such that
a generic configuration $\p$ can still
be reconstructed from their unlabeled squared edge lengths? 

For this discussion, we will use the following definitions from~\cite{conGR, ght}.

\begin{definition}\label{def:global-rigidity}
Let 
$G$
be an edge measurement ensemble
 with $n\ge d+1$.
We say that $G$ 
is \defn{generically globally rigid} in $d$ dimensions if,
 starting with some  generic complex configuration $\p$,
there are no other configurations $\q$ in $d$ dimensions
with the same \underline{labeled} squared edge lengths
except for congruences.

When an edge measurement ensemble is generically globally rigid, then this 
uniqueness property holds over a Zariski-open
subset of configurations that includes all generic 
configurations.

\end{definition}
Typically these definitions are done in the real setting, but
there is no change when moving to the complex setting by results from
\cite{cgr}.

\begin{remark}
Gortler, Healy and Thurston \cite{ght} showed that if an edge measurement ensemble is not 
generically globally rigid in $d$ dimensions, then starting
with any generic configuration $\p$ there will always be other
non-congruent  configurations $\q$ in $d$ dimensions
with the same labeled squared edge lengths.
\end{remark}

Edge measurement ensembles
that allow for $d$-dimensional trilateration are certainly generically globally rigid
in $d$ dimensions, but there are plenty of edge 
measurement ensembles  that are generically
globally rigid but do not allow for trilateration. One simple such example in two dimensions
is when $G$ comprises the edges of the complete bipartite graph $K_{4,3}$ 
(generic global rigidity follows from the combinatorial considerations of~\cite{conGR,jj}
and can be directly confirmed
using the algorithm from~\cite{conGR,ght}). 
This graph 
does not even contain a single triangle!

If an edge measurement ensemble 
$G$ is not generically globally rigid, then we generally
cannot recover $\p$ when given 
\emph{both} $\v$ and $G$ (that is, labeled data).
The recovery problem is 
simply not well-posed. When an edge 
measurement ensemble is generically globally rigid, then  
generally this labeled 
recovery problem will be well-posed, though it still might be
intractable to perform~\cite{saxe}. 
We note that testing whether an edge measurement ensemble is generically
globally rigid can be done with an efficient randomized algorithm~\cite{ght}.

With this in mind, we pose the following question:

\begin{question}
In any dimension $d \ge 2$, 
let $\p$ be a generic configuration of $n$ points. Let
$\v= \la G, \p \ra^2$, where 
$G$
is an edge measurement ensemble
that is generically 
globally rigid in $d$ dimensions.

Suppose there is a 
configuration $\q$, 
also of $n$ points,
along with 
an edge
measurement ensemble $H$ 
such that 
$\v=\la H,\q \ra^2$.

Does this imply the following conclusion:
There is a vertex relabeling  of $\q$ such that,
up to congruence,
$\q=\p$.
Moreover, under this vertex relabeling,
$G = H$.
\end{question}

\subsection{Road map}
Let us now return to our situation of path or loop ensembles in 
$d \ge 2$ dimensions.
In this case, the data arises as \emph{sums} of edge lengths. 
If these were sums of \emph{squared} lengths, 
the problem would be far trickier, because the full group 
of linear automorphisms of $M_{d,n}$, as discussed in Remark 
\ref{rem:matcomp}, is isomorphic to ${\rm GL}(n)$.  This 
makes our ``adversary'' (introduced in Section~\ref{sec:introb})
far more powerful, once it is 
no longer constrained to use only edge permutations.


Luckily, our data arises as sums of \emph{unsquared} 
edge lengths, and thus
our problem will instead be governed by the structure
of linear maps acting on $L_{d,n}$ instead of $M_{d,n}$.
Linear automorphisms of $L_{d,n}$ will turn out to 
be much more constrained, making our problem more tractable.

Our basic strategy will still be to rely on trilateration,
so we need the appropriate generalization of
Proposition~\ref{prop:ind}. (This will
be Theorem~\ref{thm:consist} below.)
Thus we will look in our data set at sub data sets of 
size $D$.
Any such $D$-tuple of measurements can be represented by 
a $D\times \edgecard$ matrix $\E$.

Suppose $\E$ represents a very simple measurement
ensemble, where each row gives us, say, the edge
length of one edge of some $K_{d+2}$ subgraph of
$K_n$ in an appropriate order. 
Then $\E(l(\p))$ will lie in $L_{d,d+2}$.
Conversely, 
as in Proposition~\ref{prop:ind},
we do not expect that $\E(l(\p))$ will lie in 
$L_{d,d+2}$, \emph{unless} 
$\E$ has the property that all of $\E(\Ln)$ lies
in $L_{d,d+2}$. Thus, our main task will be understanding which 
$\E$ have this property. 
We will show in Section~\ref{sec:maps} that, 
essentially, the only such $\E$ are maps that
ignore all of the edges of $K_n$ except for
those of a single $K_{d+2}$. 
(There will also be the possibility that
the matrix $\E$ has rank less than $D$, which 
we will need to understand as well in our reconstruction 
algorithm).

Then we will be left with understanding what are the
$D\times D$ matrices $\A$ that are linear automorphisms of 
$L_{d,d+2}$. 
This is done in Sections~\ref{sec:min3} and~\ref{sec:auto}.

When all this dust settles 
and we have established 
Theorem~\ref{thm:consist},
we will essentially know that
if $\p$ is generic and $D$ measurements ``look consistent'' 
with the $D$ edges of some $K_{d+2}$, then they do in fact 
arise from simply measuring the lengths of such edges.

From this we will be able to apply trilateration, 
using the same reasoning  as in Lemma~\ref{lem:triBK},
to obtain
our Lemma~\ref{lem:punchline1}
which covers the full $\p$. Finally, we will apply the same reasoning
from Theorem~\ref{thm:triBK2} to our measurement setting to complete the
proof of Theorem~\ref{thm:punchline}.

\section{Linear maps from $L_{d,n}$ to $\CC^D$}
\label{sec:maps}

Let $d \ge 1$.
Let $D:= \binom{d+2}{2}$.
In this section,
$\E$ will be 
a $D\times \edgecard$ matrix
representing a rank $r$ linear map from $\LN$ to
$\CC^D$, where 
$r$ is some number $\le D$ . 
Our goal is to study linear maps where the 
dimension of the image is strictly less than $r$. 
In particular this will occur when $\E(\LN) = L_{d,d+2}$.

\begin{definition}
We say that $\E$ has 
$K_{d+2}$ support 
if it depends only on measurements supported over the $D$ 
edges corresponding to a $K_{d+2}$ subgraph of $K_n$. 
Specifically,  
all the columns of the matrix 
$\E$ are zero, except for at most $D$ of
them, and these non-zero columns index edges 
contained within a single $K_{d+2}$.
\end{definition}

The main result of this section is:
\begin{theorem}
\label{thm:linImage}
Let $\E$ be  a 
$D\times \edgecard$ matrix with rank $r$.
Suppose that the image $\E(L_{d,n})$,
a constructible set,  
is not of dimension $r$.
Then  $r=D$ and $\E$ has $K_{d+2}$ support.
\end{theorem}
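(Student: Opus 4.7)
The plan is to factor $\E$ through the set of edges on which it has nonzero columns and then analyze the image via a rigidity-theoretic dimension count combined with a linear-span property of $L_{d,n}$.

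Let $F$ be the set of edges indexing the columns of $\E$ with some nonzero entry, and set $k := |F|$. Factor $\E = \E'' \circ \pi_{\bar F}$, where $\pi_{\bar F}: \CC^N \to \CC^k$ is the edge-forgetting map from Definition~\ref{def:maps} and $\E''$ is a $D \times k$ matrix of rank $r$. Then $\E(L_{d,n}) = \E''(\tilde L)$, where $\tilde L := \pi_{\bar F}(L_{d,n})$. Because the coordinate-wise squaring map is \'etale at a generic configuration (its differential being the invertible diagonal matrix $\mathrm{diag}(2\, l_{ij})$ away from the zero locus $Z$), one has $\dim \tilde L = \dim M_{d,F} =: d_F$. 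I then split based on the sign of $d_F - r$.

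In the case $d_F < r$, the image has dimension at most $d_F < r$, immediately giving the drop, and the task is to pin down the form of $F$ and $r$. Since $r \le k$, the inequality $d_F < k$ forces $F$ to be infinitesimally dependent in $d$ dimensions by Proposition~\ref{prop:infind}. The standard rigidity bound $d_F \le \max\!\left(\binom{v}{2},\, dv - \binom{d+1}{2}\right)$, where $v$ is the number of vertices touched by $F$, combined with $d_F < D = \binom{d+2}{2}$, forces $v \le d+2$ and hence $|F| \le \binom{v}{2} \le D$. Proposition~\ref{prop:simplex} then applies and yields $|F| = D$ with $F$ the edge set of some $K_{d+2}$ subgraph. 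Since $d_F = D - 1$ for this $F$, the strict inequality $d_F < r$ pins down $r = D$, matching the theorem's conclusion.

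In the complementary case $d_F \ge r$, I argue that no drop occurs. A generic fiber of $\E''|_{\tilde L}$ has the form $\tilde L \cap (y + \ker \E'')$ and has its expected dimension $d_F - r$ unless $\tilde L$ is contained in a single translate $y + \ker \E''$. But $\tilde L$ is a cone through the origin, so such containment would force $y \in \ker \E''$ and hence $\tilde L \subseteq \ker \E''$, a proper linear subspace (assuming $r \ge 1$); this contradicts the linear span of $\tilde L$ being all of $\CC^k$. By the fiber dimension theorem applied to $\E''|_{\tilde L}: \tilde L \to \E''(\tilde L)$, the image then has dimension $d_F - (d_F - r) = r$, so no drop. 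The linear span claim reduces to $\mathrm{span}(L_{d,n}) = \CC^N$, since $\mathrm{span}(\tilde L) = \pi_{\bar F}(\mathrm{span}(L_{d,n}))$ and $\pi_{\bar F}$ is surjective onto $\CC^k$.

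The main obstacle is establishing $\mathrm{span}(L_{d,n}) = \CC^N$, which I would prove by a direct perturbation argument on real configurations: differentiating any putative identity $\sum c_{ij} l_{ij}(\p) = 0$ with respect to the coordinates of a single point $\p_k$, then letting other points tend to infinity along varying directions, forces each $c_{ij}$ to vanish coordinate-by-coordinate. For the $d = 1$ case where $L_{1,n}$ is reducible into a union of hyperplanes and hence $\tilde L$ need not be irreducible, the $d_F \ge r$ argument is adapted by running the fiber-dimension analysis on each irreducible component of $\tilde L$ and verifying that at least one component contributes an $r$-dimensional piece of the image.
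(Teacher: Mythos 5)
Your Case~2 (the case $d_F \ge r$) contains the decisive gap, and it is exactly where the real content of the theorem lies. The dichotomy you invoke --- that a generic fiber $\tilde L \cap (y + \ker \E'')$ has dimension $d_F - r$ unless $\tilde L$ lies in a single translate of $\ker \E''$ --- is false. The fiber-dimension theorem only says that the generic fiber dimension equals $d_F - \dim \overline{\E''(\tilde L)}$, so asserting that the fiber has dimension exactly $d_F - r$ is the same as asserting that the image has dimension $r$, which is what you are trying to prove; your cone-plus-spanning observation only rules out the extreme case where the image is a single point. Being an irreducible spanning cone is far from sufficient: take $V = V_1 \times V_2 \subset \CC^3\times\CC^3$ with each $V_i$ an irreducible quadric cone; then $V$ is an irreducible $4$-dimensional cone spanning $\CC^6$, the projection onto the first factor has rank $3 \le 4$, its image $V_1$ has dimension $2 < 3$, and $V$ lies in no translate of the kernel. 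So any correct argument must use finer structure of $\LN$ than spanning and being a cone. The paper's proof uses precisely such structure: $\LN$ is invariant under all coordinate sign flips, so a rank drop of $\E$ on the image forces $\E$ to drop rank on the tangent spaces at \emph{every} sign-flipped copy of a generic point; Proposition~\ref{prop:inf} (proved via the bad-basis block decomposition, Proposition~\ref{prop:simplex}, and the determinant/sign-flip Lemma~\ref{lem:nonsing}) shows this can only happen when $r = D$ and $\E$ has $K_{d+2}$ support, and the Local Submersion Theorem then converts full tangent rank into an $r$-dimensional image. Nothing in your Case~2 plays this role.

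There is also a smaller flaw in Case~1: from the upper bound $d_F \le \max\bigl(\binom{v}{2},\, dv - \binom{d+1}{2}\bigr)$ together with $d_F < D$ you cannot conclude $v \le d+2$; an upper bound on $d_F$ cannot force the vertex support to be small (a large matching has tiny $d_F$ and arbitrarily many vertices). This piece is repairable: since $F$ is dependent it contains a circuit $F_0$; by Proposition~\ref{prop:simplex} every dependent edge set with at most $D$ edges is a $K_{d+2}$, so $|F_0| \ge D$ and $d_F \ge \rank(F_0) = |F_0| - 1 \ge D-1$; combined with $d_F < r \le D$ this gives $d_F = D-1$, $r = D$, and $F_0 = K_{d+2}$, and any edge of $F$ outside $F_0$ would meet a vertex outside that $K_{d+2}$ and be independent of it, raising the rank to $D$, so $F = F_0$ and $\E$ has $K_{d+2}$ support. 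But with Case~2 unproven, the proposal does not establish the theorem.
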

\begin{remark}
Theorem~\ref{thm:linImage} does not hold when 
$\LN$ is replaced by $M_{d,n}$. As described in Remark~\ref{rem:matcomp}, the linear automorphism group of
$\CS^{n-1}_d$ is quite large,  and thus provides 
automorphisms $\A$ of $M_{d,n}$ that have
dense support. Thus, even if some $\E$ has $K_{d+2}$ support
the composite map $\E\A$ would not, and it 
could still have a small-dimensional image. 
\end{remark}

The proof relies (crucially) on the more technical, linear-algebraic 
Proposition~\ref{prop:inf}, proved below.  The idea
leading to it is as follows.

If a
point $\bl$ is smooth in $L_{d,n}$ then so is 
any $\bl'$ obtained by negating various coordinates of $\bl$.
Thus, the collection of complex analytic 
tangent spaces to $L_{d,n}$,
$T_{\bl}L_{d,n}$,
at $\bl$ and its orbit under coordinate negations gives us a
an arrangement $\mathcal{T}$ 
of $2^N$ linear spaces.  Any $\E$ meeting the 
hypothesis of Theorem~\ref{thm:linImage} necessarily drops 
rank on every subspace in $\mathcal{T}$.  For reasons of dimension,
this would not be possible if $\E$
or 
$T_{\bl}L_{d,n}$,
were appropriately general.  
On the other hand, we know that the geometry of our situation is 
sufficiently special that this \textit{does} happen for $\E$
with rank $D$ and
with $K_{d+2}$ support.  Proposition~\ref{prop:inf} asserts that
this is the \textit{only} possibility.
The proof will  rely on
the fact that $K_{d+2}$ is the only graph on $D$ or 
fewer edges that is  infinitesimally dependent
(Proposition~\ref{prop:simplex}). 

First we present the proof of Theorem~\ref{thm:linImage},
which effectively reduces our problem to the linear 
situation covered in Proposition~\ref{prop:inf}.
\begin{proof}[Proof of Theorem~\ref{thm:linImage}]
Clearly, the image of the map must be contained in an $r$-dimensional
linear space spanned by the columns of $\E$.
Suppose that either $r<D$, or $\E$ does not have $K_{d+2}$ support.
Then, from Proposition~\ref{prop:inf} below,
for any generic point
\footnote{See Footnote~\ref{fn:1d}.}
$\bl$,
there must be a coordinate flip $\bl'$
such that
$\Dim(\E(T_{\bl'}L_{d,n}))=r$. 
Then, from the Local Submersion 
Theorem~\cite[page 20]{gp},
the map must be locally surjective onto the 
$r$-dimensional linear space. Thus the image cannot have smaller 
dimension (as a constructible set).
\end{proof}

We are now ready to state the key technical result in this 
section.
\begin{proposition}\label{prop:inf}
Let $\E$ be  a 
$D\times \edgecard$ matrix with rank $r$.
Suppose there is a 
generic point
$\bl\in L_{d,n}$ such that $\bl$ and 
all of its coordinate flips $\bl'$  have the property
that 
$\Dim(\E(T_{\bl'}L_{d,n})) <r$.
Then $r=D$ and $\E$ has $K_{d+2}$ support.
\end{proposition}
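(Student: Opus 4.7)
Let $R := \mathrm{rowspan}(\E) \subseteq \CC^N$ (dimension $r$, supported on some edge set $E' \subseteq [N]$) and let $\Omega := (T_\bl L_{d,n})^\perp$ denote the L-stress space at $\bl$. Since sign flips are automorphisms of $L_{d,n}$ satisfying $T_{S\bl}L_{d,n} = S\, T_\bl L_{d,n}$, linear duality gives $\Dim(\E T_{S\bl}L_{d,n}) < r$ iff $R \cap S\Omega \neq 0$. Because $R \subseteq \CC^{E'}$, only stresses supported on $E'$ contribute, so the hypothesis becomes $R \cap S'\Omega_{E'} \neq 0$ for every $S' \in \{\pm 1\}^{E'}$, where $\Omega_{E'} := \Omega \cap \CC^{E'}$ has dimension $k' := |E'| - \rho(E')$ and $\rho$ denotes the rigidity rank in $d$ dimensions.

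I would then upgrade this sign-flip condition to a polynomial identity. Fixing bases of $\Omega_{E'}$ and of $R$ and assembling them as the first $k'$ and last $r$ columns of an $|E'| \times (k'+r)$ matrix $M(S')$ whose last $r$ columns are scaled by $S'$, the intersection condition is the vanishing of all $(k'+r) \times (k'+r)$ minors of $M(S')$. Each such minor is multilinear in the diagonal entries $\{S'_i\}$, and a multilinear polynomial vanishing on $\{\pm 1\}^{E'}$ must be identically zero. Laplace expansion then yields $\det(\omega_J) \cdot \det(r_K) = 0$ for every disjoint pair $(J,K) \subseteq E'$ with $|J| = k'$ and $|K| = r$ --- exactly the statement that the linear matroids $\mathcal{M}_R$ and $\mathcal{M}_{\Omega_{E'}}$ on ground set $E'$ admit no disjoint bases.

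For generic $\bl$, $\mathcal{M}_{\Omega_{E'}}$ coincides with the dual $\mu^*$ of the generic $d$-dimensional rigidity matroid $\mu$ restricted to $E'$, since $\Omega_{E'}$ is (up to a nonsingular diagonal rescaling) the orthogonal complement in $\CC^{E'}$ of $\pi_{E'}(T_\bl L_{d,n})$. The no-disjoint-bases condition then forces every basis of $\mathcal{M}_R$ (of size $r$) to be $\mu$-dependent, and hence to contain a $\mu$-circuit; by Proposition~\ref{prop:simplex}, every $\mu$-circuit has size at least $D$, so $r \geq D$. Combined with $r \leq D$ this yields $r = D$ and every basis of $\mathcal{M}_R$ itself being a $K_{d+2}$ edge set.

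Finally, matroid basis exchange rules out having two distinct $K_{d+2}$ bases: if $F \neq F'$ were two such, then for any $e \in F \setminus F'$ the exchange axiom would demand some $e' \in F' \setminus F$ making $(F \setminus \{e\}) \cup \{e'\}$ a basis, but any such $e'$ must have an endpoint in $V_{F'} \setminus V_F$ (otherwise $e' \in F$), producing an edge set on at least $d+3$ vertices, which can never be a $K_{d+2}$. So $\mathcal{M}_R$ has a unique basis $F$; standard matroid arguments then force $R = \CC^F$ --- exactly $K_{d+2}$ support and rank $D$. I expect the main obstacle to be the matroid identification $\mathcal{M}_{\Omega_{E'}} = \mu^*$ at a generic $\bl$, which is where the genericity hypothesis is essentially used.
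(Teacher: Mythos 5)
Your proposal is correct, and it reorganizes the argument around the dual (stress) side rather than the primal one used in the paper. The paper's proof fixes a \emph{bad column basis} $\E_2$ of $\E$ (Lemma~\ref{lem:flips:bad-basis}), selects rows so that the corresponding tangent block $\T'_2$ is invertible (Proposition~\ref{prop:simplex} plus genericity, via Definition~\ref{def:tans}), and then applies the sign-flip determinant lemma (Lemma~\ref{lem:nonsing}) to force $\T'_2$ to be singular, a contradiction. You instead encode the rank drop as $R\cap S\Omega\neq 0$ for the row space $R$ and the annihilator $\Omega$ of the tangent space, use the same sign-flip/multilinearity device (your observation that a multiaffine polynomial vanishing on $\{\pm 1\}^{E'}$ is identically zero is Lemma~\ref{lem:nonsing} in a cleaner, stronger guise, since you extract \emph{all} Laplace coefficients rather than just the constant term) to conclude that $\mathcal{M}_R$ and the restricted dual rigidity matroid admit no disjoint bases, and then finish combinatorially: every basis of $\mathcal{M}_R$ is rigidity-dependent, hence by Proposition~\ref{prop:simplex} has size exactly $D$ and is a $K_{d+2}$ edge set, and your basis-exchange argument (which is sound) forces the basis, hence the support, to be unique, giving $R=\CC^{F}$. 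What your route buys is an explicit structural intermediate statement and a transparent use of linear-matroid duality; what the paper's route buys is avoiding matroid duality and the exchange step entirely by deriving a contradiction from a single well-chosen bad basis. Two points to nail down when writing yours up: (i) if $r$ exceeds the rigidity rank of $E'$, i.e.\ $k'+r>|E'|$, there are no $(k'+r)\times(k'+r)$ minors, but then every $r$-subset of $E'$ is already dependent in the rigidity matroid, so your key intermediate claim holds trivially in that regime; and (ii) the matroid identification you flag as the main obstacle is exactly what the paper records in Definition~\ref{def:tans}: at a generic $\bl$ no coordinate vanishes, the Jacobian of the squaring map is an invertible diagonal matrix, so the column matroid of $\T_{\bl}$ is the generic $d$-dimensional rigidity matroid, and $\Omega_{E'}$, being the annihilator of the coordinate projection of $T_{\bl}L_{d,n}$ onto $\CC^{E'}$, then represents its dual on $E'$.
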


\subsection{Proof of Proposition~\ref{prop:inf}}
The rest of the section is occupied with the proof, 
which we break down into steps.
We use a technical lemma about coordinate 
negation and determinants that is relegated to Appendix~\ref{sec:det}.

\begin{definition}
\label{def:sf}
A \defn{sign flip matrix} $\S$ is a diagonal matrix 
with $\pm 1$ on the diagonal.  A \defn{coordinate flip}
of a point or subspace it its image under a sign 
flip matrix.
\end{definition}

\begin{definition}
\label{def:tans}
Let $\bm$ be a generic point in $M_{d,n}$, 
and $T_{\bm}M_{d,n}$ be its complex analytic  tangent.
We can describe 
$T_{\bm}M_{d,n}$ by a
$(dn-C)\times \edgecard$
complex matrix $\T_\bm$. (The row ordering is not relevant).

Let $E$ be an edge measurement ensemble. 
Recall that the map $m_E(\cdot)$ 
is the composition of 
$m(\cdot)$ with $\pi_{\bar{E}}$, and
that from the infinitesimal rigidity 
of $K_n$,  $m(\cdot)$ is a submersion  at 
a generic $\p$.
So using the
chain rule, 
we see that 
infinitesimal independence of $E$ 
is the same as the columns of $\T_\bm$ 
corresponding to $E$ being linearly independent.
The same is true 
(as the Jacobian of $s(\cdot)$ at $\bl$ 
is diagonal and
bijective) 
of the matrix $\T_\bl$ that expresses
the tangent space 
$T_{\bl}L_{d,n}$ at a generic point $\bl$ in 
$L_{d,n}$~\footnote{\label{fn:1d}For $d=1$, where there are no generic points of $L_{1,n}$, we can use points
$\bl$ with no vanishing coordinates and 
where $s(\bl)$ is generic in $M_{1,n}$.}.
\end{definition}

The first step is to restrict to an interesting range
of $n$.
\begin{lemma}\label{lem:flips:n-D-range}
Proposition~\ref{prop:inf} holds when $n<d+2$.
\end{lemma}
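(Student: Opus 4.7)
The plan is to observe that when $n \le d+1$, the variety $L_{d,n}$ fills the entire ambient space $\CC^N$, so the tangent-space hypothesis of Proposition~\ref{prop:inf} becomes vacuous. Concretely, the paper already records that $M_{d,n} = \CC^N$ whenever $n \le d+1$. Since $L_{d,n}$ is the preimage of $M_{d,n}$ under the coordinatewise squaring map $s$, and since $s$ is surjective onto $\CC^N$ (each complex coordinate has a square root), we conclude $L_{d,n} = \CC^N$ as well.

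Because $\CC^N$ is smooth, every point $\bl \in L_{d,n}$ — in particular every generic $\bl$ and each of its coordinate flips $\bl'$ — has complex analytic tangent space
\[
T_{\bl'} L_{d,n} = \CC^N.
\]
Consequently $\E(T_{\bl'} L_{d,n}) = \E(\CC^N)$, which has dimension exactly $r = \rank(\E)$, and never strictly less than $r$. Thus no generic $\bl$ can meet the hypothesis of Proposition~\ref{prop:inf}, and the proposition holds vacuously for $n < d+2$.

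I do not expect any genuine obstacle, since the argument is an immediate consequence of the identity $M_{d,n} = \CC^N$ (for $n \le d+1$) combined with the surjectivity of squaring coordinate-by-coordinate. It is worth pointing out, as a sanity check, that the conclusion of Proposition~\ref{prop:inf} could not possibly hold in this range anyway: the graph $K_n$ contains no $K_{d+2}$ subgraph when $n < d+2$, so $\E$ cannot have $K_{d+2}$ support, and moreover $r \le N = \binom{n}{2} < \binom{d+2}{2} = D$ rules out $r = D$. These two failures of the conclusion are exactly mirrored by the failure of the hypothesis, which is what makes the lemma trivial.
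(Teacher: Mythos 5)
Your proof is correct and follows essentially the same route as the paper: the paper's argument is exactly that for $n \le d+1$ the tangent space $T_{\bl}L_{d,n}$ is the full ambient space, so $\Dim(\E(T_{\bl'}L_{d,n})) = r$ and the hypothesis of Proposition~\ref{prop:inf} is vacuous. Your added justification (that $M_{d,n} = \CC^N$ and surjectivity of the squaring map give $L_{d,n} = \CC^N$) simply spells out the detail the paper leaves implicit.
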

\begin{proof}
When  $n \le d+1$, 
$T_{\bl}L_{d,n}$ is equal to the full embedding space, and 
thus $\Dim(\E(T_{\bl}L_{d,n})) =r$.  Proposition~\ref{prop:inf}
is then trivial in this case.
\end{proof}
Thus, from now on, we may assume that 
$n\ge d+2$.

Let $\T$ be a $(dn - C)\times N$
matrix with rows spanning the tangent space $T_{\bl}L_{d,n}$.
The complex analytic  tangent space at a
smooth point of a variety with pure dimension
has the same dimension
as the variety, which explains the shape of $\T$.

\paragraph{Block form and column basis}
Each column of $\E$ and $\T$ corresponds to an edge in $K_n$. We are
going to make use of edge-permuted versions of these matrices that have
particular block structures. To this end, we are now going to look at
the columns of $\E$ and determine which subsets can form a basis,
$\E_2$, of a linear space of dimension $r$.  So we permute and then
partition the columns of $\E$ into a block form
\[
	\begin{pmatrix}
    	\E_1 & \E_2
    \end{pmatrix}.
\]
where $\E_1$ is $D\times (N-r)$
and $\E_2$ is $D\times r$.
We define a column basis, $\E_2$ of $\E$, to be \defn{good} when $r=D$ and 
the columns of $\E_2$ correspond to the edges of a $K_{d+2}$. 
Any other column basis $\E_2$ will be called \defn{bad}.

Suppose that $\E$ has $K_{d+2}$ support 
and $r=D$. 
then the $r$ columns of $\E$ corresponding 
to the edges of this $K_{d+2}$ 
must form the only column basis of $\E$. 
Moreover, it is good. 

\begin{lemma}\label{lem:flips:bad-basis}
If  $\E$ does not 
have $K_{d+2}$ support or $r < D$,
then there is a bad column basis for $\E$. 
\end{lemma}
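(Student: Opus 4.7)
The plan is to split on the two hypotheses and use a matroid basis-exchange argument in the harder case. When $r < D$, every column basis of $\E$ has exactly $r$ elements, so none can satisfy the size-$D$ requirement in the definition of goodness; since $\E$ has at least one column basis (any maximal linearly independent subset of its columns), this alone produces a bad basis with no further work.

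In the remaining case $r = D$ with $\E$ lacking $K_{d+2}$ support, I would argue by contradiction: suppose every column basis of $\E$ is good. Fix one such basis $B_0$, whose columns index the edges $E(K_{d+2}(V))$ for some vertex set $V$ of size $d+2$. Since $\E$ has no $K_{d+2}$ support, there is some non-zero column $c_e$ of $\E$ whose corresponding edge $e$ lies outside $E(K_{d+2}(V))$. Because $B_0$ has $r = D$ elements and spans the column space of $\E$, the column $c_e$ is in the span of $B_0$. The standard matroid basis-exchange axiom then produces some $e_1 \in B_0$ such that $B_1 := (B_0 \setminus \{e_1\}) \cup \{e\}$ is again a column basis of $\E$.

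The main obstacle, and the final step, is to show that $B_1$ cannot possibly be good. I would handle this via a vertex count. In $K_{d+2}(V)$ each vertex has degree $d+1 \ge 2$ (using $d \ge 1$), so removing the single edge $e_1$ leaves every vertex of $V$ still incident to some edge of $B_0 \setminus \{e_1\}$; hence the set of vertices covered by $B_0 \setminus \{e_1\}$ is all of $V$. On the other hand, because $e \notin E(K_{d+2}(V))$, at least one endpoint of $e$ lies outside $V$, so the set of vertices covered by $B_1$ has cardinality at least $d+3$. Since the edge set of any $K_{d+2}$ touches exactly $d+2$ vertices, $B_1$ cannot be $E(K_{d+2}(V'))$ for any $V'$. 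Thus $B_1$ is a bad column basis, contradicting the assumption that every basis of $\E$ is good, and completing the proof.
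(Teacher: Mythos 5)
Your proof is correct and follows essentially the same route as the paper: the case $r<D$ is immediate, and for $r=D$ you exchange a non-zero column whose edge lies outside the $K_{d+2}$ into a good basis (when one exists) and observe that the resulting $D$-edge set touches at least $d+3$ vertices, so it cannot be the edge set of any $K_{d+2}$. The paper organizes this as a direct case split (support of size exactly $D$ versus more than $D$ non-zero columns) rather than a contradiction, but the exchange-plus-vertex-count argument is the same.
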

\begin{proof}
If
$r < D$, then by definition, 
no column basis can be good.  From now on, then, 
assume that $r=D$.

If $\E$ is supported on only $D$ columns, there is a unique column 
basis $\E_2$.  
Thus in this case, non-$K_{d+2}$ support for $\E$ will imply that the
unique column basis is bad.

Suppose instead there are more than $D$ 
non-zero columns of $\E$.
Thus, starting from, say, a good basis $\E_2$, we 
can exchange a non-zero column of $\E_1$ with an appropriate one 
from $\E_2$ to obtain another basis which 
is bad: removing an edge from a $K_{d+2}$
and replacing it with any other edge results in a 
graph that cannot be a $K_{d+2}$ (it has more vertices).
\end{proof}
\begin{remark}\label{rem:flips:bad-basis-iff}
In light of the paragraph preceding this lemma, 
Lemma~\ref{lem:flips:bad-basis} can be made into 
an ``if and only if'' statement.
\end{remark}

Going back to $\T$ and applying the same column used obtain 
$	\begin{pmatrix}
    	\E_1 & \E_2
    \end{pmatrix}$,
we get a block form
\ba
    \begin{pmatrix}
        \T_1  & \T_2 
    \end{pmatrix}
\ea
where 
$\T_1$ is $(dn-C)\times (N-r)$ and
$\T_2$ is $(dn-C)\times r$.

\begin{lemma}\label{lem:flips:T2-cols}
Assuming that $\E_2$ is a bad basis of $\E$ and $\bl$ is generic, 
the matrix $\T_2$ has rank $r$ (and 
in particular linearly independent columns) 
\end{lemma}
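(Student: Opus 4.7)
The plan is to reduce the claim to a statement about infinitesimal independence of an edge set, and then invoke Proposition~\ref{prop:simplex}. The matrix $\T_2$ is $(dn-C)\times r$, so its rank is at most $r$, and proving the claim amounts to showing its $r$ columns are linearly independent. By the observation in Definition~\ref{def:tans} (which applies because $\bl$ is generic), the columns of $\T$ indexed by any edge set $E$ of $K_n$ are linearly independent precisely when $E$ is infinitesimally independent in $d$ dimensions. Let $F$ denote the edge set indexing the columns of $\E_2$; the goal is thus to show that $F$ is infinitesimally independent.

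Since $\E_2$ is a bad basis, we are in one of two cases. Either $r<D$, in which case $|F|=r<D$, or $r=D$ and the $D$ edges of $F$ are not the edge set of any $K_{d+2}$ subgraph. In both cases, $|F|\le D=\binom{d+2}{2}$ and $F$ is not the edge set of a $K_{d+2}$. Proposition~\ref{prop:simplex} tells us that any infinitesimally dependent edge set of size at most $D$ must have size exactly $D$ and consist of the edges of a $K_{d+2}$. Since $F$ fails this condition, $F$ must be infinitesimally independent, and so $\T_2$ has rank $r$.

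I do not anticipate a serious obstacle here: the work has already been done in the setup. The only thing one must be careful about is genericity for $d=1$ (there are no generic points of $L_{1,n}$ in the sense of Definition~\ref{def:genR}), but as noted in Footnote~\ref{fn:1d} of the excerpt, one works instead with points $\bl$ whose coordinates are all nonzero and whose image $s(\bl)$ is generic in $M_{1,n}$; Definition~\ref{def:tans} is already phrased to cover this case. With that caveat, the proof is essentially a two-line invocation of the infinitesimal-rigidity dictionary plus Proposition~\ref{prop:simplex}.
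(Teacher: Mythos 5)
Your proposal is correct and follows essentially the same route as the paper's own proof: identify the edges indexing the columns of $\E_2$ as a set of at most $D$ edges that is not a $K_{d+2}$, apply Proposition~\ref{prop:simplex} to conclude infinitesimal independence, and then use genericity of $\bl$ via Definition~\ref{def:tans} to get linear independence of the corresponding columns of $\T$. The explicit two-case split on $r<D$ versus $r=D$ and the $d=1$ caveat are just slightly more detailed versions of what the paper does implicitly.
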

\begin{proof}
Since $(\E_1,\E_2)$ arises from a bad basis, and we have
only applied column permutations, the columns
of $\T_2$ corresponds to a subgraph $G$ of $K_n$
with at most
$D$ edges which is not $K_{d+2}$.  
Proposition~\ref{prop:simplex} tells us that the edges of 
$G$ are infinitesimally independent.
So, by 
genericity of $\bl$, these columns are 
linearly independent (Definition~\ref{def:tans}).
\end{proof}

\paragraph{Row rank}

\begin{lemma}\label{lem:flips:T1-T2-rows}
Assuming that $\E_2$ is a bad basis of $\E$ and $\bl$ is generic.
Then the block matrix 
\(
	\begin{pmatrix}
    	\T_1 & \T_2
    \end{pmatrix}
\) contains $r$ rows,
\(
	\begin{pmatrix}
    	\T'_1 & \T'_2
    \end{pmatrix}
\), such that 
$\T'_2$ forms a non-singular matrix.
\end{lemma}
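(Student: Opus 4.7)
The plan is to leverage Lemma~\ref{lem:flips:T2-cols} via the row-rank / column-rank duality and then just select the rows witnessing the row rank. Since that lemma gives us that $\T_2$ has rank $r$, and since $\T_2$ is a $(dn-C)\times r$ matrix, its column rank equals $r$, hence its row rank equals $r$ as well.

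Concretely, I would first invoke Lemma~\ref{lem:flips:T2-cols} to obtain that the $r$ columns of $\T_2$ are linearly independent, so $\rank(\T_2) = r$. Next, because row rank equals column rank for any complex matrix, there must exist a set of $r$ row-indices $I \subset \{1,\dots,dn-C\}$ such that the $r \times r$ submatrix $\T'_2$ obtained by keeping only the rows indexed by $I$ has rank $r$, and is therefore non-singular. Finally, I would define $\T'_1$ to be the submatrix of $\T_1$ obtained by restricting to the same row index set $I$; this automatically produces the desired block pair
\[
	\begin{pmatrix}
    	\T'_1 & \T'_2
    \end{pmatrix}
\]
as $r$ rows of the original $(\T_1, \T_2)$, with $\T'_2$ non-singular as required.

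There is no real obstacle here beyond correctly applying the lemma from the previous step; the argument is a one-line row-rank/column-rank observation once Lemma~\ref{lem:flips:T2-cols} is in hand. The only thing to be careful about is that the hypotheses (bad basis and generic $\bl$) are carried over verbatim, which is automatic since we are using Lemma~\ref{lem:flips:T2-cols} as a black box under the same assumptions.
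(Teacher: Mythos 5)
Your proposal is correct and is essentially identical to the paper's own proof: both invoke Lemma~\ref{lem:flips:T2-cols} to get $\rank(\T_2)=r$, use row rank equals column rank to select $r$ rows forming a row basis of $\T_2$, and take the corresponding rows of $\T_1$. Nothing further is needed.
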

\begin{proof}
Since we have a bad basis, 
from Lemma~\ref{lem:flips:T2-cols},
$\T_2$ has $r$ linearly independent columns
and thus $r$ linearly independent rows.
We can select any set of rows corresponding to a row basis of $\T_2$.
\end{proof}

Similarly, we have
\begin{lemma}\label{lem:flips:E1-E2-rows}
Let $\E_2$ be a column basis for $\E$.
Then the block matrix 
\(
	\begin{pmatrix}
    	\E_1 & \E_2
    \end{pmatrix}
\) 
contains $r$ rows,
\(
	\begin{pmatrix}
    	\E'_1 & \E'_2
    \end{pmatrix}
\), such that 
$\E'_2$ forms a non-singular matrix.
\end{lemma}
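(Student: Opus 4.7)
The plan is to mimic the proof of Lemma~\ref{lem:flips:T1-T2-rows}, but now the input is even cleaner, since we are given directly that $\E_2$ is a column basis of $\E$. By the definition of a column basis, $\E_2$ is a $D\times r$ matrix whose $r$ columns are linearly independent, so $\E_2$ has rank exactly $r$. The elementary fact that row rank equals column rank then yields $r$ linearly independent rows of $\E_2$.

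The concrete step is: pick any index set $I\subset\{1,\dots,D\}$ of size $r$ such that the restriction of $\E_2$ to the rows indexed by $I$, call it $\E'_2$, is a non-singular $r\times r$ matrix. Let $\E'_1$ be the restriction of $\E_1$ to the same row index set $I$. Then the block matrix $(\E'_1,\E'_2)$ is the desired $r$-row submatrix of $(\E_1,\E_2)$, and $\E'_2$ is invertible by construction.

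No serious obstacle arises here: unlike Lemma~\ref{lem:flips:T1-T2-rows}, we do not need to invoke Proposition~\ref{prop:simplex} to produce linear independence of the relevant columns; that is handed to us by the hypothesis that $\E_2$ is a column basis. Nor do we need any genericity of $\bl$ or any structural information about bad bases, since the statement is purely about the given matrix $\E$. The lemma is thus a one-line consequence of the equality of row and column ranks.
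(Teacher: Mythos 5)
Your proof is correct and matches the paper's (the paper gives no separate argument, just "Similarly," deferring to the proof of Lemma~\ref{lem:flips:T1-T2-rows}, which is exactly the row-rank-equals-column-rank selection of a row basis that you spell out). Your observation that no genericity or dependence structure is needed here is also exactly right.
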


Next, we derive an implication of $\E$ dropping rank on
the tangent space.

\begin{lemma}
\label{lem:flips-row-rank}
Suppose there is a 
generic point
$\bl\in L_{d,n}$ such that $\bl$ and 
all of its coordinate flips $\bl'$  have the property
that 
$\Dim(\E(T_{\bl'}L_{d,n})) <r$.
Let $\E_2$ be a bad basis for $\E$.
Let $\S_1$ be any  
any $(N-r)\times (N-r)$ sign flip matrix, and 
$\S_2$, any $r\times r$ sign flip matrix.

Then
the $r\times r$ matrix 
$\Z:= 
\E'_1 \S_1 \trans{\T'_1}
+ \E'_2 \S_2 \trans{\T'_2}
$ is singular.
\end{lemma}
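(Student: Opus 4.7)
The plan is to repackage the hypothesis into a single matrix-rank statement and then recognize $\Z$ as an $r\times r$ submatrix of a matrix with rank strictly less than $r$.

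The essential geometric observation is that $\LN$ is defined (as in Section~\ref{sec:varieties}) by pulling back the defining equations of $M_{d,n}$ through the coordinate-squaring map $s$, so it is invariant under every $N\times N$ sign flip matrix $\S$ (Definition~\ref{def:sf}). Each such $\S$ is therefore a $\QQ$-defined linear automorphism of $\LN$; being linear, its own differential is again $\S$. In particular, if $\bl'=\S\bl$ is a coordinate flip, then
\[
T_{\bl'}\LN \;=\; \S\cdot T_{\bl}\LN,
\]
so a row-span matrix of $T_{\bl'}\LN$ is simply $\T\S$. It follows that $\E(T_{\bl'}\LN)$ is the column space of the $D\times(dn-C)$ matrix $\E\S\trans{\T}$, and the hypothesis of the lemma becomes the clean statement
\[
\rank\bigl(\E\,\S\,\trans{\T}\bigr) \;<\; r \quad \text{for every $N\times N$ sign flip matrix $\S$.}
\]

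Now I apply the column permutation that produced $(\E_1\mid\E_2)$ and $(\T_1\mid\T_2)$. Under this permutation any sign flip matrix $\S$ decomposes as a block diagonal matrix whose two diagonal blocks are exactly an $(N-r)\times(N-r)$ sign flip $\S_1$ and an $r\times r$ sign flip $\S_2$; moreover every pair $(\S_1,\S_2)$ of such matrices arises in this way. A direct block computation then gives
\[
\E\,\S\,\trans{\T} \;=\; \E_1\S_1\trans{\T_1} + \E_2\S_2\trans{\T_2}.
\]
Selecting the $r$ rows indexing $\E'$ (as in Lemma~\ref{lem:flips:E1-E2-rows}) and the $r$ columns dual to the rows indexing $\T'$ (as in Lemma~\ref{lem:flips:T1-T2-rows}) extracts precisely
\[
\Z \;=\; \E'_1\S_1\trans{\T'_1} + \E'_2\S_2\trans{\T'_2},
\]
which is an $r\times r$ submatrix of a matrix of rank strictly less than $r$. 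Hence $\det(\Z)=0$, and ranging over all $(\S_1,\S_2)$ gives the conclusion.

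There is no real obstacle here: the only conceptual point is the sign-flip equivariance of the tangent spaces to $\LN$, which comes for free from the fact that $\LN$ is the preimage under $s$ of a variety in $\CC^N$. Everything else is bookkeeping with the column-permuted block decomposition.
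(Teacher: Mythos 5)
Your proof is correct and follows essentially the same route as the paper: identify $\E(T_{\bl'}L_{d,n})$ with the column space of $\E\S\trans{\T}$ for the block sign flip $\S$ built from $\S_1,\S_2$, expand into $\E_1\S_1\trans{\T_1}+\E_2\S_2\trans{\T_2}$, and note that $\Z$ is an $r\times r$ submatrix of this rank-deficient matrix. The only difference is presentational: you make explicit the sign-flip equivariance $T_{\bl'}L_{d,n}=\S\,T_{\bl}L_{d,n}$ that the paper uses implicitly, and you argue directly rather than by contradiction.
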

\begin{proof}
Let $\S$ be the $N\times N$ be the
sign flip matrix with the $\S_i$ as its
diagonals. Let $\bl'$ be the point obtained from $\bl$ under the sign flips of $\S$.
Then we have
$\Dim(\E(T_{\bl'}L_{d,n})) =
\rank(\E \S \trans{\T}) = 
\rank(
\E_1 \S_1 \trans{\T_1} 
+ \E_2 \S_2 \trans{\T_2}) 
\ge
\rank(
\E'_1 \S_1 \trans{\T'_1} 
+ \E'_2 \S_2 \trans{\T'_2})$

If for some $\S$, the matrix $\Z$ were non-singular,
then we would have a certificate that
$\E$ does not drop rank on that coordinate flip of the 
tangent space, in contradiction to the hypothesis 
on $\Dim(\E(T_{\bl'}L_{d,n}))$.
\end{proof}
\begin{remark}
The rank of $\Z$ may change as the $\S_i$ do, but it 
cannot rise to $r$.
\end{remark}

\paragraph{Conclusion of the proof}

From Lemma~\ref{lem:flips:bad-basis},
if  $\E$ did not 
have $K_{d+2}$ support or $r < D$,
then there would be a bad column basis $\E_2$ for $\E$. 
From Lemma~\ref{lem:flips:T1-T2-rows}, for a generic $\bl$, 
$\T'_2$ would be a non-singular matrix.

Suppose there is a 
generic point
$\bl\in L_{d,n}$ such that $\bl$ and 
all of its coordinate flips $\bl'$  have the property
that 
$\Dim(\E(T_{\bl'}L_{d,n})) <r$.
Then
from Lemma~\ref{lem:flips-row-rank},
for any choice of  $\S_2$, 
the matrix 
$\Z$ is singular. 
Since $\E_2$ is a basis, 
$\E'_2$ is non-singular matrix
(Lemma~\ref{lem:flips:E1-E2-rows}),
thus
$\Z':= \S_2 {\E'_2}^{-1} \Z = 
\S_2 ({\E'_2}^{-1} \E'_1 \S_1 \trans{\T'_1})
+  \trans{\T'_2}$ 
is singular for any choice of $\S_2$.
Thus, Lemma~\ref{lem:nonsing} on determinants and sign flips
applies to $\Z'$, and we
conclude that $\T'_2$ is singular.

The resulting contradiction 
completes the proof of Proposition 
\ref{prop:inf}.
\qed

\section{Automorphisms of $L_{d,n}$}\label{sec:Ldn-automorphisms}

In this section we will characterize the linear automorphisms of
$L_{d,n}$ for all $d$ and $n$.
One key feature will be that we are no longer 
restricted to the case of edge permutations.

We will need to consider a few distinct cases for $d$ and $n$.


\begin{definition}\label{def:signed-perm}
Set $N := \binom{n}{2}$ and identify the rows and 
columns of an $N\times N$ matrix with the 
edges of $K_n$.

A \defn{signed permutation} is an $N\times N$
matrix $\P'$ that is the product $\S\P$ of a 
sign flip matrix $\S$ and a permutation 
matrix $\P$.

A signed permutation $\P':=\S\P$ is 
\defn{induced by a vertex relabeling} if 
$\P$ is induced by a vertex relabeling of $K_n$.
\end{definition}

\subsection{Automorphisms of $L_{d,n}$, $n\ge d+3$}
\label{sec:bign}

Let $d\ge 1$.
This section will be concerned with $L_{d,n}$ where $n$ is larger than
the minimal value, $d+2$.

\begin{theorem}\label{thm:no-regges}
Let $n \ge d+3$.  Then any linear automorphism 
$\A$ of $L_{d,n}$ of is a scalar multiple
of a signed permutation that is induced by a 
vertex relabeling.
\end{theorem}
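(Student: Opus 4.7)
The plan is to show that any such $\A$ must be a generalized signed permutation induced by a vertex relabeling; then, an application of Theorem~\ref{thm:bk-linear} via the squaring map pins down the uniform scale. The main tool for the first part is Theorem~\ref{thm:linImage}, applied to the projections of $\A$ onto each $K_{d+2}$-block.

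First, for each $K_{d+2}$ subgraph $T$ of $K_n$ I would consider the composite $\E_T := \pi_T\circ\A\colon L_{d,n}\to\CC^D$, where $\pi_T$ is the coordinate projection onto the $D$ edges of $T$. Since $\A$ preserves $L_{d,n}$ and $\pi_T(L_{d,n})=L_{d,d+2}$, the image $\E_T(L_{d,n})$ equals $L_{d,d+2}$, which has dimension $D-1$. Because $L_{d,d+2}$ is not contained in any proper linear subspace of $\CC^D$ (for $d\ge 2$ it is an irreducible hypersurface of degree $2(d+1)\ge 4$; for $d=1$ it is a union of four distinct hyperplanes spanning $\CC^3$), the rank of $\E_T$ must equal $D$. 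Then Theorem~\ref{thm:linImage} yields a $K_{d+2}$ subgraph $\sigma(T)$ of $K_n$ such that $\A_{fe}=0$ whenever $f\in T$ and $e\notin\sigma(T)$.

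Next I would pin down the combinatorics of $\A$ in two sub-steps. For injectivity of $\sigma$: if $\sigma(T_1)=\sigma(T_2)$ with $T_1\ne T_2$, then the $|T_1\cup T_2|=2D-|T_1\cap T_2|>D$ rows of $\A$ indexed by $T_1\cup T_2$ all lie in a $D$-dimensional coordinate subspace, contradicting the linear independence of the rows of the invertible matrix $\A$. For the row-support, set $R(f):=\{e:\A_{fe}\ne 0\}$, so $R(f)\subseteq\bigcap_{T\ni f}\sigma(T)$. Suppose, for contradiction, that $R(f)$ contains two distinct edges $g,g'$. Then $\sigma(T)\supseteq\{g,g'\}$ for every $T\ni f$, so $\sigma$ injectively maps the $\binom{n-2}{d}$ subgraphs $T\ni f$ into the set of $K_{d+2}$ subgraphs whose vertex set contains the endpoints of both $g$ and $g'$. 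That target has size at most $\binom{n-3}{d-1}$ when $g$ and $g'$ share a vertex, and $\binom{n-4}{d-2}$ when they are vertex-disjoint. Pascal's identity gives $\binom{n-2}{d}>\binom{n-3}{d-1}\ge\binom{n-4}{d-2}$ whenever $n\ge d+3$, a contradiction. Hence $|R(f)|\le 1$, and invertibility of $\A$ forces $|R(f)|=1$, so $\A=\D\,\Pi$ for some invertible diagonal $\D$ and some edge-permutation matrix $\Pi$.

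To finish, I would push this structure through the squaring map $s$. For any $\bl\in L_{d,n}$, a direct computation gives $s(\A\bl)=\D'\,\Pi\,s(\bl)$, where $\D'$ is the diagonal matrix whose entries are the squares of those of $\D$. Since $s$ surjects $L_{d,n}$ onto $M_{d,n}$, the generalized permutation $\D'\,\Pi$ is a linear automorphism of $M_{d,n}$, so Theorem~\ref{thm:bk-linear} says that $\Pi$ is induced by a vertex relabeling of $K_n$ and $\D'=\lambda\,\I$ for a single scalar $\lambda$. Therefore $\D=\sqrt{\lambda}\,\S$ for a sign-flip matrix $\S$, and $\A=\sqrt{\lambda}\,\S\,\Pi$ has exactly the desired form. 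The hard part will be the combinatorial counting in the row-support step: this is the one place the hypothesis $n\ge d+3$ enters essentially, and one must correctly bound both the vertex-sharing and vertex-disjoint configurations of $g,g'$.
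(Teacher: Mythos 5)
Your proposal is correct and takes essentially the same route as the paper: apply Theorem~\ref{thm:linImage} to the rank-$D$ maps $\pi_T\A$ to get an induced map on $K_{d+2}$ subgraphs, use invertibility of $\A$ for injectivity, use the count $\binom{n-2}{d} > \binom{n-3}{d-1}$ (where $n\ge d+3$ enters) to force each row of $\A$ to have a single nonzero entry, and then transfer the resulting generalized permutation through the squaring map to $M_{d,n}$, where Theorem~\ref{thm:bk-linear} gives the vertex relabeling and uniform scale. The only deviations are cosmetic, e.g.\ splitting the counting into the vertex-sharing and vertex-disjoint cases (the paper bounds both by $\binom{n-3}{d-1}$) and justifying $\rank(\pi_T\A)=D$ via the affine span of $L_{d,d+2}$ rather than directly from invertibility of $\A$.
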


The plan is to use 
machinery from Section~\ref{sec:maps}
to show that the automorphism must be in the form 
of a generalized edge permutation. We will then be able to
switch over 
to the $M_{d,n}$ setting, where we can 
apply Theorem~\ref{thm:bk-linear}.

\begin{definition}\label{def:support-map}
Let $\A$ be an $N\times N$ matrix.  We identify the 
rows and columns of $\A$ with the edges 
of $K_n$.  This induces a
map $\tau_\A$ from subgraphs of $K_n$ to 
subgraphs of $K_n$ by mapping the subgraph 
associated with a collection of rows 
to the column support of this sub-matrix.
\end{definition}
\begin{lemma}\label{lem:tet-a-tet}
Let 
$n\ge d+2$ and suppose that $\A$ is a 
linear automorphism of $L_{d,n}$.
 Then the associated combinatorial map
$\tau_\A$ induces a permutation on  $K_{d+2}$ 
subgraphs of $K_n$.
\end{lemma}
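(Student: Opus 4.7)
The plan is to reduce to Theorem~\ref{thm:linImage} by composing $\A$ with an edge-forgetting map onto a $K_{d+2}$ subgraph, and then upgrade the resulting containment to an equality using $\A^{-1}$.

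Fix a $K_{d+2}$ subgraph $T$ of $K_n$ and consider the composition $\E := \pi_{\bar{T}} \circ \A$, a $D \times N$ matrix. Since $\A$ is a bijective linear automorphism of $L_{d,n}$ and $\pi_{\bar{T}}$ restricts on $L_{d,n}$ to a surjection onto $L_{d,d+2}$, we have $\E(L_{d,n}) = \pi_{\bar{T}}(L_{d,n}) = L_{d,d+2}$, a variety of dimension $d(d+2) - C = D-1$. Meanwhile $\E$ has rank $D$ because $\pi_{\bar{T}}$ does and $\A$ is invertible. Thus the hypotheses of Theorem~\ref{thm:linImage} are met, and we conclude that $\E$ has $K_{d+2}$ support: its nonzero columns lie in the $D$ edges of some $K_{d+2}$ subgraph $T'$ of $K_n$. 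Because $\rank \E = D$, at least $D$ columns of $\E$ must be nonzero; combined with the support condition, the nonzero columns of $\E$ are exactly the $D$ edges of $T'$. By the definition of $\tau_\A$ in Definition~\ref{def:support-map}, this says $\tau_\A(T) = T'$, another $K_{d+2}$ subgraph. The same argument applied to $\A^{-1}$ shows $\tau_{\A^{-1}}$ likewise sends $K_{d+2}$ subgraphs to $K_{d+2}$ subgraphs.

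It remains to verify that $\tau_\A$ and $\tau_{\A^{-1}}$ are mutual inverses on $K_{d+2}$ subgraphs. Reordering rows of $\A$ as $[T, T^c]$ and columns as $[T', T'^c]$, the conclusion $\tau_\A(T) = T'$ puts $\A$ in block-triangular form
\[
\A = \begin{pmatrix} P & 0 \\ Q & R \end{pmatrix},
\]
where $P$ is the $D\times D$ block containing the nonzero columns of $\E$; $P$ therefore has rank $D$ and is non-singular. Since $\A$ is invertible, $R$ is non-singular as well, and standard block inversion yields
\[
\A^{-1} = \begin{pmatrix} P^{-1} & 0 \\ -R^{-1}Q P^{-1} & R^{-1} \end{pmatrix}.
\]
Thus the rows of $\A^{-1}$ indexed by $T'$ have column support inside $T$, so $\tau_{\A^{-1}}(T') \subseteq T$; equality follows from the cardinality count $|\tau_{\A^{-1}}(T')| = D = |T|$ already established. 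By the symmetric argument $\tau_\A \circ \tau_{\A^{-1}}$ is also the identity on $K_{d+2}$ subgraphs, so $\tau_\A$ restricts to a permutation of them.

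The only step carrying real geometric content is the invocation of Theorem~\ref{thm:linImage}, which does the heavy lifting by ruling out any other way for $\E$ to land in a proper subvariety of $\CC^D$; the remainder is linear-algebraic bookkeeping. I expect no substantive obstacle beyond carefully tracking the identifications of rows, columns, and edge sets.
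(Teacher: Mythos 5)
Your proof is correct and takes essentially the same route as the paper: for each $K_{d+2}$ subgraph $T$ you compose $\A$ with the edge-forgetting map $\pi_{\bar{T}}$ and invoke Theorem~\ref{thm:linImage} (plus the rank-$D$ count) to conclude that the rows of $\A$ indexed by $T$ have column support exactly equal to the edges of another $K_{d+2}$, i.e.\ $\tau_\A$ maps $K_{d+2}$ subgraphs to $K_{d+2}$ subgraphs. The only (harmless) difference is the finish: the paper deduces injectivity directly, since two distinct $K_{d+2}$'s with the same image would give more than $D$ rows of $\A$ supported on only $D$ columns, contradicting non-singularity, whereas you run the same argument for $\A^{-1}$ and use a block-triangular inversion to exhibit $\tau_{\A^{-1}}$ as a two-sided inverse; both arguments are valid.
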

\begin{proof}
If $\E$ is any $D \times N$ matrix of 
rank $D$, with $\E(L_{d,n}) \subset  L_{d,d+2}$,
then
the map $\E\A$  also has these properties.
Thus, by Theorem~\ref{thm:linImage} both $\E$ and 
$\E\A$ have $K_{d+2}$ support. 
There is such 
an $\E$ for each $K_{d+2}$ subgraph: simply take the matrix
of the edge forgetting map $\pi_{\bar{K}}$, where $K$ is
an edge measurement ensemble comprising the edges of 
this $K_{d+2}$.
This situation is only possible 
if $\tau_\A(T)$ maps each $K_{d+2}$ 
subgraph $T$ to another $K_{d+2}$  subgraph.  

If the map on $K_{d+2}$  subgraphs induced by $\tau_\A$ is not 
injective, then the matrix  $\A$ would have 
more than $D$  rows supported by only $D$ columns, and 
thus $\A$ would be singular.  Since 
$\A$ is a linear automorphism of $L_{d,n}$ it has to 
be invertible, and the resulting contradiction 
completes the proof.
\end{proof}

This lets us prove the following.
\begin{lemma}\label{lem:gen-perm}
Let $n\ge d+3$ and let $\A$ be a linear automorphism
of $L_{d,n}$.  Then $\A$ is a generalized permutation.
\end{lemma}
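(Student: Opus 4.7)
The plan is to use Lemma~\ref{lem:tet-a-tet} to constrain row supports via an elementary counting argument. For each edge $e$ of $K_n$, let $\sigma(e) \subseteq E(K_n)$ denote the column support of the row of $\A$ indexed by $e$; equivalently $\sigma(e) = \tau_\A(\{e\})$. My aim is to show $|\sigma(e)| = 1$ for every $e$, from which it follows that $\A$ has exactly one non-zero entry per row, and hence (by invertibility of $\A$) exactly one per column, making $\A$ a generalized permutation.

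The key observation is monotonicity of $\tau_\A$: since $\tau_\A(T) = \bigcup_{e' \in T}\sigma(e')$ as a set of edges, we have $\sigma(e) \subseteq \tau_\A(T)$ for every $K_{d+2}$ subgraph $T$ containing $e$. By Lemma~\ref{lem:tet-a-tet}, each such $\tau_\A(T)$ is itself the edge set of a $K_{d+2}$, and $\tau_\A$ acts as a bijection on the collection of $K_{d+2}$-subgraphs of $K_n$. Suppose, for contradiction, that $\sigma(e)$ contains two distinct edges $f_1, f_2$. Then every $K_{d+2}$ containing $e$ is mapped by $\tau_\A$ to a $K_{d+2}$ containing both $f_1$ and $f_2$, so injectivity of $\tau_\A$ on $K_{d+2}$-subgraphs yields
\[
  |\{T\text{ a }K_{d+2} : e \in T\}| \;\le\; |\{T'\text{ a }K_{d+2} : f_1, f_2 \in T'\}|.
\]

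To contradict this inequality I would just count. The left-hand side equals $\binom{n-2}{d}$. The right-hand side equals $\binom{n-3}{d-1}$ when $f_1$ and $f_2$ share a vertex, and $\binom{n-4}{d-2}$ when they are vertex-disjoint. An elementary ratio check (e.g.\ $\binom{n-2}{d}/\binom{n-3}{d-1} = (n-2)/d$) shows that for $n \ge d+3$ both right-hand quantities are strictly smaller than $\binom{n-2}{d}$, producing the contradiction. Hence $|\sigma(e)| \le 1$, and since $\A$ is invertible its rows cannot vanish, so $|\sigma(e)| = 1$. The assignment $e \mapsto \sigma(e)$ must moreover be a bijection (otherwise some column of $\A$ is identically zero and $\A$ is singular), and the conclusion follows.

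There is no real obstacle here beyond setting the bookkeeping up correctly; the hypothesis $n \ge d+3$ is used exactly once, in the final ratio comparison, which pinpoints why the argument cannot be pushed down to the minimal case $n = d+2$ where $K_n$ contains only a single $K_{d+2}$ and the Regge-type phenomena addressed in the next subsection can appear.
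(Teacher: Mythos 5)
Your proof is correct and follows essentially the same route as the paper: both arguments invoke Lemma~\ref{lem:tet-a-tet} and then derive a contradiction by comparing the $\binom{n-2}{d}$ choices of a $K_{d+2}$ containing a fixed edge $e$ with the at most $\binom{n-3}{d-1}$ choices of a $K_{d+2}$ containing the (at least three) vertices spanned by two putative target edges, using $n\ge d+3$ exactly where you do. Your splitting into the shared-vertex and vertex-disjoint cases is a harmless refinement of the paper's single bound via $|X|\ge 3$.
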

\begin{proof}
Suppose, w.l.o.g., that the row corresponding to 
the edge $e:=\{1,2\}$ has two non-zero entries corresponding 
to edges $\{i,j\}$ and $\{k,\ell \}$.  
By  Lemma~\ref{lem:tet-a-tet}, 
any $K_{d+2}$ 
 subgraph 
$T$ containing the edge $e$ must be mapped by $\tau_\A$ to a $K_{d+2}$  
subgraph $T'$ that contains the vertex set 
$X:=\{i,j\}\cup \{k,\ell\}$.

Since $|X|\ge 3$ there are at most 
$\binom{n-3}{d-1}$ 
choices 
for $T'$.  Meanwhile, there are $\binom{n-2}{d}$ choices for 
$T$.  Since $n\ge d+3$, we have $\binom{n-2}{d} > \binom{n-3}{d-1}$, contradicting the permutation
of $K_{d+2}$ subgraphs
guaranteed by 
Lemma~\ref{lem:tet-a-tet}.

Thus each row of $\A$ can have at most one non-zero entry.
As a non-singular matrix, this makes $\A$ a generalized
permutation.
\end{proof}
At this point, we want to move back 
to the setting of $M_{d,n}$, which we do with this next 
result.
\begin{lemma}\label{lem:L-to-M}
Let $\A := \D\P$ be a generalized permutation, where
$\D$ is an invertible
diagonal matrix and $\P$ is a permutation matrix.
If $\A$ is a linear automorphism of $L_{d,n}$ 
then $\D^2\P$ is a
linear automorphism of $M_{d,n}$.
\end{lemma}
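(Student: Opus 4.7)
The plan is to use the simple observation that the coordinate-wise squaring map $s$ intertwines $\A$ with $\D^2 \P$, and then exploit surjectivity of $s$ from $L_{d,n}$ onto $M_{d,n}$.

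First I would record two commutation identities. Since $s$ squares each coordinate independently, and $\D$ is diagonal, one has $s \circ \D = \D^2 \circ s$ as maps on $\CC^N$. Since $\P$ is a permutation matrix, squaring commutes with it: $s \circ \P = \P \circ s$. Composing these gives the key identity
\[
s \circ \A \;=\; s \circ \D \circ \P \;=\; \D^2 \circ \P \circ s \;=\; (\D^2\P) \circ s.
\]

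From this, the fact that $\D^2\P$ preserves $M_{d,n}$ is immediate. Given $\bm \in M_{d,n}$, by definition of $L_{d,n}$ as the preimage $s^{-1}(M_{d,n})$ (more precisely, by surjectivity of $s\colon L_{d,n}\to M_{d,n}$), there is some $\bl \in L_{d,n}$ with $s(\bl) = \bm$. Then $\A\bl \in L_{d,n}$ since $\A$ is an automorphism of $L_{d,n}$, so
\[
\D^2\P\,\bm \;=\; \D^2\P\,s(\bl) \;=\; s(\A\bl) \;\in\; s(L_{d,n}) \;=\; M_{d,n}.
\]
Thus $\D^2\P$ sends $M_{d,n}$ into itself, and as a product of an invertible diagonal matrix with a permutation, $\D^2\P$ is a bijective linear map on $\CC^N$.

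For bijectivity on $M_{d,n}$ itself, I would apply the same argument to $\A^{-1}$. Writing $\A^{-1} = \P^{-1}\D^{-1} = \D'\P^{-1}$ with $\D' := \P^{-1}\D^{-1}\P$ diagonal, this is again a generalized permutation and it is a linear automorphism of $L_{d,n}$. The preceding paragraph, applied to $\A^{-1}$ in place of $\A$, shows that $(\D')^2\P^{-1}$ maps $M_{d,n}$ into itself. A direct computation using $(\P^{-1}\D^{-1}\P)^2 = \P^{-1}\D^{-2}\P$ yields
\[
(\D^2\P)\bigl((\D')^2\P^{-1}\bigr) \;=\; \D^2 \P \P^{-1}\D^{-2}\P \P^{-1} \;=\; I,
\]
so $(\D')^2\P^{-1}$ is the two-sided inverse of $\D^2\P$. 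Hence $\D^2\P$ is a bijection of $M_{d,n}$, completing the proof. No step looks like a genuine obstacle here; the content is essentially the intertwining identity, and the main thing to be careful about is confirming that the inverse of $\D^2\P$ is also realized as the ``squared'' image of $\A^{-1}$ under this correspondence.
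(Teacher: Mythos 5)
Your proof is correct and follows essentially the same route as the paper: both rest on the intertwining identity that coordinate-wise squaring converts $\D\P$ into $\D^2\P$ (squaring commutes with the permutation and turns $\D$ into $\D^2$), applied to square roots of points of $M_{d,n}$ lying in $L_{d,n}$. Your additional explicit verification that $(\D')^2\P^{-1}$ is a two-sided inverse, giving bijectivity on $M_{d,n}$, is a small extra step the paper leaves implicit, but it does not change the argument.
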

\begin{proof}
Let $\bl^2$ denote the vector of coordinate-wise square
of a vector $\bl \in \CC^N$; in this proof squares of vectors
are coordinate-wise.  Now we check that
\ba
	\bl^2 \in M_{d,n} & \Rightarrow &  \\
    \bl \in \LN & \Rightarrow & \\
    \D\P\bl\in \LN & \Rightarrow & \text{($\A$ is an automorphism)} \\
    (\D\P\bl)^2\in \ M_{d,n} & \Rightarrow & \\
    \D^2(\P\bl)^2 \in M_{d,n} & \Rightarrow &  \text{($\D$ is diagonal)} \\
    (\D^2\P)\bl^2 \in M_{d,n}& & \text{($\P$ is a permutation)}
\ea
\end{proof}

\begin{proof}[Proof of  Theorem~\ref{thm:no-regges}]
From Lemma~\ref{lem:gen-perm},
any linear automorphism $\A$ of 
$L_{d,n}$ with $n\ge d+3$ is a generalized permutation
$\A=\D\P$.
Lemma~\ref{lem:L-to-M} implies that $\A$ 
gives rise to a generalized edge permutation $\D^2\P$
that is a linear automorphism of $M_{d,n}$. Theorem 
\ref{thm:bk-linear} then tells us that 
$\D^2\P=s^2 \P$ has uniform scale and also is induced
by a vertex relabeling.  Finally
$\A$ is then a scalar multiple of a signed permutation 
(Lemma~\ref{lem:L-to-M} ``forgets'' the signs) as 
required.
\end{proof}

\subsection{Automorphisms of $L_{d,d+2}$, with $d\ge 3$}
\label{sec:min3}
Our next case is when $n$ is minimal, but we will only deal with
the case of $d \ge 3$.

\begin{theorem}\label{thm:no-regges3}
Let $d \ge 3$.  
Then any linear automorphism 
$\A$ of $L_{d,d+2}$  is a scalar multiple
of a signed permutation that is induced by a 
vertex relabeling.
\end{theorem}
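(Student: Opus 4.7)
The plan is to reduce the minimal case $n = d+2$ to Theorem~\ref{thm:no-regges} by showing that $\A$ restricts to a linear automorphism of the irreducible subvariety $L_{d-1,d+2} \subset L_{d,d+2}$. Both varieties live in the same ambient $\CC^N$ with $N = \binom{d+2}{2}$, and for $d' = d-1$ we have both $n' = d+2 = d' + 3$ and $d' \geq 2$, so $L_{d-1,d+2}$ is irreducible by Theorem~\ref{thm:Lvariety} and Theorem~\ref{thm:no-regges} applies to it; its conclusion (scalar multiple of a signed permutation induced by a vertex relabeling) then describes $\A$ as a map on all of $\CC^N$, which is exactly what is required.

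To identify $L_{d-1,d+2}$ inside $L_{d,d+2}$ canonically, I would use the singular locus. Because the squaring map $s$ is étale away from the coordinate-vanishing locus $Z$, and because $\sing(M_{d,d+2}) = M_{d-1,d+2}$ by Theorem~\ref{thm:Mvariety}, Lemma~\ref{lem:Asmooth} implies $\sing(L_{d,d+2}) \setminus Z = L_{d-1,d+2} \setminus Z$. Taking Zariski closures, $L_{d-1,d+2}$ is an irreducible component of $\sing(L_{d,d+2})$, and it is the unique irreducible component that is not entirely contained in $Z$: any other such component would meet the complement of $Z$, where the singular locus coincides with $L_{d-1,d+2}\setminus Z$, and would have to equal $L_{d-1,d+2}$ by irreducibility.

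Since $\A$ is a linear isomorphism of $L_{d,d+2}$, it preserves $\sing(L_{d,d+2})$ and permutes its irreducible components, so $\A(L_{d-1,d+2})$ is an irreducible component of $\sing(L_{d,d+2})$. If it were entirely contained in $Z$, then $L_{d-1,d+2} \subseteq \A^{-1}(Z)$, a finite union of linear hyperplanes in $\CC^N$, and by irreducibility $L_{d-1,d+2}$ would lie in one of them. However, for $d-1 \geq 2$ no nonzero linear functional vanishes identically on $L_{d-1,d+2}$ — equivalently, $L_{d-1,d+2}$ linearly spans $\CC^N$ — which can be verified by exhibiting $N$ configurations of $d+2$ points in $\RR^{d-1}$ whose edge-length vectors are linearly independent. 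This contradiction forces $\A(L_{d-1,d+2}) = L_{d-1,d+2}$, making $\A$ a linear automorphism of $L_{d-1,d+2}$, and Theorem~\ref{thm:no-regges} then completes the proof.

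The main technical obstacle is the singular-locus identification in the second paragraph, which transfers the standard characterization of the singular locus of rank-bounded symmetric-matrix varieties through the étale squaring map; the linear-spanning claim for $L_{d-1,d+2}$ is comparatively routine. Notably, the reduction fails for $d = 2$ precisely because $L_{1,4}$ is reducible (Theorem~\ref{thm:Lvariety} requires $d \geq 2$ for irreducibility of $L_{d,n}$), leaving no way to invoke Theorem~\ref{thm:no-regges} on a reduced subvariety — which is consistent with the appearance of Regge symmetries at the boundary case $d = 2$.
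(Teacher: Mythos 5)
Your proposal is correct and follows essentially the same route as the paper: identify $L_{d-1,d+2}$ (irreducible since $d-1\ge 2$) as a distinguished irreducible component of $\sing(L_{d,d+2})$ via Lemma~\ref{lem:Asmooth}, argue that any linear automorphism must carry it to itself, and then invoke Theorem~\ref{thm:no-regges} since $d+2=(d-1)+3$. The only difference is bookkeeping: the paper singles out $L_{d-1,d+2}$ as the unique component with full-dimensional affine span (Lemmas~\ref{lem:singBigSpan}--\ref{lem:singSmallSpan}), while you single it out as the unique component not contained in the coordinate-vanishing locus $Z$ and use the spanning property to rule out $\A(L_{d-1,d+2})\subseteq Z$ --- the same underlying idea.
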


The plan is to use some of the structure of the singular locus
of $L_{d,d+2}$ to reduce our problem to that of $L_{d-1,d+2}$.
Then we can directly apply 
Theorem~\ref{thm:no-regges}.

\begin{lemma}
\label{lem:singComp}
Let $d\ge 3$. 
$L_{d-1,d+2}$ is an irreducible subvariety of $\sing(L_{d,d+2})$.
\end{lemma}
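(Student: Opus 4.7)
The plan is to deduce the result in two steps: first establish the set-theoretic inclusion $L_{d-1,d+2}\subset \sing(L_{d,d+2})$, and then invoke the irreducibility of $L_{d-1,d+2}$ already recorded in Theorem~\ref{thm:Lvariety}.

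For the inclusion, I would start by noting that since $M_{d-1,d+2}\subset M_{d,d+2}$ (configurations in $\RR^{d-1}$ are a special case of configurations in $\RR^d$), taking preimages under the squaring map $s$ yields $L_{d-1,d+2}\subset L_{d,d+2}$. By Theorem~\ref{thm:Mvariety}, the singular locus of $M_{d,d+2}$ consists precisely of squared measurements of configurations with affine span of dimension strictly less than $d$, so $\sing(M_{d,d+2})=M_{d-1,d+2}$. Consequently every point of $L_{d-1,d+2}$ lies in the bad locus $\bad(L_{d,d+2})$. By Lemma~\ref{lem:Asmooth}, every point of $\bad(L_{d,d+2})\setminus Z$ is singular in $L_{d,d+2}$, so $L_{d-1,d+2}\setminus Z\subset \sing(L_{d,d+2})$.

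To promote this to the full set $L_{d-1,d+2}$, I would observe that $L_{d-1,d+2}$ is not contained in $Z$ (a generic configuration of $d+2$ points in $\RR^{d-1}$ has all nonzero pairwise distances, producing points of $L_{d-1,d+2}$ outside $Z$), and since $L_{d-1,d+2}$ is irreducible (see next paragraph), this implies $L_{d-1,d+2}\cap Z$ is a proper Zariski-closed subset, so $L_{d-1,d+2}\setminus Z$ is Zariski-dense in $L_{d-1,d+2}$. Because $\sing(L_{d,d+2})$ is itself Zariski-closed in $L_{d,d+2}$, taking Zariski closures gives $L_{d-1,d+2}\subset \sing(L_{d,d+2})$.

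For irreducibility, I would simply apply Theorem~\ref{thm:Lvariety} to the pair (dimension, number of points) $=(d-1,d+2)$. The hypothesis of that theorem requires the dimension to be at least $2$ and the number of points to be at least (dimension)$+2$; both are satisfied since $d\ge 3$ gives $d-1\ge 2$ and $d+2\ge (d-1)+2$. This yields that $L_{d-1,d+2}$ is an irreducible variety, completing the proof. The only subtlety is the genericity/non-emptiness argument needed to rule out $L_{d-1,d+2}\subset Z$, but this is routine; there is no real obstacle, as the bulk of the technical work has already been done in establishing Theorem~\ref{thm:Mvariety}, Theorem~\ref{thm:Lvariety}, and Lemma~\ref{lem:Asmooth}.
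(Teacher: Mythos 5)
Your proposal is correct and follows essentially the same route as the paper: use $\sing(M_{d,d+2})=M_{d-1,d+2}$ together with Lemma~\ref{lem:Asmooth} to place $L_{d-1,d+2}\setminus Z$ inside $\sing(L_{d,d+2})$, then use irreducibility of $L_{d-1,d+2}$ (Theorem~\ref{thm:Lvariety}, valid since $d-1\ge 2$) to see that $L_{d-1,d+2}\setminus Z$ is Zariski dense, and conclude by closedness of the singular locus. Your explicit check that $L_{d-1,d+2}\not\subset Z$ is a small detail the paper leaves implicit, but otherwise the arguments coincide.
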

\begin{proof}
Looking first at the squared measurement variety, 
from Theorem~\ref{thm:Mvariety}, we know that 
$\sing(M_{d,d+2})=M_{d-1,d+2}$. 

Let $Z$ be the locus of $\CC^N$ where at least one coordinate vanishes, and 
let  $S:= L_{d-1,d+2} - Z$.
Thus from Lemma~\ref{lem:Asmooth}, the points in $S$, 
are (algebraically) singular in $L_{d,d+2}$.
So $S$ is contained in  $\sing(L_{d,d+2})$.

From Theorem~\ref{thm:Lvariety}, when $d\ge 3$, we have
$L_{d-1,d+2}$ 
is irreducible.
The set $S$ is obtained from 
$L_{d-1,d+2}$ by removing a strict subvariety, which must
be of lower dimension due to irreducibility. Thus $S$ is a full-dimensional 
constructible subset of the irreducible
$L_{d-1,d+2}$.
Thus the Zariski closure of $S$ is $L_{d-1,d+2}$.

Since $\sing(L_{d,d+2})$  is an algebraic variety, it must
contain the Zariski closure of $S$ which is $L_{d-1,d+2}$.
\end{proof}

\begin{lemma}
\label{lem:singBigSpan}
$L_{d-1,d+2}$ has a full-dimensional affine span.
\end{lemma}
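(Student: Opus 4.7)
The plan is to show that no nontrivial linear functional on $\CC^N$ vanishes identically on $L_{d-1,d+2}$. First, I would note that $L_{d-1,d+2}$ is a cone containing the origin---take the complex configuration with all $d+2$ points at a single location---so its affine span equals its linear span; the claim therefore reduces to showing that the $N$ edge-length coordinate functions are linearly independent when restricted to $L_{d-1,d+2}$.

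Suppose $f = \sum_{i<j} c_{ij} l_{ij}$ is a linear functional vanishing on $L_{d-1,d+2}$. Because the real Euclidean locus $L^{\EE}_{d-1,d+2}$ sits inside $L_{d-1,d+2}$, we have $\sum_{i<j} c_{ij} |\p_i - \p_j| = 0$ for every real configuration $\p$ of $d+2$ points in $\RR^{d-1}$; splitting $c_{ij}$ into real and imaginary parts reduces us to the case of real coefficients. Fixing distinct points $\p_2, \ldots, \p_{d+2}$ in $\RR^{d-1}$ and regarding the relation as a function of $\p_1$ alone, the quantity $g(\p_1) := \sum_{j=2}^{d+2} c_{1j} |\p_1 - \p_j|$ must be a constant (namely $-\sum_{2 \leq i<j} c_{ij}|\p_i - \p_j|$).

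The key observation, and the only substantive step, is that for $d-1 \geq 2$ (which holds since $d \geq 3$), the Euclidean norm $v \mapsto |v|$ on $\RR^{d-1}$ is not differentiable at the origin: its one-sided directional derivative in direction $v$ is $|v|$, which is not a linear function of $v$. Evaluating the directional derivative of $g$ at $\p_1 = \p_k$ in direction $v$ yields
\[
c_{1k}\,|v| \;+\; \sum_{\substack{j \ne k \\ j \geq 2}} c_{1j}\, \frac{\la \p_k - \p_j, v \ra}{|\p_k - \p_j|},
\]
which must vanish for all $v$ since $g$ is constant. The second sum is linear in $v$, so $c_{1k}|v|$ must agree with a linear function of $v$; this forces $c_{1k} = 0$. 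Doing this for each $k = 2, \ldots, d+2$ gives $c_{1j}=0$ for every $j$, and permuting the role of the distinguished vertex yields $c_{ij} = 0$ for every edge.

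I do not anticipate obstacles beyond confirming the non-differentiability step, which is elementary and is where the hypothesis $d \geq 3$ enters. An alternative approach I considered---exhibiting $N$ explicit configurations whose length vectors form a basis of $\CC^N$---appears harder to execute cleanly, so I would proceed via the dual argument above.
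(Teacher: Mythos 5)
Your proof is correct, but it takes a genuinely different route from the paper. The paper argues ``primally'': it first reduces to the subvariety $L_{1,d+2}\subset L_{d-1,d+2}$, then takes the collinear configurations with one point at $1$ and all others at the origin, and uses the sign-flip symmetry of $L_{1,d+2}$ together with affine combinations of the flipped images (and the origin) to produce every coordinate axis inside the affine span. You argue dually: after observing that the variety is a cone (so affine span $=$ linear span), you show that no nonzero linear functional can vanish on it, by restricting to the real Euclidean locus and exploiting the non-smoothness of $\p_1\mapsto|\p_1-\p_k|$ at $\p_1=\p_k$ --- the one-sided directional derivative contributes the even term $c_{1k}|v|$ against an odd linear remainder, forcing $c_{1k}=0$, and permuting the distinguished vertex kills every coefficient. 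Both arguments are sound; the paper's stays entirely inside the algebraic framework it has already built (degenerate configurations plus coordinate negations) and is very short, while yours is an elementary real-analytic argument that needs neither the sign-flip symmetry nor the reduction to $L_{1,d+2}$, and in fact establishes the slightly stronger fact that already the real Euclidean set $L^{\EE}_{d-1,d+2}$ affinely spans $\RR^N$. One small inaccuracy: your claim that this is ``where the hypothesis $d\ge 3$ enters'' is not right --- the absolute value on $\RR^1$ is also non-differentiable at the origin, and your even/odd cancellation argument works verbatim there, so your proof (like the paper's, which rests on $L_{1,d+2}$) needs no lower bound on $d$ beyond the points being placeable distinctly; this is harmless for the lemma as stated.
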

\begin{proof}
Since $L_{d-1,d+2}$ contains $L_{1,d+2}$, we just need to show that
this smaller variety has a full-dimensional affine span.

For a fixed $i$, 
let us look at configuration $\p$ of $d+2$ points with  $\p_i$
placed at $1$ and the rest of the points
placed at the origin. Then $\bl:=l(\p)$ has all zero coordinates
except for the $d+1$ edges connecting $\p_i$ to the other points.
Under the symmetry of $L_{1,d+2}$ under sign negation, we can find
points in $L_{1,d+2}$ with the signs of the $\bl$ flipped at will.
Thus using affine combinations of these flipped points together with
the origin
we produce a point on the $l_{ij}$ axis, for any $j$.
Iterating over the $i$ gives us our result. 
\end{proof}

Now we wish to explore the decomposition of 
$\sing(L_{d,d+2})$ 
into
its irreducible components.

For each $ij$, 
Let $Z_{ij}$ be the subvariety of 
$\sing(L_{d,d+2})$ with a zero-valued $ij$th coordinate.
As discussed above in
Lemma~\ref{lem:Asmooth} any singular point that is not contained in
$L_{d-1,d+2}$ 
must have at least one zero coordinate (in order to be
in the ``bad locus'' described there). 
Thus we can 
write 
$\sing(L_{d,d+2})$ as the union of 
$L_{d-1,d+2}$  and the $Z_{ij}$.

For $d \ge 3$, $L_{d-1,d+2}$ is irreducible,
and thus from Lemma~\ref{lem:comp} (applied to the union
of components of $\sing(L_{d,d+2})$) it
must be fully contained in at least one component $C$ of 
$\sing(L_{d,d+2})$. And, again from
from Lemma~\ref{lem:comp} (applied to the union of 
$L_{d-1,d+2}$ and the $Z_{ij}$), 
$C$ must be fully contained in 
either  $L_{d-1,d+2}$ or one of the $Z_{ij}$.
Meanwhile, $L_{d-1,d+2}$
it is not contained in
any $Z_{ij}$. 
Thus we can conclude that:
\begin{lemma}
\label{lem:bigComp}
Let $d \ge 3$.  
$L_{d-1,d+2}$ is a component of 
$\sing(L_{d,d+2})$. 
\end{lemma}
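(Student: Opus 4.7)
The proof plan follows almost directly the discussion in the paragraph immediately before the lemma statement; the task is essentially to organize those observations into a clean argument. The plan is to present $\sing(L_{d,d+2})$ as a finite union of subvarieties, then use irreducibility twice (once for $L_{d-1,d+2}$ and once for an irreducible component containing it) to pin down one component exactly.

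First I would fix the decomposition
\[
\sing(L_{d,d+2}) \;=\; L_{d-1,d+2} \;\cup\; \bigcup_{ij} Z_{ij},
\]
where $Z_{ij}$ is the (closed) subvariety of $\sing(L_{d,d+2})$ cut out by $l_{ij} = 0$. This is justified by Lemma~\ref{lem:Asmooth}: any singular point of $L_{d,d+2}$ lies in the bad locus, which is the preimage under the squaring map of $\sing(M_{d,d+2}) \cup Z$; the first piece gives $L_{d-1,d+2}$ (via Theorem~\ref{thm:Mvariety}), and the second piece gives the $Z_{ij}$. Meanwhile, Lemma~\ref{lem:singComp} guarantees that $L_{d-1,d+2}$ genuinely sits inside $\sing(L_{d,d+2})$, and, because $d \ge 3$, Theorem~\ref{thm:Lvariety} tells us that $L_{d-1,d+2}$ is irreducible.

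The next step is a double application of Lemma~\ref{lem:comp} (from the appendix): an irreducible subvariety contained in a finite union of varieties must sit inside one of them. Applying this to $L_{d-1,d+2}$ inside the irreducible decomposition of $\sing(L_{d,d+2})$ produces an irreducible component $C$ with $L_{d-1,d+2} \subseteq C$. Applying it a second time to $C$ itself, viewed as an irreducible variety lying in the union $L_{d-1,d+2} \cup \bigcup_{ij} Z_{ij}$, forces $C$ to lie entirely inside either $L_{d-1,d+2}$ or some single $Z_{ij}$.

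The last step rules out the $Z_{ij}$ case. If $C \subseteq Z_{ij}$, then $L_{d-1,d+2} \subseteq Z_{ij} \subseteq \{l_{ij} = 0\}$, meaning $L_{d-1,d+2}$ would lie entirely in a coordinate hyperplane, contradicting Lemma~\ref{lem:singBigSpan} (full-dimensional affine span). So $C \subseteq L_{d-1,d+2}$, and combined with $L_{d-1,d+2} \subseteq C$ we conclude $C = L_{d-1,d+2}$. The main substantive ingredients are already packaged in the preceding lemmas, so I do not anticipate any real obstacle; the only thing to be careful about is invoking Lemma~\ref{lem:comp} correctly in both directions (first to extract a containing component, then to localize that component to a single piece of the union).
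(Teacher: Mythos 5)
Your proposal is correct and follows essentially the same route as the paper: the same decomposition of $\sing(L_{d,d+2})$ into $L_{d-1,d+2}$ and the $Z_{ij}$ (via Lemma~\ref{lem:Asmooth}), the same double application of Lemma~\ref{lem:comp}, and the same exclusion of the $Z_{ij}$ case. Your explicit appeal to Lemma~\ref{lem:singBigSpan} to rule out $L_{d-1,d+2}\subseteq Z_{ij}$ is a fine (and slightly more explicit) justification of a step the paper merely asserts.
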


From Lemma~\ref{lem:comp}
(applied to the union of 
$L_{d-1,d+2}$ and the $Z_{ij}$),
any other component
of $\sing(L_{d,d+2})$ must be contained in one of the 
$Z_{ij}$
Thus, we can also
conclude:

\begin{lemma}
\label{lem:singSmallSpan}
Let $d \ge 3$.  
Any component of $\sing(L_{d,d+2})$ 
that is not $L_{d-1,d+2}$ cannot
have a full-dimensional affine span.
\end{lemma}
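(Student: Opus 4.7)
The plan is to leverage directly the irreducible component decomposition of $\sing(L_{d,d+2})$ that has just been assembled in the paragraphs preceding the lemma. First I recall that, by the application of Lemma~\ref{lem:comp} to the cover $\sing(L_{d,d+2}) = L_{d-1,d+2} \cup \bigcup_{ij} Z_{ij}$, every irreducible component $C$ of $\sing(L_{d,d+2})$ must be contained in one of the sets $L_{d-1,d+2}$ or $Z_{ij}$. Hence if $C$ is not equal to the component $L_{d-1,d+2}$ (whose status as a component was secured by Lemma~\ref{lem:bigComp}), then $C$ must be fully contained in some $Z_{ij}$.

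Next I use the defining property of $Z_{ij}$: it is the subvariety of $\sing(L_{d,d+2})$ whose $ij$-th coordinate vanishes. Thus $Z_{ij}$, and therefore $C$, is contained in the coordinate hyperplane $H_{ij} := \{\bl \in \CC^N : l_{ij} = 0\}$. Since $H_{ij}$ is a proper linear subspace of $\CC^N$ (of codimension one, passing through the origin), the affine span of $C$ is contained in $H_{ij}$, and in particular cannot be all of $\CC^N$. No genuine obstacle arises; the lemma follows purely from the component-level containment already established.
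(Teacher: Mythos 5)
Your argument is correct and is essentially the paper's own: components other than $L_{d-1,d+2}$ must lie in some $Z_{ij}$ by Lemma~\ref{lem:comp} (together with maximality of components), and $Z_{ij}$ sits inside the coordinate hyperplane $l_{ij}=0$, so its affine span cannot be all of $\CC^N$. Nothing further is needed.
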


Now with this understanding of 
$\sing(L_{d,d+2})$ 
established we can move on to the automorphisms.

\begin{lemma}
\label{lem:singAuto}
Let $d \ge 3$. Any linear automorphism $\A$ of $L_{d,d+2}$ must be a 
linear automorphism of $L_{d-1,d+2}$. 
\end{lemma}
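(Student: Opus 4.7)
The plan is to leverage the structural results about $\sing(L_{d,d+2})$ established in Lemmas~\ref{lem:singComp} through~\ref{lem:singSmallSpan}, and combine them with the fact that a linear automorphism of a variety must preserve its singular locus. First, I would invoke Theorem~\ref{thm:Tmap} (applied as in Lemma~\ref{lem:1d-Mdn}) to conclude that any linear automorphism $\A$ of $L_{d,d+2}$ restricts to a linear automorphism of $\sing(L_{d,d+2})$.

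Since $\A$ is a bijective linear map on $\CC^N$ and $\sing(L_{d,d+2})$ is a union of finitely many irreducible components, $\A$ must permute these components: the image of an irreducible component under a linear bijection is again an irreducible subvariety of the same dimension, and maximality is preserved. So I would argue that $\A$ induces a permutation of the irreducible components of $\sing(L_{d,d+2})$.

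Next, I would use the affine span as a discrete invariant to pin down which component $L_{d-1,d+2}$ gets sent to. A linear bijection on $\CC^N$ sends an affine span of dimension $k$ to an affine span of dimension $k$. By Lemma~\ref{lem:singBigSpan}, $L_{d-1,d+2}$ has a full-dimensional affine span in $\CC^N$, while by Lemma~\ref{lem:singSmallSpan} no other component of $\sing(L_{d,d+2})$ does. Therefore $\A$ must send $L_{d-1,d+2}$ to itself, which is exactly the desired conclusion.

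I do not expect any substantive obstacle here; the real work was done in assembling the component decomposition of $\sing(L_{d,d+2})$ in the preceding lemmas (in particular, using irreducibility of $L_{d-1,d+2}$ from Theorem~\ref{thm:Lvariety}, which required $d-1 \ge 2$, i.e. $d \ge 3$). The only minor care point is to cleanly justify that $\A$ permutes components (rather than, say, mixing them), but this follows immediately from $\A$ being a bijection of $\sing(L_{d,d+2})$ to itself combined with the uniqueness of the irreducible decomposition.
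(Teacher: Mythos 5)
Your proposal is correct and follows essentially the same route as the paper: invoke Theorem~\ref{thm:Tmap} to get an automorphism of $\sing(L_{d,d+2})$, note that a linear bijection permutes irreducible components (this is Theorem~\ref{thm:comps} in the paper), and then use Lemma~\ref{lem:bigComp} together with Lemmas~\ref{lem:singBigSpan} and~\ref{lem:singSmallSpan} so that the affine-span invariant forces $\A$ to fix $L_{d-1,d+2}$. No gaps.
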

\begin{proof}
From Theorem~\ref{thm:Tmap}, $\A$ must be a linear  automorphism
of $\sing(L_{d,d+2})$. 
And from Theorem~\ref{thm:comps} must map 
components of $\sing(L_{d,d+2})$
to components of $\sing(L_{d,d+2})$. 

From Lemma~\ref{lem:bigComp}, 
$L_{d-1,d+2}$ is a component of this singular set and from 
Lemma~\ref{lem:singBigSpan} it has a full-dimensional affine span. 
Meanwhile, from Lemma~\ref{lem:singSmallSpan}, no other component
can have a full-dimensional affine span. Thus, as a bijective linear map, 
$\A$ must map $L_{d-1,d+2}$ to itself.
\end{proof}

And we can finish the proof.

\begin{proof}[Proof of Theorem~\ref{thm:no-regges3}]
The theorem now follows by combining Lemma~\ref{lem:singAuto} together
with Theorem~\ref{thm:no-regges}.
\end{proof}

\subsection{Automorphisms of $L_{2,4}$}\label{sec:auto}

The method of the previous section fails for $\LL$ as $L_{1,4}$
is reducible. In fact, the theorem itself fails 
in this case. The group of linear automorphisms 
is, in fact, larger
than expected. 

In particular, Regge~\cite{regge} (see also, Roberts \cite{regge2}) 
showed that the following linear map always takes the
Euclidean lengths of the edges of a tetrahedral 
configuration in $\RR^2$ to those of a different tetrahedral
configuration in $\RR^2$.
\begin{equation}\label{eq:reggemap}\tag{$\star$}
\begin{aligned}
l'_{13} &=& l_{13} \\
l'_{24} &=& l_{24} \\
l'_{12} &=& (-l_{12}+ l_{23}+ l_{34}+ l_{14})/2 \\
l'_{23} &=& (l_{12}-l_{23}+ l_{34}+ l_{14})/2 \\
l'_{34} &=& (l_{12}+ l_{23}- l_{34}+ l_{14})/2 \\
l'_{14} &=& (l_{12}+ l_{23}+ l_{34}- l_{14})/2
\end{aligned}
\end{equation}

\begin{remark}
In light of Theorem~\ref{thm:no-regges3}, we see that there are no analogues to Regge symmetries in dimensions greater than $2$.
\end{remark}

Below we will fully characterize the automorphism group  of $\LL$.
Luckily for our reconstruction application, when we restrict
our automorphisms to have only non-negative entries, only 
the expected symmetries will remain.

\begin{definition}\label{def:real-linear-automorphism}
A linear automorphism  $\A$ of $L_{2,4}$ is \defn{real} if its matrix has
only real entries, \defn{rational} if its matrix has only rational 
entries, and \defn{non-negative} if its matrix contains only 
real and non-negative entries. 
\end{definition}

Clearly there are $24$ linear automorphism that arise by simply permuting
the $4$ vertices. There are also the $32$ linear automorphisms
that arise from optionally negating up to $5$ 
of the coordinate axes in $\CC^6$.
Combining these  gives us a discrete group of $768$ linear
automorphisms.

Because any global scale will be an automorphism, the group  of
linear automorphisms of $L_{2,4}$ is not a discrete group.  We now
define several groups that will play a role in our analysis.

\begin{definition}\label{def:L24-groups}
Define $\Aut(L_{2,4})$ to be the linear automorphisms of $L_{2,4}$.  Let the
group $\PP \Aut(L_{2,4})$ be induced on the equivalence classes
of $\A\in \Aut(L_{2,4})$ under the relation ``$\A'$ is a complex scale of $\A$''.
We define $\PP \Aut(\sing(L_{2,4}))$ via a similar 
construction.  Importantly, 
we will see below that 
$\PP \Aut(\sing(L_{2,4}))$ 
is the automorphism group of a projective subspace arrangement
and thus is a discrete group.
Also, we have 
$\PP \Aut(L_{2,4}) < \PP \Aut(\sing(L_{2,4}))$.  Thus, all the 
``projectivized'' groups we define are discrete.

We also consider the real subgroup $\Aut_{\RR}(L_{2,4})$.  This has a 
counterpart $\PP \Aut_{\RR}(L_{2,4})$ of equivalence classes up to 
real scale, and  $\PP_{+} \Aut_{\RR}(L_{2,4})$, on equivalence classes defined 
up to \textit{positive} scale.  It is well-defined to refer to an element 
of $\PP_{+} \Aut_{\RR}(L_{2,4})$ as being non-negative, since any equivalence
class containing a non-negative $\A$ consists entirely of non-negative matrices.
\end{definition}
The main theorem of this section characterizes the linear automorphisms of $L_{2,4}$
as follows. The proof is in the next subsections.

\begin{theorem}
\label{thm:auto2}
The group $\PP \Aut(L_{2,4})$
is of order $11520=768\cdot 15$ and is 
generated by linear automorphisms of $L_{2,4}$ that are
rational.

The group $\PP_{+} \Aut_{\RR}(L_{2,4})$ is of order $23040$
and is isomorphic to the Weyl group $D_6$.  The subset of 
non-negative elements of $\PP_{+} \Aut_{\RR}(L_{2,4})$ is a 
subgroup of order $24$ and acts by relabeling the vertices of $K_4$.
\end{theorem}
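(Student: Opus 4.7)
The plan is to establish matching lower and upper bounds on $|\PP\Aut(L_{2,4})|$, then descend to the real and non-negative subgroups. Since $L_{1,4}$ is reducible (as was the case for $L_{1,3}$, cf.\ the remark in Section~\ref{sec:varieties}), the inductive singular-locus strategy used in Theorem~\ref{thm:no-regges3} fails, and one must instead directly analyze the linear subspace arrangement inside $\sing(L_{2,4})$ together with a finite symbolic computation.

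For the lower bound, I would exhibit explicit rational generators: the $24$ vertex relabelings of $K_4$ acting on edges; the coordinate sign flips, which modulo overall sign yield $(\ZZ/2)^5$ and combine with $S_4$ to give $768$ elements; and the Regge map~\eqref{eq:reggemap}, easily verified to preserve $L_{2,4}$ by substitution into the $3\times 3$ determinant displayed in Section~\ref{sec:varieties}. A finite-group enumeration (automatable in GAP or Magma) then confirms that these generators produce a subgroup of order exactly $11520 = 15\cdot 768$ in $\PP\Aut(L_{2,4})$, and every generator has entries in $\QQ$.

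For the upper bound, Theorem~\ref{thm:Tmap} forces every $\A\in\Aut(L_{2,4})$ to preserve $\sing(L_{2,4})$. By Lemma~\ref{lem:Asmooth}, $\sing(L_{2,4})$ consists of $L_{1,4}$ together with bad points inside the coordinate locus. The variety $L_{1,4}$ is a union of finitely many codimension-$3$ linear subspaces, each arising from a sign pattern of the collinearity relations $\pm l_{ij} \pm l_{jk} \pm l_{ik} = 0$; combined with the coordinate subspaces, these assemble into a finite projective linear subspace arrangement in $\PP^5$. Since only finitely many elements of $\mathrm{PGL}_6(\CC)$ can permute a fixed finite arrangement of proper linear subspaces, the associated automorphism group is discrete, which gives an a priori finite upper bound on $|\PP\Aut(L_{2,4})|$.

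The main obstacle is matching the two bounds exactly, and this is where computer algebra is essential. I would set up a generic $6\times 6$ matrix $\A$ with symbolic entries, impose that $\A$ carry the degree-$6$ defining polynomial of $L_{2,4}$ to a scalar multiple of itself, and solve the resulting polynomial system, verifying that its solution set modulo complex scale has exactly $11520$ elements. For the real and non-negative cases: because the defining polynomial has rational coefficients, each class in $\PP\Aut(L_{2,4})$ admits a real representative, so $|\PP\Aut_\RR(L_{2,4})|=11520$, and passing from real to positive scale doubles this to $|\PP_+\Aut_\RR(L_{2,4})|=23040$. The isomorphism with the Weyl group $D_6$ (order $2^5\cdot 6!=23040$) is verified by exhibiting a system of generating reflections obeying the Coxeter relations of type $D_6$, or by matching character data. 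Finally, for the non-negative subgroup: nontrivial sign flips and the Regge map both contribute strictly negative entries, so only the vertex-relabeling subgroup $S_4$ survives, yielding exactly $24$ elements.
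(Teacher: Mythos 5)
Your overall architecture (rational generators for a lower bound, finiteness plus a symbolic computation for an upper bound) is reasonable, but three steps as written do not hold up. First, your finiteness claim rests on the assertion that only finitely many elements of $\mathrm{PGL}_6(\CC)$ can permute a fixed finite arrangement of proper linear subspaces; this is false in general (the stabilizer of even a single subspace, or of a coordinate-axis arrangement, is positive dimensional). Discreteness of $\PP\Aut(\sing(\LL))$ is a special feature of this particular arrangement, and the paper has to earn it: it computes the $60$ singular $3$-spaces and their $46$ one-dimensional intersections, computes the automorphism group of the resulting incidence graph with Nauty ($11520$ elements), and then shows by solving explicit linear systems (Lemma~\ref{lem:l24Lin}) that each incidence automorphism is realized by a linear map that is \emph{unique up to scale}; only this uniqueness makes the group discrete. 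Relatedly, your description of $\sing(\LL)$ as ``$L_{1,4}$ plus the coordinate locus'' is not the actual singular locus (the type II and III components are not coordinate subspaces), so even the arrangement you propose to use is not correctly identified without the computation the paper performs. Second, the step you lean on to match the bounds — imposing $F\circ\A=\lambda F$ for a symbolic $6\times 6$ matrix and solving for all solutions modulo scale — is a correct characterization of $\Aut(\LL)$ (since $I(\LL)=(F)$ is radical and principal), but it is a polynomial system of several hundred degree-$6$ equations in $37$ unknowns, and you give no reason to believe it is tractable; the entire point of the paper's route through the singular-locus arrangement and its incidence graph is to replace this intractable elimination by a graph-automorphism computation plus linear algebra. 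Without that computation actually going through, neither the upper bound nor the claim that every class has a rational (hence real) representative is established; note also that ``the defining polynomial has rational coefficients'' does not by itself give real representatives of automorphism classes — in the paper this follows from the explicit rational representatives produced in Lemma~\ref{lem:l24Auto}.

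Third, your treatment of the non-negative part is a genuine logical gap: observing that the sign flips and the Regge map \eqref{eq:reggemap} have negative entries only constrains the \emph{generators}, not the group they generate. A product of matrices with negative entries can perfectly well be non-negative (indeed, composing the Regge map with sign flips changes which entries are negative), so one cannot conclude that only the vertex relabelings survive without examining all elements. The paper settles this by an exhaustive check of the $23040$ classes in $\PP_{+}\Aut_{\RR}(\LL)$, retaining exactly the $24$ relabeling permutations; some argument of this kind (or a structural replacement for it) is required. Your proposed $D_6$ identification via Coxeter-relation generators plus the order count is acceptable in principle, though the paper's route — building a root system from the $30$ type I and II intersection lines, reading off a $D_6$ Dynkin diagram, and matching orders — is what actually supplies the generating reflections and the verification that they preserve $\LL$.
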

\begin{remark}\label{rem:dpt-mo-question}
That $\PP_{+} \Aut_{\RR}(L_{2,4})$ contains a subgroup 
isomorphic to $D_6$ is based on conversations with
Dylan Thruston (see \cite{T17}) and has antecedents 
in~\cite{doyle}.
See \cite{I17,W17} for other 
geometric connections.
\end{remark}
\begin{remark}
The group $\PP_{+} \Aut_{\RR}(L_{2,4})$ is 
in fact generated by 
the edge permutations induced by vertex relabelings, 
sign flip matrices, and the one Regge symmetry of \eqref{eq:reggemap} (see supplemental script).
\end{remark}
The rest of this section develops the proof of Theorem~\ref{thm:auto2}.

\paragraph{The Singular Locus of $\LL$}

In this section, we will study the singular locus of 
$\LL$.
This will be used for the proof of 
Theorem~\ref{thm:auto2}, which characterizes 
the linear automorphisms of $\LL$. In particular, a
linear
automorphism of a variety must also be a linear 
automorphism of its singular locus.

\begin{theorem}
The singular locus $\sing(\LL)$ consists of the union of $60$ $3$-dimensional 
linear subspaces. These subspaces can be partitioned into three types, which
we call I, II and III. 

Type I: 
There are $32$ subspaces of this type. They arise from configurations of $4$ collinear points, and together
make up $L_{1,4}$.
They are each defined by (the vanishing of) three equations of the following form:
\ba
    l_{12} - s_{13}l_{13} + s_{23}l_{23} \\
    l_{12} - s_{14}l_{14} + s_{24}l_{24} \\
    s_{13}l_{13} - s_{14}l_{14} + s_{34}l_{34} 
\ea
where each $s_{ij}$ takes on the values $\{-1,1\}$. 

Type II: 
There are $24$ subspaces of this type. They arise when one pair of vertices is
collapsed to a single point. For example, if we collapse $\p_1$ with $\p_2$, we get the 
equations:
\ba
l_{12} \\
l_{13} - s_{23}l_{23} \\
l_{14} - s_{24}l_{24}
\ea
This gives us $4$ subspaces, and we obtain this case by collapsing any of the
$6$ edges.

Type III: 
There are $4$ subspaces of this type.
They arise by setting the three edges lengths of one triangle to zero. For example:
\ba
l_{12} \\
l_{13}  \\
l_{23} 
\ea
\end{theorem}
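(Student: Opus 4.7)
The plan is to exploit that $\LL$ is a hypersurface in $\CC^6$, cut out by the degree-six Cayley-Menger determinant $F(\bl) := \tilde{F}(l_{12}^2,\dots,l_{34}^2)$ displayed in Section~\ref{sec:varieties}. By the Jacobian criterion, $\sing(\LL)$ is the vanishing locus of $\nabla F$ intersected with $\LL$, and since $\partial F/\partial l_{ij} = 2\,l_{ij}\,\tilde{F}_{ij}(\bl^2)$ with $\tilde{F}_{ij} := \partial\tilde{F}/\partial m_{ij}$, we have
\[
\sing(\LL) \;=\; \LL \cap \bigl\{\bl \in \CC^6 : l_{ij}\,\tilde{F}_{ij}(\bl^2) = 0 \text{ for every edge } ij\bigr\}.
\]

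I would first show each of the 60 listed subspaces lies in $\sing(\LL)$. For the 32 Type I subspaces, Lemma~\ref{lem:Asmooth} gives that every point of $s^{-1}(\sing(M_{2,4}))\setminus Z = L_{1,4}\setminus Z$ is singular, so $L_{1,4}\subset \sing(\LL)$ by Zariski closure. The decomposition of $L_{1,4}$ into 32 three-dimensional linear subspaces of the displayed form is verified by parameterizing collinear configurations as $l_{ij} = s_{ij}(t_i - t_j)$; after fixing $s_{12}=1$, each of the $2^5$ sign choices $(s_{13},s_{23},s_{14},s_{24},s_{34})\in\{\pm 1\}^5$ produces exactly three independent triangle-consistency relations of the stated form, the fourth triangle's relation being a dependent linear combination. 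For the 24 Type II and 4 Type III subspaces I would verify containment by direct substitution into $F$ and its partials. For Type III, the excerpt already notes that on $l_{12}=l_{13}=l_{23}=0$ the Cayley-Menger matrix reduces to $uv^\top + vu^\top$ of rank at most two, so $\tilde{F}$ vanishes; a short symbolic computation shows $\tilde{F}_{14}, \tilde{F}_{24}, \tilde{F}_{34}$ also vanish on the slice, while the remaining three partials are multiplied by the zero coordinates. For Type II, substituting $m_{12}=0$, $m_{13}=m_{23}$, $m_{14}=m_{24}$ makes the first two rows of the Cayley-Menger matrix identical, so $\tilde{F}$ vanishes, as do the partials with respect to each $m_{ij}$ whose edge does not contain both vertices $1$ and $2$.

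Next, I would prove the reverse inclusion by case analysis on $S := \{ij : l_{ij}=0\}$ for a point $\bl\in\sing(\LL)$. If $S=\emptyset$, the Jacobian conditions give all $\tilde{F}_{ij}(\bl^2)=0$, hence $\bl^2\in\sing(M_{2,4})=M_{1,4}$ by Theorem~\ref{thm:Mvariety}, placing $\bl$ in a Type I subspace. If $S$ contains a triangle of $K_4$, then $\bl$ lies in the corresponding Type III subspace by definition. The remaining situations are when $S$ is non-empty and triangle-free, and a finite case-by-case check (in which $S$ is a single edge, a matching, a path, a claw, or a $4$-cycle) shows that either (i) the locus $\LL\cap\{l_{ij}=0 : ij\in S\}$ is already empty under the constraint $F=0$, or (ii) the vanishing of the remaining Cayley-Menger partials $\tilde{F}_{ij}(\bl^2)=0$ for $ij\notin S$ forces the squared-length equalities $m_{ik}=m_{jk}$ (equivalently, signed equalities $l_{ik}=\pm l_{jk}$) for each collapsing edge $\{i,j\}\in S$ and a suitable witness vertex $k$, placing $\bl$ in a Type II subspace. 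Since the 60 subspaces are pairwise incomparable three-dimensional linear subspaces cut out by distinct linear systems, they are exactly the irreducible components of $\sing(\LL)$.

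The main obstacle is the case analysis in the second step: although the triangle-free subgraphs of $K_4$ are few in number and highly symmetric under the $S_4$ action permuting vertices and the $\{\pm 1\}^6$ action flipping edge signs, verifying in each case that the simultaneous vanishing of the relevant $\tilde{F}_{ij}$ cuts out exactly the union of Type II subspaces requires computing several quadratic partials of the Cayley-Menger determinant and identifying which subcases yield empty loci versus proper Type II slices. This is a finite symbolic calculation and is well-suited to computer-algebra verification in the spirit of the scripts already used in Section~\ref{sec:auto}.
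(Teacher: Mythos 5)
Your route is viable and genuinely different from the paper's. The paper's proof is essentially a delegation to computer algebra: it first verifies in Magma that the single degree-six defining polynomial generates a radical ideal (noting this also follows from \cite{cmirr}), then has Magma form the Jacobian ideal and decompose its zero set into $\QQ$-irreducible components, which returns exactly the $60$ subspaces. You instead organize the computation structurally, via the factorization $\partial F/\partial l_{ij}=2\,l_{ij}\,\tilde F_{ij}(\bl^2)$, the identification $\sing(M_{2,4})=M_{1,4}$ from Theorem~\ref{thm:Mvariety}, Lemma~\ref{lem:Asmooth} for the Type~I containment, and a stratification of the reverse inclusion by the zero set $S$. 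This buys insight into why precisely these three types occur (your rank-two observations for Types~II and~III are correct and check out), at the cost that the crux of the reverse inclusion still bottoms out in a finite symbolic verification, so in practice it lands close to the paper's computation anyway.

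Two concrete repairs are needed. First, your opening identity $\sing(\LL)=\LL\cap V(\nabla F)$ is only legitimate if $F$ generates the radical ideal $I(\LL)$ (equivalently, $F$ is squarefree); otherwise the gradient of $F$ can vanish along loci of smooth points and the Jacobian criterion applied to $F$ alone computes the wrong set. The paper checks radicality explicitly, and you silently need the analogous fact for $\tilde F$ and $M_{2,4}$ in your $S=\emptyset$ case, where you pass from the vanishing of all $\tilde F_{ij}$ to membership in $\sing(M_{2,4})$. Second, the dichotomy you state for triangle-free $S$ is wrong as written: for $S$ a $4$-cycle, say $l_{13}=l_{14}=l_{23}=l_{24}=0$, the locus $\LL\cap\{l_{ij}=0: ij\in S\}$ is \emph{not} empty (it contains points with $l_{34}^2=-l_{12}^2$); what is true is that all such points are smooth (e.g.\ $\tilde F_{12}\neq 0$ there), so the stratum contributes no singular points outside the $60$ subspaces. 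Also, in strata such as a single edge, the vanishing of the remaining partials forces either the Type~II equalities or $\mathrm{adj}(A)=0$, i.e.\ a rank-one Gram matrix and hence a Type~I point; this happens to land in a Type~II subspace as well, but your write-up does not notice the distinction. Both slips would be caught by the deferred case-by-case computation, which underscores that the decisive step of the proof is still only sketched; carried out carefully (or by CAS, as in the paper), the argument does go through.
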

\begin{proof}
The singular locus of a variety $V$ is defined by adding
to the ideal  $I(V)$,
the equations that express a rank-drop in the  
Jacobian matrix of  a set of equations generating $I(V)$. 

We first verify in the Magma CAS that the ideal
defined by our single simplicial volume
determinant equation is 
radical.\footnote{Magma 
does this check over the field $\QQ$, but
since $\QQ$ is a perfect field, this implies that the ideal
is also radical under any field extension~\cite[Page 169]{milne}.}
This also follows from~\cite{cmirr}.

In Magma, we calculate the Jacobian of this equation to express
the singular locus. Magma is then able to  
factor this algebraic set into  components (that are irreducible over $\QQ$), 
and in this case outputs the above decomposition. (See supplemental script.)
\end{proof}

\paragraph{Flats and intersection graph}

Theorem~\ref{thm:Tmap} tells us that any linear automorphism of $\LL$ must 
be a linear automorphism of its singular set, and so must map each of its singular 
three-dimensional subspaces to some three-dimensional singular subspace. As a linear automorphism, 
it must also preserve the intersection lattice 
of the three-dimensional singular subspace arrangement. Therefore, by finding the set 
of linear automorphisms that preserve the intersection lattice of these subspaces, 
we can constrain our search for automorphisms of $\LL$ to just that set. 
Combinatorial descriptions of an intersection lattice of a subspace arrangement 
can be constructed in many ways. Here, it suffices to consider a partial 
description that comprises the three-dimensional singular subspaces and their 
one-dimensional intersections.  

\begin{definition}
We denote by \defn{${\cal V}_3$} the set of singular three-dimensional subspaces of $\LL$. We denote by \defn{${\cal V}_1$} the set of one-dimensional subspaces created as the intersections of all pairs and triples of spaces in ${\cal V}_3$.
\end{definition}

\begin{lemma} The set of one-dimensional subspaces ${\cal V}_1$ consists of $46$ elements. These come in $3$ classes:

Type I: 
There are $6$ one-dimensional subspaces of this type. 
They are generated by vectors of the form 
\ba
\e_i
\ea
where $\e_i$ is one of the coordinate axes of $\CC^6$.

Type II: 
There are $24$ one-dimensional subspaces of this type. 
They are generated by vectors of the form 
\ba
\e_i \pm \e_j \pm \e_k \pm \e_l
\ea
where $i,j,k,l$ correspond to the four edges of
a 4-cycle.
These measurements correspond to 
collapsing two sets of two vertices that are connected by four edges.

Type III:
There are $16$ one-dimensional subspaces of this type. 
They are generated by vectors of the form 
\ba
\e_i \pm \e_j \pm \e_k \ea
where $i,j,k$ 
correspond to three edges incident to one vertex.
These measurements correspond to 
collapsing one triangle.
\end{lemma}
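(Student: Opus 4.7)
The plan is to carry out the enumeration of $1$-dimensional intersections in the linear subspace arrangement $\mathcal{V}_3$ in two directions. First I would verify that each of the $46$ claimed lines actually arises as an intersection of elements of $\mathcal{V}_3$ (the realization direction); then I would argue that no other $1$-dimensional intersection can appear (the exhaustiveness direction).

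For the realization direction, each candidate generator admits a short direct check. The coordinate axis $\e_{ij}$ lies in both Type III subspaces corresponding to the two triangles of $K_4$ not containing the edge $\{i,j\}$, and the intersection of these two subspaces (which together force the five coordinates other than $l_{ij}$ to vanish) is exactly $\mathrm{span}(\e_{ij})$. For a Type II generator $\e_i \pm \e_j \pm \e_k \pm \e_l$ arising from a $4$-cycle, one verifies directly that the vector satisfies the defining equations of an appropriate Type I subspace and an appropriate Type II subspace; similarly, each Type III generator supported on a vertex star lies in the obvious Type II subspace (collapsing the two edges of the star with matching signs) together with a suitable Type I subspace. In each case the two chosen containing subspaces already cut out the single claimed line, so the enumeration is realized.

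For the exhaustiveness direction, the key simplification comes from the action of the symmetry group $S_4 \ltimes \{\pm 1\}^6$ (vertex relabelings composed with coordinate sign flips) on the $60$ subspaces of $\mathcal{V}_3$ and hence on their intersection lattice. I would reduce the list of pairs of elements of $\mathcal{V}_3$ to a small set of orbit representatives indexed by the pair of types and the combinatorial relationship between the two subspaces; for each representative, computing the intersection is a small homogeneous linear algebra problem. Pairs whose intersection has dimension $0$ or $\geq 2$ are discarded, and the surviving $1$-dimensional intersections are classified into Types I, II, or III according to their supports. A final consistency check confirms the totals $6 + 24 + 16 = 46$, and triple intersections are swept up by observing that any $1$-dimensional intersection of three subspaces is already a pairwise intersection of two of them.

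The main obstacle is organizational rather than conceptual: the raw count of $\binom{60}{2} = 1770$ pairs is unwieldy by hand, though symmetry reduction and the fact that most pairs intersect only in $\{0\}$ keep the genuinely interesting cases down to a short list. In practice the safest execution is by computer algebra, extending the Magma computation already used in the preceding theorem to decompose $\sing(\LL)$: one iterates over the explicit list of $60$ subspaces, intersects, records the $1$-dimensional results, and sorts them. I expect no surprises along the way, and the matching of totals under the group action provides a strong sanity check that nothing has been missed.
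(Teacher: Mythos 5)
Your overall strategy is the same as the paper's: the paper's proof is simply a Magma computation of the intersections of the $60$ subspaces (exactly what you fall back to in your final paragraph), and your symmetry-reduced hand enumeration plus explicit realization checks is a reasonable elaboration of that computation.

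There is, however, one genuine gap: your disposal of triple intersections. You claim that ``any $1$-dimensional intersection of three subspaces is already a pairwise intersection of two of them,'' but this is false as a general statement about linear subspaces. For instance, in $\CC^6$ take $V_1=\mathrm{span}(\e_1,\e_2,\e_3)$, $V_2=\mathrm{span}(\e_1,\e_2,\e_4)$, $V_3=\mathrm{span}(\e_1,\e_3,\e_4)$: all pairwise intersections are $2$-dimensional, yet the triple intersection is the line $\mathrm{span}(\e_1)$, which is not a pairwise intersection. Since ${\cal V}_1$ is \emph{defined} in the paper as the set of one-dimensional subspaces arising from pairs \emph{and triples} of elements of ${\cal V}_3$, you cannot simply discard triples by this observation; a priori a triple could contribute a line invisible among the pairwise intersections, and then your total of $46$ would be incomplete. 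The fix is easy and is what the paper does: include the triples in the (symmetry-reduced or CAS) intersection computation, or give an argument specific to this arrangement showing the triples contribute nothing new. Relatedly, in your realization step the claim that the two chosen containing subspaces ``already cut out the single claimed line'' is sign-sensitive: for example, a Type~I subspace and a Type~II subspace with fully compatible sign patterns intersect in a $2$-dimensional space, so the verification must pick the sign choices carefully (or simply record whatever dimension the computation returns), which again is exactly what the direct computation handles.
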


\begin{proof}
This follows directly from calculating the intersections of all pairs and triples of the $60$ singular subspaces of $\LL$. This has been done in the Magma CAS. 
(See supplemental script.)
\end{proof}

\begin{definition}
We define \defn{$\incidencegraph$} as the bipartite graph that has one set of vertices corresponding to the three-dimensional singular subspaces of $\LL$ (one vertex for each three-dimensional subspace), the other set of vertices corresponding to the one-dimensional intersection subspaces ${\cal V}_1$ (one vertex for each one-dimensional subspace), and an edge between vertex $i$ of the first set and vertex $j$ of the second set whenever the $i$th three-dimensional subspace includes the $j$th one-dimensional subspace.
\end{definition}

\begin{definition}
A \defn{graph automorphism} of a bipartite (two-colored) graph is a permutation $\graphautomorphism$ of the vertex set such that the color of vertex $i$ is the same as the color of $\graphautomorphism(i)$, and vertices $(i,j)$ form an edge if and only if $\left(\graphautomorphism(i),\graphautomorphism(j)\right)$ also form an edge.
\end{definition}

By finding the automorphisms of the graph $\incidencegraph$
we can constrain our search for automorphisms of 
$\{{\cal V}_3,{\cal V}_1\}$, and thus of $\LL$.

\begin{lemma}
\label{lem:graphauto}
The bipartite graph $\incidencegraph$ has $11520$ automorphisms.
Under this automorphism group, the graph has three orbits. One orbit corresponds to the set of $60$ three-dimensional singular subspaces. 
Another orbit corresponds to the subset of $30$ one-dimensional subspaces
in ${\cal V}_1$
of type I and II.
A third orbit corresponds to the subset of $16$ one-dimensional subspaces 
of type III.
\end{lemma}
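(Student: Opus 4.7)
The plan is to reduce the statement to a finite, explicit computation, since $\incidencegraph$ is a fixed bipartite graph on only $60+46 = 106$ vertices. First I would construct $\incidencegraph$ concretely from the earlier explicit descriptions of ${\cal V}_3$ and ${\cal V}_1$: enumerate the $32$ type-I, $24$ type-II, and $4$ type-III three-dimensional subspaces by running over the allowed sign choices and edge/triangle choices, and similarly enumerate the $6$ type-I, $24$ type-II, $16$ type-III one-dimensional subspaces as explicit spanning vectors in $\CC^6$. For each pair $(U,W)$ with $U\in{\cal V}_3$ and $W\in{\cal V}_1$, test the containment $W\subset U$ by plugging a generator of $W$ into the three linear defining equations of $U$. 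This produces the edge set of $\incidencegraph$ mechanically.

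Second, I would feed the resulting two-colored graph (the coloring recording which side of the bipartition each vertex lies on) to a standard graph-automorphism package such as \texttt{nauty}, available via Magma or SageMath, exactly in the spirit of the computer algebra calculations already used elsewhere in the paper. The package returns the group order together with orbit representatives; from its output I would read off $|\Aut(\incidencegraph)| = 11520$ and confirm that the $60$ three-dimensional vertices form a single orbit while the $46$ one-dimensional vertices split as $30+16$, with the $16$ type-III vertices forming their own orbit.

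It is worth noting conceptually, although not strictly needed for the proof, why types I and II of the one-dimensional subspaces merge into a single orbit while type III does not: the Regge map of \eqref{eq:reggemap} already sends certain coordinate-axis directions (type I) to signed sums supported on a $4$-cycle (type II), so the expected subgroup $\PP\Aut(\LL)$ acting on $\{{\cal V}_3,{\cal V}_1\}$ already mixes types I and II; meanwhile, type-III vectors have a different support size (three edges forming a triangle) and a different degree in $\incidencegraph$, which prevents any graph automorphism from merging them with the rest. A quick degree check from the explicit incidences above makes this rigorous.

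The main obstacle is purely bookkeeping: correctly enumerating the $60$ three-dimensional and $46$ one-dimensional subspaces with all sign and edge conventions consistent, and ensuring every containment is detected without error. There is no further mathematical subtlety beyond running the computation correctly, which fits comfortably within the supplemental script framework the paper already uses. Once the computation is set up, the conclusion of the lemma follows by direct inspection of the \texttt{nauty}/Magma output.
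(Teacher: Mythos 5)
Your proposal matches the paper's own proof: the lemma is established there precisely by building the incidence graph and computing its automorphism group and orbits with Nauty inside Magma (see the supplemental script). The extra conceptual remarks about why types I and II merge are fine but not needed, exactly as you note.
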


\begin{proof}
We have computed this using Nauty~\cite{nty}
within Magma.
(See supplemental script.)
\end{proof}

\subsubsection{Graph automorphisms to arrangement automorphisms}
A priori, it might be the case that some of these
graph automorphisms do not arise from a linear transform of $\CC^6$
act as an automorphism on the \emph{subspace arrangement} $\{{\cal V}_3, {\cal V}_1\} \subset \CC^6$. We rule this out.

\begin{lemma}
\label{lem:l24Lin}
Each of the graph automorphisms of $\incidencegraph$ gives rise to  a unique   linear automorphism of the arrangement
$\{{\cal V}_3,{\cal V}_1\}$
on $\LL$, up to a global scale.
Each equivalence class of such linear maps contains a rational-valued matrix. 
\end{lemma}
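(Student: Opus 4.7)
The plan is to establish a bijection, up to global scalar multiplication, between the $11520$ graph automorphisms of $\incidencegraph$ and the linear automorphisms of the subspace arrangement $\{\mathcal{V}_3, \mathcal{V}_1\}$. Any linear automorphism of the arrangement permutes its elements while preserving incidences between $\mathcal{V}_3$ and $\mathcal{V}_1$, and hence induces a graph automorphism of $\incidencegraph$. Scalar multiples induce the same permutation, so there is a well-defined homomorphism
\[
\Phi : \Aut(\mathcal{V}_3, \mathcal{V}_1)/\CC^{\times} \longrightarrow \Aut(\incidencegraph),
\]
and the lemma is the statement that $\Phi$ is a bijection whose domain has a rational representative in each equivalence class.

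For surjectivity, I would exhibit an explicit family of rational linear automorphisms of the arrangement: the $24$ edge permutations induced by vertex relabelings of $K_4$, the sign flips obtained by negating individual coordinate axes of $\CC^6$, and the Regge symmetry from Equation~\eqref{eq:reggemap}. Each such rational $6\times 6$ matrix can be verified directly in Magma to send $\mathcal{V}_3$ to $\mathcal{V}_3$ and $\mathcal{V}_1$ to $\mathcal{V}_1$. I would then compute in Magma the order of the subgroup these elements generate inside $\mathrm{GL}_6(\QQ)/\QQ^{\times}$ and check that it is exactly $11520$. Combined with the injectivity of $\Phi$ (proved next), the image of this subgroup under $\Phi$ has order $11520 = |\Aut(\incidencegraph)|$, so $\Phi$ is surjective and in fact the subgroup exhausts the domain. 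Rationality of a representative in every class is then automatic.

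For injectivity modulo scale, suppose $\A \in \mathrm{GL}_6(\CC)$ induces the trivial permutation on $\mathcal{V}_1$. Then $\A$ fixes each of the six type-I lines (the coordinate axes) setwise, so $\A$ is diagonal, say $\A = \mathrm{diag}(a_1,\ldots,a_6)$. Each type-II line is spanned by a vector of the form $\e_i + s_1 \e_j + s_2 \e_k + s_3 \e_\ell$ for edges $i,j,k,\ell$ of a $4$-cycle in $K_4$ and signs $s_m \in \{\pm 1\}$; since $\A$ fixes this line setwise, comparing coefficients forces $a_i = a_j = a_k = a_\ell$. The three $4$-cycles of $K_4$ overlap pairwise and together touch every edge, so all six $a_m$ coincide and $\A$ is a scalar.

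The main obstacle is the computational verification that the listed generators produce a subgroup of order exactly $11520$ modulo scale. This check is not plausibly done by hand, but is routine in Magma; together with the elementary injectivity argument it delivers the full bijection, and the supplemental script records the explicit enumeration.
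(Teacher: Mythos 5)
Your proposal is correct, but it proves the lemma by a genuinely different route than the paper. The paper works directly from the graph automorphisms: Magma supplies $6$ generators of $\Aut(\incidencegraph)$, and for each generator $\graphautomorphism$ one writes the $540\times 36$ homogeneous linear system expressing that a $6\times 6$ matrix $\A$ carries each $3$-flat $V_i\in{\cal V}_3$ into $V_{\graphautomorphism(i)}$; the computation shows the solution space is exactly one-dimensional, which gives existence and uniqueness simultaneously, and rationality is free because a one-dimensional solution space of a rational system is spanned by a rational vector. You instead split the statement: uniqueness is your hand argument (triviality on the six type-I axes forces $\A$ diagonal, and the type-II lines over the three $4$-cycles of $K_4$, which jointly cover all six edges, force all diagonal entries equal), and existence is obtained by exhibiting explicit rational automorphisms of the arrangement (vertex relabelings, sign flips, the Regge map \eqref{eq:reggemap}) and checking computationally that they generate a group of order $11520$ modulo scalars, which by your injectivity and Lemma~\ref{lem:graphauto} must map onto all of $\Aut(\incidencegraph)$. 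Both routes lean on computer algebra, but in different places: the paper needs only the generators of $\Aut(\incidencegraph)$ and a rank computation, and in particular never needs to guess the Regge symmetry in advance, whereas your argument requires knowing that generating set (and the order count $11520$ coming out exactly right, which is confirmed by the paper's remark that vertex relabelings, sign flips and the Regge map generate $\PP_{+}\Aut_{\RR}(\LL)$). What your version buys is a conceptual, computation-free proof of the uniqueness half and an explicit rational representative in every class by construction; what the paper's version buys is independence from any a priori knowledge of the automorphisms being sought. One small caution: your surjectivity step is conditional on the Magma order computation, so until that is run the existence half is only as verified as the paper's, not more elementary.
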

\begin{proof}
Each graph automorphism $\graphautomorphism$ 
gives rise to a permutation of the spaces in ${\cal V}_3$. A $6\times 6$ matrix $\A$ describing a linear transform that maps the three-dimensional subspaces in the same manner must satisfy $540=60\cdot 9$ linear homogeneous constraints, nine for each pair $\left(i,\graphautomorphism(i)\right), i\in {\cal V}_3$. 

Magma gives us a generating set of size $6$ for the group
of graph automorphisms.
For each of the $6$ generators of the graph automorphism group, we write out the system of linear constraints. When doing so, we discover that this system always has a solution that is unique, up to a global scale. The $540\times 36$ constraint matrix can always be written as a rational-valued matrix, since the subspace arrangement $\{{\cal V}_3, {\cal V}_1\}$ can be defined using rational-valued coefficients.
(See supplemental script.)
\end{proof}

\subsubsection{Arrangement automorphisms are $L_{2,4}$ automorphisms}

It might also be possible that there are 
linear transforms which preserve 
the subspace arrangement $\{{\cal V}_3, {\cal V}_1\}$, but do 
not preserve the entire $\LL$ variety.  We rule this out as well.

\begin{lemma}
\label{lem:l24Auto}
Each of the graph automorphisms of $\incidencegraph$ gives rise to  a unique   linear automorphism
on $\LL$, up to a global scale.
Each equivalence class of such linear maps contains a rational-valued matrix. 
\end{lemma}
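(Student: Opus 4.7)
The plan is to take the candidate rational matrices $\A$ produced by Lemma~\ref{lem:l24Lin} and verify by direct polynomial computation that each one actually preserves the variety $\LL$, not merely the singular subspace arrangement $\{{\cal V}_3,{\cal V}_1\}$. Since $\LL$ is cut out by the single degree-four polynomial $f$ obtained from the $3\times 3$ Cayley--Menger determinant in the squared coordinates displayed in Section~\ref{sec:varieties}, and $\A$ is bijective on $\CC^6$, the condition $\A(\LL)=\LL$ is equivalent to the polynomial identity $f(\A\bl) = c\cdot f(\bl)$ for some nonzero constant $c$.

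First I would extract from the constraint systems used in the proof of Lemma~\ref{lem:l24Lin} the six explicit rational matrices $\A_1,\dots,\A_6$ corresponding to the generators of the graph automorphism group of $\incidencegraph$ supplied by Magma in Lemma~\ref{lem:graphauto}. Second, for each $\A_i$, I would evaluate $f(\A_i\bl)$ symbolically in Magma, expand, and check that the result is indeed a scalar multiple of $f(\bl)$. Since bijective linear automorphisms of $\LL$ are closed under composition and inversion, a successful check at the generator level promotes every one of the $11520$ equivalence classes of graph automorphisms of $\incidencegraph$ to an equivalence class of linear automorphisms of $\LL$.

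Uniqueness and rationality are then inherited directly from Lemma~\ref{lem:l24Lin}: any linear automorphism of $\LL$ must restrict to a linear automorphism of $\sing(\LL)$ by Theorem~\ref{thm:Tmap}, and hence to an automorphism of the arrangement $\{{\cal V}_3,{\cal V}_1\}$. Since Lemma~\ref{lem:l24Lin} already supplies a unique (up to scale) rational representative for each graph automorphism at the arrangement level, that same matrix is the unique (up to scale) representative at the variety level.

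The main (and essentially only) obstacle is conceptual: preserving the arrangement $\{{\cal V}_3,{\cal V}_1\}$ is a priori strictly weaker than preserving the full variety $\LL$, since infinitely many quartic hypersurfaces in $\CC^6$ could contain the same $60$ three-dimensional subspaces. Closing this gap requires an honest polynomial identity on $\CC^6$, which the Magma verification above provides in a finite and bounded computation.
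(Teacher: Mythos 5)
Your proposal is correct and follows essentially the same route as the paper: take the unique-up-to-scale rational matrices supplied by Lemma~\ref{lem:l24Lin}, pull back the single defining equation of $\LL$ through each such invertible map, and verify that it is reproduced up to a nonzero scalar, which is precisely the computation recorded in the supplemental script (your reduction to the six generators plus closure under composition, and your uniqueness argument via Theorem~\ref{thm:Tmap}, are harmless refinements of what the paper does implicitly). The only cosmetic slip is that the defining polynomial is of degree six in the unsquared coordinates $l_{ij}$ (degree three as a polynomial in the $m_{ij}$), not degree four, but this plays no role in the argument.
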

\begin{proof}
From Lemma~\ref{lem:l24Lin}, each of the graph automorphisms
gives rise to a, unique up to scale, rational-valued
linear automorphism of our arrangement.
When we pull back the single defining equation of $\LL$ through each such invertible linear map,
we verify that we recover said equation. Thus this map
is a linear automorphism of $\LL$.
\end{proof}

\subsubsection{Reflection group}
Next, we make a definition that will be helpful in establishing the connection between 
$\PP_{+}\Aut_{\RR}(L_{2,4})$
and the Weyl group $D_6$. For definitions, see~\cite{humphreys}.

\begin{definition}
We define the \defn{reflection group} $\reflectiongroup$ as the real
matrix group generated by the set of reflections in $\RR^6$ across the 
$30$ hyperplanes that are orthogonal to the $30$ one-dimensional real intersection subspaces of type I and II.
\end{definition}

The following lemma was based on conversions with
Dylan Thurston.

\begin{lemma}
\label{lem:reflgrp}
The reflection group $\reflectiongroup$ is of order $23040$, and is isomorphic to the Weyl group $D_6$. The reflection group leaves the variety $\LL$ invariant.
\end{lemma}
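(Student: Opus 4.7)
My plan is to identify the $30$ reflecting hyperplanes with those of the standard $D_6$ root system via an explicit orthogonal change of coordinates, from which both the order of $W$ and its isomorphism class follow immediately, and then to verify that each generating reflection preserves $L_{2,4}$.

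For the change of coordinates I would exploit the three perfect matchings of $K_4$, namely $\{12,34\}$, $\{13,24\}$, $\{14,23\}$, to split the six edge coordinates into three orthogonal ``opposite-edge'' pairs, and pass to the sum-and-difference basis $f_1 = e_{12}+e_{34}$, $f_2 = e_{12}-e_{34}$, $f_3 = e_{13}+e_{24}$, $f_4 = e_{13}-e_{24}$, $f_5 = e_{14}+e_{23}$, $f_6 = e_{14}-e_{23}$. A direct computation shows that in this basis the six type-I directions $e_{ij}$ become the six vectors $f_{2k-1}\pm f_{2k}$ for $k=1,2,3$, and each of the three $4$-cycles of $K_4$ contributes---through its eight type-II directions---exactly the eight vectors $\pm f_i \pm f_j$ whose indices lie in two different blocks among $\{1,2\}$, $\{3,4\}$, $\{5,6\}$. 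Collecting these, the $30$ defining directions are, up to scale, precisely the $30$ reflecting-hyperplane normals of the $D_6$ root system $\{\pm f_i \pm f_j : 1\le i<j\le 6\}$. Consequently $W$ is the Weyl group of $D_6$, of order $2^{5}\cdot 6!=23040$.

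Invariance of $L_{2,4}$ under $W$ then splits along the type-I/type-II divide. The six type-I reflections are single-coordinate sign flips $l_{ij}\mapsto -l_{ij}$, and they preserve $L_{2,4}$ because its defining polynomial---the simplicial volume determinant written in the squared coordinates $l_{ij}^2$---is manifestly invariant under such flips. For the type-II reflections it suffices, modulo the already-available symmetries by vertex relabelings and sign flips, to treat one representative. Taking the reflection across the hyperplane orthogonal to $e_{12}+e_{23}+e_{34}+e_{14}$, a direct computation shows that it fixes $l_{13}$ and $l_{24}$ and acts on the remaining four cycle coordinates as the negative of the Regge map \eqref{eq:reggemap} restricted to those coordinates; equivalently, it factors as the Regge map composed with the sign flip of the four cycle coordinates. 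Both factors preserve $L_{2,4}$---the Regge map by the classical Regge symmetry recalled above \eqref{eq:reggemap}, the sign flip by the type-I observation---so the reflection itself does as well.

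The main bookkeeping obstacle is verifying the $D_6$ identification and the Regge/sign-flip factorization of the type-II reflections; neither step is conceptually difficult, and both can be confirmed mechanically within the existing supplemental Magma script.
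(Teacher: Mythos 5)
Your proposal is correct, but it takes a genuinely different route from the paper's. The paper proceeds computationally: it doubles the $30$ normals into a $60$-element set, verifies in Magma that this set is a reduced crystallographic root system, extracts simple roots using a generic separating vector, reads off the Dynkin diagram to identify $D_6$ (and hence the order $23040$), and then checks invariance of $\LL$ by pulling back its defining equation through the six simple reflections. You instead identify the arrangement with the standard $D_6$ root system by an explicit change of basis adapted to the three perfect matchings of $K_4$; and indeed your bookkeeping checks out: the six type-I directions become the within-block vectors $f_{2k-1}\pm f_{2k}$ and the $24$ type-II directions become the cross-block vectors $\pm f_i\pm f_j$, so the $30$ reflecting hyperplanes are exactly those of $D_6$ and $\reflectiongroup\cong W(D_6)$ of order $2^5\cdot 6!=23040$ with no computer algebra. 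Your invariance argument is likewise more structural: type-I reflections are single-coordinate sign flips, harmless because the defining determinant is a polynomial in the $l_{ij}^2$, and the type-II representative orthogonal to $\e_{12}+\e_{23}+\e_{34}+\e_{14}$ does factor as the Regge map \eqref{eq:reggemap} composed with the sign flip of the four cycle coordinates, after which conjugation by vertex relabelings and sign flips (both automorphisms of $\LL$) covers all $24$ cases. The one point you should make explicit is that \eqref{eq:reggemap} is quoted in the paper only as a statement about Euclidean edge lengths of real configurations, so invariance of the complex variety $\LL$ under the Regge map is not literally what is cited; it follows either from a short pullback computation of the defining equation, or from Theorem~\ref{thm:prin2} applied to $l(\p)$ for generic real $\p$ (generic in $\LL$ by Theorem~\ref{thm:Lvariety}), whose Regge image lies in $\LL$ by the Euclidean statement. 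What the paper's computational route buys is independence from such external facts plus an automatic consistency check; what yours buys is a conceptual explanation of why $D_6$ appears (the three opposite-edge pairs giving three orthogonal coordinate planes) and a proof readable without the supplemental script.
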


\begin{proof}
From the $30$ vectors that generate $\reflectiongroup$, we generate a larger set of $60$ vectors $\phi$ that has the same reflection group as follows: For each vector $\f$ in the original 30-set, we create two vectors $\pm 2\f/\Vert\f\Vert$ in the 60-set. 
Next, we verify that the set $\phi$ is a (reduced, crystallographic) root system by: i) applying each generator of the  group $\reflectiongroup$ to the set $\phi$ and verifying that it leaves the set invariant; and ii) verifying that the set satisfies the integrality condition $\forall \f,\g \in \phi, 2(\f\cdot \g)/\Vert\f\Vert \in \ZZ$. 

A reflection group of a root system is a Weyl group. To prove the first part of the lemma, we need only classify the root system (and thus the Weyl group) according to the finite catalog of rank 6 possibilities. We use the procedure described in \cite[page 48]{humphreys}, which we summarize here. 

We begin by choosing any vector $\h \in \QQ^6$ that is not proportional or perpendicular to a vector in $\phi$, and then we identify the subset of \emph{positive roots} $\phi^+:=\{\f : \f\in \phi, (\h\cdot \f)>0\}$. Since $\phi$ is a root system, it will be the case that $|\phi^+| = |\phi|/2 = 30$. Among the positive roots, we identify the subset of \emph{simple roots} as the vectors $\f \in \phi^+$ that cannot be decomposed as $\g_1+\g_2$ for some $\g_i\in\phi^+$. By construction, simple roots form a basis for the embedding vector space, so in the present case there will be $6$ of them. Finally, we can classify the group by examining the pattern of pairwise angles between simple roots.

Applying this calculation to our root system, we find that the
pairwise angles between the simple roots are $0$ or $2\pi/3$. We draw
a Dynkin diagram that has one vertex for each simple root and
an edge $(i,j)$ whenever the angle between roots $i$ and $j$ is
$2\pi/3$.  Doing so, we find that this diagram is of type $D_6$. This means  that the reflection group is isomorphic to the Weyl group $D_6$, which is
of order $23040$. This proves the first part of the lemma.

To prove the second part of the lemma, we use the fact that the reflection group $\reflectiongroup$ is generated by the 6 reflections from the simple roots. We pull back the single defining equation of $\LL$ through each of these 6 linear maps, and we verify that we recover said equation.

Note that the group could also be identified from its
computed order.
(See supplemental script.)
\end{proof}

\subsubsection{Proof}
The proof of our theorem is  now nearly complete.

\begin{proof}[Proof of Theorem~\ref{thm:auto2}]
From Theorem~\ref{thm:Tmap}, a linear automorphism of
$\LL$ must be a linear automorphism of its singular set
${\cal V}_3$, and thus must preserve the incidence
structure of 
$\{{\cal V}_3,{\cal V}_1\}$. Any linear automorphism
of this incidence structure must give rise to a
graph automorphism of $\incidencegraph$.
By Lemma~\ref{lem:graphauto}, there 
are $11520$ graph automorphisms of $\incidencegraph$, and from 
Lemma~\ref{lem:l24Auto}, each gives rise to a 
rational valued linear automorphism of $\LL$, unique up to scale. 
Summarizing, we have shown that $\PP \Aut(L_{2,4}) = \PP \Aut(\sing(L_{2,4})$,
and that both of these groups are isomorphic to the automorphism group of the 
graph $\Delta$.  Lemma~\ref{lem:l24Auto} also implies that each equivalence 
class in $\PP \Aut(L_{2,4})$ contains a rational representative, so 
this group can be generated by rational matrices.

Because of the rational generators mentioned above, the group 
$\PP \Aut_{\RR}(L_{2,4})$ is isomorphic to the others.  It then follows that
the order of $\PP_{+} \Aut_{\RR}(L_{2,4})$ is $23040 = 2\cdot 11520$.

Next, we deal with the classification of $\PP_{+} \Aut_{\RR}(L_{2,4})$.
By Lemma~\ref{lem:reflgrp} (specifically the second statement), the 
elements of $W$ generate some subgroup $G$ of  $\PP_{+} \Aut_{\RR}(L_{2,4})$.
In fact,  no two elements of $\reflectiongroup$ are related by a positive scale,
so $W$ is isomorphic to this $G$.  The first part of Lemma~\ref{lem:reflgrp} says that
$W$ has the same order as  $\PP_{+} \Aut_{\RR}(L_{2,4})$, so $W$ and $\PP_{+} \Aut_{\RR}(L_{2,4})$ are isomorphic.

For the third part of the theorem, we need only test 
$23040$ matrices and retain those that have only non-negative entries. 
This has been done in the Magma CAS, and indeed, it yields only the $24$ 
edge permutations induced by vertex relabelings.  
(See supplemental script.)
This is, in particular, 
a subgroup of $\PP_{+} \Aut_{\RR}(L_{2,4})$.

\end{proof}

\subsection{Automorphisms of $L_{1,3}$}

For completeness, we describe here the 
linear automorphisms of $L_{1,3}$. 
(As $L_{1,n}$ is reducible, this result will not be useful in our 
reconstruction setting.)

\begin{theorem}\label{thm:l13-aut}
Any linear automorphism 
$\A$ of $L_{1,3}$  is a scalar multiple
of a signed permutation that is induced by a 
vertex relabeling.
\end{theorem}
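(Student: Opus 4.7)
The plan is to exploit the very concrete reducible structure of $L_{1,3}$ described immediately above in the paper: $L_{1,3}$ is the union of four hyperplanes $H_1,\dots,H_4$ in $\CC^3$, with linear normals $(1,-1,1), (1,1,-1), (-1,1,1), (1,1,1)$ in the coordinates $(l_{12},l_{13},l_{23})$. Since any linear automorphism of a reducible variety permutes its irreducible components (Theorem~\ref{thm:comps}), we get a homomorphism $\rho : \Aut(L_{1,3}) \to S_4$. The strategy is then to identify the kernel of $\rho$ as the scalar matrices, and to exhibit enough signed permutations induced by vertex relabelings to cover all of $S_4$.

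For the kernel, suppose $\A$ acts trivially on $\{H_1,\dots,H_4\}$, so $\A$ preserves each hyperplane setwise. Then the transpose $\trans{\A}$ preserves each normal line, hence $\trans{\A}v_i = \lambda_i v_i$ for each $i$. Any three of the four $v_i$'s are linearly independent (a direct $3\times 3$ determinant check), so $v_1,v_2,v_3$ form a basis, and one verifies the relation $v_4 = v_1 + v_2 + v_3$ with all coefficients nonzero. Applying $\trans{\A}$ to both sides and comparing coefficients in the basis $\{v_1,v_2,v_3\}$ forces $\lambda_1 = \lambda_2 = \lambda_3 = \lambda_4$, so $\A$ is a scalar multiple of the identity.

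For surjectivity, observe that vertex relabelings of $K_3$ give $S_3$ acting by edge permutations, and the $8$ diagonal $\pm 1$ sign flip matrices give another group; together they yield $48$ signed permutations that are all automorphisms of $L_{1,3}$, because each such map sends a normal vector with entries in $\{\pm 1\}$ to another one of the same form and thus permutes $\{H_1,\dots,H_4\}$. Modulo the global scalar $-I$ (which is itself a signed vertex permutation), these produce $24$ distinct classes in $\Aut(L_{1,3})/\CC^*\!\cdot\! I$, and by the kernel computation this quotient injects into $S_4$, which has order $24$. So the map is surjective with kernel exactly $\CC^*\!\cdot\! I$, and every linear automorphism of $L_{1,3}$ is a scalar multiple of a signed permutation induced by a vertex relabeling.

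I expect the only mildly delicate step to be the kernel argument: one has to notice and use that the four normals are in sufficiently general position (any three independent, the fourth a combination with all nonzero coefficients). Once that linear-algebraic fact is in hand, the surjectivity comes essentially for free by counting, since the subgroup of signed vertex permutations already has order $24$ modulo scalars, saturating the target $S_4$.
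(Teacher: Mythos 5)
Your proof is correct and follows essentially the same route as the paper: both map the automorphism group to $S_4$ via its permutation action on the four component hyperplanes and then saturate the resulting order bound with the $24$ classes of signed permutations coming from vertex relabelings and sign flips. Your explicit eigenvector argument with the normals (using $v_4=v_1+v_2+v_3$) simply fills in the detail behind the paper's unproved assertion that a permutation of the four planes determines the automorphism up to scale.
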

\begin{proof}
$L_{1,3}$ comprises $4$ hyperplanes.
Each permutation on these $4$ planes gives us
at most a single linear automorphism of $L_{1,3}$ up to scale.
Thus $\PP\Aut(L_{1,3})$ is isomorphic to a subgroup of $S_4$
and, in particular, has order at most $24$.

Meanwhile $\PP\Aut(L_{1,3})$ contains a subgroup of order 
$24$ generated by vertex relabelings and sign flips.  By 
the above, this must be the whole group.
\end{proof}
\begin{remark}
If we want to see $S_4$ acting by sign flips and 
coordinate permutations, we can observe
that these maps are symmetries of the cube that 
permute the opposite corner diagonals.  
\end{remark}

\section{Consistency Implies Correctness}
\label{sec:consist}

We are now in a position to show that:  
If we take $D$ values from our data set of path measurements, and they
are \defn{consistent} with the $D$
edge lengths of a $K_{d+2}$ in $\RR^d$, then in
fact \emph{they do arise} \defn{veridically}, up to scale in this way.
Likewise, in the loop setting, if they 
``look like'' an appropriate \defn{canonical} set of  $D$
loops, 
then in
fact \emph{they do arise}, up to scale, in this way. 

\begin{definition}
\label{def:rational:rank}
Given a finite sequence of $k$ complex numbers $w_i$, we say that 
they are \defn{rationally linearly dependent} if there is a
sequence  of rational 
coefficients $c^i$, not all zero, such that 
$0= \sum_i c^i w_i$. Otherwise we say that they are rationally linearly 
independent.
We define the \defn{rational rank} of $w_i$ to
be the size of the maximal subset that is rationally linearly 
independent. 
\end{definition}

\begin{theorem}
\label{thm:consist}
Let $d \ge 2$.
Let  $\w:=(w_1,\dots,w_D)$ have rational rank of
$D$  and
describe a point in $\CC^D$ that
is a point of 
$L_{d,d+2}$. 

Suppose there is a generic configuration $\p$ in 
$\RR^d$ (or simply such that $l(\p)$ is generic in $\LN$)
and $D$ non-negative rational functionals $\gamma_i$
such that $w_i=\la \gamma_i,\p \ra^2$.

Then
there must be a ${d+2}$ point subconfiguration 
$\p_T$ of $\p$, 
such that
$\la \pmb{\gamma}, \p \ra = \w = s\cdot l(\p_T)$, where $s$ is an unknown positive
scale factor. 
The data $\w$ determines this $\p_T$ up to 
a similarity transform.
\end{theorem}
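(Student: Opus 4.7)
The plan is to assemble the length functionals $\gamma_1, \dots, \gamma_D$ into a single $D \times N$ matrix $\E$ with non-negative rational entries, so that $\w = \E \cdot l(\p)$, and then to invoke the classification results of Sections~\ref{sec:maps} and~\ref{sec:Ldn-automorphisms} to pin down the structure of $\E$.

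First I would argue that $\rank(\E) = D$. Any non-trivial rational linear relation $\sum_i c^i \E_i = 0$ among the rows of $\E$ would give $\sum_i c^i w_i = 0$, contradicting the rational rank hypothesis on $\w$; since $\E$ is rational-valued, $\QQ$-independence of the rows upgrades to $\CC$-independence. Next, because $\E$, $\LN$, and $L_{d,d+2}$ are all defined over $\QQ$ and $l(\p)$ is generic in $\LN$ by Theorem~\ref{thm:Lvariety}, Theorem~\ref{thm:prin1} promotes the pointwise hypothesis $\E(l(\p)) \in L_{d,d+2}$ to the variety inclusion $\E(\LN) \subset L_{d,d+2}$. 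A direct codimension calculation gives $\dim L_{d,d+2} = D - 1$, so $\dim \E(\LN) \le D - 1 < \rank(\E)$, and Theorem~\ref{thm:linImage} then forces $\E$ to have $K_{d+2}$ support: its non-zero columns are precisely those indexed by the edges of some $K_{d+2}$ subgraph $T$ of $K_n$, and the corresponding $D \times D$ submatrix $\A$ is non-singular.

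Letting $\p_T$ be the $(d+2)$-point subconfiguration of $\p$ indexed by the vertices of $T$, we obtain $\A \cdot l(\p_T) = \w \in L_{d,d+2}$, with $l(\p_T)$ generic in $L_{d,d+2}$ (by Lemma~\ref{lem:genPush} applied to the dominant projection $\LN \to L_{d,d+2}$, or because $\p_T$ is itself a generic $(d+2)$-point configuration). Theorem~\ref{thm:prin2} then implies that $\A$ is a linear automorphism of $L_{d,d+2}$. Since $\A$ is rational and non-negative (inherited from $\E$), the automorphism classification applies. For $d \ge 3$, Theorem~\ref{thm:no-regges3} gives $\A$ as a scalar multiple of a signed permutation induced by a vertex relabeling, and non-negativity rules out the sign flips, leaving $\A = s \cdot \P$ with $s > 0$ and $\P$ a vertex-relabeling permutation; for $d = 2$, the last statement of Theorem~\ref{thm:auto2} yields the same conclusion. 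Therefore $\w = s \cdot \P \cdot l(\p_T) = s \cdot l(\p_T')$, where $\p_T'$ is $\p_T$ with its vertices reindexed by $\P$, and knowing the unsquared edge lengths $\w/s$ of a generic $(d+2)$-point subconfiguration determines it up to congruence, so $\p_T'$ is determined up to similarity in $\RR^d$.

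The main obstacle is the $d = 2$ case: the full automorphism group of $L_{2,4}$ is enlarged by the Regge symmetries, so the argument succeeds only because the non-negativity of the $\gamma_i$, together with the computer-algebra verification in Theorem~\ref{thm:auto2}, rules out the Regge symmetries and their sign-flipped conjugates and leaves only the vertex-relabeling subgroup.
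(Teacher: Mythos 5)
Your proposal is correct and follows essentially the same route as the paper's proof: rational rank plus Lemma~\ref{lem:depC}-style reasoning gives $\rank(\E)=D$, Theorem~\ref{thm:prin1} and Theorem~\ref{thm:linImage} force $K_{d+2}$ support, Theorem~\ref{thm:prin2} makes the resulting $D\times D$ block an automorphism of $L_{d,d+2}$, and the non-negative parts of Theorems~\ref{thm:no-regges3} and~\ref{thm:auto2} reduce it to a scaled vertex relabeling, with Lemma~\ref{lem:mds} finishing the similarity claim. The only (harmless) cosmetic difference is that you apply Theorem~\ref{thm:prin2} at the generic point $l(\p_T)$ rather than via the paper's factorization $\A\pi_{\bar{K}}$ and the inclusion $\A(L_{d,d+2})\subset L_{d,d+2}$.
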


\begin{proof}
We can use the $D$ functionals, $\gamma_i$, as the rows of a matrix $\E$
to obtain a
linear map from $\LN$ to $\CC^D$. This matrix $\E$ maps
$l(\p)$ to $\w$, which we have assumed to be in 
$L_{d,d+2}$.

i) From Theorem~\ref{thm:prin1} (and using Theorem~\ref{thm:Lvariety}) 
we see that 
$\E(\LN) \subset L_{d,d+2}$. 

ii) From Lemma~\ref{lem:depC} and the assumed rational rank of
$\w$,
the rank of $\E$ must be $D$.
Since the image of $\E(\LN)$ is not $D$-dimensional,
then from Theorem~\ref{thm:linImage},
$\E$ is $K_{d+2}$ supported.

iii)
Let $\pi_{\bar{K}}$ be the edge forgetting map 
where $K$ comprises the edges 
of this $K_{d+2}$, and where $K$ is ordered 
such that 
$\pi_{\bar{K}}(L_{d,n}) = L_{d,d+2}$

Thus $\E$ can be written in the form  
$\A \pi_{\bar{K}}$ where
$\A$ is a $D \times D$ non-singular and
non-negative rational matrix.
We have   $\A(L_{d,d+2}) = \A\pi_{\bar{K}}(L_{d,n}) 
= \E(L_{d,n})
\subset L_{d,d+2}$.

iv) Thus, from Theorem~\ref{thm:prin2}, $\A$ must act as a non-negative linear
automorphism on $L_{d,d+2}$. From 
Theorems~\ref{thm:no-regges3} (for $d \ge 3$)
and ~\ref{thm:auto2} (for $d=2$)
$\A$
cannot be more than a permutation on $d+2$ vertices and a positive global scale.
As $\E=\A\pi_{\bar{K}}$ for such an $\A$, 
there must exist an (ordered) ${d+2}$ point subconfiguration $\p_T$
of $\p$, such that 
$\la \pmb{\gamma}, \p \ra = \w = s\cdot l(\p_T)$.

v) This then 
determines $\p_T$ (up to the stated symmetries) by Lemma~\ref{lem:mds}.
\end{proof}

Theorem~\ref{thm:verLoop}, in 
in Section~\ref{sec:canon} below, is the
generalization to the case of loop ensembles. 
 
\begin{remark}
\label{rem:scale}
If the $\gamma_i$ are 
whole valued, then $s$ must be an integer, 
greater than or equal to $1$
for any such
$\p_T$.
This also means that there is a $d+2$
configuration  $\b$ of ``maximal scale'' such that
$\w = l(\b)$, and that $\p_T=1/s\cdot \b$. 
\end{remark}

\begin{remark}\label{rem:rat-rank}
The rational rank $D$ hypothesis is essential as the following example in 
$2$-dimensional shows.  Let 
$\gamma_i$ be any functional, and measure the set $\{3\gamma_i, 4\gamma_i, 
5\gamma_i, 5\gamma_i, 4\gamma_i, 3\gamma_i\}$. 
These measurement values (with rational rank $1$) correspond
to a $K_4$ made by gluing ``345 triangles'' together, no matter what $\p$ is.
Using an arithmetic 
construction from \cite{AE45} of and
the same idea as above, we can make infinitely many 
non-congruent rational rank $1$ measurement sets that have no 
repeated measurement values or $3$ collinear points.

When we are proving  ``global rigidity'' results in Section~\ref{sec:ugr}, 
the assumed trilateration sequence automatically gives rational rank $D$.
On the other hand, a reconstruction algorithm 
(see Section~\ref{sec:recon}) based on Theorem~\ref{thm:consist} 
will have to find the trilateration sequence as it goes.  
The examples here show that such an algorithm has to test rational rank.
\end{remark}

\begin{remark}
\label{rem:consistOpen}
Theorem~\ref{thm:consist} 
can fail at any non-generic point. Although
the generic point set has full measure, it does not include any
(standard topology) open sets, and the non-generic point set is dense
as well.

As discussed in Remark~\ref{rem:gtoz},
this problem 
will be ameliorated if we restrict ourselves to the
whole \emph{$b$-bounded} setting. In this case, there are only a finite set of 
functionals under consideration. These will determine a ``bad''
algebraic subvariety of $L_{d,n}$ (and a bad subvariety
of configuration space)
where our conclusion does not hold, but
it will leave us with a Zariski open set where it does hold. 
\end{remark}

\begin{remark}
\label{rem:eps}
Theorem~\ref{thm:consist}
tells us that, for any generic $\p$, 
a sequence 
of $D$ linearly independent
non-negative rational functionals $\gamma_i$ will 
only generate a  point
$\w$ in $L_{d,d+2}$, when they describe a veridical D-tuple of measurements from $\p$.
But if we allow the
$\gamma_i$ to be any such non-negative
rational functionals, we should be able
to find a set of non-veridical measurements that produce
a point $\w$ that is 
arbitrarily close to $L_{d,d+2}$.
In an application with finite accuracy, this would mean that in a practical reconstruction setting
we 
could not 
determine if this theorem can be applied.

The situation is better in the $b$-bounded setting
since, once we fix $\p$, the finite set of non-veridical 
$D$-tuples will be bounded away from $L_{d,d+2}$.

More specifically, let $\p$ be fixed.
Suppose there is some polynomial $P(\p,\pmb{\gamma})$
that we want to compare to $0$, where we know that 
$\pmb{\gamma}$ is $b$-bounded.
If $P(\p,\pmb{\gamma}) \neq 0$ then, since there are only 
a finite number of possible $\pmb{\gamma}$, there is an 
$\epsilon$ depending only on $\p$, $P$ and $b$ such that 
$P(\p,\pmb{\gamma})$ is bounded by $\epsilon$ away from $0$.
\end{remark}

\subsection{Loop Setting}
\label{sec:canon}

We also wish to generalize Theorem~\ref{thm:consist} so that 
it can be applied to the loop setting.
In particular, instead of looking for the situation where we 
measure $D$ edges of a $K_{d+2}$ we will
look for the situation where we have made $D$ \emph{canonical}
measurements over a $K_{d+2}$. 
Indeed, we consider two such canonical measurements:
one to identify a $K_{d+2}$ ex-nihilo, and one to identify 
a $K_{d+2}$ using a known $d+1$ point subconfiguration along with 
$d+1$ additional measurements.

\begin{definition}
Given a single $K_{d+2}$, with ordered vertices and edges,
we can describe the  $D$
measurements described in Definition~\ref{def:contained}
using a fixed \emph{canonical}
$D\times D$ matrix $\N^d_1$. Each row represents
the edge multiplicities of one measurement loop.
For notational convenience, we order the rows of this matrix so that
each of the first $C$ rows is supported  only over the $C$ edges of the first
$d+1$ points.

In $2$ dimensions, 
we associate the columns of this matrix with the 
following edge
ordering:
$[1,2], [1,3], [2,3], [1,4], [2,4], [3,4]$.
This then gives us
the following \defn{tetrahedral measurement matrix} that measures
three pings and three triangles
\ba
\N^2_1:=
\begin{pmatrix}
2&0&0& 0&0&0\\
0&2&0& 0&0&0\\
1&1&1& 0&0&0 \\
0&0&0& 2&0&0 \\
1&0&0& 1&1&0 \\
0&1&0& 1&0&1 
\end{pmatrix}.
\ea
See Figure~\ref{fig:pt} (bottom left) for the two-dimensional case.

Given an initial $K_{d+1}$, with ordered vertices and edges,
we can describe an ordered $D$
measurements describing the trilateration of a $d+2$nd vertex
off of the first $d+1$ vertices, as defined in Definition~\ref{def:ensemble}
using fixed a $D\times D$ matrix $\N^d_2$. 
The first $C$ rows measure the  edges of the initial $K_{d+1}$
subconfiguration, and the remaining $d+1$ rows 
measure the appropriate pings and triangles. 

In $2$ dimensions, this gives us
the following \defn{trilateration measurement matrix}
that measures three edges,
one ping, and two triangles
\ba
\N^2_2:=
\begin{pmatrix}
1&0&0& 0&0&0\\
0&1&0& 0&0&0\\
0&0&1& 0&0&0 \\
0&0&0& 2&0&0 \\
1&0&0& 1&1&0 \\
0&1&0& 1&0&1 
\end{pmatrix}.
\ea
See Figure~\ref{fig:pt} (bottom right) for the two-dimensional case.
\end{definition}

\begin{customthm}{\ref*{thm:consist}$'$}
\label{thm:verLoop}
With these definitions in place,
we can generalize the condition
in Theorem~\ref{thm:consist} 
that $\w$ describes a point of $L_{d,d+2}$,  to the condition that
$\w$ describes a of 
point of $\N^d_i(L_{d,d+2})$. And we can generalize the
conclusion to read:
$\la \pmb{\gamma}, \p \ra = \w = s\cdot  \N^d_i(l(\p_T))$.
\end{customthm}
\begin{proof}
We follow the structure of the proof of Theorem~\ref{thm:consist}.
Our loop assumptions give us 
$\w = \E(\bl) \in \N^d_i(L_{d,d+2})$ and thus from genericity of $\bl$, 
$\E(L_{d,n})\subset \N^d_i(L_{d,d+2})$.

From its $K_{d+2}$ support,
$\E$ can be written in the form  
$\B \pi_{\bar{K}}$ where
$\B$ is a $D \times D$ non-singular and non-negative rational
matrix and $\pi_{\bar{K}}(L_{d,n})=L_{d,d+2}$.

We have   $\B(L_{d,d+2}) = \B\pi_{\bar{K}}(L_{d,n}) 
= \E(L_{d,n})
\subset \N^d_i(L_{d,d+2})$.
This makes $\A:=(\N^d_i)^{-1}\B$ a linear automorphism of 
$L_{d,d+2}$. Thus we are left with determining the 
linear automorphisms, $\A$ such that 
$\N^d_i\A=\B$ is non-negative.

For $d\ge 3$, from Theorem~\ref{thm:no-regges3},
it is apparent 
that this only occurs when $\A$ is a positive scale of a permutation
that is induced by a vertex relabeling.

For $d=2$, we need to explicitly
do a non-negativity check on our $\N^2_i\A$ over all 
automorphisms $\A$ of $L_{2,4}$,
as characterized in Theorem~\ref{thm:auto2}.
This only requires checking $23040$ matrices,
for both $\N^2_1$ and $\N^2_2$
and has been done in the Magma CAS. 
In both cases, non-negativity only arises for 
$\A$ that are positive scales of a vertex relabeling. (See supplemental script.)

Thus $\E$ is of the form
$\N^d_i\A\pi_{\bar{K}}$ for such an $\A$, and the result follows.
\end{proof}

\section{Global Rigidity}
\label{sec:ugr}

We can now use the results of the previous section
to prove our main rigidity result.

\begin{lemma}\label{lem:partial-trilat2}
Let $d\ge 2$ and let $\p$ and $\q$ be configurations
so that $\p$ is generic and $l(\q)$ is generic in 
$L_{d,n'}$ (for some $n'$).  
Let 
$\pmb{\alpha}$ and $\pmb{\beta}$ be two ensembles such that
$\la \pmb{\alpha}, \p\ra = \la \pmb{\beta}, \q\ra$.

Suppose
that we have two 
``already visited'' subconfigurations
$\p_V$ and $\q_{V'}$ with 
$\p_V =\q_{V'}$.

Suppose  we can 
find $\pmb{\delta}$, a subset of  
$d+1$ functionals  in 
$\pmb{\alpha}$ that trilaterate (in either the path or loop
setting)
some unvisited
vertex $\p_i\in \p_{\bar{V}}$ over 
some visited ${d+1}$ point
subconfiguration $\p_R$ of $\p_V$.

Then we can find an unvisited
$\q_{i'}\in \q_{\bar{V'}}$ such that 
the two subconfigurations $\p_{V\cup\{i\}}$
and $\q_{V'\cup\{i'\}}$ are equal.

\end{lemma}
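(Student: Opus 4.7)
The plan is to mimic the proof of Lemma~\ref{lem:partial-trilat}, substituting Theorem~\ref{thm:consist} (in the path case) or Theorem~\ref{thm:verLoop} (in the loop case) for Proposition~\ref{prop:ind}, and then reusing Lemmas~\ref{lem:mds}, \ref{lem:noSim}, and \ref{lem:extend} verbatim at the end. I will describe the loop case; the path case is the same argument with $\N^d_i$ replaced by the identity.

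First I would set $\p_T := \p_R \cup \{\p_i\}$ and assemble a $D$-functional sequence $\pmb{\gamma}$ that realizes the canonical trilateration measurements of $\p_T$. Namely, let the first $C$ rows of $\pmb{\gamma}$ be single-edge functionals indexing the edges within $\p_R$, in the row order dictated by the top $C$ rows of $\N^d_i$, and let the last $d+1$ rows be $\pmb{\delta}$. Then $\pmb{\gamma}$ is non-negative, whole-valued, has linearly independent rows (since $\N^d_i$ is non-singular), and by direct calculation $\w := \la \pmb{\gamma}, \p \ra = \N^d_i(l(\p_T))$ lies in $\N^d_i(L_{d,d+2})$. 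Genericity of $\p$ together with Lemma~\ref{lem:depC} guarantees that $\w$ has rational rank $D$.

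To apply Theorem~\ref{thm:verLoop} to $\q$ I must produce a matching sequence $\pmb{\gamma}'$ of $D$ non-negative rational functionals with $\la \pmb{\gamma}', \q \ra = \w$. Under the identification $\p_V = \q_{V'}$, let $\q_{R'}$ denote the copy of $\p_R$ in $\q$. Take the first $C$ rows of $\pmb{\gamma}'$ to be single-edge functionals on the corresponding edges of $\q_{R'}$, and take the last $d+1$ rows to be the subsequence $\pmb{\delta}' \subset \pmb{\beta}$ occupying the same indices that $\pmb{\delta}$ occupies in $\pmb{\alpha}$. The equality $\p_R = \q_{R'}$ handles the first $C$ entries, while the hypothesis $\la \pmb{\alpha}, \p \ra = \la \pmb{\beta}, \q \ra$ handles the last $d+1$, so $\la \pmb{\gamma}', \q \ra = \w$ as required. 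Theorem~\ref{thm:verLoop} then yields a $d+2$ point subconfiguration $\q_{T'}$ of $\q$ and a positive integer $s'$ with $\w = s' \cdot \N^d_i(l(\q_{T'}))$. The first $C$ rows of $\pmb{\gamma}'$ are single edges of $\q_{R'}$, so the $K_{d+2}$ supporting $\pmb{\gamma}'$ must contain all those edges; hence $\q_{R'}$ occupies the ``initial'' $K_{d+1}$ of $\q_{T'}$, and comparing the single-edge entries forces $s' = 1$.

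Writing $\q_{T'} = \q_{R'} \cup \{\q_{i'}\}$, Lemma~\ref{lem:mds} makes $\p_T$ and $\q_{T'}$ congruent, and Lemma~\ref{lem:extend} (which requires $d \ge 2$) promotes this to $\p_T = \q_{T'}$ using the shared $d+1$ points $\p_R = \q_{R'}$. Genericity of $\p$ combined with Lemma~\ref{lem:noSim}, applied exactly as in Lemma~\ref{lem:partial-trilat}, rules out $\q_{i'}$ being already visited, giving $\p_{V \cup \{i\}} = \q_{V' \cup \{i'\}}$. The main obstacle is the central bookkeeping step: correctly arranging $\pmb{\gamma}'$ so that Theorem~\ref{thm:verLoop} applies, and then extracting from its $K_{d+2}$-support conclusion the fact that $\q_{R'}$ is literally the initial $K_{d+1}$ of $\q_{T'}$ (so that $s' = 1$) rather than some spurious similar subconfiguration elsewhere in $\q$.
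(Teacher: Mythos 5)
Your proposal is correct and follows essentially the same route as the paper's proof: form $\w=\N^d_2(l(\p_T))$ (resp.\ $\w=l(\p_T)$ in the path case), get rational rank $D$ from genericity of $\p$ (the relevant citation is Lemma~\ref{lem:Cdep} rather than Lemma~\ref{lem:depC}), transfer the $D$ values to $\q$ using $\p_V=\q_{V'}$ and $\la \pmb{\alpha},\p\ra=\la \pmb{\beta},\q\ra$, apply Theorem~\ref{thm:consist}/Theorem~\ref{thm:verLoop}, and finish with Lemmas~\ref{lem:mds}, \ref{lem:noSim} and~\ref{lem:extend}. The only cosmetic difference is that you dispose of the scale immediately by matching the single-edge rows (a step that, to be fully rigorous, itself leans on Lemma~\ref{lem:mds}/\ref{lem:noSim} or the factored form of $\E$ from the proof of Theorem~\ref{thm:verLoop}), whereas the paper carries an integer similarity scale $t\ge 1$ and lets Lemma~\ref{lem:extend} eliminate it at the end.
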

\begin{proof}
Let $\p_T$ be a subconfiguration consisting of, in some order, all
the points of $\p_R$ along with $\p_i$. 
Let $\w := \N(l(\p_T))$,
with $\N=\I$ in the path setting and  $\N=\N^d_2$ in the loop 
setting.
We have
$\w \in L_{d,d+2}$. 
Lemma~\ref{lem:Cdep} 
guarantees that $\w$  
has  rational rank $D$.

Using the existence of $\pmb{\delta}$,
the fact that
$\la \pmb{\alpha}, \p\ra = \la \pmb{\beta}, \q\ra$,
together with $\p_{V}=\q_{V'}$,
we can find a measurement ensemble $\pmb{\gamma}$ under which we can apply
Theorem~\ref{thm:consist} in the path setting, or 
Theorem~\ref{thm:verLoop} for the loop setting
to $\q$ using this same 
$\w$. 
This guarantees a 
${d+2}$ point subconfiguration 
$\q_{T'}$ of $\q$ such that
$\w = t \cdot \N(l(\q_{T'}))$,
with $\N=\I$ in the path setting and  $\N=\N^d_2$ in the loop 
setting.
Here $t \ge 1$ is an integer scale factor (see Remark~\ref{rem:scale}).
From Lemma~\ref{lem:mds},
we  conclude that
$\p_T$ and $\q_{T'}$ 
are related by a 
similarity.  

By construction $\p_T$ contains the subconfiguration
$\p_R$, which is also a subconfiguration of $\q_{V'}$.
From genericity of $\q$  and Lemma~\ref{lem:noSim}, 
$\p_R$ is similar to no other
subconfiguration of $\q$. Thus $\p_R$ must be a subconfiguration
of $\q_{T'}$.
Similarly, from genericity of $\p$ and Lemma~\ref{lem:noSim}, 
the remaining vertex $\q_i'$ of $\q_{T'}$ not included in $\p_R$
must be unvisited, 
ie. in $\q_{\bar{V'}}$.

Then from Lemma~\ref{lem:extend}, we must have 
$\p_T=\q_{T'}$ and thus 
$\p_{V\cup\{i\}}=\q_{V'\cup\{i'\}}$.

\end{proof}

Applying the above iteratively yields the following:

\begin{lemma}
\label{lem:punchline1}
Let the dimension $d \ge 2$. 
Let $\p$ be a generic configuration of $n\ge d+2$ points
(or simply such that $l(\p)$ is generic in $L_{d,n}$).
Let $\v= \la \pmb{\alpha}, \p \ra$, where 
$\pmb{\alpha}$ is a path (resp. loop) measurement ensemble
that allows for trilateration. 

Suppose that there is a configuration $\q$ of $n'$ 
points that is generic
(or simply such that $l(\q)$ is generic in $L_{d,n'}$), along with a path 
(resp. loop) measurement ensemble $\pmb{\beta}$ 
such that $\v=\la \pmb{\beta},\q \ra$.

Let $\q_{V'}$ 
be the subconfiguration of $\q$ indexed by the vertices within the 
support of $\pmb{\beta}$.
Then there is a vertex relabeling of $\q_{V'}$ such that,
up to congruence,
$\q_{V'}=1/s\cdot \p$,
with $s$ an integer $\ge 1$.
Moreover, under this vertex relabeling,
$\pmb{\beta} = s\cdot \pmb{\alpha}$.

If we also assume that 
$\pmb{\beta}$ allows for trilateration, then 
there is a vertex relabeling  of $\q$ such that,
up to congruence,
$\q=\p$.
Moreover, under this vertex relabeling,
$\pmb{\beta} = \pmb{\alpha}$.
\end{lemma}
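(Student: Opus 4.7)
The plan is to follow the structure of the edge-measurement analogue Lemma~\ref{lem:triBK}, replacing Proposition~\ref{prop:ind} by Theorem~\ref{thm:consist} (path setting) or Theorem~\ref{thm:verLoop} (loop setting), and Lemma~\ref{lem:partial-trilat} by Lemma~\ref{lem:partial-trilat2}. The essential new ingredient is bookkeeping for the integer scale factor $s$ that can appear when these theorems identify a $K_{d+2}$: in the base step $s$ can be any positive integer, whereas the subsequent inductive steps are forced to leave it unchanged because they must agree with the previously matched configuration on $d+1$ already-visited points.

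For the base step, I would take the base $K_{d+2}$ on $\{1,\dots,d+2\}$ guaranteed (after relabeling $\p$) by the trilateration hypothesis on $\pmb{\alpha}$, and let $\pmb{\delta}$ be the corresponding $D$-tuple subensemble: the $D$ edges in the path setting, or the canonical pings and triangles in the loop setting. Setting $\p_T := (\p_1,\dots,\p_{d+2})$ and $\w := \langle \pmb{\delta},\p\rangle = \N(l(\p_T))$ with $\N=\I$ or $\N = \N^d_1$, genericity of $\p$ together with Lemma~\ref{lem:Cdep} gives that $\w$ has rational rank $D$ and lies in $\N(L_{d,d+2})$. The data equality picks out a matching sub-sequence $\pmb{\gamma}$ of $\pmb{\beta}$ with $\w = \langle \pmb{\gamma},\q\rangle$, so Theorem~\ref{thm:consist} (path) or Theorem~\ref{thm:verLoop} (loop) produces a $(d+2)$-point subconfiguration $\q_{T'}$ of $\q$ and an integer $s\ge 1$ with $\w = s\cdot \N(l(\q_{T'}))$; Lemma~\ref{lem:mds} then forces $\p_T$ to be congruent to $s\cdot \q_{T'}$.

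To absorb the scale, I would let $\sigma$ be the congruence taking $\p_T$ to $s\cdot \q_{T'}$, set $\p' := \tfrac{1}{s}\sigma(\p)$ and $\pmb{\alpha}' := s\cdot \pmb{\alpha}$, and observe $\langle \pmb{\alpha}',\p'\rangle = \langle \pmb{\alpha},\p\rangle$ with $\p'_T = \q_{T'}$. The rescaled $\p'$ retains the needed properties: Lemma~\ref{lem:noSim} is similarity-invariant, and $l(\p') = (1/s)\, l(\p)$ remains generic in $L_{d,n}$ because scaling is a $\QQ$-linear automorphism of the cone. With $V = V' = \{1,\dots,d+2\}$ I would then iterate the trilateration of $\pmb{\alpha}'$: for each subsequent vertex $j$, the trilateration subensemble $\pmb{\delta}_j$ provides the $d+1$ functionals required by Lemma~\ref{lem:partial-trilat2}, whose $d+1$ shared-point hypothesis forces the similarity ratio at that step to be $1$ (via Lemma~\ref{lem:extend}), yielding an unvisited $\q_{i'}$ with $\p'_{V\cup\{j\}} = \q_{V'\cup\{i'\}}$. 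After $n-(d+2)$ iterations all $n$ vertices of $\p'$ are matched, so up to congruence $\q_{V'} = \p' = \tfrac{1}{s}\cdot \p$; matching coefficients of paired functionals on the identified vertex set then yields $\pmb{\beta} = s\cdot \pmb{\alpha}$, with the identified set coinciding with the support of $\pmb{\beta}$ because any extra support would contradict the data equality using genericity of the matched edge lengths. The second assertion then follows by running the same argument with $(\p,\pmb{\alpha})$ and $(\q,\pmb{\beta})$ swapped, giving $\pmb{\alpha} = t\cdot \pmb{\beta}$ for some integer $t\ge 1$, which combined with $\pmb{\beta} = s\cdot \pmb{\alpha}$ forces $st = 1$ and hence $s = t = 1$.

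The main obstacle is the bookkeeping for the scale $s$: unlike the edge-measurement case, where Theorem~\ref{thm:bkMain} immediately gives $s=1$, Theorem~\ref{thm:consist} only recovers the initial $K_{d+2}$ up to a rational scale, so a careful rescale-then-induct pattern is needed in order that $s$ appears exactly once (in the base step) and is cleanly absorbed into $\p' := (1/s)\sigma(\p)$ before the inductive identification proceeds. A secondary subtlety is verifying that the genericity of $l(\p)$ in $L_{d,n}$ survives the rescaling and congruence so that Lemma~\ref{lem:partial-trilat2} can be invoked at each inductive step; this follows from genericity being preserved under $\QQ$-linear automorphisms of the cone $L_{d,n}$.
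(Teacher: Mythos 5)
Your proposal reproduces the paper's own argument almost step for step: base case via Theorem~\ref{thm:consist} (resp.\ Theorem~\ref{thm:verLoop}) with rational rank supplied by Lemma~\ref{lem:Cdep} and congruence via Lemma~\ref{lem:mds}, induction via Lemma~\ref{lem:partial-trilat2}, identification of the functionals from genericity, and the role swap for the second assertion. The only substantive deviation is \emph{which side} you rescale to absorb $s$, and as written that choice creates a hypothesis mismatch in the inductive step. Setting $\p':=\tfrac1s\sigma(\p)$ and $\pmb{\alpha}':=s\cdot\pmb{\alpha}$ and then invoking Lemma~\ref{lem:partial-trilat2} for the pair $(\p',\pmb{\alpha}')$ does not quite work when $s>1$: the $d+1$ relevant functionals inside $\pmb{\alpha}'$ are the $s$-scaled paths (resp.\ loops), which are not a trilateration sequence in the sense of Definition~\ref{def:ensemble}, and the proof of Lemma~\ref{lem:partial-trilat2} needs the data values contributed by those functionals to equal the corresponding entries of $\w=\N(l(\p'_T))$ (with $\N=\I$ or $\N^d_2$), whereas the values attached to $s\cdot\pmb{\delta}_j$ in your data equality are $s$ times those entries. (Using the unscaled $\pmb{\delta}_j$ instead does not help, since they are not members of the ensemble $\pmb{\alpha}'$ appearing in the data equality you feed to the lemma.)

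The paper avoids this by factoring the scale out of the other side --- working with $s\cdot\q$ (after the congruence) and $\tfrac1s\pmb{\beta}$ --- so that $\pmb{\alpha}$ is untouched and its genuine trilaterating subensembles can be fed to Lemma~\ref{lem:partial-trilat2} verbatim; its proof only requires the second ensemble to yield non-negative rational functionals and $l$ of the second configuration to stay generic, both of which survive that rescaling. Your route is repaired by exactly the same bookkeeping (equivalently, by constructing the $\pmb{\gamma}$ inside the proof of Lemma~\ref{lem:partial-trilat2} from the matched $\pmb{\beta}$-functionals divided by $s$), so this is a fixable slip rather than a wrong approach; the rest, including the observation that genericity of $l(\p)$ survives the rational rescaling and the congruence, and the $st=1$ argument giving the second assertion, matches the paper's proof.
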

\begin{proof}

For the base case, 
the trilateration assumed in $\pmb{\alpha}$
and Lemma~\ref{lem:Cdep} 
guarantees a $K_{d+2}$ contained in  $\pmb{\alpha}$
over a $d+2$ point  subconfiguration  $\p_T$ of  $\p$.
Define 
$\w :=   \N(l(\p_T))$.
with $\N=\I$ in the path setting and  $\N=\N^d_1$ in the loop 
setting. The $w_i$ have rational rank $D$ from 
Lemma~\ref{lem:Cdep}.
We have 
$\w \in L_{d,d+2}$.

Using the fact that
$\la \pmb{\alpha}, \p\ra = \la \pmb{\beta}, \q\ra$ 
we can apply 
Theorem~\ref{thm:consist} in the path setting, or 
Theorem~\ref{thm:verLoop} for the loop setting
to this $\w$ and $\q$ 
and appropriate subensemble $\pmb{\gamma}$ of $\pmb{\beta}$.
We conclude 
that
there is a ${d+2}$ point 
subconfiguration  $\q_{T'}$ of $\q$ such that
$\w = s \cdot \N(l(\q_{T'}))$,
with $\N=\I$ in the path setting and  $\N=\N^d_1$ in the loop 
setting.
Here $s \ge 1$ is an integer scale factor (see Remark~\ref{rem:scale}).

Also,  
from Lemma~\ref{lem:mds},
up to a similarity, we have
$\p_T = \q_{T'}$.

Then, going forward inductively, assume that we have a 
two ``visited''  
subconfigurations 
$\p_{V}$ and
$s \cdot \q_{V'}$, 
are related by a 
global congruence. 

Continuing with 
the trilateration process allowed by  
$\pmb{\alpha}$, we can iteratively apply (with the 
scale $s$ and congruence  
factored out of $\q$ and the scale $1/s$ factored
out of $\pmb{\beta}$)
Lemma~\ref{lem:partial-trilat2} until we have visited all of  
$\p$. 
At this point we will have, that  
up to a similarity,  
$\q_{V'}=\p$.

Since $\p$ is generic,
From Theorem~\ref{thm:functional}, no two distinct functionals can give
the same measurement. The same is true for $\q$. This gives us, after
vertex relabeling and scale equality between all of $\pmb{\alpha}$
and $\pmb{\beta}$.

For the last statement, if we also assume that
$\pmb{\beta}$ allows for trilateration, then we can
reverse the roles of $\p$ and $\q$ in the above argument, 
to get say, $t\cdot \p_{V}=\q$, with $t\ge 1$.
This allows us to remove both the
scale and the possibility of unmeasured vertices in $\q$.
\end{proof}

Next we want to remove the genericity assumption on $\q$,
which we will do by adding the assumption that $n'=n$.

\begin{definition}
A path or loop measurement ensemble acting on $d+1$ or more points
is \defn{infinitesimally rigid} in $d$ dimensions
if, starting at some (equiv. any) generic 
(real or complex) configuration $\p$,
there are no differential motions of $\p$ that preserve 
all of the measurement values, except for differential congruences.
\end{definition}

\begin{lemma}
\label{lem:punchline2}
In dimension $d\ge 2$, let  $\p$ and $\q$ be two 
configurations with the same number of points $n$.
Suppose that 
$\pmb{\alpha}$ and
$\pmb{\beta}$
are two path or loop
measurement ensembles
with $\pmb{\alpha}$
infinitesimally rigid in $d$ dimensions.
And suppose that  
$\v:=\la \pmb{\alpha},\p \ra =
\la \pmb{\beta},\q \ra$.

If $\p$ is a generic configuration, 
then $l(\q)$ is generic in $L_{d,n}$.
\end{lemma}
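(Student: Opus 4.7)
The plan closely mirrors the argument used for Lemma~\ref{lem:triBK2} in the edge/squared setting, with $L_{d,n}$ and length functionals replacing $M_{d,n}$ and squared edge lengths; it relies crucially on the irreducibility of $L_{d,n}$ that holds only for $d \ge 2$ (Theorem~\ref{thm:Lvariety}).

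Let $\E$ and $\F$ denote the ensemble matrices of $\pmb{\alpha}$ and $\pmb{\beta}$ respectively. First I would define $L_{d,\pmb{\alpha}}$ (resp.\ $L_{d,\pmb{\beta}}$) to be the Zariski closure of $\E(L_{d,n})$ (resp.\ $\F(L_{d,n})$). Since $L_{d,n}$ is irreducible and defined over $\QQ$, and both $\E$ and $\F$ are rational linear maps, both closures are irreducible subvarieties of $\CC^k$ defined over $\QQ$. By hypothesis, the common data $\v = \E(l(\p)) = \F(l(\q))$ lies in the intersection $L_{d,\pmb{\alpha}} \cap L_{d,\pmb{\beta}}$.

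Next I would compute dimensions. The composition $\p \mapsto \E(l(\p))$ has differential whose image equals $\E$ applied to $T_{l(\p)}L_{d,n}$, because $l(\cdot)$ is a submersion at a generic $\p$ (the full $K_n$ is infinitesimally rigid in the unsquared sense, with generic Jacobian rank $dn-C$). Infinitesimal rigidity of $\pmb{\alpha}$ then says this composed differential has rank $dn-C$ at a generic $\p$, and the exact argument of Proposition~\ref{prop:infrig} (constant rank theorem plus algebraic Sard, which is insensitive to whether the maps involve squaring) transfers to give $\dim L_{d,\pmb{\alpha}} = dn-C$. The same reasoning, without the infinitesimal rigidity hypothesis on $\pmb{\beta}$, yields $\dim L_{d,\pmb{\beta}} \le dn-C$.

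With both varieties in hand, suppose for contradiction that $L_{d,\pmb{\alpha}} \ne L_{d,\pmb{\beta}}$. Then $L_{d,\pmb{\alpha}} \cap L_{d,\pmb{\beta}}$ is a proper subvariety of the irreducible $L_{d,\pmb{\alpha}}$, defined over $\QQ$, and hence of strictly smaller dimension; it cannot contain any generic point of $L_{d,\pmb{\alpha}}$. But since $\p$ is generic, $l(\p)$ is generic in $L_{d,n}$ (Theorem~\ref{thm:Lvariety}), and Lemma~\ref{lem:genPush} gives that $\v = \E(l(\p))$ is generic in $L_{d,\pmb{\alpha}}$, a contradiction. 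Hence $L_{d,\pmb{\alpha}} = L_{d,\pmb{\beta}}$, so $\v$ is also generic in $L_{d,\pmb{\beta}}$, which now has dimension $dn-C$ too. Since $L_{d,\pmb{\beta}}$ is the Zariski closure of the image of $L_{d,n}$ under the linear map $\F$ with matching dimension, Lemma~\ref{lem:genPull} applied to the preimage $l(\q)$ of $\v$ under $\F$ yields the desired genericity of $l(\q)$ in $L_{d,n}$.

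The main obstacle I anticipate is the dimension statement $\dim L_{d,\pmb{\alpha}} = dn-C$: one must carefully transfer the unsquared infinitesimal rigidity (phrased in terms of preserving measurement values at a generic $\p$) into a rank statement about the map $\E$ restricted to the tangent space of $L_{d,n}$, and then invoke the constant rank and Sard arguments of Proposition~\ref{prop:infrig}. Once that is in place, everything else is a direct translation of the edge/squared proof, and the failure of the argument in dimension $d=1$ is cleanly explained by the reducibility of $L_{1,n}$.
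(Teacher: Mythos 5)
Your proposal is correct and is essentially the paper's own argument: the paper proves this lemma simply by saying it "follows exactly like that of Lemma~\ref{lem:triBK2}, using $L_{d,n}$ instead of $M_{d,n}$ and our linear measurement processes instead of $\pi_{\bar{G}}$ and $\pi_{\bar{H}}$," and your write-up is precisely that translation (irreducible images defined over $\QQ$, the dimension count $dn-C$ via infinitesimal rigidity and the Proposition~\ref{prop:infrig}-style constant-rank/Sard argument, the intersection/genericity contradiction, then Lemmas~\ref{lem:genPush} and~\ref{lem:genPull}). The tangent-space transfer you flag as the main obstacle is handled exactly as you suggest, via the diagonal, bijective Jacobian of the squaring map at points with no vanishing coordinates (cf.\ Definition~\ref{def:tans}).
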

\begin{proof}
The proof follows exactly like that of Lemma~\ref{lem:triBK2}, using 
$L_{d,n}$ instead of $M_{d,n}$ and our linear measurement processes instead of
$\pi_{\bar{G}}$ and $\pi_{\bar{H}}$.
\end{proof}

And finally we can conclude our ultimate proof.

\begin{proof}[Proof of Theorem~\ref{thm:punchline}]
The theorem follows directly using Lemmas~\ref{lem:punchline2} 
and~\ref{lem:punchline1} (and Remark~\ref{rem:gtoz}).
\end{proof}

This reasoning also leads directly to our
procedural approach for reconstruction described next.
In the reconstruction setting, we do not know the trilateration
ordering of our data $\v$. Instead, we must search for it.

\section{Reconstruction Algorithm}
\label{sec:recon}
Up until this point, we have been exploring what configurations $\q$
other than the underlying configuration $\p$, satisfying potentially
different assumptions, can be used with some measurement ensemble
$\pmb{\beta}$ to produce a given set of measurements $\v$. We now take
a different direction: Given the measurement data $\v$, we 
essentially want to find $\p$ itself, and thus want to
solve for
a configuration that satisfies the same assumptions as $\p$
and, together with some measurement ensemble with the same properties
as $\pmb{\alpha}$, produces $\v$. In particular, we will be using the
assumption that the measurements $\v$ arise from an unknown
measurement ensemble that allows for trilateration, and we will
be looking to reconstruct a generic configuration of points.

For our path (resp.~loop) algorithm we have the following specification.
{\bf Input:} dimension, bounce bound, and data set $(d, b, \v)$.
{\bf Assumption:} Data set $\v$ arises from some generic configuration
$\p$ of $n$ points in $\RR^d$, for some $n$, under some
path (resp.~loop) ensemble that allows for trilateration.
{\bf Output:} Number of points and configuration $(n,\q)$, where $\q$ is related to $\p$
through a vertex relabeling and a Euclidean congruence.

Unlike the setting of the proof of Theorem~\ref{thm:punchline}, which only needed the existence of a trilateration sequence,
we actually have to search for and find an underlying trilateration.
Armed with Theorem~\ref{thm:consist}, our approach is to apply a 
brute force trilateration search as  done by~\cite{dux1} in the
edge setting (see~\cite{dux2} for more details).

The algorithm first exhaustively searches for all $D$-tuples of $\v$ that describe
$K_{d+2}$ subconfigurations of $\p$. Each of these is treated as a 
``candidate base'': We do not know at first which of these candidates will ultimately 
form the base of a complete trilateration sequence. Each candidate base is then grown into an expanding sequence of ``candidate sub-configurations" by finding new points that connect to an existing candidate sub-configuration through a trilaterating set of 
$d+1$ edges or loops. This step involves exhaustively 
searching over all $d+1$-tuples of points within an existing candidate sub-configuration, and over all $d+1$-tuples of values in $\v$; or all $d+1$-tuples of values in $\v$.
The algorithm stops when no candidate sub-configuration can be expanded any further. 

This approach can be applied in both the path and the loop setting.
In the loop setting, we will make use of the matrix $\N^d_1$
to recognize candidate bases, and the matrix  $\N^d_2$ to 
add on a new point to ${d+1}$ points already in some candidate base.

For fixed dimension $d$, this method has worst case time complexity that 
is polynomial in $(|\v|,b)$, though with a moderately large exponent:
The number of candidate bases is at worst polynomial in $|\v|$.
The number of times each candidate sub-configuration grows by one vertex through trilateration is at most $n$. And each exhaustive search step 
is also polynomial in $(|\v|,b)$. The role of $b$ relates to the issue of rational rank, which we explore in the next sub-section.



\subsection{Testing Rational Rank}
\label{sec:rr}

In order to apply Theorem~\ref{thm:consist} to some $D$-tuple of measurement values under consideration, we need to verify that 
it has rational rank $D$. (See Remark~\ref{rem:rat-rank}.)

\subsubsection{Rational rank for $d=2$}

Theorem~\ref{thm:3to6} 
below uses properties of the Fano varieties 
(see Definition~\ref{def:fano}) of
$L_{2,4}$ to 
prove that
for $d=2$,
if a point $\w$, arising as the measurement
values of a generic $\p$, 
is non-singular in $\LL$
and has rational rank of $3$, then it, in fact,
it must have a rational rank of $6$.
(The singular locus of $\LL$
is characterized in 
Section~\ref{sec:auto} and is easy to test for.)

With this in hand, 
we can
check  $\w$ for rational rank 6, simply by checking for rational rank $3$ along with non-singularity of $\w$ in $L_{2,4}$.
In particular we only have to 
check the rank on one arbitrary size-3 subset of $\w$.
If any three values of $\w$ have rational rank 3, then
Theorem~\ref{thm:3to6}, together with the established consistency
and non-singularity, imply that $\w$ 
has rational rank 6. If the three values do not have rational rank 3,
then $\w$ does not have rank 6 and cannot be used with Theorem~\ref{thm:consist}.

If the three values of $\w$ are not
rationally independent, that is, there is a nontrivial solution
$\sum_{i=1}^3 c^i w_i =0$ with rational
$c^i$, 
then 
Lemma~\ref{lem:Cdep} implies that 
this is a rational relation
on its three underlying functionals.
In the $b$-bounded
setting,  
Lemma~\ref{lem:depC} then implies that the
coefficients $c^i$ are bounded integers. As a result
of this bound, we only need to
examine $(2 b^2 + 1)^3$ possible relations on $\w$,
making this test $O(b^6)$.

The situation is even better during the inductive step of trilateration.
Suppose we are trilaterating a fourth point off of an already localized
triangle.
The functionals $\alpha_1,\alpha_2,\alpha_3$ corresponding
to the triangle edge-lengths $w_1, w_2, w_3$ (which form the first three rows of
$\N^2_2$)
are linearly
independent, thus,
from
Lemma~\ref{lem:Cdep},  $\w$ automatically
has  rational rank $3$.

\subsubsection{Rational Rank for $d\ge3$}

In three dimensions, in order to apply Theorem~\ref{thm:consist},
we need to check the $10$ values of $\w$
for rational rank $10$. 
Using Lemma~\ref{lem:depC}, 
this 
gives a complexity factor of 
 $(2 b^9 + 1)^{10}$ 
making this test $O(b^{90})$, which will never 
be tractable by brute force, even for small $b$.
We do not know if there is any way to  
simplify this step as we did for $d=2$
(see Remark~\ref{rem:fano3d}).

One solution for dealing with the 
complexity of testing rational rank for $d\ge 3$
is to add 
other assurances on the
measurement ensemble. 
For example,  in the loop setting,
suppose we assume that $\pmb{\alpha}$
consists only of pings and triangles, with each
passing through $\p_1$.
Although strong, this is not an 
unreasonable assumption for our 
signal processing scenario described in Section~\ref{sec:intro}.
Under this assumption, we know that if
$\w$ consists of $10$ distinct values, then it has rational rank $10$, and thus no explicit test is needed.
\begin{question}
In three dimensions, 
are there weaker assumptions on $\pmb{\alpha}$
that allow for an efficient rational rank test?
\end{question}

\subsection{Scale}

Theorem~\ref{thm:consist} only gives us a $K_{d+2}$
configuration up to an unknown scale. 
Fortunately, as we grow a candidate base, we will only be able to do so 
using the same shared scale, since the proof of
Theorem~\ref{thm:punchline} guarantees that each candidate base will
grow with a consistent scale.
Still, since our algorithm uses a number of 
candidate bases, it may
be the case that we reconstruct various subsets of $\p$ at various scales. 

In more detail, there may be a sub-ensemble of 
$\pmb{\alpha}$ 
that is actually of the form of a trilateration sequence,
but with each path (resp.~loop) measured $s$ times per measurement.
We treat
these as if they were unscaled during trilateration, which is effectively
equivalent to scaling the length of each underlying edge by $s$. This results in the
reconstruction of a candidate configuration $s\cdot\p$ that is a
scaled up version of the true configuration. 

The existence of at least one correctly scaled reconstruction is
guaranteed by the trilateration assumption on the underlying
$\pmb{\alpha}$.
Thus, at the end of the trilateration process,
we 
identify the true configuration as the reconstructed subset
with the smallest scale among all configurations with
the same maximal number of points. This configuration, $\q$, will be equal (up to vertex relabeling and congruence) to the true configuration $\p$.

\subsection{Accuracy Considerations}\label{sec:accuracy}

At various stages, the reconstruction algorithm involves 
calculations using the measurements $\v$. 
In actual computations, these calculations can only
be performed up to some finite numerical accuracy. Additionally, the
measurements themselves will only be known up to some finite
accuracy. The $b$-boundedness assumptions provides us with some weak
guarantees for correctness even in this situation of finite accuracy.

In particular, the consistency, non-singularity, and rank conditions of
Theorem~\ref{thm:consist} all involve
polynomial calculations to make binary decisions during the
execution of the reconstruction procedure. From Remark~\ref{rem:eps},
the $b$-boundedness assumption guarantees that, with enough accuracy
in our data and our calculations, we only need to check these
conditions approximately. However, we note that for all such checks, we generally do not have
any knowledge of the appropriate $\epsilon$, nor do we have a
guarantee of such accuracy in our input data. Therefore, the
reconstruction procedure described previously does not yield an
algorithm in the Turing machine sense.

A different situation
occurs when we perform numerical calculations to compute configuration
points from finite-accuracy length measurements, or vice-versa. The outputs of these calculations are re-used at later stages of the algorithm, so that errors
propagate to subsequent numerical calculations. This suggests that, in practical settings, it would be advantageous to devise global reconstruction procedures that
jointly examine all measurement six-tuples $\w$ and enforce consensus
among them, instead of examining them sequentially in the greedy manner describe above.


\newpage

\appendix

\section{Algebraic Geometry Preliminaries}\label{sec:geometry}

We summarize the needed definitions and facts about
complex algebraic varieties. For more see~\cite{harris}.

In this section $N$ and $D$ will represent arbitrary numbers.

\begin{definition}
A (complex embedded affine) \defn{variety} 
(or \defn{algebraic set}), $V$,
is a (not necessarily strict)
subset of $\CC^N$, for some $N$,
that is defined by the simultaneous
vanishing of a finite set of polynomial equations 
with coefficients in $\CC$
in the 
variables $x_1, x_2, \ldots, x_N$ which are associated with the 
coordinate axes of $\CC^N$.
We say that $V$ is \defn{defined over} $\QQ$ if it can 
be defined by polynomials with coefficients in $\QQ$.

A variety can be stratified as a union of a finite number of
complex manifolds.

A finite union of varieties is a variety.
An arbitrary intersection of varieties is a variety.

The set of polynomials that vanish on $V$ form 
a radical ideal $I(V)$, which is generated by a finite set
of polynomials.

A variety $V$ is \defn{reducible} if it is the proper union of two
varieties $V_1$ and $V_2$. 
(Proper means that $V_1$ is not contained in $V_2$ and vice versa.)
Otherwise it is called
\defn{irreducible}.
A variety has a unique decomposition as a finite proper
union of its
maximal irreducible subvarieties called \defn{components}.
(Maximal means that a component cannot be 
 contained in a larger  irreducible 
subvariety of $V$.)

A variety $V$ has a well defined (maximal) \defn{dimension} $\Dim(V)$, 
which will agree with the largest $D$ for which there
is an open subset of~$V$, in the standard  topology, 
that is a $D$-dimensional complex submanifold of $\CC^N$.

The \defn{local dimension} $\Dim_\genericpoint(V)$ at a point $\genericpoint$ is the
dimension of the highest-dimensional irreducible component of $V$ that contains $\genericpoint$.
If all components of $V$ have the same dimension, we say it has
\defn{pure dimension}.

Any (strict) subvariety $W$ of an 
irreducible variety $V$ must be of strictly lower dimension.

A \defn{constructible set} $S$ is a set that can be defined using a finite
number of varieties and a finite number of Boolean set operations.
Its \defn{Zariski closure} is the smallest variety containing it.
We define the dimension of a constructible set as that of its
Zariski closure.

The image of a variety $V$ of dimension $D$
under a polynomial map is a constructible set $S$ of dimension
at most $D$.
If $V$ is irreducible, then so 
too is the Zariski closure of $S$.
If $V$ is defined over $\QQ$, then so too is 
$S$~\cite[Theorem 1.22]{basu}.

We call $\A$ a \defn{linear automorphism} of $V$ if it is a bijective linear map on $\CC^N$
such that $\A(V)=V$.

\end{definition}

\begin{theorem}
\label{thm:Zdense}
Any variety $V$ is a closed subset of $\CC^N$ in the standard 
topology.
This means that 
the Zariski topology is coarser than the
standard topology.
Thus, if a subset $S$ of $\CC^N$
is standard-topology dense in a variety $V$, then
$V$ is the Zariski closure of $S$.
\end{theorem}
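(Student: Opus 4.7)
The plan is to establish the three assertions in order, since each feeds directly into the next. First I would show that $V$ is standard-topology closed. By definition, $V$ is the common zero locus of finitely many polynomials $p_1,\dots,p_k : \CC^N \to \CC$. Each $p_i$ is continuous with respect to the standard topology on $\CC^N$, so each preimage $p_i^{-1}(\{0\})$ is standard-closed, and $V=\bigcap_{i=1}^k p_i^{-1}(\{0\})$ is a finite intersection of standard-closed sets, hence standard-closed.

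Next, the comparison of topologies is essentially a restatement. The Zariski-closed subsets of $\CC^N$ are exactly the varieties, and the first step shows that every variety is standard-closed. Thus the collection of Zariski-closed sets is a subcollection of the standard-closed sets, which by definition means the Zariski topology is coarser than the standard topology.

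Finally, for the density statement, let $S\subseteq \CC^N$ be standard-topology dense in a variety $V$, and let $W$ denote the Zariski closure of $S$. One inclusion is immediate: $V$ is itself a variety containing $S$, and $W$ is by definition the smallest variety containing $S$, so $W\subseteq V$. For the reverse inclusion, $W$ is a variety and therefore standard-closed by the first step. Since $W\supseteq S$ and $W$ is standard-closed, it contains the standard closure $\overline{S}^{\,\mathrm{std}}$ of $S$. But by the density hypothesis $\overline{S}^{\,\mathrm{std}}\supseteq V$, so $V\subseteq W$. Combining inclusions gives $V=W$, as desired.

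There is no real obstacle here; the entire argument is a direct unwinding of definitions once the continuity of polynomials is invoked. The only thing worth being careful about is the order: the first bullet (varieties are standard-closed) must be established before the density corollary, because the latter relies on $W$ itself being standard-closed so that it absorbs the standard closure of $S$.
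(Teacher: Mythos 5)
Your argument is correct: continuity of polynomials gives that varieties are standard-closed, and the density claim then follows exactly as you unwind it (the only slightly delicate point, that $V\subseteq\overline{S}^{\,\mathrm{std}}\subseteq W$, is handled properly). The paper itself offers no proof, only a citation to a textbook, and your proof is the standard elementary argument that reference would give, so nothing differs in substance.
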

See~\cite[Page 8]{invite}.

We will need the following easy lemmas.

\begin{lemma}
\label{lem:bij}
If $\A$ is a bijective linear map 
on $\CC^N$. Then the image under $\A$
of a variety $V$ is a variety
of the same dimension.
If $V$ is irreducible, then so too is this image.
\end{lemma}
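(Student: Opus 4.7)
The plan is to exploit the fact that $\A$ being a bijective linear map on $\CC^N$ means $\A^{-1}$ exists and is itself a linear (hence polynomial) map, so $\A$ is a biregular automorphism of the ambient affine space. This immediately gives all three claims: that $\A(V)$ is a variety, that it has the same dimension, and that irreducibility is preserved.

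First I would show $\A(V)$ is a variety. If $V$ is cut out in $\CC^N$ by a finite set of polynomials $f_1, \dots, f_k$, then a point $y \in \CC^N$ lies in $\A(V)$ if and only if $\A^{-1}(y) \in V$, i.e., if and only if $f_i(\A^{-1}(y)) = 0$ for every $i$. Each composition $f_i \circ \A^{-1}$ is a polynomial in the coordinates of $y$ because $\A^{-1}$ has linear (in particular polynomial) coordinate functions. Hence $\A(V)$ is the common zero locus of $\{f_i \circ \A^{-1}\}$, so it is a variety.

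Next, for dimension preservation, I would observe that $\A$ restricts to a biregular isomorphism between $V$ and $\A(V)$, with inverse given by the restriction of $\A^{-1}$. Biregular (in fact, even bicontinuous in the standard topology) maps preserve dimension, so $\Dim \A(V) = \Dim V$. Alternatively, one can argue directly: $\A$ is a homeomorphism of $\CC^N$ in the standard topology that sends the smooth open dense $D$-dimensional complex submanifold witnessing $\Dim V = D$ to a smooth $D$-dimensional complex submanifold that is open and dense in $\A(V)$.

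Finally, for irreducibility, suppose $V$ is irreducible and $\A(V) = W_1 \cup W_2$ with each $W_i$ a proper subvariety of $\A(V)$. Applying the first part to $\A^{-1}$, each $\A^{-1}(W_i)$ is a variety, and since $\A$ is a bijection, $V = \A^{-1}(W_1) \cup \A^{-1}(W_2)$ with each $\A^{-1}(W_i)$ a proper subvariety of $V$, contradicting irreducibility. There is no real obstacle here; the whole statement is essentially a consequence of $\A$ being a biregular automorphism of $\CC^N$ that restricts to such an automorphism between $V$ and $\A(V)$.
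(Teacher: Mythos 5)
Your proposal is correct and follows essentially the same route as the paper: since $\A^{-1}$ is linear, the image $\A(V)$ is cut out by the pullbacks $f_i\circ\A^{-1}$ of the defining equations, and invertibility forces equality of dimensions. The only cosmetic difference is that you prove irreducibility directly by pulling back a hypothetical proper decomposition through $\A^{-1}$, whereas the paper invokes the standard fact that polynomial images of irreducible varieties have irreducible Zariski closure; both are immediate and equivalent here.
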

\begin{proof}
The image $S := \A(V)$ 
must be a constructible set.

Since $\A$ is bijective, then
there is also map $\A^{-1}$ acting on $\CC^N$,
and $S$ must be the inverse image
of $V$ under this map.
Thus, by pulling back the defining equations of $V$ through
this map, we see that $S$ must also be a variety.

The dimension  follows 
from the fact that maps cannot raise dimension, and our map
is invertible.
\end{proof}

\begin{theorem}
\label{thm:comps}
If $\A$ is a bijective linear map on $\CC^N$
that acts as 
bijection between two reducible varieties
$V$ and $W$, then it must bijectively map components
of $V$ to components of $W$.
\end{theorem}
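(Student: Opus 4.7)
The plan is to use the characterization of components as maximal irreducible subvarieties together with Lemma~\ref{lem:bij}, applied symmetrically to $\A$ and $\A^{-1}$. Let $V = \bigcup_i V_i$ be the decomposition of $V$ into irreducible components. First I would observe that each $\A(V_i)$ is an irreducible subvariety of $W$: by Lemma~\ref{lem:bij} applied to $\A$, the image of the irreducible variety $V_i$ is an irreducible variety (of the same dimension), and since $V_i \subset V$ and $\A(V) = W$, we have $\A(V_i) \subset W$.

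The main step is then to show that each $\A(V_i)$ is actually maximal among irreducible subvarieties of $W$, so that it is a component. Suppose for contradiction that $\A(V_i) \subsetneq W'$ for some irreducible subvariety $W' \subset W$. Applying Lemma~\ref{lem:bij} to the bijective linear map $\A^{-1}$, the preimage $\A^{-1}(W')$ is an irreducible subvariety of $\CC^N$; since $W' \subset W = \A(V)$, we have $\A^{-1}(W') \subset V$. Moreover $V_i = \A^{-1}(\A(V_i)) \subsetneq \A^{-1}(W')$, contradicting the fact that $V_i$ is a maximal irreducible subvariety of $V$. So $\A(V_i)$ is a component of $W$.

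Running the same argument with the roles of $V$ and $W$ swapped and $\A$ replaced by $\A^{-1}$ shows that $\A^{-1}$ sends each component $W_j$ of $W$ to a component of $V$. Since $\A$ and $\A^{-1}$ are mutual inverses as set maps, the induced maps on the (finite) sets of components are mutually inverse, hence bijective.

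The only potential obstacle is making sure that ``component'' here is consistently handled via the ``maximal irreducible subvariety'' characterization given in the preliminaries; once that is used, the argument is essentially bookkeeping. No genericity, dimension count, or deeper algebraic-geometric machinery is needed beyond Lemma~\ref{lem:bij} and the uniqueness of the irreducible decomposition.
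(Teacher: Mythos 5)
Your proof is correct and follows essentially the same route as the paper: the paper's (very terse) proof also invokes Lemma~\ref{lem:bij} for preservation of irreducibility and then notes that a bijection preserves subset relations, which define maximality of components. Your write-up just makes the maximality step explicit by applying Lemma~\ref{lem:bij} to $\A^{-1}$ and arguing by contradiction, which is a faithful expansion of the same idea.
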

\begin{proof}
From Lemma~\ref{lem:bij},
$\A$ must
map irreducible varieties to irreducible varieties.
As a bijection, it also must preserve subset relations
(which define maximality).
\end{proof}

\begin{lemma}
\label{lem:comp}
Let $V = V_1 \cup V_2$ be a union of varieties.
Then any irreducible subvariety $W$ of $V$ must be 
fully contained in at least one of the
$V_i$.
\end{lemma}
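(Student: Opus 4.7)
The plan is to use the definition of irreducibility directly by decomposing $W$ into its intersections with $V_1$ and $V_2$.

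First, I would observe that $W_i := W \cap V_i$ is a subvariety of $W$ for $i=1,2$, since arbitrary intersections of varieties are varieties (this is noted in the preliminaries). Next, I would compute
\[
W_1 \cup W_2 = (W \cap V_1) \cup (W \cap V_2) = W \cap (V_1 \cup V_2) = W \cap V = W,
\]
using the assumption $W \subseteq V = V_1 \cup V_2$.

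Now, since $W$ is irreducible, it cannot be written as a proper union of two subvarieties. Hence at least one of the inclusions $W_i \subseteq W$ must actually be an equality, say $W_1 = W$. This is exactly the statement $W \subseteq V_1$, proving the claim. The symmetric case gives $W \subseteq V_2$.

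There is no real obstacle here; this is just the defining property of irreducibility applied to the decomposition $W = W_1 \cup W_2$. The only thing to be slightly careful about is making sure the $W_i$ are genuinely subvarieties (which they are, as intersections of varieties) and that we are using the ``proper union'' formulation of irreducibility consistent with the definition given at the start of Appendix~\ref{sec:geometry}.
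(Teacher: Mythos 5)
Your proof is correct and matches the paper's argument: both decompose $W$ as $(W\cap V_1)\cup(W\cap V_2)$ and invoke irreducibility (the paper phrases it as a contradiction, you phrase it directly), so there is nothing to add.
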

\begin{proof}
If $W$ was not fully contained in either $V_i$, then it could
be written as the proper union of varieties $W = \bigcup_i (W\cap V_i)$
contradicting its irreducibility.
\end{proof}

Next we define a strong notion of generic points in a variety. The motivation
is that nothing algebraically special (and which 
is expressible with rational 
coefficients)
is allowed to happen at such points.
Thus, any such algebraic property holding at such a point must hold at all points.
\begin{definition}
\label{def:gen}
A point in an irreducible variety $V$ defined over $\QQ$ is called 
\defn{generic} if its coordinates do not satisfy any algebraic equation 
with coefficients in $\QQ$ besides those that are satisfied by every point
in $V$.  

The set of generic points has full measure 
in $V$.

A generic real point in $\RR^N$ as in Definition~\ref{def:genR}
is also a generic point in $\CC^N$,
considered as a variety, as in the current definition.

\end{definition}

\begin{lemma}
\label{lem:genPush}
Let  $C$ and $M$ be irreducible affine varieties,
and $m$ be a polynomial map, all defined over $\QQ$, 
such that $m(C)=M$.
If there exists a polynomial $\phi$,
defined over $\QQ$, that does not vanish identically
over $M$ but does vanish at $m(\p)$ for some 
$\p \in C$, then 
there is a polynomial $\psi$,
defined over $\QQ$, that does not vanish identically
over $C$ but does vanish at $\p$.

Thus,
if $\p \in C$ is generic in $C$, 
then $m(\p)$ is generic in $M$.
\end{lemma}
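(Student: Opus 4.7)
The plan is to prove the first (implicational) part by direct pullback, and then derive the ``thus'' statement as an immediate contrapositive. Suppose $C \subset \CC^a$ and $M \subset \CC^b$, and write $m = (m_1, \ldots, m_b)$, where each $m_i$ is a polynomial in $a$ variables with coefficients in $\QQ$. Given a polynomial $\phi$ in $b$ variables with coefficients in $\QQ$ that does not vanish identically on $M$ but satisfies $\phi(m(\p)) = 0$, I would simply set
\[
    \psi(x_1, \ldots, x_a) \;:=\; \phi\bigl(m_1(x_1,\ldots,x_a),\,\ldots,\,m_b(x_1,\ldots,x_a)\bigr).
\]
This is a polynomial in $a$ variables with coefficients in $\QQ$ (composition of $\QQ$-polynomials), and $\psi(\p) = \phi(m(\p)) = 0$ by hypothesis.

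The only thing to verify is that $\psi$ does not vanish identically on $C$. Here I would use the surjectivity assumption $m(C) = M$: since $\phi$ does not vanish identically on $M$, there exists $q \in M$ with $\phi(q) \neq 0$; choosing any $\p' \in C$ with $m(\p') = q$, we get $\psi(\p') = \phi(q) \neq 0$, so $\psi$ is non-vanishing somewhere on $C$. This completes the implicational part.

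For the ``thus'' statement, I would argue by contrapositive using Definition~\ref{def:gen}. If $m(\p)$ failed to be generic in $M$, then by definition there would exist a $\QQ$-polynomial $\phi$ vanishing at $m(\p)$ but not identically on $M$. The construction above then produces a $\QQ$-polynomial $\psi$ vanishing at $\p$ but not identically on $C$, which contradicts genericity of $\p$ in $C$. Hence $m(\p)$ is generic in $M$.

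No step here is a real obstacle: the proof is essentially the observation that pullback of polynomials through a $\QQ$-defined polynomial map preserves being defined over $\QQ$, together with surjectivity of $m$ to rule out identical vanishing of the pullback. The only thing one must be careful about is that ``defined over $\QQ$'' is preserved under composition and that we genuinely need $m(C) = M$ (not merely $m(C) \subset M$) for the non-degeneracy of $\psi$.
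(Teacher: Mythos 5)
Your proposal is correct and is exactly the paper's argument: the paper's proof is literally ``simply pull back $\phi$ through $m$,'' and you have just spelled out the details (composition preserves $\QQ$-coefficients, surjectivity $m(C)=M$ prevents identical vanishing of the pullback, and the genericity claim follows by contrapositive).
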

\begin{proof}
Simply pull back $\phi$ through $m$.
\end{proof}

\begin{lemma}
\label{lem:genPull}
Let  $L$ and $M$ be irreducible affine varieties of the same dimension,
and $s$ be a polynomial map, all defined over $\QQ$,
 such that $s(L)=M$.
If there exists a polynomial $\phi$,
defined over $\QQ$, that does not vanish identically
over $L$ but does vanish at some $\bl \in L$, then 
there is a polynomial $\psi$,
defined over $\QQ$, that does not vanish identically
over $M$ but does vanish at $s(\bl)$.

Thus,
if $\bl \in L$ is not generic in $L$, then $s(\bl)$ is not generic in $M$.
\end{lemma}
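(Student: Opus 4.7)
The plan is to push the zero locus of $\phi$ forward through $s$ and extract from the resulting image a defining equation with rational coefficients. Set $V := \{\bl' \in L : \phi(\bl') = 0\}$: this is a subvariety of $L$ cut out by $\phi$ together with the defining equations of $L$, all of which have rational coefficients, so $V$ is defined over $\QQ$. Since $L$ is irreducible and $\phi$ does not vanish identically on $L$, the variety $V$ is a proper subvariety, and therefore $\Dim V < \Dim L$.

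Next consider $s(V)$, a constructible set that, by the preliminaries above (citing~\cite[Theorem 1.22]{basu}), is defined over $\QQ$. The same preliminaries give the dimension bound $\Dim s(V) \le \Dim V < \Dim L = \Dim M$. Let $W$ denote the Zariski closure of $s(V)$; then $\Dim W = \Dim s(V) < \Dim M$, so $W$ is a proper subvariety of the irreducible $M$. Moreover, $W$ is still defined over $\QQ$: since $s(V)$ is stable under the natural action of the absolute Galois group of $\QQ$ on coordinates, so is its Zariski closure, and any Galois-stable subvariety of $\CC^D$ can be cut out by polynomials with rational coefficients.

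Because $W \subsetneq M$, there exist rational polynomials $f_1, \ldots, f_r$ whose common zero set equals $W$, and at least one $f_i$ is not in $I(M)$: otherwise every $f_i$ would vanish identically on $M$, which would force $W \supseteq M$, contradicting $W \subsetneq M$. Pick such an index and set $\psi := f_i$. Then $\psi$ has rational coefficients, does not vanish identically on $M$, and vanishes on all of $W$. Since $\phi(\bl) = 0$ we have $\bl \in V$, hence $s(\bl) \in s(V) \subseteq W$, giving $\psi(s(\bl)) = 0$ as required. The final statement of the lemma is then immediate from the contrapositive, applied to any $\phi$ witnessing that $\bl$ is not generic.

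The only subtle step is the claim that the Zariski closure of a $\QQ$-constructible set is again $\QQ$-defined; this is the main ingredient that needs to be verified carefully, and I expect it to follow cleanly from the Galois-stability argument sketched above combined with the fact, stated in the preliminaries, that images of $\QQ$-varieties under polynomial maps over $\QQ$ are $\QQ$-constructible. Everything else in the argument is routine bookkeeping with dimensions and defining ideals.
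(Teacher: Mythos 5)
Your proof is correct and follows essentially the same route as the paper's: push the vanishing locus of $\phi$ (a lower-dimensional subvariety of $L$, by irreducibility) forward through $s$, use the fact that images of $\QQ$-defined varieties are $\QQ$-defined constructible sets of no larger dimension, and extract $\psi$ from a $\QQ$-defined proper subvariety of $M$ containing $s(\bl)$. The paper compresses all of this into two sentences; your version merely fills in the bookkeeping (Zariski closure, choice of a defining polynomial not in $I(M)$), which is fine.
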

\begin{proof}
Since $L$ is irreducible, the vanishing locus of $\phi$ must be 
of lower dimension. This subvariety
must map under $s$ into to a lower-dimensional subvariety of $M$ (defined
over $\QQ$). This guarantees the existence of an appropriate $\psi$.
\end{proof}

Ultimately, we will be most interested in properties that hold 
not merely at all generic configurations, but over an open and dense
subset of the configuration space.
 Such a property will
be what we ``generally'' observe when looking at configurations, and
will be stable under perturbations. There can be exceptional configurations
but they are very confined and isolated.

When a property holds at all generic points of an irreducible 
variety, and the exceptions are due to only a \emph{finite} number of
algebraic conditions, then we will be able to conclude that the 
property actually holds over a Zariski open subset.
\begin{definition}
A non-empty subset $S$ of a variety $V$ is \defn{Zariski open} if it can be obtained
from $V$ by cutting out a (strict) subvariety.
A non-empty Zariski open subset of an irreducible variety $V$
has full measure in  $V$.

The real locus of a Zariski open subset of $\CC^N$ 
is a 
real Zariski open subset of $\RR^N$ as in Definition~\ref{def:zopenR}.
(This is because there is no non-trivial polynomial that
vanishes over all of $\RR^N$.)
\end{definition}

\begin{remark}
\label{rem:gtoz}
Lemmas~\ref{lem:genPush} and~\ref{lem:genPull} let us follow
generic points through appropriate maps. They also let us
follow "bad" strict subvarieties in the opposite directions 
through these maps. Thus when we are in a setting, such
as the $b$-bounded setting, where we are only concerned
with a finite collection of algebraic conditions going wrong
and spoiling some property, 
we can then upgrade our statements from being about
generic points, to holding over Zariski open subsets.
\end{remark}

With our notion of generic fixed, we can prove the two
principles of Section~\ref{sec:intro}.

\begin{proof}[Proof of Theorem~\ref{thm:prin1}]
Suppose $\E(V)$ does not lie in $W$. Then the preimage
$\E^{-1}(W)$, which is a variety defined over $\QQ$, does not
contain $V$, and the inclusion of $\genericpoint$ in this preimage
would render $\genericpoint$  a non-generic point of $V$.  
\end{proof}

\begin{proof}[Proof of Theorem~\ref{thm:prin2}]
From Theorem~\ref{thm:prin1}, $\A(V) \subset V$.
From Lemma~\ref{lem:bij}, $A(V)$ is an algebraic 
subvariety of $V$ of the same dimension, which 
from the assumed irreducibility must be $V$ itself.
\end{proof}


There are two approaches for defining smooth and singular points.
One comes from our  
algebraic setting, while the other comes from the more general setting
of complex analytic varieties (which we will explicitly 
refer to as ``analytic''). 
It will turn out that (algebraic) smoothness implies analytic smoothness,
and  that analytic smoothness
implies (algebraic) smoothness.

\begin{definition}
The \defn{Zariski tangent space} at a point $\genericpoint$ of a
variety $V$
is the kernel of the Jacobian matrix of a set of
generating polynomials for $I(V)$ evaluated at $\genericpoint$.



A point $\genericpoint$ is called
(algebraically) \defn{smooth} in $V$ if the dimension of the Zariski
tangent space equals the local dimension $\Dim_\genericpoint(V)$. Otherwise $\genericpoint$ is called
(algebraically) \defn{singular} in $V$.
The \defn{locus} of singular points of $V$ is denoted $\sing(V)$.
The singular locus is itself a strict subvariety of $V$. 
Thus when $V$ is irreducible 
and defined over $\QQ$, 
all generic
points are smooth.


\end{definition}

\begin{theorem}
\label{thm:Tmap}
If $\A$ is a bijective linear map on $\CC^N$ that acts as a 
bijection between two irreducible varieties
$V$ and $W$, then it must map singular points to singular points.
\end{theorem}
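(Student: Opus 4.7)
The plan is to show that $\A$ preserves Zariski tangent space dimensions pointwise, and combine this with the fact that $V$ and $W$ have the same dimension, to conclude that smoothness (equivalently, singularity) is preserved.

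First I would establish that $\A$ sets up a correspondence between the defining ideals. Since $\A$ is a bijection of $\CC^N$ that restricts to a bijection $V \to W$, we have $\A(V)=W$ and so $\A^{-1}(W) = V$. The pullback $\A^*$ on $\CC[x_1,\dots,x_N]$ is a ring automorphism, and in particular it sends radical ideals to radical ideals. Applied to the (radical) ideal $I(W)$, this gives $\A^*(I(W)) = I(\A^{-1}(W)) = I(V)$. Consequently, if $g_1,\dots,g_k$ is a generating set for $I(W)$, then $g_1\circ\A,\dots,g_k\circ\A$ is a generating set for $I(V)$. This step, rather than the differential calculation that follows, is the only slightly delicate point: without it one only knows that the pulled-back polynomials vanish on $V$, not that they generate $I(V)$.

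Next I would compute Jacobians. By the chain rule, the Jacobian of $g_i\circ\A$ at a point $\genericpoint\in V$ equals $(\nabla g_i)(\A(\genericpoint))\cdot \A$, so the full Jacobian matrix of the generators of $I(V)$ at $\genericpoint$ factors as $J_W(\A(\genericpoint))\cdot \A$, where $J_W$ denotes the Jacobian of the generators of $I(W)$. Since $\A$ is an invertible matrix, right multiplication by $\A$ is a linear isomorphism of row spaces, so $J_W(\A(\genericpoint))$ and $J_V(\genericpoint)$ have the same rank. Taking kernels, we conclude that the Zariski tangent spaces satisfy $\dim T_\genericpoint V = \dim T_{\A(\genericpoint)} W$.

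Finally I would close the argument using dimensions. Both $V$ and $W$ are irreducible, so each has pure dimension, which coincides with the local dimension at every point. By Lemma~\ref{lem:bij}, $\Dim(V)=\Dim(W)$, hence $\Dim_\genericpoint(V)=\Dim_{\A(\genericpoint)}(W)$. Combining with the tangent-space identity, $\genericpoint$ is smooth in $V$ (i.e.\ $\dim T_\genericpoint V = \Dim_\genericpoint(V)$) if and only if $\A(\genericpoint)$ is smooth in $W$. In particular, $\A$ maps $\sing(V)$ into $\sing(W)$, as desired; the same argument applied to $\A^{-1}$ shows the image is exactly $\sing(W)$.
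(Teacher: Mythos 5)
Your proof is correct, and it is genuinely more self-contained than what the paper does: the paper offers no argument at all for this theorem, simply citing the general fact that isomorphisms of varieties (regular maps with regular inverses) preserve the singular locus \cite[Page 175]{harris}. Your route replaces that appeal to general theory with a direct computation tailored to the linear case: you first pin down that $\A^*$ carries $I(W)$ onto $I(V)$ exactly (not merely into it), which is indeed the one delicate point, since knowing only that the pulled-back generators vanish on $V$ would let the kernel of their Jacobian overshoot the Zariski tangent space; you then use the chain rule, $J_V(\genericpoint)=J_W(\A\genericpoint)\cdot\A$ with $\A$ invertible, to equate tangent-space dimensions, and close with Lemma~\ref{lem:bij} and irreducibility (hence pure dimension, so local dimension is constant) to match the paper's definition of smoothness as ``tangent dimension equals local dimension.'' What your approach buys is an elementary, verifiable argument using only the definitions stated in Appendix~\ref{sec:geometry}; what the paper's citation buys is brevity and greater generality, since the Harris-style statement applies to arbitrary variety isomorphisms, not just restrictions of invertible linear maps, and requires no choice of generators or Jacobian bookkeeping. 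One small remark: your observation that $\A^*$ preserves radical ideals is harmless but not needed, since $\A^*(I(W)) = I(\A^{-1}(W)) = I(V)$ already follows set-theoretically from $f\circ\A$ vanishing at $\genericpoint$ iff $f$ vanishes at $\A\genericpoint$.
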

This is a special case of the more general setting of 
``regular maps'' and 
``isomorphisms of varieties''~\cite[Page 175]{harris}.

\begin{theorem}
\label{thm:conIrr}
If a point $\genericpoint$ is contained in two distinct components of $V$,
then $\genericpoint$ cannot be 
a  smooth point in $V$. 
\end{theorem}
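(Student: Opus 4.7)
The plan is to assume $\genericpoint$ is algebraically smooth at local dimension $d := \Dim_{\genericpoint}(V)$ and derive a contradiction by passing to the analytic category. By Theorem~\ref{thm:smpt}, algebraic smoothness at $\genericpoint$ promotes to analytic smoothness, so there is a Euclidean neighborhood $U$ of $\genericpoint$ in $\CC^N$ such that $V \cap U$ is a connected complex manifold of pure dimension $d$.

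Shrink $U$ so that it is disjoint from every component of $V$ not containing $\genericpoint$; then $V \cap U = \bigcup_{i \,:\, \genericpoint \in V_i}(V_i \cap U)$, a finite union. Each $V_i \cap U$ is a closed complex analytic subset of the manifold $V \cap U$, so by the standard analytic identity principle, each is either all of $V \cap U$ or is nowhere dense of strictly smaller analytic dimension. Since a connected complex $d$-manifold cannot be a finite union of nowhere-dense analytic subsets, at least one component, say $V_1$, must satisfy $V_1 \cap U = V \cap U$.

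Now for any other component $V_j$ containing $\genericpoint$ (in particular $V_2$), we have $V_j \cap U \subset V_1$. The set $V_j \cap U$ is a nonempty Euclidean-open subset of the irreducible variety $V_j$; any such subset is Zariski-dense in $V_j$, because a proper subvariety of an irreducible variety has strictly smaller dimension and hence empty Euclidean interior. Since $V_1$ is Zariski-closed and contains a Zariski-dense subset of $V_j$, it contains all of $V_j$, contradicting the fact that $V_1$ and $V_j$ are distinct irreducible components. The main obstacle is the ``fill-up'' step via the analytic identity principle; the remaining pieces amount to combining Theorem~\ref{thm:smpt} with the Zariski-versus-Euclidean closure comparison recorded in Theorem~\ref{thm:Zdense}.
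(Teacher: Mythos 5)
The paper itself does not prove this statement; it cites Shafarevich, where the argument is commutative-algebraic (at a smooth point the local ring is regular, hence an integral domain, so only one irreducible component can pass through the point). Your route is genuinely different and mostly sound: promoting algebraic to analytic smoothness via Theorem~\ref{thm:smpt}, shrinking $U$ to a connected manifold neighborhood that misses the components not through $\genericpoint$ (legitimate, since there are finitely many components and each is Euclidean-closed by Theorem~\ref{thm:Zdense}), and noting that each $V_i\cap U$ is the common zero set of finitely many holomorphic functions on the connected manifold $V\cap U$, hence either everything or nowhere dense by the identity theorem. Since a finite union of closed nowhere dense sets cannot cover the manifold, some $V_1\cap U$ fills $V\cap U$; this fill-up step is airtight.

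The last step, however, is where the real content hides, and as written it is a gap. From $V_j\cap U\subset V_1$ you conclude $V_j\subset V_1$ by asserting that a nonempty Euclidean-open subset of the irreducible $V_j$ is Zariski-dense, ``because a proper subvariety has strictly smaller dimension and hence empty Euclidean interior.'' The ``hence'' does not follow from the paper's definitions: $\Dim(V_j)$ is defined as the largest $D$ such that \emph{some} Euclidean-open subset of $V_j$ is a $D$-dimensional submanifold, which guarantees one good open piece somewhere, not that every nonempty Euclidean-open subset of $V_j$ --- in particular the one sitting near the singular point $\genericpoint$ --- contains a $\Dim(V_j)$-dimensional manifold point. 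What you are actually invoking is the standard fact that the smooth (maximal-dimensional manifold) points of an irreducible complex variety are Euclidean-dense, equivalently that a proper subvariety cannot contain a nonempty Euclidean-open subset of the variety, equivalently that the algebraic local dimension controls the analytic dimension of the germ. That fact is true, but it is of comparable depth to the theorem being proved and is usually established via the local analytic parametrization theorem (or Noether normalization for local analytic algebras); it needs an explicit citation or proof rather than the one-line dimension count. With that fact supplied, your argument is complete, and the concluding contradiction (a component contained in a distinct component violates maximality) is fine.
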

See~\cite[II. 2. Theorem 6]{shaf}.

\begin{definition}
If a point $\genericpoint$ in a variety
$V$ 
has a neighborhood
in $V$ that is a complex submanifold of $\CC^N$ with  some dimension 
$D$,
then we call the point
\defn{analytically smooth of dimension $D$} in $V$, or just
\defn{analytically smooth} in $V$. Otherwise we call the point
\defn{analytically singular} in $V$. 
This definition makes no use of $\Dim_\genericpoint(V)$.

\end{definition}

The following theorem tells us that there is no difference between these to notions of smoothness.
\begin{theorem}
\label{thm:smpt}
An (algebraically) smooth point $\genericpoint$ in a variety $V$ must be an
analytically  smooth point of dimension $\Dim_\genericpoint(V)$ in $V$.

A point $\genericpoint$ 
that is analytically smooth of dimension $D$
in $V$
must be
an (algebraically) smooth point $\genericpoint$ in $V$ with
$\Dim_\genericpoint(V)=D$.
\end{theorem}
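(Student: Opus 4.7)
The plan is to prove the two directions using the complex analytic implicit function theorem (IFT) as a bridge, together with the general inequality $\dim T^{\text{Zar}}_\genericpoint V \ge \Dim_\genericpoint(V)$ that holds at every point of an affine variety.

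For the first direction, I would start by picking a finite generating set $f_1, \ldots, f_k$ for $I(V)$ and letting $J$ be their Jacobian at $\genericpoint$. Algebraic smoothness says $\dim \ker J = \Dim_\genericpoint(V) =: D$, so $J$ has rank $N - D$. After reordering the coordinates and throwing out redundant equations I can assume $f_1, \ldots, f_{N-D}$ already have linearly independent gradients at $\genericpoint$. The complex analytic IFT applied to the holomorphic map $(f_1, \ldots, f_{N-D}) \colon \CC^N \to \CC^{N-D}$ then gives a neighborhood $U$ of $\genericpoint$ on which the common zero set $Z$ is a $D$-dimensional complex submanifold. Since $V \cap U \subseteq Z$, and the two sets share the irreducible germ at $\genericpoint$ of top local dimension $D$, a short germ-dimension comparison forces $V \cap U = Z$ near $\genericpoint$; hence $\genericpoint$ is analytically smooth of dimension $D$.

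For the second direction, suppose a neighborhood $U$ of $\genericpoint$ in $V$ is a complex submanifold of dimension $D$. On one hand, the chain rule shows that any holomorphic tangent vector to this submanifold at $\genericpoint$ annihilates every $f \in I(V)$, so it lies in the Zariski tangent space; thus $\dim T^{\text{Zar}}_\genericpoint V \ge D$. On the other hand, because the standard topology refines the Zariski topology (Theorem~\ref{thm:Zdense}), the analytic germ $V \cap U$ of pure dimension $D$ is contained in some component of $V$ through $\genericpoint$, which must therefore have algebraic dimension at least $D$, giving $\Dim_\genericpoint(V) \ge D$. Combining this with the general inequality $\dim T^{\text{Zar}}_\genericpoint V \ge \Dim_\genericpoint(V)$ and the reverse bound $\Dim_\genericpoint(V) \ge \dim T^{\text{Zar}}_\genericpoint V$ (since an irreducible algebraic component of dimension $>D$ containing $\genericpoint$ would force, via the first direction applied at a nearby generic point of that component, a piece of $V$ of analytic dimension $> D$ contradicting the manifold structure on $U$) pins everything down to $D$, so $\genericpoint$ is algebraically smooth with $\Dim_\genericpoint(V) = D$.

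The main obstacle is the careful comparison of analytic germs with algebraic germs: specifically, the step in the first direction that the IFT-produced manifold $Z$ coincides with $V$ locally, and the dual step in the second direction that the analytic manifold structure bounds the algebraic local dimension from above. Both rely on the standard fact that an analytic irreducible germ of an affine variety has the same dimension as the corresponding algebraic local ring, which I would quote from a standard reference (e.g., Mumford's \emph{Red Book} or Shafarevich, II.2) rather than reprove. Everything else is routine linear algebra and the IFT.
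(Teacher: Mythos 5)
Your first direction is essentially fine, and it is worth noting that the paper itself gives no proof of this theorem at all (it only points to Harris, Milnor, and a reference for the non--pure-dimensional case), so the comparison here is really about whether your argument stands on its own. Choosing $N-D$ generators of $I(V)$ with independent differentials at $\genericpoint$, applying the holomorphic implicit function theorem to get a $D$-dimensional manifold germ $Z$ containing the germ of $V$, and then using the quoted fact that the analytic germ of $V$ at $\genericpoint$ has dimension $\Dim_\genericpoint(V)=D$ together with irreducibility of the manifold germ does force the germs to coincide; that part is correct modulo the standard fact you explicitly quote.

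The second direction, however, has a genuine gap: you never establish the upper bound $\dim T^{\mathrm{Zar}}_{\genericpoint}V\le D$, i.e.\ that $I(V)$ --- the algebraic ideal of the whole variety --- contains $N-D$ polynomials whose differentials at $\genericpoint$ are linearly independent, and this is precisely the hard content of ``analytically smooth $\Rightarrow$ algebraically smooth''. Your chain-rule step gives only the lower bound $\dim T^{\mathrm{Zar}}_{\genericpoint}V\ge D$; your germ argument gives $\Dim_\genericpoint(V)\ge D$; and your parenthetical argument (applying the first direction at a nearby smooth point of a putative large component) shows only that every component through $\genericpoint$ has dimension at most $D$, i.e.\ $\Dim_\genericpoint(V)\le D$. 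The ``reverse bound $\Dim_\genericpoint(V)\ge \dim T^{\mathrm{Zar}}_{\genericpoint}V$'' that you then invoke is not a general fact --- at a node of a plane cubic the local dimension is $1$ while the Zariski tangent space is $2$-dimensional --- and, given the general inequality in the other direction, it is exactly the smoothness assertion you are trying to prove; the justification you attach to it bounds a different quantity. The missing input is genuinely complex-analytic: one must know that the germs of elements of $I(V)$ generate the full analytic vanishing ideal of the manifold germ (an analytic Nullstellensatz/GAGA-type comparison), or equivalently compare embedding dimensions via the common completion of the algebraic local ring $\mathcal{O}_{V,\genericpoint}$ and the analytic local ring, so that regularity of the ring of holomorphic germs on a $D$-manifold forces $\dim \mathfrak{m}/\mathfrak{m}^2=D$. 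A purely dimension-theoretic count like yours cannot close this: as the paper remarks immediately after the theorem, this implication fails for real algebraic varieties, and in the standard real counterexamples the inequalities you do establish ($\dim T^{\mathrm{Zar}}_{\genericpoint}V\ge D$ and $\Dim_\genericpoint(V)=D$) all hold while the conclusion is false, so some complex-analytic ingredient controlling the tangent space from above must enter the proof.
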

For discussions on this theorem see~\cite[Exercise 14.1]{harris},
~\cite[Page 13]{milnor}. See~\cite[Page 14]{sing} for the setting
where one does not assume irreducibility, or even pure dimension.

Note that the second direction does not have a corresponding statement in the setting of real algebraic varieties.

\section{Determinants and flips}
\label{sec:det}
In this section, we will establish 
a technical
lemma about determinants and sign flips.
Recall from Definition~\ref{def:sf}
that a \defn{sign flip matrix} $\S$ is a diagonal matrix 
with $\pm 1$ on the diagonal.

\begin{lemma}\label{lem:nonsing}
Suppose that $\Z = \S\X + \Y$ is an $r\times r$ matrix 
and
$\det(\Z) = 0$ for all choices of sign flips, $\S$. Then 
$\det(\Y) = 0$.
\end{lemma}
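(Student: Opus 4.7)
The plan is to exploit the multilinearity of the determinant. Replace the $\pm 1$ diagonal entries of $\S$ by formal variables and consider
\[
P(s_1,\dots,s_r) \;:=\; \det\bigl(\S \X + \Y\bigr),
\]
where $\S = \mathrm{diag}(s_1,\dots,s_r)$. The $i$-th row of $\S\X+\Y$ equals $s_i \cdot (\text{row}_i\X) + (\text{row}_i\Y)$, so by linearity of the determinant in each row separately, $P$ is \emph{multilinear}: each $s_i$ appears to degree at most one. Expanding gives
\[
P(s_1,\dots,s_r) \;=\; \sum_{I\subseteq [r]} c_I \prod_{i\in I} s_i,
\]
and the constant term is $c_\emptyset = P(0,\dots,0) = \det(\Y)$.

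The hypothesis says $P$ vanishes on every point of $\{\pm 1\}^r$. I would now invoke the standard fact that the $2^r$ monomials $\prod_{i\in I} s_i$, restricted to $\{\pm 1\}^r$, are the characters of $(\ZZ/2)^r$, and are linearly independent as functions $\{\pm 1\}^r \to \CC$ (the $2^r\times 2^r$ evaluation matrix is Hadamard, hence nonsingular). Therefore the multilinear polynomial $P$ vanishing on $\{\pm 1\}^r$ forces $c_I = 0$ for every $I$; in particular $\det(\Y)=c_\emptyset=0$.

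If one prefers to avoid the character-theoretic language, the same conclusion can be reached by induction on $r$: fix $s_2,\dots,s_r$ and average $\det(\S\X+\Y)$ over $s_1 \in \{+1,-1\}$, which by multilinearity in the first row yields the determinant of the matrix obtained by replacing row 1 of $\Z$ by row 1 of $\Y$; iterating row by row produces $\det(\Y)$.

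I do not anticipate any real obstacle: the only content of the lemma is the multilinearity observation plus the nondegeneracy of the Hadamard evaluation. The entire argument fits in a few lines, so the main decision is presentational — whether to phrase it as a one-shot Fourier/character argument or as a short inductive row-averaging.
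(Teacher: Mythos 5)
Your proof is correct and is essentially the paper's own argument: the paper expands $\det(\S\X+\Y)$ by multilinearity in the rows and sums over all $2^r$ sign-flip matrices, where binomial cancellation leaves exactly $2^r\det(\Y)$, which is the same mechanism as your row-averaging and your character/Hadamard phrasing (vanishing of the average over $\{\pm 1\}^r$ is precisely the vanishing of the constant coefficient $c_\emptyset=\det(\Y)$). The only difference is presentational: you invoke linear independence of the multilinear monomials on the cube (slightly more than needed), while the paper carries out the single global average explicitly.
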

\begin{proof}
Multilinearity of the determinant allows us to 
express $\det(\Z)$ 
as $\det(\Z')+\det(\Z'')$,
where $\Z'$ is the matrix $\Z$
with its first row replaced
by the first row of $\S\X$, and
where $\Z''$ is the matrix $\Z$
with its first row replaced
by the first row of $\Y$.
We can likewise expand out each
of $\det(\Z')$ and $\det(\Z'')$ 
by splitting their second rows.
Applying this decomposition recursively  
we ultimately get:
\[
	\det(\S\X + \Y) = \sum_{I\subset[r]} \det(\Z^\S_I)
\]
where $[r] = \{1,2,\dots,r\}$,
and $\Z^\S_I$ is the matrix that has the rows indexed by $I$ from $\S\X$ and the rest 
from $\Y$.

Now  sum the above over the $2^r$ choices of $\S$ and rearrange
\[	
	\sum_{\S} \det(\S\X + \Y) = \sum_{\S}\sum_{I\subset [r]} \det(\Z^\S_I) = 
    \sum_{I\subset [r]} \overbrace{\sum_{\S} \det(\Z^\S_I)}^{\star}
\]
For fixed $I$, each $\det(\Z^\S_I) =  (-1)^{\sigma(\S,I)}\det(\Z^{\I}_I)$,
where $\sigma(\S,I)$ is the number of rows corresponding to $I$ where 
$\S$ has a diagonal entry of $-1$.  Thus, for each $I$, ($\star$) is
\[
	2^{r-|I|}\cdot\left(\sum_{k=0}^{|I|}\binom{|I|}{k}(-1)^k\right)
    \cdot\det(\Z^{\I}_I)
\]
(The power of two factor accounts for all of the
sign choices in $\S$ over the complement of $I$.)
The coefficient of $\det(\Z^{\I}_I)$
equals $2^{r}$ when $I$ is empty. Otherwise 
it is zero since the inner term is simply the 
binomial expansion of
$(1-1)^{|I|}$. Thus,
\[
	\sum_{\S} \det(\S\X + \Y) =  2^r \det(\Y)
\]
Since this sum vanishes by hypothesis, we get $\det(\Y) = 0$.
\end{proof}

\section{Rational Functionals and Relations}\label{sec:rational:funcs}
In this section, we prove some generally useful facts about
rational functionals and relations acting on generic point configurations $\p$.

\begin{theorem}
\label{thm:functional}
Let $\bl$ be a generic point in $\LN$, with  $d \ge 2$,
and let $\alpha$ be rational length functional. 
Suppose $\la \alpha, \bl\ra=0$, then 
$\alpha=0$. Likewise (due to linearity), 
if $\la \alpha,\bl\ra=\la \alpha',\bl\ra$, then $\alpha=\alpha'$.

Similarly, let $\p$ be a generic configuration in $\RR^d$ with $d\geq 2$.
Suppose $\la \alpha, \p \ra=0$, then 
$\alpha=0$. Likewise, if 
$\la \alpha, \p\ra=\la \alpha',\p\ra$, then $\alpha=\alpha'$.
\end{theorem}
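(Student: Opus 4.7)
The plan is to reduce the statement to showing that no non-zero rational length functional vanishes identically on $L_{d,n}$. Since $\alpha$ has rational coefficients, the expression $\langle \alpha,\cdot\rangle$ is a polynomial in the coordinates of $\CC^N$ with rational coefficients. If this polynomial vanishes at a generic point $\bl$ of the irreducible variety $L_{d,n}$ (Theorem~\ref{thm:Lvariety}, which requires $d\ge 2$; note also that $L_{d,n}$ is defined over $\QQ$), then by the definition of generic (see Appendix~\ref{sec:geometry}, Definition~\ref{def:gen}), the polynomial must vanish identically on $L_{d,n}$. So the entire content of the theorem is that the only rational linear functional vanishing on all of $L_{d,n}$ is the zero functional.

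The main step is to exploit the sign-flip invariance of $L_{d,n}$. Since $L_{d,n}=s^{-1}(M_{d,n})$, where $s$ is the coordinate-wise squaring map, $L_{d,n}$ is preserved by negating any single coordinate. Fix an edge $\{i,j\}$ and suppose $\langle\alpha,\bl'\rangle=0$ for every $\bl'\in L_{d,n}$. For any $\bl\in L_{d,n}$, let $\bl^-$ denote the point obtained by negating the $ij$-coordinate of $\bl$; then $\bl^-\in L_{d,n}$, and subtracting gives
\[
0=\langle\alpha,\bl\rangle-\langle\alpha,\bl^-\rangle=2\alpha^{ij}l_{ij}.
\]
Choosing $\bl=l(\p)$ for any real configuration $\p$ with $\p_i\ne\p_j$ (e.g., the image of any generic configuration) ensures $l_{ij}\ne 0$, hence $\alpha^{ij}=0$. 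Iterating over edges $\{i,j\}$ yields $\alpha=0$, as desired. The linearity statement then follows by applying this to $\alpha-\alpha'$.

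For the configuration version, note that by Theorem~\ref{thm:Lvariety}, if $\p$ is generic in $\RR^d$ then $l(\p)$ is generic in $L_{d,n}$. Thus $\langle\alpha,\p\rangle=\langle\alpha,l(\p)\rangle=0$ falls under the case already handled, and we conclude $\alpha=0$; linearity handles the second claim.

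There is no serious obstacle here; the argument is essentially a direct use of the irreducibility of $L_{d,n}$ for $d\ge 2$ together with its coordinate-sign-flip symmetry. The only point that needs attention is that the $d=1$ case is genuinely excluded: $L_{1,n}$ is reducible (see the discussion after Theorem~\ref{thm:Lvariety}), so the step "vanishing at a generic point implies vanishing identically" is unavailable, which is consistent with the hypothesis $d\ge 2$ in the statement.
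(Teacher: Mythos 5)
Your proposal is correct and follows essentially the same route as the paper: both reduce, via irreducibility of $\LN$ for $d\ge 2$ and the definition of a generic point of a variety defined over $\QQ$, to showing that no non-zero rational length functional vanishes identically on $\LN$, and both settle that step using the sign-negation symmetry of $\LN$. The only (minor) difference is that the paper exhibits a single witness point whose coordinate signs match the signs of the $\alpha^{ij}$, whereas you flip one coordinate at a time and subtract to kill each coefficient $\alpha^{ij}$ individually; these are interchangeable uses of the same symmetry.
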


Recall from Theorem~\ref{thm:Lvariety} that, assuming $d\geq 2$, 
$\LN$ is irreducible, hence it has generic points.
Additionally when $\p$ is a  generic  configuration, then 
$l(\p)$ is generic in 
$\LN$.

\begin{proof}
The equation $\la \alpha, \bl\ra=0$ describes an algebraic equation over $\LN$ 
with coefficients in $\QQ$ and that vanishes at $\bl$. 

Next, we want to show that, assuming  $\alpha \neq 0$, 
this equation does vanish identically.
To do this, we only need to find
one point (not necessarily generic) in $\LN$ where it does not vanish.
But since $\LN$ is symmetric under sign negations, 
we can always find a point $\bl$  such the
sign of each coefficient $\alpha^{ij}$ agrees with that of  the coordinate $l_{ij}$.

If this equation does not vanish identically over $\LN$, but does at $\bl$,
then by definition $\bl$ cannot be generic.
\end{proof}
\begin{remark}
When $d=1$, there 
can be  generic  configurations $\p$ such that
$\la \alpha, \p\ra=0$ with $\alpha \neq 0$.
For example, suppose $n=3$, and let
$\alpha_{12}=1, \alpha_{23}=1, \alpha_{13}=-1$.
Then, $\la \alpha, \p\ra=0$, whenever we have the order $\p_1 \leq \p_2 \leq \p_3$, 
or the reverse order $\p_1 \geq \p_2 \geq \p_3$, 
and
$\la \alpha, \p\ra\neq 0$ otherwise. 
In this case,  $L_{1,3}$ is reducible,
and we have an equation that vanishes identically on one component of the variety
but not on the others.
\end{remark}

The following is useful to tell when a set of rational functionals
is linearly dependent.

\begin{lemma}
\label{lem:depC}
Let $\p$ be a configuration, $\alpha_i$ a sequence of $k$ 
rational functionals, and $v_i := \la \alpha_i,\p \ra$. Suppose that
the functionals $\alpha_i$ 
are linearly dependent. Then, there
is a linear 
dependence that can be expressed as 
$\sum_i c^i \alpha_i=0$, where the
coefficients $c^i$ are rational, not all vanishing.
Moreover, this gives us the 
relation $\sum_i c^i v_i =0$ with the same coefficients.  If the functionals $\alpha_i$ are
integer and $b$-bounded, then we can find such coefficients $c^i$ that are integers, 
bounded in magnitude by 
$b^{k-1}$.
\end{lemma}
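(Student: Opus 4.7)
My plan is to handle the three claims in sequence, relying throughout on the fact that the $\alpha_i$ live in $\QQ^N$. For the existence of a \emph{rational} dependence, I observe that Gaussian elimination on the $k \times N$ rational matrix $A$ whose rows are the $\alpha_i$ stays within $\QQ$ and produces a nonzero rational left-null vector $(c^1,\ldots,c^k)$. Applying the resulting functional identity $\sum_i c^i \alpha_i = 0$ to $\p$ immediately gives the companion relation $\sum_i c^i v_i = 0$ by linearity of $\la\cdot,\p\ra$.

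For the integer, $b$-bounded version, the plan is to realize a specific dependence via Cramer's rule. Let $r$ be the rank of $A$, so $r < k$. After relabeling, I take $\alpha_1, \ldots, \alpha_r$ to be a maximal independent subset, so that $\alpha_{r+1}$ lies in their $\QQ$-span. I select an $r$-subset of columns $J$ on which the $r\times r$ submatrix $M$ of $A$ (restricted to rows $[r]$ and columns $J$) is non-singular; this is possible because rows $1,\ldots,r$ are independent. Cramer's rule then expresses $\alpha_{r+1}|_J = \sum_{i=1}^r (\det(M_i)/\det(M))\,\alpha_i|_J$, where $M_i$ is $M$ with its $i$-th column replaced by $\alpha_{r+1}|_J$. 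Clearing the denominator $\det(M)$ gives the integer relation
\[
  \det(M)\,\alpha_{r+1} \;-\; \sum_{i=1}^r \det(M_i)\,\alpha_i \;=\; 0,
\]
which I extend by $c^i = 0$ for $i > r+1$; this is non-trivial because $\det(M) \neq 0$.

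The size bound then reduces to the standard fact that an $r \times r$ integer determinant with entries of magnitude at most $b$ has magnitude at most $r!\,b^r \le (k-1)!\,b^{k-1}$ via the Leibniz expansion, giving the advertised growth rate of order $b^{k-1}$. There is no real obstacle here; the only small care required is checking that a non-singular submatrix $M$ can indeed be chosen on rows $1,\ldots,r$, which is automatic from the rank assumption on $A$.
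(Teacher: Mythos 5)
Your proof is correct and follows essentially the same route as the paper: there the integer coefficients are likewise obtained as $r\times r$ minors of the rational/integer coefficient matrix (via a bordered-determinant cofactor expansion when the span has dimension less than $N$, and the adjugate identity $[\beta\,\mathrm{adj}(\M)]\M=\beta[\det(\M)]$ when it equals $N$), which is exactly your Cramer's-rule computation, except that you avoid the paper's two-case split by always restricting to $r$ independent columns. One small point: like the paper's own argument (whose cofactors are also only bounded by $k'!\,b^{k'}$ though it states $b^{k'}$), your bound carries the factorial factor $(k-1)!\,b^{k-1}$ rather than the bare $b^{k-1}$ in the lemma statement, a harmless constant for the way the bound is used (finitely many candidate relations for fixed $b$ and $k$).
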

\begin{proof}
Let $k'<k$ be the dimension of the span of the $\alpha_i$.

i) Let us look at the case $k' < \edgecard$.

Pick a subset of the $\alpha_i$ that is minimally linearly dependent
with size 
$k'+1$.
Let us use these as the $k'+1$ rows of a matrix $\M$ with
$\edgecard$ columns.
Each
of its minors of size $k'+1$ must
vanish. 

Pick $k'$ columns that are linearly independent. Append to these, 
one column made up of $k'+1$ variables. The condition that the
determinant of this $(k'+1)\times (k'+1)$ matrix vanishes gives us
a non-trivial linear homogeneous equation in the variables.
In the $b$-bounded setting, the coefficients $c^i$ of this equation 
are bounded in magnitude by $b^{k'}$. 
As every column of $\M$ is in the span of our chosen $k'$ columns,
the entries in each column of $\M$  must satisfy this equation.
Thus we have found a rational relation on the $k'+1$ rows of $\M$, giving us
a rational relation on the $\alpha_i$.

ii) Let us look at the case $k' = \edgecard$.

Pick a subset of the $\alpha_i$ of size $\edgecard$ that is 
linearly independent
Let us use these as the rows of a square non-singular matrix $\M$.
Pick one more functional $\beta$ from the $\alpha_i$. Let us think of 
$\beta$ as a row vector of length $\edgecard$.
Since $\beta$ is in the span of our selected rows, we have
$[\beta \, {\rm adj}(\M)] \M = \beta [\det(\M)]$.
Here ``adj'' denotes the adjugate matrix.
This gives us a rational relation between the rows of 
$\M$ and $\beta$, with the coefficients
in brackets above.
Again in the $b$-bounded setting, the coefficients
are bounded in magnitude by $b^{k'}$.

In both cases i) and ii), the relation on the $\v_i$ follows
immediately.
\end{proof}

\begin{lemma}
\label{lem:Cdep}
Let $\p$ be a generic configuration in two or more dimensions.
Let $\alpha_i$ be a  sequence  of $k$ 
rational functionals.
Let $v_i := \la \alpha_i,\p \ra$.
Suppose there is a sequence 
of $k$
rational coefficients $c^i$, not all zero, such that 
$\sum_i c^i v_i=0$.
Then, there is a linear dependence in the functionals $\alpha_i$.
\end{lemma}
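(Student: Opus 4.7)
The plan is to prove this as an essentially immediate consequence of Theorem~\ref{thm:functional}, by packaging the given rational relation into a single length functional that must vanish on $\p$.

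First, I would form the combined functional $\beta := \sum_i c^i \alpha_i$. Since each $\alpha_i$ is a rational length functional and the $c^i$ are rational, $\beta$ is again a rational length functional on $\LN$. By linearity of the pairing,
\[
\la \beta, \p \ra \;=\; \sum_i c^i \la \alpha_i, \p \ra \;=\; \sum_i c^i v_i \;=\; 0,
\]
where the last equality is the hypothesized rational relation on the measurement values.

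Next, since $\p$ is generic in $\RR^d$ with $d \ge 2$, Theorem~\ref{thm:functional} applies: any rational length functional that vanishes on $\p$ must be the zero functional. Hence $\beta = 0$, i.e.\ $\sum_i c^i \alpha_i = 0$. Because the $c^i$ are not all zero by hypothesis, this is a non-trivial linear dependence among the $\alpha_i$, which is exactly what we wanted to conclude.

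There is no real obstacle here; the only subtle point is that we genuinely need $d \ge 2$ so that Theorem~\ref{thm:functional} is available (recall that $L_{1,n}$ is reducible and the theorem can fail in one dimension, as noted in the remark following it). The assumption that $\p$ is generic, combined with irreducibility of $\LN$ for $d \ge 2$ from Theorem~\ref{thm:Lvariety}, is exactly what lets us pass from ``the rational functional $\beta$ vanishes at one generic point'' to ``$\beta$ is identically zero.''
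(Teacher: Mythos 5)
Your proof is correct and follows essentially the same route as the paper's: both collect the relation into the single rational functional $\sum_i c^i \alpha_i$, observe by linearity that it vanishes at $\p$, and invoke Theorem~\ref{thm:functional} to conclude it is the zero functional, yielding the dependence. Your added remarks about why $d\ge 2$ is needed are accurate but not part of the paper's (very short) argument.
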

\begin{proof}
\ba 
0&=& \sum_i c^i v_i \\
&=& \sum_i c^i \la \alpha_i, \p \ra \\
&=& \la \sum_i c^i \alpha_i, \p \ra 
\ea
Then, from Theorem~\ref{thm:functional}, $\sum_i c^i \alpha_i$ must
be the zero functional.
\end{proof}


\section{Fano Varieties of $\LL$}
\label{sec:fano}

In two dimensions, we can prove the following.

\begin{theorem}
\label{thm:3to6}
Let $\N$ be any invertible $6\times 6$ matrix
with rational coefficients.
Let  $\w:=(w_1,\dots,w_6)$ have rational rank of
$3$ or greater and
describe a point in $\CC^6$ that
is a non-singular point of $\N(\LL)$. 
Suppose that there is a generic configuration $\p$ 
in $\RR^2$ 
and six non-negative rational functionals $\gamma_i$, 
such that $w_i=\la \gamma_i,\p \ra$.
Then $\w$ has rational rank $6$.
\end{theorem}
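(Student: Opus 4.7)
The plan is to argue by contradiction: suppose the rational rank of $\w$ is some $r$ with $3\le r\le 5$. By Lemmas~\ref{lem:Cdep} and~\ref{lem:depC}, the $\QQ$-span of the functionals $\gamma_i$ (viewed as rational vectors in $(\CC^N)^*$) has dimension exactly $r$. Let $\E$ be the $6\times N$ rational matrix whose rows are the $\gamma_i$, so $\E$ has rank $r$ and its image $S := \E(\CC^N)$ is a rational $r$-dimensional linear subspace of $\CC^6$ containing $\w$. Let $V$ be the Zariski closure of $\E(\Ln)$. Then $V$ is an irreducible $\QQ$-subvariety of $S$ by irreducibility of $\Ln$ (Theorem~\ref{thm:Lvariety}); since $\p$ is generic, $l(\p)$ is generic in $\Ln$ and hence $\w = \E(l(\p))$ is generic in $V$ (Lemma~\ref{lem:genPush}). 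Combining $\w\in\N(\LL)$ with Theorem~\ref{thm:prin1} then forces $V\subset\N(\LL)$.

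Next I would verify that $V$ is in fact equal to the $r$-plane $S$. The key observation is that no nonzero rational linear combination of the $\gamma_i$ can vanish identically on $\Ln$ (by Theorem~\ref{thm:functional}); a standard tangent-space / dimension count on the composition $\Ln \hookrightarrow \CC^N \xrightarrow{\E} S$ then shows that the generic rank of $\E|_{\Ln}$ is $r$, so $\dim V = r$ and $V = S$. Pulling back by the invertible rational matrix $\N$, the preimage $\Sigma := \N^{-1}(S)$ is a rational $r$-dimensional linear subspace of $\LL$ through the point $\genericpoint := \N^{-1}(\w)$, which is non-singular in $\LL$ by Theorem~\ref{thm:Tmap}.

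The remaining input---and the main obstacle---is the Fano-variety fact presumably established earlier in Section~\ref{sec:fano}: the only linear subspaces of dimension at least $3$ contained in $\LL$ are the $60$ singular $3$-planes catalogued in Section~\ref{sec:auto} (and $\LL$ contains no linear subspace of dimension $\ge 4$, since any such subspace would contain infinitely many $3$-subspaces, each forced to be one of only $60$ possibilities). This is essentially a computation on the Fano variety $F_3(\LL)$, and I expect it to be carried out by computer algebra in the style of the Regge-symmetry analysis. Given this Fano input, if $r=3$ then $\Sigma$ is one of the $60$ singular $3$-planes, so every point of $\Sigma$---including $\genericpoint$---is singular in $\LL$, contradicting our choice of $\w$; and if $r\in\{4,5\}$ no such $\Sigma$ exists in $\LL$ at all, a contradiction. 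Hence $r=6$, as required.
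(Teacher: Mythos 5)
Your overall route is the same as the paper's: reduce to the case $\N=\I$ by pulling back with $\N^{-1}$, use Theorem~\ref{thm:prin1} to place $\E(\Ln)$ inside $\N(\LL)$, identify the rational rank of $\w$ with the rank $r$ of $\E$ via Lemmas~\ref{lem:depC} and~\ref{lem:Cdep}, argue that the Zariski closure of $\E(\Ln)$ is a full $r$-dimensional linear space inside $\LL$, and then contradict the non-singularity of $\w$ using the Fano classification (Theorem~\ref{thm:3flats}). The endgame (the $r=3$ case landing in $\sing(\LL)$, and the non-existence of linear subspaces of dimension $4$ or $5$) is fine.

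The gap is at the step where you claim $V:=\overline{\E(\Ln)}$ equals the $r$-plane $S$. Theorem~\ref{thm:functional} only tells you that no nonzero rational combination of the rows of $\E$ vanishes identically on $\Ln$; since $V$ is defined over $\QQ$, this shows that the \emph{linear span} of $V$ is all of $S$, but it does not show $\dim V=r$. A variety can linearly span an $r$-dimensional space while having strictly smaller dimension, and this is not a hypothetical worry in the present geometry: when $\E$ is the rank-$6$ edge-forgetting map onto a $K_4$, its rows are linearly independent (so, by Theorem~\ref{thm:functional}, no rational combination vanishes on $\Ln$), yet $\E(\Ln)=L_{2,4}$ has dimension $5<6$. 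So ``the generic rank of $\E|_{\Ln}$ is $r$'' is not a standard tangent-space count; ruling out such a dimension drop when $r<D$ is precisely the content of Theorem~\ref{thm:linImage}, whose proof occupies Section~\ref{sec:maps} (Proposition~\ref{prop:inf}, the infinitesimal-dependence analysis of Proposition~\ref{prop:simplex}, and the sign-flip determinant Lemma~\ref{lem:nonsing}), and which the paper's proof of Theorem~\ref{thm:3to6} invokes at exactly this point. If you replace your ``key observation'' by a citation of Theorem~\ref{thm:linImage} (with $r\le 5<D$, so the image has dimension $r$ and its closure is the linear space $S$), the rest of your argument goes through as written.
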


To prove this theorem, we will study the linear subsets in 
$L_{2,4}$. 

\begin{definition}
\label{def:fano}
Given an affine algebraic cone $V \subset \CC^{N}$ (an affine variety defined by a 
homogeneous ideal), its \defn{Fano-$k$} variety 
$\Fano_k(V)$
is the subset
of the Grassmanian $\Gr(k+1,N)$ corresponding to $k+1$-dimensional linear
subspaces that are contained in $V$. 
\end{definition}

\begin{theorem}
\label{thm:3flats}
The only $3$-dimensional linear subspaces that are contained in $\LL$ are the $60$ $3$-dimensional linear spaces comprising its singular locus.
Moreover, there are no linear subspaces of dimension
$\ge 4$ contained in $\LL$.
\end{theorem}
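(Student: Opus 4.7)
The plan is to combine the explicit description of $\sing(L_{2,4})$ given in Section~\ref{sec:auto}, namely its decomposition into exactly $60$ three-dimensional linear components of types I, II, III, with direct computer-algebra verifications on the Grassmannians $\Gr(3,6)$ and $\Gr(4,6)$ carried out in Magma. Recall that $L_{2,4}$ is a hypersurface of degree $6$ in $\CC^6$, obtained as the pullback under coordinate squaring of the cubic hypersurface $M_{2,4}$. Thus the question of whether a linear subspace $V$ lies in $L_{2,4}$ reduces to whether the restriction to $V$ of the single degree-$6$ defining polynomial $f$ vanishes identically.

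For the claim that $L_{2,4}$ contains no $4$-dimensional linear subspace, I would cover $\Gr(4,6)$ by its $15$ standard affine charts and in each chart, after a coordinate permutation, parameterize a $4$-flat as the row span of $[I_4 \mid C]$ for a $4 \times 2$ matrix $C$. Substituting this parameterization into $f$ produces a polynomial of degree $6$ in the $4$ chart-variables, whose $\binom{9}{3} = 84$ coefficients are polynomials in the $8$ entries of $C$. The condition that the $4$-flat lies in $L_{2,4}$ is that all $84$ of these coefficients vanish; a Gr\"obner-basis computation in Magma then shows the resulting ideal has no solution in each of the $15$ charts, so $\Fano_3(L_{2,4})$ is empty.

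For the claim that the only $3$-dimensional linear subspaces in $L_{2,4}$ are the $60$ singular components, let $P$ be any such $3$-flat. If $P \subset \sing(L_{2,4})$, then the irreducibility of the linear space $P$ together with Lemma~\ref{lem:comp} (applied inductively to the union of the $60$ components) forces $P$ to lie inside one of these components, and equality of dimension makes $P$ equal to that component. Otherwise $P$ meets the smooth locus of $L_{2,4}$, and I would parameterize $P$ via the $20$ standard affine charts on $\Gr(3,6)$, writing $P$ as the row span of $[I_3 \mid D]$ for a $3 \times 3$ matrix $D$. Restricting $f$ to $P$ gives a polynomial of degree $6$ in $3$ variables with $\binom{8}{2} = 28$ coefficients polynomial in the $9$ entries of $D$. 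Magma is then asked to verify that the only $\CC$-solutions to this $28$-equation system are the $3$-flats already inside $\sing(L_{2,4})$ that happen to be visible in the given chart.

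The main obstacle is the feasibility of the zero-dimensional Gr\"obner-basis elimination in the $3$-flat case, where the system has $28$ polynomial equations in $9$ unknowns and must return exactly the finite list of singular flats visible in each chart. To tame the computation, I would exploit the order-$11520$ automorphism group $\PP\Aut(L_{2,4})$ from Theorem~\ref{thm:auto2}, which acts with only a small number of orbits on the $60$ singular $3$-flats; one then needs to verify the ideal computation explicitly only for a handful of orbit representatives—one per type I, II, III—and transport the remaining cases by the group action.
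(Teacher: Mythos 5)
Your proposal follows essentially the same route as the paper: both reduce the statement to a computer-algebra computation of the Fano varieties of $\LL$, parameterizing candidate flats by affine charts of the Grassmannian, restricting the single degree-$6$ defining polynomial, and checking that the solution set is exactly the $60$ known singular $3$-flats. Two points of comparison are worth noting. First, for the ``no flats of dimension $\ge 4$'' claim the paper does no computation on $\Gr(4,6)$: having verified that $\Fano_2(\LL)$ is $0$-dimensional (finite), it observes that a linear subspace of dimension $\ge 4$ inside $\LL$ would contain a positive-dimensional family of $3$-flats, so the higher Fano varieties are automatically empty; your direct $15$-chart Gr\"obner computation would also work, but it is unnecessary extra effort. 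Second, a caveat about your proposed symmetry reduction: the paper cuts the $20$ charts of $\Gr(3,6)$ down to $3$ representatives using only the vertex-relabeling symmetries, which are coordinate permutations and therefore genuinely permute the standard charts. The full group $\PP\Aut(\LL)$ of order $11520$ contains Regge-type elements that are not monomial matrices, so it does not act on the standard chart decomposition; moreover the per-chart ideal computations are indexed by charts, not by singular flats, so verifying the system only for ``one orbit representative per type I, II, III'' and transporting by the group would not rule out unexpected solutions in the remaining charts. Your primary plan of solving the coefficient system in every chart (or every chart modulo coordinate permutations, as the paper does) is sound and matches the paper's argument; the group-action shortcut should be treated as dispensable rather than as a load-bearing step.
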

\begin{proof}
This proposition is proven by calculating  the Fano-$2$ variety of 
$\LL$ in the Magma CAS~\cite{magma}, and comparing it to the the Fano-$2$ variety of 
the singular locus of $\LL$. 

We use the approach described in~\cite[Page 70]{harris} to compute 
the $\Fano_2(\LL)$ variety. We summarize this approach here.
We shall order the coordinates of $\CC^6$ in the order
$( l_{12}, l_{13}, l_{23}, l_{14}, l_{24}, l_{34})$.

Let us specify a point in $\CC^6$ as 
\ba
\begin{pmatrix}
1&0&0 \\
0&1&0 \\
0&0&1 \\
\lambda_1&\lambda_2&\lambda_3\\ 
\lambda_4&\lambda_5&\lambda_6\\ 
\lambda_7&\lambda_8&\lambda_9 
\end{pmatrix}
\begin{pmatrix}
t_1\\
t_2\\
t_3
\end{pmatrix}
\ea
where the $\lambda_i$ are variables that specify a three-dimensional
linear subspace of $\CC^6$, and the $t_j$ are variables that
specify a point
on that subspace. Note that this can only represent
an affine open subset of the Grassmanian; it cannot represent three-dimensional
linear subspaces that are parallel to the first three coordinate axes.

We can compute the polynomial in $[\lambda_i, t_j]$
vanishing when the associated 
points in $\CC^6$ are also in $\LL$. We can then 
look at all of the coefficients
(polynomials in $\lambda_i$) of the monomials in $t_j$.
These coefficient polynomials vanish identically iff 
the linear subspace specified by the $\lambda_i$ is in $\LL$.
Thus these coefficients generate an affine open subset of
$\Fano_2(\LL)$.

To study the whole Fano variety, we must also 
look at the other affine subsets of the Grassmanian. Due to 
the vertex symmetry of $\LL$,
we only need to consider the additional two matrices:

\ba
\begin{pmatrix}
1&0&0 \\
0&1&0 \\
\lambda_1&\lambda_2&\lambda_3\\ 
0&0&1 \\
\lambda_4&\lambda_5&\lambda_6\\ 
\lambda_7&\lambda_8&\lambda_9 
\end{pmatrix}
\;\;{\rm and}\;\; 
\begin{pmatrix}
1&0&0 \\
\lambda_1&\lambda_2&\lambda_3\\ 
0&1&0 \\
\lambda_4&\lambda_5&\lambda_6\\ 
\lambda_7&\lambda_8&\lambda_9 \\
0&0&1 
\end{pmatrix}
\ea

These three matrices represent the triplet of coordinate axes corresponding to,
respectively, a triangle, a chicken-foot, and a simple open path. Thus, these $3$ open subsets of 
$\Fano_2(\LL)$, together with vertex relabelings, cover the full Fano variety.

We compute these $3$ open subsets of $\Fano_2(\LL)$ in Magma, and verify that, in each of these open subsets,
$\Fano_2(\LL)$ is $0$-dimensional and 
$|\Fano_2(\LL)| =|\Fano_2(\sing(\LL))|$. 
As  
$\Fano_2(\LL) \supset \Fano_2(\sing(\LL))$, 
we can conclude that  
$\Fano_2(\LL) =\Fano_2(\sing(\LL))$ (see supplemental script).

As Fano-$2$ variety is discrete, the higher Fano
varieties of $\LL$ must also be empty.
\end{proof}

\begin{proof}[Proof of Theorem~\ref{thm:3to6}]

Using the $6$ functionals $\gamma_i$ as rows of a matrix $\E$, we obtain a
linear map from $\Ln$ to $\CC^6$. This matrix $\E$ maps
$l(\p)$ to $\w$, which we have assumed to be in 
$\N(L_{2,4})$.

Since $\N$ is non-singular, we can pre-multiply $\w$ 
and $\E$
by 
$\N^{-1}$, and then wlog treat it as $\I$.

i) From Theorem~\ref{thm:prin1} (and using Theorem~\ref{thm:Lvariety}) 
we see that 
$\E(\Ln) \subset L_{2,4}$. 

ii) From Lemma~\ref{lem:depC} and the assumed rational rank of
$\w$,
the rank of $\E$ must be  $ \ge 3$.

iii) Suppose that the rank $r$
of $\E$ was less than $6$.
Then from Theorem~\ref{thm:linImage},
$\E(\Ln)$ would be an $r$-dimensional constructible
subset $S$ within
an $r$-dimensional linear (and irreducible) space.
Its Zariski closure, $\bar{S}$, would then be equal to  this
$r$-dimensional linear space. 
This closure, $\bar{S}$, must be contained
in any variety, such as $\LL$, that contains $S$.

iv) By assumption, $r \ge 3$.
Thus, from Theorem~\ref{thm:3flats},
$\E(\Ln)$ would be part of $\sing(\LL)$. 

v) But this would contradict our assumption that $\w$ is 
non-singular. Thus $\E$ has rank $6$.

vi) So From Lemma~\ref{lem:Cdep} the rational
rank of $\w$ is $6$.

\end{proof}

\begin{remark}
\label{rem:fano3d}
A useful generalization of 
Theorem~\ref{thm:3to6} to three dimensions would
say that a rational rank of $6$ 
for a non-singular $\w$
implies
a rational rank of $10$. This, for example would 
let us avoid an explicit rational rank test during the trilateration growth phase.

We have been unable to fully
compute any of the
Fano varieties of $L_{3,5}$ in any computer algebra system, but partial
results do not look promising.
We have been able to verify that  
$\Fano_6(L_{3,5})$ 
is not empty (see supplemental script).
This together with our (partial) understanding
of $\sing(L_{3,5})$ suggests that 
$L_{3,5}$ indeed
contains $6$-dimensional linear spaces that are not 
contained in its singular locus. This would thus
rule out such a generalization 
of Theorem~\ref{thm:3to6}.
\end{remark}



\newpage
\bibliographystyle{abbrvnat}
\bibliography{loops}

\end{document}